\documentclass[11pt,a4paper]{amsbook}
\usepackage{amscd,amssymb,amsmath,amsthm,amsfonts}

\numberwithin{equation}{section}
\newtheoremstyle{mythmsty}%
  {3pt}%
  {3pt}%
  {\itshape}%
  {}%
  {\bfseries}%
  {.}%
  {.5em}%
  {#1 #2\thmnote{(#3)}}%
\theoremstyle{plain}
\newtheorem{theorem}[equation]{Theorem}
\newtheorem{lemma}[equation]{Lemma}
\newtheorem{cor}[equation]{Corollary}
\newtheorem{prop}[equation]{Proposition}
\theoremstyle{definition}
\newtheorem{defn}[equation]{Definition}
\theoremstyle{remark}
\newtheorem{remark}[equation]{Remark}

\newtheorem{ex}[equation]{Example}
\newtheoremstyle{noname}%
  {3pt}%
  {3pt}%
  {}%
  {}%
  {\scshape}%
  {}%
  {.5em}%
  {#3(#2)}%
\theoremstyle{noname}
\newtheorem{fact}[equation]{}
\newtheoremstyle{citing}%
  {3pt}%
  {3pt}%
  {\itshape}%
  {}%
  {\bfseries}%
  {.}%
  {.5em}%
  {\thmnote{#3}}%
\theoremstyle{citing}
\newtheorem*{varthm}{}%
\newtheoremstyle{citingex}%
  {3pt}%
  {3pt}%
  {}%
  {}%
  {\bfseries}%
  {.}%
  {.5em}%
  {\thmnote{#3}}%
\theoremstyle{citingex}
\newtheorem*{varex}{}%

\newcommand{\ie}{\mbox{i.e.\ }}
\newcommand{\eg}{\mbox{e.g.\ }}

\newcommand{\cf}{\mbox{cf.\ }}
\newcommand{\page}{\mbox{p.\ }}
\newcommand{\resp}{\mbox{resp.\ }}
\newcommand{\wolg}{\mbox{w.l.o.g.\ }}

\newcommand{\set}[1]{\{#1\}}
\newcommand{\sett}[2]{\{#1\,|\,#2\}}

\DeclareMathOperator{\l<}{\langle}
\DeclareMathOperator{\r>}{\rangle}
\newcommand{\gen}[1]{\l<#1\r>}

\newcommand{\field}[1]{\mathbb{#1}}
\newcommand{\Q}{\field{Q}}
\newcommand{\R}{\field{R}}
\newcommand{\C}{\field{C}}
\newcommand{\Ham}{\field{H}}
\newcommand{\N}{\field{N}}
\newcommand{\Z}{\field{Z}}
\newcommand{\Cn}[1]{\mathfrak{Z}_{#1}}
\newcommand{\Zn}[1]{\Z/{#1}}

\newcommand{\F}{\field{F}}

\newcommand{\ug}[1]{{{#1}^{\times}}}

\newcommand{\bs}{\backslash}
\newcommand{\ift}{\infty}
\newcommand{\ndiv}{\nmid}
\newcommand{\gdw}{\Leftrightarrow}

\newcommand{\impl}{\Rightarrow}

\newcommand{\pp}{{\mathfrak p}}

\newcommand{\OO}{{\mathcal O}}
\newcommand{\PP}{{\mathfrak P}}

\newcommand{\qt}[1]{\quad\text{#1}\quad}

\newcommand{\qqt}[1]{\qquad\text{#1}\qquad}

\DeclareMathOperator{\lcm}{lcm}
\DeclareMathOperator{\id}{id}

\DeclareMathOperator{\Aut}{Aut}

\DeclareMathOperator{\Gal}{Gal}
\DeclareMathOperator{\I}{I}
\DeclareMathOperator{\No}{N}

\DeclareMathOperator{\Tr}{Tr}
\DeclareMathOperator{\Val}{\mathcal{V}}
\DeclareMathOperator{\V0}{{\mathcal{V}_0}}

\DeclareMathOperator{\Br}{Br}
\DeclareMathOperator{\VBr}{VBr}
\DeclareMathOperator{\IBr}{IBr}
\DeclareMathOperator{\SBr}{SBr}

\DeclareMathOperator{\ind}{ind}
\DeclareMathOperator{\inv}{inv}

\DeclareMathOperator{\calD}{\mathcal{D}}
\DeclareMathOperator{\calA}{\mathcal{A}}
\DeclareMathOperator{\calH}{\mathcal{H}}

\DeclareMathOperator{\calF}{\mathcal{F}}

\DeclareMathOperator{\calP}{\mathcal{P}}

\DeclareMathOperator{\Fix}{Fix}
\DeclareMathOperator{\disc}{disc}
\DeclareMathOperator{\charak}{char}
\DeclareMathOperator{\Irr}{Irr}
\DeclareMathOperator{\ord}{ord}

\DeclareMathOperator{\res}{res}

\DeclareMathOperator{\im}{im}

\newcommand{\lra}{\longrightarrow}
\newcommand{\lms}{\longmapsto}

\newcommand{\wt}{\widetilde}

\newcommand{\dummy}[1]{}
\renewcommand{\Zn}{\Cn}

\hyphenation{Lem-ma}

\begin{document}
\title{\titel}

\author{\myname}
\address{\adresse}
\email{hanke@math.rwth-aachen.de}
\thanks{The author was supported in part by the DAAD}

\date{\today}
\subjclass{Primary 16S35; Secondary 16K20, 16W60}
\keywords{noncrossed products, inertially split division algebras, generalized crossed products, twisted Laurent series}

\titlepage

\newcommand{\titel}{A Direct Approach to Noncrossed Product Division Algebras}
\newcommand{\myname}{Timo Hanke}
\newcommand{\adresse}{Timo Hanke
\\Universit\"at Potsdam
\\Institut f\"ur Mathematik
\\Postfach 60 15 53
\par 14415 Potsdam
}
\newcommand{\betreuer}{Prof. Dr. J. Gr\"ater}
\newcommand{\namekurz}{T. Hanke}
\newcommand{\betreuerkurz}{J. Gr\"ater}
\newcommand{\datum}{12. M\"arz 2001}
\newcommand{\gebdatum}{12. April 1975}
\newcommand{\gebort}{Braunschweig}
\newcommand{\disputdatum}{16. Juli 2001}
\newcommand{\druckdatum}{19. August 2001}

\mbox{}\vspace{15mm}
\begin{center}
{\LARGE\bf
\titel}

\vspace{5cm}
{\Large\bf
Dissertation}

\vspace{15mm}
\large
zur Erlangung des akademischen Grades \\
Doktor der Naturwissenschaften \\
(Dr. rer. nat.) \\
in der Wissenschaftsdisziplin Algebra und Zahlentheorie

\vspace{15mm}
eingereicht an der \\
Naturwissenschaftlichen Fakult\"at \\
der Universit\"at Potsdam

\vspace{15mm}
von \myname, \\
geboren am \gebdatum~in \gebort.

\vspace{15mm}
Potsdam, den \druckdatum

\end{center}

\titlepage
\begin{minipage}[b]{12cm}
Corrected version as of \today.\\
 \flushleft
Typographical corrections and small changes were made within the text as long as they did not significantly change the page breaks.
Newly added comments were placed within the page margins on pages 25,27,31.
\end{minipage}
\vfill
\begin{minipage}[b]{15cm}
Eingereicht am : \datum. \\
\\
Gutachter : \\
\\
Prof. Dr. Eberhard Becker, Universit\"at Dortmund, \\
Prof. Dr. Joachim Gr\"ater, Universit\"at Potsdam, \\
Prof. Dr. Adrian R. Wadsworth, University of California, San Diego. \\
\\
Tag der Disputation : \disputdatum.
\end{minipage}

\frontmatter

\chapter*{Preface}

The history of division algebras
starts with Hamilton's construction of the real quaternions in 1843,
which were the first example of a skewfield.
Over the field of real numbers, as well as over any real closed field,
they are the only finite-dimensional noncommutative division algebra.
For other ground fields the situation can be much more complicated,
and it is a major problem in the theory of division algebras 
to classify all finite-dimensional division algebras over a given field,
or more precisely, with a given centre.
The centre of an $F$-algebra is denoted by $Z(A)$
and $A$ is called a central $F$-algebra if $Z(A)=F$.
In the following all $F$-algebras are assumed to be finite-dimensional.
The finite-dimensional central $F$-division algebras are better understood
in the context of the finite-dimensional central simple $F$-algebras.
By the Wedderburn structure theorem, 
every central simple $F$-algebra is isomorphic to a matrix ring $M_n(D)$
over a unique (up to isomorphism) central $F$-division algebra $D$,
which is called the underlying division algebra of $A$.
Two central simple $F$-algebras are called similar
if their underlying division algebras are isomorphic.
This defines an equivalence relation.
The tensor product (taken over $F$) defines a group structure 
on the equivalence classes of finite-dimensional central simple $F$-algebras.
This group is called the Brauer group $\Br(F)$ of~$F$.
It classifies the finite-dimensional central $F$-division algebras, 
because 
every element from the Brauer group is represented by a unique
$F$-central division algebra.

To give more examples of finite-dimensional central simple algebras, 
Hamilton's construction has been subsequently generalized to 
quaternion algebras over arbitrary fields, symbol algebras, crossed products and generalized crossed products,
all of which will play a role in this work.
Due to its importance the construction of crossed products shall be shortly repeated here.
Let $K/F$ be a Galois extension with  Galois group $G$ 
and let $f$ be a map $f:G\times G\to K^\times$.
An associative multiplication is defined on the $F$-space
$A=\bigoplus_{\sigma\in G} Kz_\sigma$ by the rule
\begin{equation*}
  kz_\sigma\cdot k'z_\tau=k\sigma(k')f(\sigma,\tau)z_{\sigma\tau}
\qt{for all $k,k'\in K$ and $\sigma,\tau\in G$,}
\end{equation*}
if and only if $f$ is a $2$-cocycle, \ie
\[ \rho(f(\sigma,\tau))f(\rho,\sigma\tau)=f(\rho,\sigma)f(\tau,\rho) 
\qt{for all $\rho,\sigma,\tau\in G$.}\]
In this case, $A$ is a central simple $F$-algebra that contains a strictly maximal subfield isomorphic to $K$ (\ie $\dim_F A=[K:F]^2$).
The algebra $A$ is called a crossed product of $K$ with $G$
and is written $A=(K,G,f)$.

However, it is in general not easy to give examples of finite-dimensional division algebras,
since it is difficult to decide whether a given crossed product is a division algebra or not.
A construction that always leads to division algebras 
are the twisted (or skew) polynomial rings, twisted (or skew) function fields and twisted (or skew) Laurent series rings.
These were used by Hilbert to give examples of noncommutative ordered division rings, though his examples are not finite-dimensional.
We shall shortly repeat here the special case that yields finite-dimensional division algebras.
Let $K/k$ be a finite cyclic field extension with $\Gal(K/k)=\gen{\sigma}$.
Let $D$ be a finite-dimensional central $K$-division algebra
and suppose that $\sigma$ extends to a $k$-algebra automorphism $\wt\sigma$ of $D$.
The twisted polynomial ring $D[x;\wt\sigma]$ is the ring of polynomials over $D$ in the indeterminate $x$ with multiplication ``twisted'' by the rule
\begin{equation*}
 ax^i\cdot bx^j=a\wt\sigma^i(b)x^{i+j} \qt{for all $a,b\in D$, $i,j\in\N_0$.} 
\end{equation*}
The twisted function field $D(x;\wt\sigma)$ is the quotient ring of $D[x;\wt\sigma]$
and the twisted Laurent series ring $D(\!(x;\wt\sigma)\!)$ is the analog of $D[x;\wt\sigma]$ for Laurent series in $x$ over $D$.
Both $D(x;\wt\sigma)$ and $D(\!(x;\wt\sigma)\!)$ are division rings
that are finite-dimensional over their centre.

We now return to crossed products.
As a consequence of the theorem of Skolem-Noether,
if a central simple $F$-algebra $A$ contains a strictly maximal subfield $K$
which is Galois over $F$,
then $A$ is isomorphic to a crossed product $(A,G,f)$ for some $2$-cocycle $f$.
We simply say that $A$ is a crossed product if it is isomorphic to a crossed product, 
otherwise we say $A$ is a noncrossed product.
Thus, $A$ is a crossed product if and only if it contains a strictly maximal subfield that is Galois over the centre of $A$.

The crossed products provide examples of central simple algebras
whose structure is particularly nice,
since the multiplication can be described by a group action.
Besides that,
their importance lies in the fact that they already describe the whole Brauer group.
For, every central simple $F$-algebra is similar to a crossed product,
thus every element from $\Br(F)$ is represented by a crossed product.
This is equivalent to the statement that every simple algebra has a Galois splitting field,
which in turn is a consequence from the fact that every division algebra contains a maximal separable subfield.
Since the crossed products are described by cocycles,
they provide the bridge to the cohomological interpretation of the Brauer group,
which is formulated by the isomorphism $H_c^2(\Gal(F_{sep}/F),\ug{F_{sep}})\cong\Br(F)$.

Since any central simple algebra is similar to a crossed product,
the question naturally arises,
whether any central simple algebra itself is a crossed product,
\ie whether any central simple algebra contains a strictly maximal subfield which is Galois over the centre.
In general, questions about the subfields of a simple algebra $A$
are difficult,
because $A$ can contain subfields of
quite a lot of different isomorphism classes.
However, it is known that any central simple algebra of degree $2,3,4,6$ and $12$ is a crossed product.
Moreover, as a classic (and deep) result of number theory,
any central simple algebra over a local or global field $F$ is a crossed product (there is even a cyclic strictly maximal subfield).
Trivially, this is also true over finite, real closed or algebraically closed fields $F$.
But in general the question remained open for several decades till 1972
when Amitsur gave the first examples of noncrossed products in \cite{amitsur:central-simple-div-alg}.
Today it is still open for all degrees $n=2^kp_1\cdots p_r$
with $n\nmid 12$,
where $0\leq k\leq 2$, $r\geq 0$, and 
$p_1,\ldots,p_r$ are pairwise different prime numbers $\geq 3$.
We now look closer at the noncrossed product constructions presented so far
and particularly focus on 
the centres of the constructed noncrossed products and
the indices and exponents that can be realized.

Amitsur's noncrossed product examples in \cite{amitsur:central-simple-div-alg} are so-called generic division algebras $D_n=\Q(X_1,\ldots,X_m)$, $m\geq 2$,
where the $X_i$ are $n\times n$ matrices whose entries are commutative indeterminates over $\Q$.
$D_n$ is a division algebra of degree $n$.
The central theorem of \cite{amitsur:central-simple-div-alg} states that if $D_n$ is a crossed product with group $G$,
then any central simple algebra $D$ of degree $n$ over a field of characteristic zero is a crossed product with the same group $G$.
Thus, if there are two division algebras $D_1,D_2$ of degree $n$ over fields of characteristic zero
that are crossed products but not crossed products with the same group,
this proves that $D_n$ is a noncrossed product.
Amitsur constructs $D_1$ as an iterated twisted Laurent series ring
and $D_2$ as a division algebra over a $p$-adic field $\Q_p$.
The subfields of the Laurent series ring $D_1$ can be controlled using the 
standard valuation that is introduced by the indeterminates
(though Amitsur does not use valuations explicitly).
His result is that $D_n$ is a noncrossed product
if $p^2\mid n$ for an odd prime $p$ or $8\mid n$.
Unfortunately the centre of $D_n$ is unknown, and so is its Brauer group.
However, it can be shown that $\exp D_n=\ind D_n=n$.
Based on Amitsur's idea
there have been subsequent constructions by various authors
covering noncrossed products in characteristic $p$ 
and noncrossed products whose index exceeds its exponent.

Another approach was made by Jacob and Wadsworth in \cite{jacob-wadsworth:constr-noncr-prod}.
They explicitly use (noncommutative) valuations on a division algebra $D$
to control the subfields of $D$.
More precisely, division algebras $D$ are considered
that are totally and tamely ramified with respect to two valuations $v_1$ and $v_2$.
If $D$ is a crossed product with group $G$,
then $G$ is isomorphic to a subgroup of the relative value groups of $v_1$ and $v_2$.
Thus, if the relative value groups of $v_1$ and $v_2$ 
have no common subgroup of a certain order,
then $D$ is a noncrossed product.
In this way the existence of noncrossed products of index $p^m$ and exponent $p^n$ is shown
for any $n\geq 2$ if $p$ is an odd prime, $n\geq 3$ if $p=2$, and any $m\geq n$.
The noncrossed products appear as the underlying divison algebra of some tensor product $A_1\otimes A_2$, 
which itself is a crossed product.
The centres are known, since they are part of the construction,
though their transcendence degree can be relatively high
(it depends on $m$).

Brussel showed in \cite{brussel:noncr-prod}
that there are noncrossed products with centres $\Q(t)$ and $\Q(\!(t)\!)$.
These are in some sense the ``simplest'' centres over which noncrossed products exist.
The Brauer groups are well known, \eg
by a theorem of Witt,
\begin{equation}
  \label{eq:Br-Qt}
\Br(\Q(\!(t)\!))\cong\Br(\Q)\oplus X(G),
\end{equation}
where $X(G)$ is the character group of the absolute Galois group of $\Q$.
Brussel constructs elements of $\Br(\Q(\!(t)\!))$ 
via the presentation (\ref{eq:Br-Qt}) whose underlying division algebras are noncrossed products.
More precisely, a tensor product $E:=A\otimes_\Q (K(\!(t)\!)/\Q(\!(t)\!),\sigma,t)$
is constructed,
where $A$ is a central simple algebra over $\Q$ 
and $K/\Q$ is a finite cyclic extension with $\Gal(K/\Q)=\gen{\sigma}$.
He shows that the underlying division algebra of $E$ is a noncrossed product
if any splitting field $M$ of $A$ containing $K$ with degree 
$[K:\Q]\ind(A\otimes_\Q K)$ over $\Q$ is not Galois over $\Q$.
By arranging such situations of $A$ and $K$
with purely number theoretic methods,
the existence of noncrossed product division algebras of index $p^m$ and exponent $p^n$ is shown
for any $n\geq 2$ if $p$ is an odd prime, $n\geq 3$ if $p=2$, and $n\leq m< 2n$.
The noncrossed products over $\Q(t)$ appear analogously as the underlying division algebra of 
$A\otimes_\Q (K(t)/\Q(t),\sigma,t)$.
Moreover, the above is done in \cite{brussel:noncr-prod} for $\Q$ replaced by a number field $k$.
The lowest possible indices and exponents may increase if $k$ contains roots of unity.
In any case, the noncrossed products are the underlying division algebras of
tensor products, which themselves are crossed products.
A more detailed survey on the various approaches to noncrossed products is contained in \cite[\S 5]{wadsworth:survey}.

The goal of this work is to give a direct approach to the construction of noncrossed product division algebras that covers ``simple'' centres as well as ``small'' indices and exponents
and that makes it possible to construct explicit examples.
By a direct approach, we mean a construction that directly yields the noncrossed product division algebras instead of a matrix ring over it.
This is accomplished \eg by Amitsur's examples, though not for the ``simple'' centres.
By an explicit example we mean that we can effectively compute inside the noncrossed product division algebra
and that we can write down a multiplication table of some basis.
The result can be formulated as follows.
\begin{varthm}[Result]
The construction of iterated twisted function fields 
and iterated twisted Laurent series rings of the form
\[
D(x_1;\wt\sigma_1)\cdots(x_r;\wt\sigma_r) \qt{and}
D(\!(x_1;\wt\sigma_1)\!)\cdots(\!(x_r;\wt\sigma_r)\!)
\]
meets all these requirements.
An explicit example of noncrossed product division algebras of this form
is given for the case $r=2$.
\end{varthm}
It was mentioned before that the iterated twisted Laurent series rings also play a role in Amitsur's noncrossed product examples.
The iterated twisted Laurent series rings constructed by Amitsur are crossed products,
but only a certain group can occur as the Galois group of a maximal subfield.
It is now surprising that we get iterated twisted Laurent series rings
which are even noncrossed products.
The central theorem on the way to this result is the following noncrossed product criterion for so-called inertially split division algebras.
\begin{varthm}[Noncrossed Product Criterion]
Let $D$ be a valued finite-dimensional central $F$-division algebra
that is inertially split.
If $D$ is a crossed product, then the residue algebra $\bar D$ contains a maximal subfield that is Galois over the residue field $\bar F$ of $F$
(not just over $Z(\bar D)$).
\end{varthm}
This theorem is practical if the structure of $\bar D$ is well understood,
\eg if $\bar D$ is a division algebra over a number field.
The actual noncrossed product criterion, of course, is the contraposition of the statement.
The terminology ``inertially split'' is due to Jacob and Wadsworth,
who define inertially split division algebras over Henselian fields in \cite{jacob-wadsworth:div-alg-hensel-field}
as division algebras that have an inertial splitting field,
and the proof is based on results from \cite{jacob-wadsworth:div-alg-hensel-field}.
If $\bar F$ is not perfect, then further on, a lemma from Saltman \cite{saltman:noncr-prod-small-exp} on $p$-algebras is required,
which states that if a division algebra contains a normal maximal subfield
then it also contains a Galois maximal subfield.

The Noncrossed Product Criterion easily extends to non-Henselian valued division algebras.
For, if a valued division algebra is a crossed product,
then its Henselization,
as introduced by Morandi in \cite{morandi:henselization-div-alg},
is a crossed product with the same residue algebra.
Here, for convenience,
we call a valued division algebra inertially split if its Henselization 
is inertially split in the sense of \cite{jacob-wadsworth:div-alg-hensel-field},
so that the Noncrossed Product Criterion as stated holds for arbitrary valued division algebras.
The reason for this is to treat the iterated twisted function fields,
which are not Henselian valued, with this criterion.
It shall be mentioned that in the Henselian case
even the converse is true,
and this result can be used for example to show that every division algebra over $k(\!(t_1,\ldots,t_r)\!)$ for a local field $k$ of characteristic zero is a crossed product.

Results from \cite{jacob-wadsworth:div-alg-hensel-field} are further used
to compute index and exponent of inertially split division algebras.
However, the result on the exponent can not be carried over immediately to the non-Henselian case.
Only in special cases, including the considered iterated twisted function fields,
the exponent coincides with the exponent of the Henselization.
It is further investigated how the exponent of an inertially split division algebra $D$ can be computed from the residue algebra $\bar D$.
If $\bar D$ is a division algebra over a global field,
this is possible under certain circumstances using local-global principles.

The Noncrossed Product Criterion suggests to divide the proof of existence of noncrossed products over $F$, as well as their construction, into two steps.
First, one has to prove the existence of $\bar F$-division algebras $\wt D$
that do not contain a maximal subfield Galois over $\bar F$.
Note that $\bar F$ is not required to be the centre of $\wt D$,
hence $\wt D$ may well be a crossed product.
Second, one has to find a central $F$-division algebra $D$ that is inertially split with residue algebra $\bar D$ isomorphic to $\wt D$.
Then $D$ is a noncrossed product.
Besides that, one should keep track of the index and exponent in this process.
In this way, the noncrossed product division algebra $D$ is obtained ``directly''.

It turns out that if $\bar F$ is a number field,
the first step can be done with the same number theoretic arguments 
that are used by Brussel in \cite{brussel:noncr-prod}.
More precisely, as mentioned before,
Brussel arranges a situation of a cyclic extension $K/k$ of number fields
and a central simple $k$-algebra $A$ such that
any splitting field of $A$ containing $K$ with degree
$[K:k]\ind(A\otimes_k K)$ over $k$ 
is not Galois over $k$.
If instead of $A$ the underlying division algebra $D$ of $A\otimes_k K$ is considered,
this is equivalent to that any maximal subfield of $D$ is not Galois over $k$.

The second step is possible under certain circumstances.
We introduce a property of the valued field $F$ called \emph{inertial lift property}.
Roughly spoken, it means that there are inertial lifts over $F$ of both,
central $\bar F$-division algebras and Galois extensions of $\bar F$,
such that they ``fit together''.
Besides the Henselian fields,
all valued fields $F$ have the inertial lift property 
whose valuation ring $V_F$ contains a subfield that maps isomorphically onto $\bar F$ under the canonical residue map.
For example the rational function fields $F=\bar F(t_1,\ldots,t_r)$
and the Laurent series fields $F=\bar F(\!(t_1,\ldots,t_r)\!)$
regarded with respect to the composite of the $t_i$-adic valuations
have the inertial lift property.
This leads to the 

\begin{varthm}[Lift Theorem]
Suppose that $F$ has the inertial lift property.
Let $K/\bar F$ be a field extension with $[K:\bar F]=n<\ift$,
and let $\wt D$ be a finite-dimensional central $K$-division algebra.
There exists a finite-dimensional central $F$-division algebra $D$ 
that is inertially split with residue algebra $\bar D$ isomorphic to $\wt D$
if and only if the following conditions are satisfied :
\begin{enumerate}
\item $K$ is abelian over $\bar F$.
\item $\wt D$ is similar to $A\otimes_{\bar F} K$ for some central simple $\bar F$-algebra $A$.
\item $\Gal(K/\bar F)$ embeds into $\Gamma_F/m\Gamma_F$,
where $\Gamma_F$ is the value group of $F$
and $m=\exp \Gal(K/\bar F)$.
\end{enumerate}
Moreover, if (1)--(3) hold,
then $D$ can be found such that
$\ind D=n\ind\wt D$ and $\exp D=\lcm(m,\exp A)$.
\end{varthm}
This theorem is called Lift Theorem because $D$ is in some sense a lift of $\wt D$ over $F$.
The inertial lift property thereby serves two purposes.
First, it guarantees the existence of $D$,
and second, it makes it possible to keep track of the exponent
in the passage from $\wt D$ to $D$ also in the non-Henselian cases.

An important special case is $F=k(t)$ or $F=k(\!(t)\!)$ 
regarded with respect to the $t$-adic valuation, \ie $\bar F=k$.
Since the valuation is discrete, (1) and (3) of the Lift Theorem
require that $K/k$ is cyclic.
If $\Gal(K/k)=\gen{\sigma}$
then (2) is equivalent to that $\sigma$ extends to an automorphism $\wt\sigma$ of $\wt D$.
Moreover, the algebras $\wt D(x;\wt\sigma)$ and $\wt D(\!(x;\wt\sigma)\!)$
are lifts of $\wt D$ in the sense of the Lift Theorem.
As a corollary from the Noncrossed Product Criterion we get the following
crossed product characterization for twisted function fields and twisted Laurent series rings.

\begin{varthm}[Crossed Product Characterization]
The algebras $\wt D(x;\wt\sigma)$ and $\wt D(\!(x;\wt\sigma)\!)$
are crossed products if and only if $\wt D$ contains a maximal subfield 
that is Galois over $k$.
Here, $k$ is the fixed field of the restriction of $\wt\sigma$ to $Z(\wt D)$.
\end{varthm}

If $k$ is a number field,
then both steps of the noncrossed product construction are done theoretically by the above explanations.
If $k=\Q$ for example,
then noncrossed product division algebras $\wt D(x;\wt\sigma)$ and $\wt D(\!(x;\wt\sigma)\!)$ are obtained in this way
for any index $m$ and any exponent $n$
such that $p^2\mid n\mid m\mid\frac{n^2}{p}$ 
for an odd prime $p$, or 
$8\mid n\mid m\mid\frac{n^2}{2}$.
The centres are isomorphic to $\Q(t)$ and $\Q(\!(t)\!)$ respectively.
Moreover, the noncrossed products from \cite{brussel:noncr-prod} are obtained in this way.
In fact, if $\wt D$ is the underlying division algebra of $A\otimes_k K$,
then $\sigma$ extends to a $k$-automorphism $\wt\sigma$ of $\wt D$
and $\wt D(x;\wt\sigma)$ is isomorphic to the underlying division algebra of 
$A\otimes_k (K(t)/k(t),\sigma,t)$.
Analogously  $\wt D(\!(x;\wt\sigma)\!)$ is isomorphic to the underlying division algebra of 
$A\otimes_k (K(\!(t)\!)/k(\!(t)\!),\sigma,t)$.
This establishes a more direct way to look at the noncrossed products from \cite{brussel:noncr-prod},
and their construction is reduced to the construction of $\wt D$ and $\wt\sigma$.
If the construction is started with $\wt D$,
the algebra $A$ is not needed anymore,
hence no underlying division algebra of a tensor product has to be determined.
We give an explicit example of a suitable $\wt D$.
Here $k=\Q$ and $K$ is the cyclic cubic number field $\Q(\zeta+\zeta^{-1})$ for a primitve $7$-th root of unity $\zeta$.
$\wt D$ is a central $K$-division algebra of degree~$3$,
the automorphism $\sigma$ of $K/k$ extends to an automorphism $\wt\sigma$ of $\wt D$,
and $\wt D(x;\wt\sigma)$ and $\wt D(\!(x;\wt\sigma)\!)$ are noncrossed product division algebras of index and exponent $9$.
However, the automorphism $\wt\sigma$ remains unknown,
and the examples $\wt D(x;\wt\sigma)$ and $\wt D(\!(x;\wt\sigma)\!)$ are not fully explicit as desired.

The crucial point in the construction of an explicit example
is the computation of automorphisms of $\wt D$
that extend given automorphisms on the centre.
Since this is too difficult in general,
we have to find a division algebra $\wt D$ that, on the one handside,
has a simple structure so that its automorphisms can be computed.
But on the other handside, $\wt D$ shouldn't contain a maximal subfield that is Galois over $k$,
so its structure may not be ``too simple''.

A class of algebras that are ``simple enough'' are the symbol algebras.
If $K$ is a field containing a primitive $m$-th root of unity $\zeta$
and $a,b$ are elements from $\ug K$,
then the symbol algebra $A=(\frac{a,b}{K,\zeta})$ is the central simple $K$-algebra of degree $m$ that is 
generated over $K$ by elements $x,y$ satisfying $x^m=a,y^m=b$ and $yx=\zeta xy$. 
We find that the computation of an automorphism of $A$ extending a given automorphism $\sigma$ on $K$ with $\sigma(b)=b$
reduces to the solution of the relative norm equation
$\No_{K(y)/K}(\alpha)=\frac{\sigma(a)}{a}$ for $\alpha\in K(y)$.
If $K$ is a number field,
this relative norm equation can be solved with methods from computational algebraic number theory.
In concrete examples the KASH software \cite{kash} can be used.

The question arises if the symbol algebras are ``general enough'' to build noncrossed products from them.
First we stay in the setup that $K/k$ is a cyclic extension of number fields
and we suppose that $K$ contains a primitive $m$-th root of unity
and that $\wt D$ is a symbol algebra of degree $m$.
The arguments that we have taken over from \cite{brussel:noncr-prod}
to show that $\wt D$ does not contain a maximal subfield Galois over $k$
require the absence of a primitive $m$-root of unity.
It turns out that this assumption is necessary.
In fact we prove the
\begin{varthm}[Embedding Theorem]
Let $K/k$ be a cyclic extension of global fields and let $A$ be a finite-dimensional central simple $K$-algebra.
If $A$ is a symbol algebra or a $p$-algebra,
then $A$ contains a strictly maximal subfield Galois over $k$.
\end{varthm}
By a $p$-algebra we mean an algebra of $p$-power degree over a field of characteristic $p$.
The theorem is called Embedding Theorem, because the problem is to embed the cyclic extension $K/k$ of global fields
into a larger Galois extension $M/k$ of a certain degree that splits $A$.
The fact that $M$ splits $A$ is encoded in the local degrees of $M/k$.
Therefore, the theorem can be regarded as a Grunwald-Wang type of theorem.
In \cite{036.15802}, Wang discusses the similar question when a cyclic extension of number fields embeds into a larger cyclic extension of given local degrees.
However, this is too strict to be applied here, 
since we can not expect to find $M/k$ cyclic.
An immediate consequence of the Embedding Theorem and the Crossed Product Characterization is the

\begin{varthm}[Crossed Product Criterion]
Let $\wt D$ be a finite-dimensional division algebra over a global field
and let $\wt\sigma$ be an automorphism of $\wt D$.
If $\wt D$ is a symbol algebra or a $p$-algebra, 
then $\wt D(x;\wt\sigma)$ and $\wt D(\!(x;\wt\sigma)\!)$ are crossed products.
\end{varthm}

A way to bypass the Crossed Product Criterion is to consider non-cyclic abelian extensions $K/k$.
So let $K/k$ be abelian with $\Gal(K/k)=\gen{\sigma_1}\oplus\ldots\oplus\gen{\sigma_r}$.
As lifts in the sense of the Lift Theorem we can no longer take 
twisted function fields and twisted Laurent series rings in one indeterminate.
We have to generalize to iterated twisted function fields
and iterated twisted Laurent series rings in $r$ indeterminates of the form
$\wt D(x_1;\wt\sigma_1)\cdots(x_r;\wt\sigma_r)$ 
and
$\wt D(\!(x_1;\wt\sigma_1)\!)\cdots(\!(x_r;\wt\sigma_r)\!)$
respectively.
Such algebras can be constructed from so-called abelian factor sets in $\wt D$.
This construction is due to Tignol \cite{tignol:gen-cr-prod} and it
generalizes the construction of generic abelian crossed products from
Amitsur and Saltman in \cite{amitsur:central-simple-div-alg}.
The resulting algebras are written
\begin{equation}
  \label{eq:alg}
\wt D(x_1,\ldots,x_r;\wt\sigma_1,\ldots,\wt\sigma_r;u_{ij}) \qt{and}
\wt D(\!(x_1,\ldots,x_r;\wt\sigma_1,\ldots,\wt\sigma_r;u_{ij})\!),
\end{equation}
where the $\wt\sigma_i$ are automorphisms of $\wt D$ extending $\sigma_i$ respectively,
and the $u_{ij}$ are elements from $\ug D$ satisfying certain relations.
The multiplication is subject to the rules
$$x_ia=\wt\sigma_i(a)x_i \qt{and} x_ix_j=u_{ij}x_jx_i \qt{for all $a\in D$, $1\leq i,j\leq r$.} $$
For $r=1$ the algebras (\ref{eq:alg}) coincide with
$\wt D(x;\wt\sigma)$ and $\wt D(\!(x;\wt\sigma)\!)$ respectively.
For $r=2$ the elements $u_{ij}$ are determined by a single element $u$ with $x_2x_1=ux_1x_2$.
Due to Tignol \cite{tignol:gen-cr-prod},
the algebras (\ref{eq:alg}) can be represented as generalized crossed products to show that they are inertially split with residue algebra $\bar D$ and to compute their index and exponent.
The Crossed Product Characterization then holds analogously,
\ie the algebras (\ref{eq:alg}) are crossed products if and only if $\wt D$ contains a maximal subfield that is Galois over the fixed field $k$ of the restrictions of $\wt\sigma_1,\ldots,\wt\sigma_r$ to $Z(\wt D)$.

At first sight it might look more difficult to compute explicit examples of algebras (\ref{eq:alg}),
because instead of extending one automorphism from $K$ to $\wt D$,
an abelian factor set has to be computed.
In the case that $\wt D$ is a field, the abelian factor sets coincide with the abelian $2$-cocycles.
And like it is done by Amitsur and Saltman in \cite{amitsur-saltman:gen-abel-cr-prod} for the abelian $2$-cocycles,
also the abelian factor sets can be described by a few parameters with a few relations.
The parameters are essentially the elements $u_{ij}$,
and it is relatively easy to compute them as long as the
extensions $\wt\sigma_i$ of the automorphisms $\sigma_i$ to $\wt D$ can be computed.
Therefore, if $\wt D$ is a symbol algebra over a number field,
then lifts 
$\wt D(x_1;\wt\sigma_1)\cdots(x_r;\wt\sigma_r)$ 
and 
$\wt D(\!(x_1;\wt\sigma_1)\!)\cdots(\!(x_r;\wt\sigma_r)\!)$
of $\wt D$ in the sense of the Lift Theorem can be computed explicitly.

The following example shows
that if we allow two cyclic factors in $\Gal(K/k)$,
we even find a quaternion algebra $\wt D$ that does not contain a maximal subfield Galois over $k$.
In particular, this shows that in the Embedding Theorem the assumption that $K/k$ is cyclic is necessary.

\begin{varex}[Example]
Let $K=\Q(\sqrt{3},\sqrt{-7})$.
Then $[K:\Q]=4$ and $K/\Q$ is abelian with $\Gal(K/\Q)=\gen{\sigma_1}\times\gen{\sigma_2}$, where
\begin{alignat*}{2}
  \sigma_1(\sqrt{3})&=-\sqrt{3},&\qquad \sigma_1(\sqrt{-7})&=\sqrt{-7}, \\
  \sigma_2(\sqrt{3})&=\sqrt{3},& \sigma_2(\sqrt{-7})&=-\sqrt{-7}.
\end{alignat*}
Let $\wt D$ be the quaternion algebra $D=(\frac{a,b}{K})$ with 
$$ a=3+\sqrt{3}, \qquad b=\frac{-7+\sqrt{-7}}{2}. $$
Then $\wt D$ is a division algebra that does not contain a maximal subfield Galois over $\Q$.
Moreover, there are extensions $\wt\sigma_1,\wt\sigma_2$ of $\sigma_1,\sigma_2$ to $\wt D$ respectively
and an element $u\in\ug{\wt D}$, such that 
$\wt D(x_1,x_2;\wt\sigma_1,\wt\sigma_2;u)$
and $\wt D(\!(x_1,x_2;\wt\sigma_1,\wt\sigma_2;u)\!)$
are noncrossed product division algebras of index and exponent $8$.
The automorphisms $\wt\sigma_1,\wt\sigma_2$ and the element $u\in\ug{\wt D}$ are computed explicitly.
\end{varex}

To prove that $\wt D$ does not contain a maximal subfield Galois over $\Q$
it is first assumed that such a maximal subfield $M$ exists with $\Gal(M/\Q)=G$.
Then $|G|=8$,
and it is shown that all possible groups of order $8$ lead to a contradiction.
The arguments used here also require the absence of roots of unity,
in this case the primitive $4$-th roots of unity, which are not in $\Q$.
But compared to the cyclic case the requirements are relaxed.
This makes it possible that $\wt D$ is a quaternion algebra.

Finally, the Example is modified by replacing $\Q$ with $\Q(\sqrt{37})$
and by introducing a third indeterminate 
to obtain noncrossed product division algebras 
$\wt D(x_1,x_2;\wt\sigma_1,\wt\sigma_2;u)(x_3;\wt\sigma_3)$
and $\wt D(\!(x_1,x_2;\wt\sigma_1,\wt\sigma_2;u)\!)(\!(x_3;\wt\sigma_3)\!)$
of index~$16$ and exponent $8$.

\mbox{}\\
\noindent
Acknowledgements.
Most notably I would like to express my gratitude towards my teacher and advisor Joachim Gr\"ater
for his support and his suggestions throughout the work on this thesis.
I am also indebted to him 
for introducing me to the fields of algebra and number theory 
from the very beginning
and for acquainting me with the topics of this dissertation during my time in Potsdam.
I would further like to thank Darell Haile for the invitation 
to IU Bloomington in 1999
and for his support during my stay there.
Particular thanks go to Florin Nicolae for the many discussions
and his help in reading through large parts of the text and pointing out errors.
Finally, I am indebted to the DAAD for the financial support of my visit to IU Bloomington.

\tableofcontents

\mainmatter

\chapter*{Preliminaries and Notation}

\section{Central simple algebras}

Let $F$ be a field and let $A$ be an $F$-algebra.
Then $Z(A)$ denotes the centre of $A$.
If $A$ is  non-trivial we identify $F$ with $F\cdot 1_A\subseteq Z(A)$,
and $A$ is said to be a central $F$-algebra if $Z(A)=F$.
If $A$ is a simple then $Z(A)$ is a field, and 
$A$ is said to be finite-dimensional if $[A:Z(A)]<\ift$.
We write $\calA(F)$ for the set of all finite-dimensional central simple $F$-algebras and
$\calD(F)$ for the set of all finite-dimensional $F$-division algebras.
For any $A\in\calA(F)$, the dimension $[A:F]$ of $A$ is a square and
the degree of $A\in\calA(F)$ is defined as $\deg A:=\sqrt{[A:F]}$.
By the Wedderburn Structure Theorem,
$A\cong M_n(D)$ for a unique (up to isomorphism) $D\in\calD(F)$ and a unique $n\in\N$.
$D$ is called the underlying division algebra of $A$,
and the index of $A$ is defined as $\ind A:=\deg D$.
Let $\sim$ denote the similarity relation on $\calA(F)$,
\ie $A\sim B$ if and only if $A\cong M_n(D)$ and $B\cong M_m(D)$ for the same $D\in\calD(F)$,
and let $[A]$ denote the similiarity class of $A$.
The tensor product defines a group structure on the set of all similiarity classes by $[A]\cdot[B]:=[A\otimes_F B]$.
This group is called the Brauer group of $F$ and is denoted by $\Br(F)$.
The exponent of $A$, written $\exp A$, is the order of $[A]$ in $\Br(F)$.

Let $K/F$ be any field extension and $A\in\calA(F)$.
We write $A^K$ for the scalar extension $A\otimes_F K$
and $A_K$ for the underlying division algebra of $A^K$.
In particular $A_F$ is the underlying division algebra of $A\in\calA(F)$.
The restriction map is the group homomorphism 
$\res_{K/F}:\Br(F)\to\Br(K)$ given by $[A]\mapsto[A^K]$.
If $B\subseteq A$ is a subalgebra then $C_A(B)$ denotes the centralizer of $B$ in~$A$.

We will frequently deal with central simple algebras over local and global fields,
since they are the source of our examples.
By a \emph{local} field we mean a field that is complete with respect to some discrete (non-archimedian) valuation and has finite residue field. 
By a \emph{global} field we mean an algebraic number field
or a function field in one indeterminate over a finite field.

Let $k$ be a global field,
$\Val(k)$ the set of all non-trivial normalized valuations on $k$
(including archimedian valuations)
and $\V0(k)$ the subset of all non-archimedian valuations of $\Val(k)$.
For any $v\in\Val(k)$ we write $k_v$ for the completion of $k$ with respect to $v$,
and for any $v\in\V0(k)$ we write 
$\OO_v$ for the valuation ring of $v$,
$\PP_v$ for its maximal ideal
and $\bar k_v$ the its residue field $\OO_v/\PP_v$.
For a number field $k$ we write $\OO_k$ for the integral closure of $\Z$ in $k$.

Let $K/k$ be an extension of global fields,
$v\in\Val(k)$ and $w\in\Val(K)$.
We write $w\mid v$ if $w$ extends $v$.
In this case we denote the local degree $[K_w:k_v]$ also by $[K:k]_w$.
If $K/k$ is a Galois extension, 
the completions $K_w$ for the different $w\in\Val(K)$ with $w\mid v$
are all isomorphic,
therefore we can simply denote them by $K_v$.
In particular, the local degrees $[K:k]_w$ are equal for all $w\in\Val(K)$ with $w\mid v$,
and this degree shall simply be written $[K:k]_v$.

We now recall a few of the main results from the theory of central simple algebras over local and global fields that will be of frequent use.
For a reference on this theory 
see Pierce's book \cite[Chapter 17 and 18]{pierce:ass-alg} 
or Reiner's book \cite[Chapter 8 and 9]{reiner:max-orders}.

For a local field $k$ with valuation $v$ the Brauer group is $\Br(k)\cong \Q/\Z$,
and the isomorphism is given by the invariant map as follows.
Let $n\in\N$ and let $K_n/k$ be the unique inertial extension of degree $n$.
Let $\phi\in\Gal(K_n/k)$ be the Frobenius automorphism,
\ie $\phi$ induces the automorphism $x\mapsto x^q$ on $\bar k$,
where $q=|\bar k|$. %
The invariant of the cyclic crossed product $A=(K_n/k,\phi,a)$, $a\in\ug k$,
is defined as $\inv A:=\frac{v(a)}{n}+\Z$.
The mapping $[A]\mapsto\inv A$ then defines an isomorphism $\Br(k)\to\Q/\Z$
(\cf \cite[Theorem 17.10]{pierce:ass-alg}).
In particular, any $A\in\calA(k)$ is cyclic and 
$\exp A=\ind A$.
Under scalar extensions $K/k$ we have the relation
(\cf \cite[Proposition 17.10]{pierce:ass-alg})
\begin{equation}
  \label{eq:loc-scal-ext}
  \inv A^K = [K:F]\inv A.
\end{equation}

Besides that, invariants over $\R$ and $\C$ are defined by
$\inv\R=\inv\C=0$ and $\inv\Ham=\frac{1}{2}+\Z$ for
the real quaternions $\Ham$.

For a global field $k$ with $v\in\Val(k)$ and $A\in\calA(k)$
we write $A_v$ for the local completion $A^{k_v}$ and 
$\inv_v A$ for the local invariant $\inv A_v$.
Then $\inv_v A=0$ for almost all $v\in\Val(k)$.
Under scalar extensions $K/k$ we have
\begin{equation}
  \label{eq:glob-scal-ext}
 \inv_w A^K = [K:F]_w\inv_v A \qt{for any $w\in\Val(K)$ with $w\mid v$,}
\end{equation}
as an immediate consequence of (\ref{eq:loc-scal-ext}).
The global invariant is the map
\[
\inv: \Br(k)\lra \bigoplus_{v\in\Val(k)} \I_v(k), \quad [A]\lms (\inv_v A)_{v\in\Val(k)},
\]
where $\I_v(k)=\Q/\Z$ if $v\in\V0(k)$, $\I_v(k)=\frac{1}{2}\Z/\Z$ is $v$ is real,
and $\I_v(k)=0$ if $v$ is complex.
The Albert-Hasse-Brauer-Noether Theorem (\cf \cite[Proposition~18.4]{pierce:ass-alg})
states that
\begin{equation}
  \label{eq:alb-has-bra-noe}
 A\sim B \text{ if and only if } \inv_v A=\inv_v B \text{ for all $v\in\Val(k)$,} 
\end{equation}
\ie the global invariant map is injective.
It is a consequence from the Grunwald-Wang Theorem and the Albert-Hasse-Brauer-Noether Theorem that any $A\in\calA(k)$ is cyclic and $\ind A=\exp A$
(\cf \cite[Theorem 18.6]{pierce:ass-alg}).
Moreover, we have the formula 
\begin{equation}
  \label{eq:global-index-exp}
\ind A=\exp A=\lcm(\ind A_v)_{v\in\Val(k)}
\end{equation}
(\cf \cite[Corollary 18.6]{pierce:ass-alg}).
Finally, the image of the global invariant map is determined by the following theorem
(\cf \cite[Proposition 18.7b]{pierce:ass-alg}).
\begin{fact}
\label{fact:inv-surj}
If $(a_v)\in\bigoplus_{v\in\Val(k)} \I_v(k)$ such that
$\sum_{v\in\Val(k)} a_v=0$, 
then there is an $A\in\calA(k)$ with $\inv_v A=a_v$ for all $v\in\Val(k)$.
\end{fact}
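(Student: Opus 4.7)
\emph{Proof plan.} The strategy I would follow is to realize an arbitrary sequence $(a_v)$ with $\sum_v a_v=0$ as the invariant sequence of a cyclic algebra, following the classical Hasse construction. The plan has three steps: reduce to a finite-support statement with bounded denominator; construct a cyclic extension $K/k$ of degree $n$ with prescribed local degrees using Grunwald-Wang; and choose $b\in\ug k$ so that $A:=(K/k,\sigma,b)$ has exactly the prescribed local invariants.

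First, let $S\subset\Val(k)$ be the finite support of $(a_v)$ and let $n$ be a common denominator, so that $a_v=c_v/n+\Z$ for integers $c_v$, with $\sum_{v\in S} c_v\equiv 0\pmod{n}$, and $2c_v\equiv 0\pmod{n}$ at any real place $v$. By harmlessly enlarging $n$ (and, if convenient, $S$), I would arrange that the Grunwald-Wang hypotheses are cleanly satisfied, thereby sidestepping the notorious special case at the prime~$2$.

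Second, I would invoke the Grunwald-Wang theorem to produce a cyclic global extension $K/k$ of degree $n$ such that, for each $v\in S$, the local degree $[K:k]_v$ equals $\ord(a_v)$ in $\I_v(k)$, with the local completion being any prescribed cyclic subextension of $k_v$ of that degree. Fixing a generator $\sigma$ of $\Gal(K/k)$, its restriction generates the decomposition group at each $v$. Weak approximation then yields a $b\in\ug k$ realizing a prescribed $v$-adic valuation at each $v\in S$ (units elsewhere on a convenient finite ramification locus). By the standard formula for local invariants of a cyclic algebra, together with the identification $(K/k,\sigma,b)\otimes_k k_v\sim (K_v/k_v,\sigma|_{K_v},b)$, the invariants $\inv_v A$ for $v\in S$ can be adjusted to equal $a_v$ by the choice of $v(b)$. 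At places $w\notin S$ where $K/k$ is unramified and $w(b)=0$, the local algebra is split, so $\inv_w A=0$. Finally the reciprocity identity $\sum_v \inv_v A=0$, combined with $\sum_v a_v=0$ and the matching already achieved on $S$, forces the two invariant sequences to coincide everywhere.

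The main obstacle is clearly the Grunwald-Wang step: producing a global cyclic extension with prescribed local degrees is a deep consequence of class field theory, and the well-known exception at the prime $2$ has to be navigated. Since we have full freedom to enlarge $n$ and $S$, the special case can always be circumvented. The remaining ingredients --- weak approximation and the local-invariant formula for cyclic algebras --- are routine, and rest on the material already recalled in the excerpt, notably (\ref{eq:loc-scal-ext}) and (\ref{eq:alb-has-bra-noe}).
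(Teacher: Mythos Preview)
The paper does not give its own proof of this fact; it appears in the Preliminaries section as a citation to \cite[Proposition~18.7b]{pierce:ass-alg}. Your plan is precisely the classical Hasse argument that Pierce (and most standard references) use: Grunwald--Wang to obtain a cyclic extension with controlled local degrees, followed by a weak-approximation choice of $b$ so that the cyclic algebra $(K/k,\sigma,b)$ has the desired local invariants. So your approach matches the literature proof that the paper defers to.

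One point worth tightening in your sketch: the passage ``at places $w\notin S$ where $K/k$ is unramified and $w(b)=0$, the local algebra is split \ldots\ finally reciprocity forces the two invariant sequences to coincide everywhere'' is not quite self-contained. Reciprocity only gives $\sum_v \inv_v A = 0$; it does not by itself kill each individual unwanted invariant outside $S$. The clean way to close this is to enlarge $S$ in advance to contain all ramification of $K/k$ and all places dividing $b$ (which you can do since both sets are finite and you still have $a_v=0$ at the added places), so that outside the enlarged $S$ the local algebra is genuinely split; then the matching on $S$ is achieved directly by the choice of $v(b)$, and no residual appeal to reciprocity is needed. With that adjustment your plan is complete.
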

The results so far can be summarized by the exact sequence (\cf \cite[Theorem 18.5]{pierce:ass-alg})
\[ 1 \lra \Br(k) \lra \bigoplus_{v\in\Val(k)} \I_v(k) \lra \Q/\Z \lra 1, \]
where the map $\bigoplus_{v\in\Val(k)} \I_v(k) \lra \Q/\Z$ is the sum of the local invariants.

\chapter{Valued Division Algebras}

\section{Noncommutative valuations}
\label{sec:def-noncom-val}

We define valuations on division rings following Schilling.
For a detailed reference see his book \cite{schilling:th-of-val}.
A ring $D$ is called a division ring if the non-zero elements $\ug D$ form a multiplicative group.

\begin{defn}
\label{def:non-com-val-fin}
\index{valuation!noncommutative}
  Let $D$ be a division ring, $\Gamma$ a totally ordered abelian group 
  (additively written), and $\widehat\Gamma = \Gamma\cup\set{\ift}$ with
  $\gamma+\infty=\infty+\gamma=\infty$ and $\gamma<\infty$ for all $\gamma\in\Gamma$.
  A map $v: D \lra \Hat\Gamma$ is called a \emph{valuation} on $D$ 
  if the following conditions are satisfied for all $x,y\in D$ :
  \begin{enumerate}
  \item[(V1)] $v(x)=\infty \Leftrightarrow x=0$.
    \label{item:def:non-com-val-fin-1}
  \item[(V2)] $v(xy)=v(x)+v(y)$ for all $x,y\in D$.
    \label{item:def:non-com-val-fin-2}
  \item[(V3)] $v(x+y) \geq \min\{v(x),v(y)\}$.
    \label{item:def:non-com-val-fin-3}
  \end{enumerate}
\end{defn}

Note that Schilling in his definition of a valuation does not require $\Gamma$ to be abelian.
But in the case that $D$ is finite-dimensional, 
which is our main interest here,
his definition implies that $\Gamma$ is abelian.
If $D$ is a field then Definition \ref{def:non-com-val-fin} coincides with the definition of a Krull valuation.

Associated to a valuation $v$ on $D$ we have
the \emph{valuation ring}
$B_v = \sett{x\in D}{v(x)\geq 0}$,
which is a subring of $D$ with the unique maximal two-sided ideal
$M_v = \sett{x\in D}{v(x)> 0}$ 
and unit group
$U_v = \sett{x\in D}{v(x)= 0}$.
Moreover we have the \emph{residue ring} or \emph{residue algebra}
$\bar D_v=\bar B_v=B_v/M_v$, which is a division ring,
the canonical \emph{residue map}
$\pi_v:B_v\to\bar B_v,x\mapsto \bar x=x+M_v$,
and the \emph{value group} $\Gamma_v=v(D^\times)$.
If a valuation $v$ on $D$ is fixed,
we index these objects also by $D$
($M_D$, $\Gamma_D$, etc.) and write $\bar D$ for $\bar B_v$.
For the valuation ring however we shall use the more common notation $V_D$.
If we say that $D$ is a \emph{valued division ring} or \emph{valued division algebra},
we mean that some valuation $v$ on $D$ is fixed.
A valuation $v$ is called \emph{discrete} if $\Gamma_v$ is order isomorphic to the additive group of integers.
Note that throughout this chapter by a \emph{valued field} we mean a field with a Krull valuation.

If $E$ is a sub-division ring of the valued division ring $D$ then $v$ clearly restricts to the valuation $v|_E$ on $E$.
Conversely we say that $v$ extends $v|_E$.
We usually denote the restriction $v|_E$ also by $v$ 
and write $V_E$, $M_E$, $U_E$, $\bar E$ and $\Gamma_E$ for the corresponding induced objects.
Then $V_E=V_D\cap E$.
The value group $\Gamma_E$ is a subgroup of $\Gamma_D$ 
and the group index $|\Gamma_D : \Gamma_E|$ is called the \emph{ramification index} of $D$ over $E$.
The factor group $\Gamma_D/\Gamma_E$ is called the \emph{relative value group} of $D$ over $E$.
Furthermore $\bar E$ can be viewed as a sub-division ring of $\bar D$
and $[\bar D : \bar E]$ is called the \emph{residue degree} of $D$ over $E$.
If $[D:E]$ is finite so are $|\Gamma_D : \Gamma_E|$ and $[\bar D : \bar E]$ 
and the fundamental inequality
\begin{equation}
  \label{eq:noc-val-fund-ineq}
  [\bar D : \bar E] \cdot |\Gamma_D : \Gamma_E| \leq [D:E]
\end{equation}
holds (cf. \cite[Chapter 1, Lemma 18]{schilling:th-of-val}).
We say $D$ is \emph{defectless} over $E$ if $[D:E]<\infty$ and equality holds in (\ref{eq:noc-val-fund-ineq}),
$D$ is \emph{immediate} over $E$ if $[\bar D:\bar E]=|\Gamma_D:\Gamma_E|=1$,
and $D$ is \emph{totally ramified} over $E$ if $|\Gamma_D:\Gamma_E|=[D:E]$.
If $E$ is a subfield of $Z(D)$, we say $D$ is \emph{inertial} over $E$ if $[D:E]=[\bar D:\bar E]<\infty$ and $Z(\bar D)$ is separable over $\bar E$.
The inequality (\ref{eq:noc-val-fund-ineq}) implies

\begin{remark}
  \label{rem:defectless-interm-div-ring}
  If $D$ is defectless over a sub-division ring $E$ of $D$, 
  then any intermediate division ring $E'$, $E\subseteq E' \subseteq D$, is also defectless over $E$.
\end{remark}

\begin{ex}
  \label{ex:t-adic-val}
Let $D$ be a division ring and let $t$ be a commutative indeterminate over $D$.
Let $D(t)$ be the rational function field over $D$,
\ie the ring of central quotients of the polynomial ring $D[t]$,
and let $D(\!(t)\!)$ be the Laurent series ring over $D$.
Both $D(t)$ and $D(\!(t)\!)$ are division rings.
Consider $D(t)$ as a subring of $D(\!(t)\!)$.
The $t$-adic valuation $v_t$ on $D(\!(t)\!)$ is defined by
$$ v_t(\sum_{i\geq k} d_i t^i) := \min\sett{i\in\Z}{d_i\neq 0}. $$
Clearly $\Gamma_{v_t}=\Z$, \ie $v_t$ is discrete.
Moreover
$ B_{v_t}=\sett{\sum_{i\geq 0} d_i t^i}{d_i\in D}, $
$ M_{v_t}=\sett{\sum_{i\geq 1} d_i t^i}{d_i\in D}, $
and $\bar B_{v_t}\cong D.$
The restriction of $v_t$ to $D(t)$, which will also be denoted by $v_t$,
has the same residue algebra and value group.
\end{ex}

Now, let $D$ be a finite-dimensional valued division ring and $F=Z(D)$.
We shortly write 
$e_D=|\Gamma_D : \Gamma_F|$
and $f_D=[\bar D : \bar F]$,
then (\ref{eq:noc-val-fund-ineq}) implies that $e_D$ and $f_D$ are finite.
We simply say that $D$ is \emph{defectless} %
if $D$ is defectless %
over the centre $F$.
It follows from (V$2$) that the valuation ring $V_D$ and its maximal ideal $M_D$ are invariant under all inner automorphisms of $D$,
\ie $aV_Da^{-1}=V_D$ and $aM_Da^{-1}=M_D$ for all $a\in D^\times$.
Thus the inner automorphism $\iota_a:x\mapsto axa^{-1}$ of $D$ defined by $a\in D^\times$ induces an automorphism $\bar\iota_a$ on the residue division ring $\bar D$ via
\[ 
  \bar\iota_a : \bar D \lra \bar D, \quad \bar x \lms \bar{axa^{-1}} . 
\]

Clearly, since $\iota_a$ fixes $F$ pointwise,
$\bar\iota_a$ fixes $\bar F$ pointwise,
thus $\bar\iota_a\in\Aut_{\bar F}\bar D$.
Furthermore $\bar\iota_a$ restricts to an automorphism of $Z(\bar D)$ 
because any automorphism of $\bar D$ maps the centre $Z(\bar D)$ onto itself.
Thus we have a map
\[
  \phi_D : D^\times \lra \Gal(Z(\bar D)/\bar F), \quad
  a \lms \bar\iota_a|_{Z(\bar D)} .
\]
It follows directly from the definition of $\bar\iota_a$ that $\phi_D$ is a group homomorphism.
Obviously -- also by definition -- $\bar\iota_a$ is the identity on $\bar D$ for all $a\in F^\times$.
Since for $a\in U_D$ we have $a, a^{-1}\in V_D$, 
it follows 
$\bar\iota_a(\bar x)=\bar{axa^{-1}}=\bar a\,\bar x\,\bar a^{-1}$,
\ie $\bar\iota_a$ is an inner automorphism of $\bar D$
and therefore fixes $Z(\bar D)$ pointwise.
Thus $U_D F^\times$ lies in the kernel of $\phi_D$,
so $\phi_D$ induces a homomorphism 
$\bar\phi_D : D^\times/{U_D F^\times} \lra \Gal(Z(\bar D)/\bar F)$.
By connecting $\bar\phi_D$ with the canonical isomorphism 
$D^\times/{U_D F^\times} \lra \Gamma_D/\Gamma_{F}$
we finally get the fundamental homomorphism
\begin{equation}
  \label{eq:theta}
  \theta_D : \Gamma_D/\Gamma_{F} \lra \Gal(Z(\bar D)/\bar F)
\end{equation}
with
\[
\theta_D(v(a)+\Gamma_{F}) = \phi_D(a) = \bar\iota_a|_{Z(\bar D)} \qt{for all $a\in D^\times$.}
\] 
This homomorphism will be of frequent use in the following.

\begin{prop}
  \label{prop:theta-surj}
  For any finite-dimensional valued division algebra \linebreak$D\in\calD(F)$,
  the homomorphism $\theta_D$ as defined in (\ref{eq:theta}) is surjective. 
  The field extension $Z(\bar D)/\bar F$ is normal,
  and if $Z(\bar D)$ is separable over $\bar F$, 
  then $Z(\bar D)$ is abelian Galois over $\bar F$.
\end{prop}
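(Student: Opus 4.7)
The plan is to derive all three assertions --- surjectivity of $\theta_D$, normality of $Z(\bar D)/\bar F$, and (under separability) the abelian Galois conclusion --- from a single construction. First observe that the fundamental inequality (\ref{eq:noc-val-fund-ineq}) gives $[\bar D:\bar F]\leq[D:F]<\infty$, so $Z(\bar D)/\bar F$ is a finite algebraic extension and every $\alpha\in Z(\bar D)$ has a well-defined minimal polynomial over $\bar F$.

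Fix $\alpha\in Z(\bar D)$ with minimal polynomial $m\in\bar F[x]$, and choose a lift $a\in V_D$ whose minimal polynomial over $F$ is $p\in F[x]$. Then $\bar p(\alpha)=0$, so $m\mid\bar p$. For any $d\in D^\times$ the element $dad^{-1}$ lies in $V_D$ (conjugation preserves valuation), and its residue $\bar\iota_d(\alpha)\in Z(\bar D)$ is a root of $m$, since $\bar\iota_d$ is an $\bar F$-automorphism of $\bar D$ preserving its centre. Conversely, Skolem--Noether applied to the embedding $F[x]/(p)\cong F(a)\hookrightarrow D$ shows that every $b\in D$ with $p(b)=0$ is of the form $dad^{-1}$. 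Thus the orbit $S=\{\bar\iota_d(\alpha):d\in D^\times\}\subseteq Z(\bar D)$ is exactly the set of residues of the roots of $p$ in $V_D$. The stabiliser of $\alpha$ under this action contains $U_DF^\times$ (units of $V_D$ act by inner automorphisms of $\bar D$, which fix $Z(\bar D)$ pointwise, and $F^\times$ acts trivially), so $S$ is bijective with a quotient of $\Gamma_D/\Gamma_F$ and is precisely the $\theta_D$-orbit of $\alpha$.

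The main obstacle, and the technical heart of the argument, is to show that $S$ already exhausts all $\bar F$-conjugates of $\alpha$ in $\bar F^{\mathrm{alg}}$. My intended route is a degree count via the fundamental inequality applied to the subfield $F(a)\subseteq D$: it yields $[\bar F(\alpha):\bar F]\cdot|\Gamma_{F(a)}/\Gamma_F|\leq\deg p$, and combined with the observation that the ramification coming from $v(a)$ is realised faithfully inside $\Gamma_D/\Gamma_F$, this forces $|S|\geq\deg m$. Once $|S|=\deg m$, every root of $m$ lies in $Z(\bar D)$, which gives normality; moreover, for any $\sigma\in\Aut_{\bar F}(Z(\bar D))$, applying the construction to a primitive generator of $Z(\bar D)/\bar F$ exhibits some $d\in D^\times$ with $\bar\iota_d|_{Z(\bar D)}=\sigma$, giving surjectivity of $\theta_D$. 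Finally, if $Z(\bar D)/\bar F$ is separable then normality upgrades it to Galois, and since $\theta_D$ surjects from the abelian group $\Gamma_D/\Gamma_F$, its image $\Gal(Z(\bar D)/\bar F)$ is abelian.
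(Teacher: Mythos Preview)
The paper gives no proof of its own; it cites Jacob--Wadsworth \cite[Proposition~1.7]{jacob-wadsworth:div-alg-hensel-field}. Your overall architecture matches theirs: lift $\alpha\in Z(\bar D)$ to $a\in V_D$, let $p=\Irr(a,F)\in V_F[x]$, use Skolem--Noether to identify the roots of $p$ in $D$ with the conjugates $dad^{-1}$, and deduce everything from the claim that the residues of these conjugates exhaust the $\bar F$-conjugates of $\alpha$. The concluding deductions (normality plus separability $\Rightarrow$ Galois; abelian because $\Gamma_D/\Gamma_F$ is abelian) are fine, modulo the minor point that ``primitive generator of $Z(\bar D)/\bar F$'' should read ``primitive generator of the separable closure of $\bar F$ in $Z(\bar D)$'' in positive characteristic.

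There is, however, a real gap at the step you flag as the technical heart. The fundamental inequality gives
\[
\deg m\cdot|\Gamma_{F(a)}:\Gamma_F|\;\leq\;[\bar{F(a)}:\bar F]\cdot|\Gamma_{F(a)}:\Gamma_F|\;\leq\;\deg p,
\]
which is an \emph{upper} bound for $\deg m$; it provides no mechanism forcing $|S|\geq\deg m$. The assertion that ``the ramification coming from $v(a)$ is realised faithfully inside $\Gamma_D/\Gamma_F$'' only says $\Gamma_{F(a)}/\Gamma_F\hookrightarrow\Gamma_D/\Gamma_F$, and this does not control the stabiliser of $\alpha$ under the $D^\times$-action: an element $d$ with $dad^{-1}\neq a$ but $dad^{-1}-a\in M_D$ lies in the stabiliser without lying in $U_DF^\times C_D(a)^\times$, and nothing in your count rules out many such $d$.

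What is actually used in Jacob--Wadsworth is Wedderburn's factorisation theorem: since $p\in Z(D)[x]$ is irreducible with a root $a\in D$, one can write
\[
p(x)=(x-a_n)(x-a_{n-1})\cdots(x-a_1)\quad\text{in }D[x],
\]
with each $a_i$ conjugate to $a$. Reducing modulo $M_D$ gives $\bar p(x)=(x-\bar a_n)\cdots(x-\bar a_1)$ in $\bar D[x]$; but each $\bar a_i=\bar\iota_{d_i}(\alpha)$ lies in $Z(\bar D)$, so the factors commute and $\bar p$ splits completely in $Z(\bar D)[x]$ with all linear factors having roots in $S$. Since $m\mid\bar p$ in $\bar F[x]$, every root of $m$ is among the $\bar a_i$, hence lies in $S\subseteq Z(\bar D)$. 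This single factorisation step yields normality and surjectivity simultaneously; your degree count cannot substitute for it.
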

\begin{proof}
See \cite[Proposition 1.7]{jacob-wadsworth:div-alg-hensel-field}.  
\end{proof}
Since $\theta_D$ is surjective, 
every $\sigma\in\Gal(Z(\bar D)/\bar F)$ is the restriction of $\bar\iota_a$ for some $a\in D^\times$.
Hence, we get

\begin{cor}
  \label{cor:bar-F-normal}
  For any valued division algebra $D\in\calD(F)$, 
  the residue division ring $\bar D$ is \emph{$\bar F$-normal}, 
  \ie every $\sigma\in\Gal(Z(\bar D)/\bar F)$ can be extended to some $\tilde\sigma\in\Aut_{\bar F}(\bar D)$.
\end{cor}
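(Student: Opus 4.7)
The plan is to apply Proposition \ref{prop:theta-surj} essentially verbatim; the corollary is really just a repackaging of the surjectivity of $\theta_D$ in terms of its defining formula.

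Given $\sigma\in\Gal(Z(\bar D)/\bar F)$, I would invoke surjectivity of
\[
\theta_D:\Gamma_D/\Gamma_F\lra\Gal(Z(\bar D)/\bar F)
\]
to produce $a\in D^\times$ with $\theta_D(v(a)+\Gamma_F)=\sigma$. By the defining equation of $\theta_D$ recalled in the text, this equality literally says $\bar\iota_a|_{Z(\bar D)}=\sigma$. So setting $\tilde\sigma:=\bar\iota_a$ does the job, provided $\bar\iota_a$ is indeed an element of $\Aut_{\bar F}(\bar D)$.

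The latter has already been verified in the paragraph preceding the proposition: because $\iota_a$ preserves both $V_D$ and $M_D$, the induced map $\bar\iota_a$ on the residue division ring $\bar D=V_D/M_D$ is a well-defined ring automorphism, and since $\iota_a$ fixes $F=Z(D)$ pointwise, $\bar\iota_a$ fixes $\bar F$ pointwise. Thus $\tilde\sigma\in\Aut_{\bar F}(\bar D)$ and $\tilde\sigma|_{Z(\bar D)}=\sigma$, proving that $\bar D$ is $\bar F$-normal.

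There is no real obstacle here: once the surjectivity of $\theta_D$ is granted (which is the content of the cited proposition from Jacob--Wadsworth), the corollary is immediate from tracing definitions. The only point that needs a brief mention is that the lift $\tilde\sigma=\bar\iota_a$ is automatically an $\bar F$-algebra automorphism of the whole residue division ring $\bar D$, not just of its centre $Z(\bar D)$, which is exactly what distinguishes the stronger ``$\bar F$-normal'' statement from the weaker surjectivity of $\theta_D$.
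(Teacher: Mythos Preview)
Your proof is correct and matches the paper's approach exactly: the paper derives the corollary in a single sentence immediately preceding it, noting that surjectivity of $\theta_D$ means every $\sigma\in\Gal(Z(\bar D)/\bar F)$ is the restriction of some $\bar\iota_a$, which is the desired extension in $\Aut_{\bar F}(\bar D)$.
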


In general, for a field $F$ with valuation $v$ and $D\in\calD(F)$,
$v$ does not necessarily extend to a valuation on $D$.
If the valuation $v$ is fixed we define
\[ \VBr(F):=\{[D]\,|\, D\in\calD(F) \text{ and $v$ extends to a valuation on $D$}\}\subseteq \Br(F), \]
which in general is not a subgroup of $\Br(F)$.

\begin{remark}
If we write $[D]\in\VBr(F)$ and $D$ is not further determined
we always mean that $D\in\calD(F)$ and $[D]\in\VBr(F)$.
Conversely, if $P$ is a predicate of a valued division algebra,
\eg ``inertial'', ``defectless'' or some other predicate defined later on,
and we say that ``$D$ is $P$'' for some $D\in\calD(F)$,
then we always mean that $[D]\in\VBr(F)$ and $D$ is $P$.
\end{remark}

It is known from \cite{wadsworth:ext-val} that
\begin{fact}
  \label{eq:unique-ext}
$[D]\in\VBr(F)$ if and only if
$v$ extends uniquely to every subfield $K\subseteq D$ with $F\subseteq K$.
In this case the extension of $v$ to $D$ is unique.
\end{fact}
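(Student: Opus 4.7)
The result is due to Wadsworth; I sketch both directions of the equivalence plus the uniqueness statement.

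For the direction $(\Rightarrow)$, suppose $w$ extends $v$ to a valuation on $D$. The restriction $w|_K$ to any intermediate subfield $K$ is an extension of $v$. To prove uniqueness of this extension, I would pass to the Henselization $(F^h,v^h)$ of $(F,v)$: the existence of $w$ allows one to Henselize $D$ to a valued division algebra $D^h$ over $F^h$, and the tensor product $K\otimes_F F^h$ embeds as a commutative subring into $D^h$ and hence must be a field; by a classical criterion this is equivalent to $v$ having a unique extension to $K$. For the uniqueness of $w$ itself on $D$: given two extensions $w_1,w_2$ of $v$, their restrictions to every maximal subfield $L$ of $D$ agree by the uniqueness on subfields just proved, and since every $a\in\ug D$ lies in some maximal subfield, $w_1=w_2$.

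For $(\Leftarrow)$, assume $v$ has a unique extension to every subfield $K$ with $F\subseteq K\subseteq D$. Define $w:\ug D\to\Gamma$ by
\[ w(a) := \frac{1}{[F(a):F]}\, v(\No_{F(a)/F}(a)); \]
equivalently, $w(a)$ is the value of $a$ under the unique extension $u_a$ of $v$ to $F(a)$. When $a,b$ commute, axioms (V2) and (V3) for $w$ follow from the corresponding axioms applied to the unique extension of $v$ on the commutative subfield $F(a,b)$, whose restrictions to $F(a), F(b), F(ab), F(a+b)$ must coincide with $u_a, u_b, u_{ab}, u_{a+b}$ by uniqueness.

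The main obstacle is axiom (V2) for non-commuting $a,b$. The strategy is to rewrite $w$ in terms of the reduced norm: on any subfield $F(a)$ one has $\Nrd_{D/F}(a)=\No_{F(a)/F}(a)^{\deg D/[F(a):F]}$, hence $w(a)=\frac{1}{\deg D}\,v(\Nrd_{D/F}(a))$ for every $a\in\ug D$. Since $\Nrd_{D/F}$ is multiplicative on all of $D$ and $v$ is multiplicative on $F$, axiom (V2) for $w$ follows globally. For (V3), I factor $a+b=a(1+a^{-1}b)$ and apply (V2) to reduce to showing $w(1+x)\geq\min(0,w(x))$ for $x=a^{-1}b$; since $1,x,1+x$ all lie in the commutative subfield $F(x)$, this is the ultrametric inequality for the unique extension $u_x$ of $v$ to $F(x)$.
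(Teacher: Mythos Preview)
The paper does not prove this Fact; it merely cites \cite{wadsworth:ext-val}. Your argument is correct and, for the direction $(\Leftarrow)$, reproduces Wadsworth's original proof: define $w$ via the reduced norm formula, obtain (V2) from the multiplicativity of $\Nrd_{D/F}$, and reduce (V3) to the commutative subfield $F(a^{-1}b)$, where the uniqueness hypothesis furnishes a genuine valuation agreeing with $w$. For $(\Rightarrow)$ your Henselization route is also valid, but note that it invokes the forward implication of Theorem~\ref{thm:div-alg-henselization} (Morandi), which in the paper's layout is stated only \emph{after} the present Fact; there is no actual circularity, since Morandi's proof of that implication is direct, but this is not how Wadsworth---writing in 1986, before \cite{morandi:henselization-div-alg}---could have argued. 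Your uniqueness argument for $w$ on all of $D$ is fine and, as you observe, already follows from uniqueness on the subfields $F(a)$.
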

This result implies that 
\begin{fact}
\label{eq:ext-val-hensel}
if $v$ is Henselian, then $\VBr(F)=\Br(F)$.
\end{fact}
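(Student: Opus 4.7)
The plan is to reduce the statement directly to Fact \ref{eq:unique-ext}, which characterizes membership in $\VBr(F)$ by the unique extensibility of $v$ to all intermediate subfields $K$ with $F\subseteq K\subseteq D$. So I would take an arbitrary $[D]\in\Br(F)$, pick any such intermediate $K$, and show $v$ extends uniquely to $K$; by the cited criterion this immediately yields $[D]\in\VBr(F)$, and since $[D]$ was arbitrary, $\Br(F)\subseteq\VBr(F)$. The reverse inclusion $\VBr(F)\subseteq\Br(F)$ is built into the definition of $\VBr(F)$.

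The single nontrivial input is the classical fact that a Henselian valuation on $F$ admits one and only one extension to every algebraic extension of $F$. Since $D\in\calD(F)$ is finite-dimensional, every subfield $K\subseteq D$ containing $F$ satisfies $[K:F]\leq[D:F]<\infty$ and is therefore an algebraic, even finite, extension of $F$. The Henselian hypothesis on $(F,v)$ thus produces a unique extension of $v$ to each such $K$, which is exactly the hypothesis of Fact \ref{eq:unique-ext}.

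I would present the argument in two short steps: first, recall (or invoke without proof, as a standard property of Henselian fields) that $v$ extends uniquely to every finite field extension $K/F$; second, combine this with Fact \ref{eq:unique-ext} applied to the given $D$. No calculation is needed, and there is no real obstacle beyond citing the standard uniqueness of extension in the Henselian setting, which one could reference in Endler or Engler--Prestel if the reader wants more detail than what Fact \ref{eq:unique-ext} already encodes. The whole proof should therefore occupy at most a few lines.
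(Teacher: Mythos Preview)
Your proposal is correct and matches the paper's approach exactly: the paper simply states that Fact~\ref{eq:ext-val-hensel} is implied by Fact~\ref{eq:unique-ext}, and your argument spells out precisely this implication via the unique extension of Henselian valuations to finite field extensions.
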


In \cite{morandi:henselization-div-alg} Morandi proved the following useful criterion 
for when a tensor product of valued division algebras is again a valued division algebra.

\begin{theorem}
  \label{thm:morandi}
  Let $D$ and $E$ be $F$-division algebras with 
  valuations $v$ on $D$ and $w$ on $E$ such that $v|_F=w|_F$.
  If
  \begin{enumerate}
  \item $[D:F]<\ift$ and $D$ is defectless over $F$ with respect to $v$,
  \item $\bar D\otimes_{\bar F}\bar E$ is a division algebra,
  \item $\Gamma_D\cap\Gamma_E=\Gamma_F$,
    \footnote{Let $\Delta_F$ (\resp $\Delta_E$) be the divisible hull of the torsion-free abelian group $\Gamma_F$ (\resp $\Gamma_E$), 
\ie $\Delta_F\cong\Gamma_F\otimes_\Z\Q$ (\resp $\Delta_E\cong\Gamma_E\otimes_\Z\Q$).
Since $|\Gamma_D:\Gamma_F|$ is finite, 
the injection $\Gamma_F\to\Delta_F$ extends uniquely to an order preserving injection $\Gamma_D\to\Delta_F\subseteq\Delta_E$. 
The expressions $\Gamma_D\cap\Gamma_E$ and $\Gamma_D+\Gamma_E$ are then computed in $\Delta_E$.}
  \end{enumerate}
  then $D\otimes_F E$ is a division algebra with a valuation $u$ extending $v$ and $w$.
  Furthermore $\bar{D\otimes_F E}\cong\bar D\otimes_{\bar F}\bar E$ and $\Gamma_{D\otimes_F E}=\Gamma_D+\Gamma_E$.
\end{theorem}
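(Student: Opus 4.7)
The strategy is to construct the valuation $u$ on $A:=D\otimes_F E$ explicitly, via a carefully chosen $F$-basis of $D$, and then check the valuation axioms, using each of the three hypotheses in a distinct role. Because $D$ is defectless, $[D:F]=e_D f_D$; I would pick elements $\pi_1,\ldots,\pi_{e_D}\in D^\times$ whose values represent the distinct cosets of $\Gamma_F$ in $\Gamma_D$, together with units $f_1,\ldots,f_{f_D}\in U_D$ whose residues form an $\bar F$-basis of $\bar D$. A standard argument then shows that $\{f_j\pi_i\}$ is an $F$-basis of $D$ and that for any $a_{ij}\in F$,
\[
v\Bigl(\sum_{i,j} a_{ij}\,f_j\pi_i\Bigr)\;=\;\min_{i,j}\bigl(v(a_{ij})+v(\pi_i)\bigr),
\]
since the summands of minimal value lie in pairwise distinct cosets of $\Gamma_F$ and thus cannot cancel.

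Next, every $x\in A$ has a unique expansion $x=\sum_{i,j}(f_j\pi_i)\otimes e_{ij}$ with $e_{ij}\in E$, and I define
\[
u(x)\;=\;\min_{i,j}\bigl(v(\pi_i)+w(e_{ij})\bigr),\qquad u(0)=\infty.
\]
Axiom (V1) and the ultrametric inequality (V3) are immediate, and one checks $u|_D=v$, $u|_E=w$ from Step~1. The delicate point is multiplicativity (V2), and establishing it will simultaneously yield the absence of zero divisors in $A$.

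To verify (V2), I would pass to the associated graded rings $\operatorname{gr}_v(D)$, $\operatorname{gr}_w(E)$, $\operatorname{gr}_u(A)$ formed from the value filtrations, and study the canonical graded homomorphism
\[
\Phi:\;\operatorname{gr}_v(D)\otimes_{\operatorname{gr}(F)}\operatorname{gr}_w(E)\;\lra\;\operatorname{gr}_u(A),
\]
which is surjective by the definition of $u$. Hypothesis (3), $\Gamma_D\cap\Gamma_E=\Gamma_F$, combined with Step~1, forces each homogeneous piece to inject into the target: a non-trivial relation in some total degree $\gamma$ would require contributions from different cosets of $\Gamma_F$ to coincide, contradicting (3). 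Hence $\Phi$ is an isomorphism. The degree-zero part of the target is $\bar D\otimes_{\bar F}\bar E$, which is a division ring by hypothesis (2); since $\Gamma_D+\Gamma_E$ is totally ordered, this absence of zero divisors in degree zero lifts to the entire graded ring by comparing leading forms. Consequently $u(xy)=u(x)+u(y)$, which gives (V2) and shows that $A$ has no zero divisors. Being a finite-dimensional $F$-algebra, $A$ is therefore a division algebra, $u$ is a valuation on it extending $v$ and $w$, $\Gamma_A=\Gamma_D+\Gamma_E$ by construction, and the residue algebra identifies with the degree-zero part of $\operatorname{gr}_u(A)$, namely $\bar D\otimes_{\bar F}\bar E$. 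The main obstacle is precisely this graded-ring analysis, in which hypotheses (2) and (3) must cooperate to rule out both degeneration of homogeneous components under $\Phi$ and zero divisors in the graded ring.
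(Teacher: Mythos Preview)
The paper does not give its own proof of this theorem; it simply states the result and attributes it to Morandi \cite{morandi:henselization-div-alg}. Your outline is essentially the standard argument (Morandi's original proof constructs $u$ exactly via such a defectless basis; the graded-ring packaging you use is closer to the later reformulations by Boulagouaz and Hwang--Wadsworth), and it is correct in structure.

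One genuine slip: you conclude by saying ``Being a finite-dimensional $F$-algebra, $A$ is therefore a division algebra.'' But only $[D:F]<\infty$ is assumed; $E$ may well be infinite over $F$, so $A=D\otimes_F E$ need not be finite-dimensional over $F$. The correct finiteness is over $E$: the basis $\{f_j\pi_i\otimes 1\}$ exhibits $A$ as a free right (and left) $E$-module of rank $[D:F]$. Once you have shown $A$ has no zero divisors, left multiplication by any nonzero $x\in A$ is an injective right-$E$-linear endomorphism of a finite-rank $E$-module, hence surjective, and similarly on the other side; thus $A$ is a division ring. With this correction your argument goes through.

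A minor point worth tightening: before forming $\operatorname{gr}_u(A)$ you should note explicitly that $u(xy)\ge u(x)+u(y)$ follows from the basis expansion and (V3), so that the $u$-filtration is multiplicative and $\Phi$ is a well-defined ring map. Your injectivity sketch for $\Phi$ is the right idea --- hypothesis~(3) forces, for each $\gamma\in\Gamma_D+\Gamma_E$, a unique coset pair $(\alpha+\Gamma_F,\beta+\Gamma_F)$ with $\alpha+\beta=\gamma$, so the degree-$\gamma$ piece of the source is a single $\bar D\otimes_{\bar F}\bar E$, which maps isomorphically to the degree-$\gamma$ piece of the target --- but this deserves one more sentence of justification.
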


\begin{defn}
  \label{def:henselization}
Let $F$ be a field with valuation $v$
and let $F^h$ denote the Henselization of $F$ with respect to $v$.
For any $D\in\calD(F)$ we call $D^h:=D^{F^h}=D\otimes F^h$ the \emph{Henselization} of $D$ with respect to $v$.
\end{defn}

In \cite{morandi:henselization-div-alg} Theorem \ref{thm:morandi} leads to

\begin{theorem}
  \label{thm:div-alg-henselization}
  Let $F$ be a field with valuation $v$ and let $D\in\calD(F)$.
  Then $[D]\in\VBr(F)$ if and only if $D^h$ is a division algebra.
  In this case $D^h$ is immediate over $D$, \ie
  $\bar{D^h}=\bar D$ and $\Gamma_{D^h}=\Gamma_D$.
\end{theorem}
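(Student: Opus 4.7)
The plan is to derive both directions from Morandi's criterion (Theorem~\ref{thm:morandi}) together with the Henselian extension principle (Fact~\ref{eq:ext-val-hensel}).

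For the ``if'' direction, suppose $D^h=D\otimes_F F^h$ is a division algebra. Since $F^h$ is Henselian, Fact~\ref{eq:ext-val-hensel} gives $\VBr(F^h)=\Br(F^h)$, so the valuation $v^h$ on $F^h$ extends to a valuation $u$ on $D^h$. The canonical map $D\hookrightarrow D^h$, $d\mapsto d\otimes 1$, is injective since $D$ is simple, and the restriction $u|_D$ is then a valuation on $D$ extending $v$, so $[D]\in\VBr(F)$.

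For the ``only if'' direction, assume $v$ extends to a valuation $u$ on $D$ and apply Theorem~\ref{thm:morandi} to the pair $D$, $E:=F^h$, both equipped with valuations restricting to $v$ on $F$. Condition~(2) is automatic: Henselization preserves the residue field, so $\bar D\otimes_{\bar F}\bar{F^h}=\bar D\otimes_{\bar F}\bar F=\bar D$, which is a division algebra. Condition~(3) is equally automatic, since $\Gamma_{F^h}=\Gamma_F\subseteq\Gamma_D$ forces $\Gamma_D\cap\Gamma_{F^h}=\Gamma_F$. The substantive content is the defectlessness of $D$ over $F$ required in~(1), which I would deduce from the unique extension property of Fact~\ref{eq:unique-ext}: the hypothesis forces $v$ to extend uniquely to every subfield $F\subseteq K\subseteq D$, so for a maximal subfield $K$ one has $K\otimes_F F^h = K^h$ of dimension $[K:F]$ over $F^h$, and since Henselian finite extensions are defectless, $K$ is itself defectless over $F$; combining this on a suitable tower of subfields with the fundamental inequality~(\ref{eq:noc-val-fund-ineq}) propagates equality up to $D$.

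Having verified~(1)--(3), Theorem~\ref{thm:morandi} yields that $D^h=D\otimes_F F^h$ is a division algebra with a valuation extending both $u$ and $v^h$; moreover the same theorem delivers $\bar{D^h}=\bar D\otimes_{\bar F}\bar F=\bar D$ and $\Gamma_{D^h}=\Gamma_D+\Gamma_{F^h}=\Gamma_D$, which is exactly the assertion that $D^h$ is immediate over $D$. The main obstacle is the defectlessness step; it relies on standard commutative valuation theory (unique extension to a finite extension implies defectlessness) together with careful noncommutative bookkeeping through a chain of subfields. A technically cleaner alternative that sidesteps the issue would be to construct the noncommutative Henselization of $D$ with respect to $u$ directly, verify that its centre is $F^h$, and then identify it with $D\otimes_F F^h$ via the universal property of Henselization, at the cost of importing more noncommutative valuation machinery than Theorem~\ref{thm:morandi}.
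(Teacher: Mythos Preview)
The paper does not give its own proof here; it simply attributes the result to Morandi \cite{morandi:henselization-div-alg}, saying that Theorem~\ref{thm:morandi} ``leads to'' it. So your proposal is being compared against Morandi's original argument.

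Your ``if'' direction is fine. The ``only if'' direction has a genuine gap in the defectlessness step. The claim that ``Henselian finite extensions are defectless'' is false: over a Henselian valued field of residue characteristic $p$ one can have finite (even purely inseparable) extensions with defect. More to the point, unique extension of $v$ to a subfield $K$ does \emph{not} imply $K/F$ is defectless --- these are independent conditions, and the paper keeps them separate (compare Remark~\ref{rem:defectless-henselization}, which would be half vacuous if $[D]\in\VBr(F)$ forced defectlessness). Your tower argument therefore does not go through: you correctly get $K\otimes_F F^h\cong K^h$ with $[K^h:F^h]=[K:F]$, but since Henselization is immediate this only says that $K/F$ and $K^h/F^h$ have the \emph{same} defect, not that either defect is trivial. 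Consequently hypothesis~(1) of Theorem~\ref{thm:morandi} is genuinely unavailable, and that theorem cannot be invoked as you propose.

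The alternative you sketch at the end --- build the Henselization of $(D,u)$ as a valued division ring directly, show its centre is $F^h$, and identify it with $D\otimes_F F^h$ --- is essentially Morandi's actual route and is the one that works without any defectlessness assumption. So your instinct about the ``cleaner alternative'' is right; it is not merely cleaner but necessary.
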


\begin{remark}
\label{rem:defectless-henselization}
Let $F$ be a valued field and let $[D]\in\VBr(F)$.
Then $D$ is defectless if and only if $D^h$ is defectless.
\end{remark}

\section{Inertial division algebras and inertial lifts}

Throughout this section let $F$ be a field with fixed valuation $v$.

\subsection{Inertial division algebras and scalar extensions}

\begin{defn}
\label{def:inertial}
\index{inertial}
Let $D$ be a finite-dimensional valued division ring 
and let $E$ be a subfield of $Z(D)$.
We say that $D$ is \emph{inertial} over $E$ if 
$[D:E]=[\bar D:\bar E]<\infty$ and $Z(\bar D)$ is separable over $\bar E$.
We simply say that $D$ is inertial if $D$ is inertial over $Z(D)$.
\end{defn}

\begin{remark}
  \label{rem:inertial} 
Let $[D]\in\VBr(F)$.

(1) If $D$ is inertial, then $Z(\bar D)=\bar F$.

(2) $D$ is inertial if and only if $\ind D=\ind\bar D$.
\end{remark}
\begin{proof}
$(1)$
By Proposition \ref{prop:theta-surj}, $Z(\bar D)/\bar F$ is a Galois extension
and the group homomorphism $\theta_D:\Gamma_D/\Gamma_F\to\Gal(Z(\bar D)/\bar F)$ is surjective.
But by (\ref{eq:noc-val-fund-ineq}), $\Gamma_D=\Gamma_F$,
thus $Z(\bar D)=\bar F$. \\
$(2)$
If $D$ is inertial then $(1)$ shows $\ind D=\ind\bar D$.
Conversely, if $\ind D=\ind\bar D$, then (\ref{eq:noc-val-fund-ineq}) shows
$[D:F]=[\bar D:Z(\bar D)]\leq[\bar D:\bar F]\leq[D:F]$,
hence $[D:F]=[\bar D:\bar F]$ and $Z(\bar D)=\bar F$,
\ie $D$ is inertial.
\end{proof}

Define
\begin{gather*}
\IBr(F) := \sett{[D]\in\VBr(F)}{D \text{ is inertial}} \subseteq \VBr(F).
\end{gather*}
Note that in general $\IBr(F)$ is not a subgroup of $\Br(F)$.
By Remark \ref{rem:inertial} we have an index-preserving map
\[ \beta_F: \IBr(F)\lra\Br(\bar F), \quad [D]\lms[\bar D] .\]
This map $\beta_F$ can be viewed in a more general context
if we consider the Brauer group $\Br(V_F)$ of the valuation ring $V_F$ of $F$,
which is the group of equivalence classes of Azumaya algebras over $V_F$
(see \cite[Chapter II, \S 5]{demeyer-ingraham} for a definiton),
and the canonical group homomorphisms
\begin{gather*}
   \alpha : \Br(V_F)\lra\Br(F), \quad [A]\lms[A\otimes_{V_F} F]
\intertext{and}
   \beta : \Br(V_F)\lra\Br(\bar F), \quad [A]\lms[A/M_F A].
\end{gather*}
It is known that $\alpha$ is injective (cf. \cite[Lemma 1.2]{saltman:centre-of-generic} or \cite[Proposition~2.5]{jacob-wadsworth:div-alg-hensel-field}).
If $D\in\calD(F)$ is inertial then $V_D$ is an Azumaya algebra over $V_F$
(cf. \cite[Example 2.4]{jacob-wadsworth:div-alg-hensel-field}),
\ie $[V_D]\in\Br(V_F)$ and $\alpha([V_D])=[D]$.
This shows that $\IBr(F)\subseteq\im(\alpha)$
and $\beta_F=\beta\circ\alpha^{-1}|_{\IBr(F)}$.
In the Henselian case we get

\begin{theorem}
  \label{thm:alpha-beta-maps}
If $F$ is Henselian, then $\im(\alpha)=\IBr(F)$,
\ie $\IBr(F)$ is a subgroup of $\Br(F)$, and 
$\alpha:\Br(V_F)\to\IBr(F)$ is an isomorphism.
Moreover, also $\beta$ and $\beta_F=\beta\circ\alpha^{-1}$
are isomorphisms.
\end{theorem}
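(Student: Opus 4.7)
My plan is to reduce the entire statement to the single classical fact that $\beta:\Br(V_F)\to\Br(\bar F)$ is a bijection when $V_F$ is a Henselian local ring, and then derive the remaining assertions formally from what is already established before the theorem.

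First I would establish that $\beta$ is an isomorphism by Hensel lifting of Azumaya algebras. For surjectivity, every central simple $\bar F$-algebra $\bar B$ arises as the reduction of an Azumaya $V_F$-algebra, obtained by lifting a suitable separable-polynomial presentation of $\bar B$ (or a full system of matrix idempotents in a matrix algebra containing $\bar B$) over the Henselian ring $V_F$. For injectivity, if $\bar A$ is a matrix algebra over $\bar F$, then a rank-one idempotent of $\bar A$ lifts to $A$ by Henselianity, forcing $A$ itself to be a matrix algebra over $V_F$ and hence trivial in $\Br(V_F)$. Both halves are standard; compare \cite{demeyer-ingraham}.

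With $\beta$ a bijection, I obtain $\im(\alpha)\subseteq\IBr(F)$ as follows. Given $[A]\in\Br(V_F)$, let $\bar D'\in\calD(\bar F)$ underlie the reduction $\bar A$. Surjectivity of $\beta$ furnishes an Azumaya $V_F$-algebra $\tilde D'$ with $\tilde D'/M_F\tilde D'\cong\bar D'$. Since $\bar D'$ is a division algebra, $\tilde D'$ is a local ring whose Jacobson radical is $M_F\tilde D'$; every element outside this radical is a unit, so $D':=\tilde D'\otimes_{V_F}F$ is an $F$-division algebra with $V_{D'}=\tilde D'$, and the equality $[\bar{D'}:\bar F]=[D':F]$ (guaranteed by the Azumaya property) shows that $D'$ is inertial. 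Now $\beta([A])=[\bar D']=\beta([V_{D'}])$, so injectivity of $\beta$ yields $[A]=[V_{D'}]$, whence $\alpha([A])=[D']\in\IBr(F)$. Combined with the inclusion $\IBr(F)\subseteq\im(\alpha)$ already noted before the theorem, this gives $\im(\alpha)=\IBr(F)$. Since $\alpha$ is a group homomorphism, $\IBr(F)$ is a subgroup of $\Br(F)$; since $\alpha$ is injective, it restricts to an isomorphism $\Br(V_F)\to\IBr(F)$. Finally $\beta_F=\beta\circ\alpha^{-1}$ is an isomorphism as a composition of two isomorphisms.

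The genuine obstacle is the Hensel lifting of Azumaya algebras underlying the bijectivity of $\beta$; once that is granted, the translation to $\IBr(F)$ is bookkeeping. A subtle point to watch is the identification $\tilde D'=V_{D'}$: it must be deduced from the Azumaya structure of $\tilde D'$ together with the Henselian extension of $v$, not from surjectivity of $\beta_F$, which is precisely part of the conclusion.
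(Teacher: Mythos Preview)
Your argument is correct, and in fact more informative than the paper's own proof, which consists entirely of two citations: \cite[Theorem~2.8]{jacob-wadsworth:div-alg-hensel-field} for the claim about $\alpha$ and \cite[Theorem~31]{azumaya:maximally-central} for the bijectivity of $\beta$. The paper treats these as two independent black boxes, whereas you derive the statement about $\alpha$ from the Henselian lifting of Azumaya algebras (the content behind $\beta$), together with the inclusion $\IBr(F)\subseteq\im(\alpha)$ already recorded before the theorem. This unification is a genuine gain in clarity.

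Two small points of precision. First, when you write ``surjectivity of $\beta$ furnishes an Azumaya $V_F$-algebra $\tilde D'$ with $\tilde D'/M_F\tilde D'\cong\bar D'$'', you are using more than surjectivity of $\beta$ on Brauer classes: you need the algebra-level lifting you stated earlier (every central simple $\bar F$-algebra is the reduction of an Azumaya $V_F$-algebra of the \emph{same rank}). You have this, but it is worth keeping the distinction visible, since Brauer-level surjectivity alone would only produce a lift with reduction $M_n(\bar D')$ for some $n$. Second, the identification $\tilde D'=V_{D'}$ that you flag can be made direct: using a $V_F$-basis of the free module $\tilde D'$, every nonzero $x\in D'$ factors as $c\cdot u$ with $c\in F^\times$ and $u\in\tilde D'{}^\times$, so $w(x):=v(c)$ defines a valuation on $D'$ extending $v$ with valuation ring exactly $\tilde D'$; uniqueness of the extension over the Henselian field $F$ then forces $V_{D'}=\tilde D'$. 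With these two clarifications your route is complete.
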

\begin{proof}
For the map $\alpha$ see \cite[Theorem 2.8]{jacob-wadsworth:div-alg-hensel-field}
and for $\beta$ see \cite[Theorem 31]{azumaya:maximally-central}.
\end{proof}

We now look at inertial scalar extensions, first in the case that $F$ is Henselian.
Let $K/F$ be any algebraic inertial field extension and let $w$ be the unique extension of $v$ to~$K$.
Then $K$ is Henselian with respect to $w$, 
hence $[D_K]\in\VBr(K)$ for any $D\in\calD(F)$.
From \cite[Theorem 3.1 and Remark 3.4]{jacob-wadsworth:div-alg-hensel-field} we have

\begin{theorem}
  \label{thm:inert-scal-ext-hensel}
Let $F$ be Henselian, $D\in\calD(F)$ and $K/F$ an algebraic inertial field extension.
Then
\begin{enumerate}
\item $Z(\bar{D_K})\cong Z(\bar D)\cdot\bar K$,
\item $\bar{D_K}\cong \bar D_{Z(\bar D)\cdot\bar K}$,
\item $\Gamma_{D_K}\subseteq \Gamma_D$ and $|\Gamma_D:\Gamma_{D_K}|=[Z(\bar D)\cap\bar K:\bar F]$,
\item $D$ defectless $\impl$ $D_K$ defectless.
\end{enumerate}
\end{theorem}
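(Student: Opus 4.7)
The plan is to exploit two features of the Henselian setting. First, by Fact \ref{eq:ext-val-hensel} the valuation extends uniquely to $D$, $K$, and $D_K$ (since $K$ is Henselian as an inertial extension of the Henselian field $F$, with residue $\bar K$). Second, by Theorem \ref{thm:alpha-beta-maps} the residue map $\beta_L:\IBr(L)\to\Br(\bar L)$ is an isomorphism over any Henselian intermediate field $L$. A direct-limit argument reduces to $[K:F]<\ift$.

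For the structure, by Proposition \ref{prop:theta-surj} the extension $Z(\bar D)/\bar F$ is separable abelian Galois, so Henselianity yields an inertial subfield $L\subseteq D$ with $\bar L=Z(\bar D)$ and $[L:F]=[Z(\bar D):\bar F]$ (the subfield $L$ need not lie in the centre of $D$). Its centralizer $E:=C_D(L)$ is an $L$-central division algebra, inertial over $L$ with $\bar E=\bar D$. Inside $D_K$ the compositum $L\cdot K$ is an inertial extension of $K$ with residue $Z(\bar D)\cdot\bar K$; since the images of $\bar L=Z(\bar D)$ and of $\bar K$ in $\bar{D_K}$ both commute with all of $\bar D$ and $\bar K$, this yields the inclusion
\[
Z(\bar D)\cdot\bar K\ \subseteq\ Z(\bar{D_K}).
\]

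To identify $\bar{D_K}$, consider the centralizer of $L\cdot K$ in $D\otimes_F K$. A centralizer computation, combined with the standard decomposition of $L\otimes_F K$ as a product of copies of $L\cdot K$ indexed by $\Gal(L/L\cap K)$, shows that this centralizer is Brauer-equivalent over $L\cdot K$ to $E\otimes_L(L\cdot K)$. The latter is inertial over $L\cdot K$ by Theorem \ref{thm:morandi}, and under $\beta_{L\cdot K}$ it corresponds to $\bar D\otimes_{Z(\bar D)}(Z(\bar D)\cdot\bar K)$. Passing to underlying division algebras yields $\bar{D_K}\cong\bar D_{Z(\bar D)\cdot\bar K}$, which is (2), and the reverse inclusion in (1) follows by equating centres. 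Once (1) and (2) are in place, (3) is a dimension count: the fundamental inequality for the Henselian valued algebra $D_K$ together with the already computed $[\bar{D_K}:\bar K]$ pins down $|\Gamma_{D_K}:\Gamma_K|$, and since $\Gamma_K=\Gamma_F$ this gives $\Gamma_{D_K}\subseteq\Gamma_D$ with index $[Z(\bar D)\cap\bar K:\bar F]$. For (4), defectlessness of $D$ makes the fundamental inequality for $D$ an equality, and the same counting chain then forces equality for $D_K$.

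The technically delicate point is the centralizer identification just outlined: pinning down the Brauer class over $L\cdot K$ of the centralizer of $L\cdot K$ in $D\otimes_F K$ and matching it with $[E\otimes_L(L\cdot K)]$ requires careful bookkeeping with the non-central inertial subfield $L$ and the factor $[L\cap K:F]=[Z(\bar D)\cap\bar K:\bar F]$ arising from the Galois decomposition of $L\otimes_F K$. This factor is precisely what produces the index $[Z(\bar D)\cap\bar K:\bar F]$ on the value-group side of (3), and getting the Brauer-equivalence right for a non-Galois $K/F$ is where the argument is most subtle.
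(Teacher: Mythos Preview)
The paper does not prove this statement; it imports it verbatim from Jacob--Wadsworth \cite[Theorem~3.1 and Remark~3.4]{jacob-wadsworth:div-alg-hensel-field}. Your plan is therefore an attempt at a self-contained argument, and its broad shape---lift $Z(\bar D)$ to an inertial subfield $L\subseteq D$, pass to centralizers, and count dimensions---is indeed along the lines of what Jacob--Wadsworth do. But as written it has two genuine gaps.

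First, you claim Proposition~\ref{prop:theta-surj} gives that $Z(\bar D)/\bar F$ is separable abelian Galois. It does not: it gives normality always, and abelian Galois only \emph{if} $Z(\bar D)/\bar F$ is separable, which is not a hypothesis here. Without separability you cannot invoke Theorem~\ref{thm:hensel-ILSP} to produce the inertial lift $L$, and the whole chain collapses. The general case (which the theorem covers) requires handling a possible purely inseparable part of $Z(\bar D)/\bar F$.

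Second, your deduction of (3) is incomplete in two ways. A dimension count via the fundamental inequality can only pin down $|\Gamma_{D_K}:\Gamma_K|$ if you already know equality holds, i.e.\ that $D_K$ is defectless---but that is (4), so the logic is circular as presented. And even granting the index, knowing $|\Gamma_{D_K}:\Gamma_F|$ does not by itself yield the \emph{containment} $\Gamma_{D_K}\subseteq\Gamma_D$: a priori $\Gamma_{D_K}$ could be a different finite-index subgroup of the divisible hull. That inclusion needs a direct argument on values, not bookkeeping. (Minor slip: the factors of $L\otimes_F K$ are indexed by the cosets $\Gal(L/F)/\Gal(L/L\cap K)$, a set of size $[L\cap K:F]$, not by $\Gal(L/L\cap K)$ itself.)
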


In the special case that $D$ is inertial this yields

\begin{cor}
  \label{cor:inert-scal-ext-hensel}
Let $F$ be Henselian, $D\in\calD(F)$ be inertial and $K/F$ be an algebraic inertial field extension.
Then
\begin{enumerate}
\item $Z(\bar{D_K})\cong \bar K$,
\item $\bar{D_K}\cong \bar D_{\bar K}$,
\item $\Gamma_{D_K}=\Gamma_F$,
\item $[D_K]\in\IBr(K)$.
\end{enumerate}
\end{cor}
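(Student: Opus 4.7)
The plan is to derive all four statements mechanically from Theorem \ref{thm:inert-scal-ext-hensel}, exploiting the two extra hypotheses that make the general formulae collapse: because $D$ is inertial, Remark \ref{rem:inertial}(1) together with the definition gives $Z(\bar D)=\bar F$, $[\bar D:\bar F]=[D:F]$, and hence via the fundamental inequality (\ref{eq:noc-val-fund-ineq}) also $\Gamma_D=\Gamma_F$; because $K/F$ is inertial, $\bar K/\bar F$ is separable algebraic and $\Gamma_K=\Gamma_F$.

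With these reductions in hand, parts (1) and (2) are immediate substitutions: Theorem \ref{thm:inert-scal-ext-hensel}(1) gives $Z(\bar{D_K})\cong Z(\bar D)\cdot\bar K=\bar F\cdot\bar K=\bar K$, and Theorem \ref{thm:inert-scal-ext-hensel}(2) gives $\bar{D_K}\cong\bar D_{Z(\bar D)\cdot\bar K}=\bar D_{\bar K}$. For (3) I would feed $Z(\bar D)\cap\bar K=\bar F\cap\bar K=\bar F$ into Theorem \ref{thm:inert-scal-ext-hensel}(3) to get $|\Gamma_D:\Gamma_{D_K}|=[\bar F:\bar F]=1$, so $\Gamma_{D_K}=\Gamma_D=\Gamma_F$.

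For (4), I would combine (1) with the dimension count. Theorem \ref{thm:inert-scal-ext-hensel}(4) tells us $D_K$ is defectless, so equality holds in (\ref{eq:noc-val-fund-ineq}) for $D_K$ over $K$: $[D_K:K]=[\bar{D_K}:\bar K]\cdot|\Gamma_{D_K}:\Gamma_K|$. Since $\Gamma_{D_K}=\Gamma_F=\Gamma_K$ by (3) and the inertiality of $K/F$, the ramification index is $1$, giving $[D_K:K]=[\bar{D_K}:\bar K]$. Together with (1) (which supplies $Z(\bar{D_K})=\bar K$, automatically separable over itself) this is exactly Definition \ref{def:inertial}, so $[D_K]\in\IBr(K)$.

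There is no real obstacle here: the entire corollary is a bookkeeping exercise showing that the two general formulae of Theorem \ref{thm:inert-scal-ext-hensel} simplify under the assumption $Z(\bar D)=\bar F$ and $\Gamma_K=\Gamma_F$. The only point worth double-checking is that $D_K$ genuinely lies in $\calD(K)$ and carries a valuation in the first place, but this follows from Fact \ref{eq:ext-val-hensel} since $K$ is Henselian as an algebraic extension of the Henselian field $F$.
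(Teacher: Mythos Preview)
Your proof is correct and is precisely the mechanical specialization of Theorem \ref{thm:inert-scal-ext-hensel} that the paper intends; the paper in fact gives no explicit proof at all, treating the corollary as an immediate consequence, and your write-up simply makes the bookkeeping visible.
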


If $F$ in Corollary \ref{cor:inert-scal-ext-hensel} is not Henselian,
then we do not necessarily have $[D_K]\in\VBr(K)$.
The following Proposition characterizes when $[D_K]\in\VBr(K)$.

\begin{prop}
  \label{prop:inert-scalar-ext-valued}
  Let $F$ be a field with valuation $v$
  and let $K/F$ be an algebraic inertial field extension.
  If $D\in\calD(F)$ is inertial
  then the following are equivalent~:
  \begin{enumerate}
  \item $[D_K]\in\VBr(K)$,
  \item $[D_K]\in\IBr(K)$,
  \item $\ind D_K=\ind\bar D_{\bar K}$.
  \end{enumerate}
  If $(1)$--$(3)$ hold, then 
  \begin{enumerate}
    \setcounter{enumi}{3}
  \item  $\bar{D_K}\cong\bar D_{\bar K}$.
  \end{enumerate}
\end{prop}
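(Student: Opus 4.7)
The plan is to reduce the statement to the Henselian case by passing to the Henselization, and then to invoke Corollary \ref{cor:inert-scal-ext-hensel}. First I would introduce $F^h$, the Henselization of $F$ at $v$. Since $K/F$ is algebraic and inertial, $v$ extends uniquely to a valuation $w$ on $K$, and $K\otimes_F F^h$ is a field which may be identified with the Henselization $K^h$ of $K$ at $w$; moreover $K^h/F^h$ is again algebraic and inertial with $\bar{K^h}=\bar K$. Because $D$ is inertial we have $[D]\in\VBr(F)$, so Theorem \ref{thm:div-alg-henselization} ensures that $D^h=D\otimes_F F^h$ is a division algebra, immediate over $D$; a dimension count using $\bar{D^h}=\bar D$, $\bar F=\bar{F^h}$ and Remark \ref{rem:inertial}(1) then shows that $D^h$ is inertial over $F^h$.

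The heart of the argument is to apply Corollary \ref{cor:inert-scal-ext-hensel} to the inertial algebra $D^h$ over the Henselian field $F^h$ and the inertial extension $K^h/F^h$. This yields $[D^h_{K^h}]\in\IBr(K^h)$ with $\bar{D^h_{K^h}}\cong\bar D_{\bar K}$, hence $\ind D^h_{K^h}=\ind\bar D_{\bar K}$. I would then compare this with $(D_K)^h:=D_K\otimes_K K^h$ by computing $D\otimes_F K^h$ in two ways: once as $D^h\otimes_{F^h}K^h$ and once as $(D\otimes_F K)\otimes_K K^h\cong M_s(D_K)\otimes_K K^h$, using $D\otimes_F K\cong M_s(D_K)$. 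Uniqueness of the underlying division algebra yields a positive integer $t$ and an isomorphism $(D_K)^h\cong M_t(D^h_{K^h})$, whence
\[
  \ind D_K \;=\; t\cdot \ind D^h_{K^h} \;=\; t\cdot \ind\bar D_{\bar K}.
\]

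From this the equivalences follow immediately. By Theorem \ref{thm:div-alg-henselization}, condition $(1)$ is equivalent to $(D_K)^h$ being a division algebra, i.e.\ to $t=1$, which in turn is equivalent to $(3)$. The implication $(2)\Rightarrow(1)$ is free since $\IBr(K)\subseteq\VBr(K)$. For $(1)\Rightarrow(2)$, if $(1)$ holds then $t=1$ gives $(D_K)^h\cong D^h_{K^h}$; by Theorem \ref{thm:div-alg-henselization} the Henselization $(D_K)^h$ is immediate over $D_K$, so $\bar{D_K}\cong\bar{(D_K)^h}\cong\bar D_{\bar K}$, which simultaneously establishes $(4)$. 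Finally, $\ind\bar{D_K}=\ind\bar D_{\bar K}=\ind D_K$ shows via Remark \ref{rem:inertial}(2) that $D_K$ is inertial.

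The main obstacle will be the careful bookkeeping that establishes the identification $(D_K)^h\cong M_t(D^h_{K^h})$, since one must verify that the iterated scalar extensions commute and that the underlying division algebra is correctly tracked through them; once this is in place the proposition is a formal transfer of the Henselian Corollary \ref{cor:inert-scal-ext-hensel} along the Henselization.
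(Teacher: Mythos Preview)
Your proof is correct and follows essentially the same route as the paper's: pass to the Henselization, compute $(D_K)^h\sim D^h_{K^h}$ via the chain of tensor identifications, apply Corollary~\ref{cor:inert-scal-ext-hensel} to obtain $\ind D^h_{K^h}=\ind\bar D_{\bar K}$, and then use Theorem~\ref{thm:div-alg-henselization} to characterize $(1)$ as the case of equality. The only cosmetic difference is that you make the integer $t$ explicit in $(D_K)^h\cong M_t(D^h_{K^h})$, whereas the paper phrases the same comparison as the inequality $\ind(D_K)^h\le\ind D_K$; the content is identical.
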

\begin{proof}
  Since $v$ extends to a valuation on $D$, $D^h$ is division algebra and
  $(D_K)^h \sim D\otimes_F K\otimes_K K^h \cong D\otimes_F K^h 
  \cong D\otimes_F F^h \otimes_{F^h} K^h \sim {D^h}_{K^h}$.
  By Corollary \ref{cor:inert-scal-ext-hensel},
  ${D^h}_{K^h}$ is inertial and $\bar{{D^h}_{K^h}}\cong\bar D_{\bar K}$, \ie
  $\ind\bar D_{\bar K}=\ind D^h_{K^h}$ by Remark \ref{rem:inertial} (2).
  This shows
  \begin{equation}
    \label{eq:2}
    \ind\bar D_{\bar K}=\ind (D_K)^h \leq \ind D_K . 
  \end{equation}
  Therefore, $[D_K]\in\VBr(K)$ iff $(D_K)^h$ is a division algebra,
  iff $\ind D_K=\ind (D_K)^h$, iff equality holds in (\ref{eq:2}).
  This shows $(1)\gdw(3)$.

  Suppose $(1)$ and $(3)$ hold.
  Then $\bar{D_K}=\bar{(D_K)^h}\cong\bar D_{\bar K}$
  and $\ind D_K=\ind\bar{D_K}$, \ie $D_K$ is inertial.
  This shows $(4)$ and $(2)$.
  The Proposition is proved since $(2)\impl(1)$ is trivial.
\end{proof}

\subsection{The inertial lift property}

Like before let $F$ be a field with fixed valuation $v$.

\begin{defn}
\label{defn:inertial-lift}
\index{inertial lift}
Let $\wt D$ be an $\bar F$-division algebra such that $Z(\wt D)$ is separable over $\bar F$.
We say that an $F$-division algebra $D$ is an \emph{inertial lift} of $\wt D$ over $F$ 
if $D$ is inertial over $F$ and $\bar D\cong\wt D$. 
\end{defn}

\begin{remark}
  \label{rem:inert-lift}
Let $\wt D\in\calD(\bar F)$ and let $D\in\calD(F)$ be inertial.
Then $D$ is an inertial lift of $\wt D$ if and only if $\beta_F([D])=[\wt D]$.
\end{remark}

\begin{defn}
\label{def:ILP}
\index{inertial lift!property (ILP)}
  Let $F$ be a valued field.
  Suppose $\IBr(F)$ contains a subgroup $X$ of $\Br(F)$ that maps onto $\Br(\bar F)$ under $\beta_F$.
  We say that $F$ has the \emph{inertial lift property} if
  for any finite Galois extension $\wt K/\bar F$,
  there exists an inertial Galois lift $K$ of $\wt K$ over $F$,
  \ie an inertial lift $K$ of $\wt K$ over $F$ with $K/F$ Galois,
  such that 
  \begin{equation}
    \label{eq:ILP}
\parbox{11cm}{$D_K$ satisfies the conditions of Proposition \ref{prop:inert-scalar-ext-valued} for any $D\in\calD(F)$ with $[D]\in X$.}    
  \end{equation}
\end{defn}

\begin{remark}
If $F$  has the inertial lift property, then :

(1) The map $\beta_F|_X: X\lra\Br(\bar F)$ is a group isomorphism.

(2) The group homomorphism $\res_{F^h/F}:\Br(F)\lra\Br(F^h)$ 
restricts to an isomorphism $\res_{F^h/F}|_X: X\lra\IBr(F^h)$.

(3) For $K/F$ as in Definition \ref{def:ILP} the following diagram commutes.
    $$  \begin{CD}
      X @>\beta_F>> \Br(\bar F) \\
      @VV\res V @VV\res V \\
      \IBr(K) @>\beta_K>> \Br(\bar K) 
    \end{CD} $$

(4) $\beta:\Br(V_F)\lra\Br(\bar F)$ is a split surjection.

(5) If $D\in\calD(F)$ with $[D]\in X$ and $\wt K$ is a Galois maximal subfield of $\bar D$,
then $D$ contains an inertial Galois lift $K$ of $\wt K$ over $F$.
\end{remark}
\begin{proof}
(1) The surjective map $\beta_F|_X$ is a group homomorphism, since
$X$ is a group and $\beta_F|_X=\beta\circ\alpha^{-1}|_X$.
As an index preserving group homomorphism, $\beta_F|_X$ is also injective. 
\\
(2) Since $X\subseteq\VBr(F)$, $\res_{F^h/F}|_X$ is an index preserving group homomorphism, hence it is injective.
To check that the image is $\IBr(F^h)$ let $D\in\calD(F^h)$ with $[D]\in\IBr(F^h)$.
By the inertial lift property there is $D_0\in\calD(F)$ with $[D_0]\in X$
and $\bar D_0\cong\bar D$.
Then $D_0^h$ is a division algebra with $[D_0^h]\in\IBr(F^h)$ and $\bar{D_0^h}\cong\bar D$.
Since $\beta_{F^h}$ is an isomorphism (\cf Theorem~\ref{thm:alpha-beta-maps}),
it follows that $D_0^h\cong D$. 
\\
(3) This follows from the implication $(2)\impl(4)$ in Proposition \ref{prop:inert-scalar-ext-valued}. 
\\
(4) The map $\alpha^{-1}\circ(\beta_F|_X)^{-1}$ is a splitting homomorphism. 
\\
(5) Suppose $D\in\calD(F)$ with $[D]\in X$ and $\wt K$ is a Galois maximal subfield of $\bar D$.
Let $K$ be an inertial Galois lift of $\tilde K$ over $F$ with the property (\ref{eq:ILP}).
Then 
$\ind D_K=\ind \bar D_{\wt K}=1$, \ie $K$ splits $D$.
Moreover $[K:F]=[\wt K:\bar F]=\ind\bar D=\ind D$, 
hence $K$ is maximal subfield of~$D$.
\end{proof}

To verify the inertial lift property for a Henselian field $F$,
first recall that  the map $K\mapsto\bar K$ 
gives a 1-1 correspondence between the inertial extensions $K/F$ and the separable extensions $\wt K/\bar F$.
Under this correspondence $K/F$ is Galois if and only if $\bar K/\bar F$ is Galois 
(cf. \cite[\S 19]{endler:val-th}).

\begin{theorem}
  \label{thm:hensel-ILP}
If $F$ is Henselian, then $F$ has the inertial lift property.
\end{theorem}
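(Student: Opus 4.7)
The plan is to verify both ingredients of the inertial lift property using the results already assembled for Henselian fields in this section. The main ingredients are Theorem \ref{thm:alpha-beta-maps}, the standard correspondence between inertial extensions of a Henselian field and separable extensions of its residue field, and Corollary \ref{cor:inert-scal-ext-hensel}. No real obstacle is expected since the required machinery has been set up in the preceding pages; the task is to assemble it correctly.

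First I would choose $X := \IBr(F)$. Since $F$ is Henselian, Theorem \ref{thm:alpha-beta-maps} tells us that $\IBr(F)$ is in fact a subgroup of $\Br(F)$ and that $\beta_F\colon\IBr(F)\to\Br(\bar F)$ is an isomorphism. In particular $X\subseteq\IBr(F)$ is a subgroup of $\Br(F)$ mapping \emph{onto} $\Br(\bar F)$ under $\beta_F$, which is the first clause in Definition \ref{def:ILP}.

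Next, given a finite Galois extension $\wt K/\bar F$, I would invoke the $1$-$1$ correspondence $K\mapsto\bar K$ between the inertial extensions of the Henselian field $F$ and the separable extensions of $\bar F$ recalled just before the theorem (from \cite[\S 19]{endler:val-th}). Under this correspondence $K/F$ is Galois exactly when $\bar K/\bar F$ is Galois, so $\wt K$ lifts to an inertial Galois extension $K/F$ with $\bar K \cong \wt K$.

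It remains to check the condition (\ref{eq:ILP}) for this $K$, i.e.\ that $D_K$ satisfies the hypotheses of Proposition \ref{prop:inert-scalar-ext-valued} for every $D\in\calD(F)$ with $[D]\in X=\IBr(F)$. Since $F$ is Henselian and $K/F$ is an algebraic (inertial) extension, the unique extension of $v$ to $K$ makes $K$ Henselian as well. Hence Corollary \ref{cor:inert-scal-ext-hensel} applies to the inertial $D\in\calD(F)$ and the inertial extension $K/F$, and it yields $[D_K]\in\IBr(K)$. This is exactly statement $(2)$ of Proposition \ref{prop:inert-scalar-ext-valued}, so in particular $[D_K]\in\VBr(K)$ and condition (\ref{eq:ILP}) holds. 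This completes the verification of the inertial lift property.
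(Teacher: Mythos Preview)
Your proof is correct and follows essentially the same route as the paper: choose $X=\IBr(F)$ via Theorem~\ref{thm:alpha-beta-maps}, lift $\wt K$ to an inertial Galois $K/F$ via the standard correspondence, and then verify (\ref{eq:ILP}). The only cosmetic difference is that the paper checks condition~(1) of Proposition~\ref{prop:inert-scalar-ext-valued} directly (since $K$ is Henselian, $\VBr(K)=\Br(K)$, so $[D_K]\in\VBr(K)$ holds for \emph{every} $D\in\calD(F)$), whereas you verify condition~(2) via Corollary~\ref{cor:inert-scal-ext-hensel}; both are immediate.
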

\begin{proof}
  If $F$ is Henselian, then $\IBr(F)$ is a subgroup of $\Br(F)$
  and $\beta_F$ is an isomorphism by Theorem \ref{thm:alpha-beta-maps}.
  Hence choose $X=\IBr(F)$.
  Let $\wt K/\bar F$ be any finite Galois extension.
  Since $\wt K/\bar F$ is separable there is a unique inertial lift 
  $K$ of $\wt K$ over $F$ and $K/F$ is Galois.
  Since $K/F$ is finite, $K$ is also Henselian, 
  so $D_K$ satisfies condition $(1)$ of Proposition \ref{prop:inert-scalar-ext-valued} for any $D\in\calD(F)$.
\end{proof}

If $F$ is Henselian we also have the following lift property.

\begin{theorem}
  \label{thm:hensel-ILSP}
  Let $F$ be Henselian and let $D$ be a division algebra over~$F$ with $[D:F]<\ift$.
  If $\wt E$ is an $\bar F$-subalgebra of $\bar D$ with
  $Z(\wt E)$ separable over $\bar F$,
  then $D$ contains an inertial lift $E$ of $\wt E$ over~$F$.
\end{theorem}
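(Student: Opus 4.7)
The plan is to prove this in two stages: first lift $Z(\wt E)$ to an inertial subfield $L \subseteq D$, then lift the (now central) $\wt E$ to a subalgebra of $C_D(L)$. Observe first that $\wt E$, being a finite-dimensional subring of the division algebra $\bar D$, is itself a division algebra. For stage one, since $Z(\wt E)/\bar F$ is finite separable, pick a primitive element $\bar c \in Z(\wt E)$ with separable minimal polynomial $\bar h \in \bar F[x]$, and lift $\bar h$ to a monic $h \in V_F[x]$ of the same degree. Choose any $a \in V_D$ with residue $\bar c$. The subfield $F(a) \subseteq D$ is a finite extension of the Henselian field $F$ and is therefore itself Henselian, so Hensel's lemma applied inside $F(a)$ (using that $\bar h(\bar c) = 0$ is a simple root) yields $c \in V_D \cap F(a)$ with $h(c) = 0$ and residue $\bar c$. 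Setting $L := F(c)$, we have $[L:F] = \deg h = [\bar L : \bar F]$, so $L$ is inertial over $F$ with $\bar L = Z(\wt E)$.

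For stage two, pass to $C_D(L)$. Since $L/F$ is algebraic and $F$ is Henselian, $L$ is also Henselian; the centralizer $C_D(L)$ is a central $L$-division algebra, and because $L$ is inertial the standard centralizer-of-inertial-subfield formula gives $\overline{C_D(L)} = C_{\bar D}(\bar L) \supseteq \wt E$. So we may replace $(F,D,\wt E)$ by $(L, C_D(L), \wt E)$ and assume from here on that $Z(\wt E) = \bar F$, i.e.\ that $\wt E$ is a central simple (hence central division) $\bar F$-subalgebra of $\bar D$. For this reduced case I apply a Wedderburn--Mal'cev type argument for Henselian local rings: choose an $\bar F$-basis $\bar b_1, \ldots, \bar b_n$ of $\wt E$ with structure constants $\bar\gamma_{ijk} \in \bar F$, lift them to $c_{ijk} \in V_F$ and each $\bar b_i$ to some $b_i \in V_D$. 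The defects $b_i b_j - \sum_k c_{ijk} b_k$ lie in $M_D$; because $\wt E$ is separable over $\bar F$, one iteratively perturbs the $b_i$ by elements of successively higher value to kill these defects, with Henselian convergence in the finite $V_F$-order generated by the $b_i$ yielding $e_1,\ldots,e_n \in V_D$ with $\bar e_i = \bar b_i$ and $e_i e_j = \sum_k c_{ijk} e_k$ exactly. Then $E := F[e_1,\ldots,e_n] \subseteq D$ has dimension $n = [\wt E : \bar F]$ with $\bar E \cong \wt E$, so $E$ is the required inertial lift.

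The technical heart is the Wedderburn--Mal'cev step. Lifting single elements via Hensel is routine, but compatibly lifting an entire separable subalgebra structure requires care, since $V_D$ is in general not an Azumaya $V_F$-algebra (ramification of $D/F$ is allowed), so one cannot directly appeal to the standard Azumaya-lifting formalism. One has to work inside the finite $V_F$-order generated by the $b_i$ and exploit separability of $\wt E$ to ensure that the successive Hensel corrections exist and converge. Once this lemma is granted, the centralizer reduction and the center-lifting in stage one assemble into the full result.
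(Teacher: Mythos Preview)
The paper gives no proof here; it simply cites \cite[Theorem~2.9]{jacob-wadsworth:div-alg-hensel-field}. Your two-stage plan---lift $Z(\wt E)$ to an inertial subfield $L\subseteq D$ via Hensel's lemma, pass to $C_D(L)$, and then lift the now $\bar F$-central separable subalgebra---is exactly the structure of the argument in that reference, so at the level of strategy you have reconstructed it.

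The gap is in your justification of the Wedderburn--Mal'cev step. You describe ``iteratively perturbing the $b_i$ by elements of successively higher value'' with ``Henselian convergence,'' but Henselian valued fields are in general not complete, so an infinite successive-approximation process has no reason to converge in $V_D$ (or in any finitely generated $V_F$-order inside it). The lifting of a separable $\bar F$-subalgebra over a Henselian base is true, but the mechanism is algebraic rather than topological: since $\wt E$ is separable (indeed Azumaya) over $\bar F$, the functor of algebra embeddings of an Azumaya $V_F$-lift of $\wt E$ into $V_D$ is formally smooth, and Hensel's lemma in its formulation for smooth schemes (equivalently, for polynomial systems with invertible Jacobian, the invertibility coming precisely from separability) yields the embedding in a single algebraic step rather than as a limit. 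Replace the convergence language with this and your argument goes through.
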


\begin{proof}
See \cite[Theorem 2.9]{jacob-wadsworth:div-alg-hensel-field}.
\end{proof}

We now verify the inertial lift property for another class of fields,
which are not necessarily Henselian.
This will later be used to get examples over non-Henselian fields.

\begin{lemma}
  \label{lem:bar-F-embed-inert-lift}
  Suppose that $V_F$ contains a field that maps isomorphically onto $\bar F$ under $\pi_F$.
  Then for any division algebra $\wt D$ over $\bar F$ with 
  $[\wt D:\bar F]<\ift$ and $Z(\wt D)$ separable over $\bar F$,
  $D:=\wt D\otimes_{\bar F} F$ is a division algebra that is an inertial lift of $\wt D$ over $F$.
\end{lemma}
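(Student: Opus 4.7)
The plan is to realize $D=\wt D\otimes_{\bar F}F$ as a Morandi tensor product of valued division rings so that Theorem \ref{thm:morandi} delivers the division ring structure together with the correct residue data in a single step.

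First, I would use the hypothesis to fix a subfield $k\subseteq V_F$ such that $\pi_F|_k:k\lra\bar F$ is an isomorphism, and use this isomorphism to identify $k$ with $\bar F$; in particular $\wt D$ becomes a finite-dimensional $k$-division algebra. The key preliminary observation is that $v|_k$ is the \emph{trivial} valuation on $k$: for any $x\in\ug k$ we have $x\in V_F$ and $\pi_F(x)\neq 0$, so $x\in V_F\setminus M_F$, i.e.\ $v(x)=0$. I then equip $\wt D$ with its trivial valuation $w$, which extends the trivial valuation on $k$.

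The bulk of the argument is a single application of Theorem \ref{thm:morandi} to the two $k$-division algebras $(\wt D,w)$ and $(F,v)$, whose restrictions to $k$ both agree (both are trivial). The three hypotheses are essentially formal once $w$ is trivial: (1) $[\wt D:k]<\ift$ by assumption, and the fundamental inequality (\ref{eq:noc-val-fund-ineq}) degenerates into an equality because $\bar{\wt D}=\wt D$, $\bar k=k$, and $\Gamma_{\wt D}=\Gamma_k=0$, so $\wt D$ is defectless over $k$; (2) via $k\cong\bar F$, $\bar{\wt D}\otimes_{\bar k}\bar F$ becomes $\wt D\otimes_k\bar F\cong\wt D$, which is a division algebra; (3) $\Gamma_{\wt D}\cap\Gamma_F=0\cap\Gamma_F=0=\Gamma_k$. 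The theorem then yields that $D=\wt D\otimes_k F=\wt D\otimes_{\bar F}F$ is a division algebra carrying a valuation $u$ extending both $v$ and $w$, with $\bar D\cong\bar{\wt D}\otimes_{\bar k}\bar F\cong\wt D$ and $\Gamma_D=\Gamma_{\wt D}+\Gamma_F=\Gamma_F$.

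It remains to recognize $D$ as an inertial lift of $\wt D$ over $F$. From the conclusion of Morandi's theorem, $[D:F]=[\wt D:k]=[\bar D:\bar F]$, and by hypothesis $Z(\bar D)\cong Z(\wt D)$ is separable over $\bar F$; this is precisely the definition of $D$ being inertial over $F$, and combined with $\bar D\cong\wt D$ gives the required inertial lift. I see no real obstacle: the only care needed is noticing that the trivial valuation is the right choice on $\wt D$, which makes $v|_k=w|_k$ and reduces all of Morandi's hypotheses to bookkeeping.
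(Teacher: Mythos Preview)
Your proof is correct and follows essentially the same approach as the paper: identify $\bar F$ with the subfield of $V_F$, observe that $v$ restricts trivially there, equip $\wt D$ with the trivial valuation, and apply Theorem~\ref{thm:morandi}. The paper's version is terser in verifying Morandi's hypotheses (it simply says ``obviously $\ldots$ the conditions (1)--(3) are satisfied''), but your more detailed checks are accurate and the argument is the same.
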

\begin{proof}
  If we identify $\bar F$ with its isomorphic image $\pi_F^{-1}(\bar F)$ in $V_F$,
  then $\bar F^\times \subseteq U_F$. 
  Hence, $v|_{\bar F}$ is the trivial valuation.
  Let $w$ be the trivial valuation on $\wt D$.
  Then obviously $v|_{\bar F}=w|_{\bar F}$ and the conditions (1)--(3) of Theorem \ref{thm:morandi} are satisfied.
  Therefore,
  $D:=\wt D\otimes_{\bar F} F$ is a division algebra with valuation $u$ 
  such that $u|_F=v$ and $\bar D=\wt D$.
  Since $[D:F]=[\wt D:\bar F]$,
  $D$ is an inertial lift of $\wt D$ over $F$.
\end{proof}

\begin{theorem}
  \label{thm:bar-F-embed-ILP}
  Suppose that $V_F$ contains a field that maps isomorphically onto $\bar F$ under $\pi_F$.
  Then $F$ has the inertial lift property.
\end{theorem}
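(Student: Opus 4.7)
The plan is to exploit the hypothesis — that $\bar F$ lifts to a subfield of $V_F \subseteq F$ — to scalar-extend every relevant object from $\bar F$ to $F$, invoking Lemma~\ref{lem:bar-F-embed-inert-lift} repeatedly. I fix a subfield $F_0 \subseteq V_F$ on which $\pi_F$ restricts to an isomorphism $F_0 \to \bar F$, and identify $\bar F$ with $F_0$, so that $\bar F \hookrightarrow F$ with $v|_{\bar F}$ trivial. Every $\bar F$-division algebra whose centre is separable over $\bar F$ then admits an inertial lift over $F$ via $\wt D \mapsto \wt D \otimes_{\bar F} F$ by the lemma.

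For the subgroup $X$ required by Definition~\ref{def:ILP}, I take $X := \im(\res_{F/\bar F}: \Br(\bar F) \to \Br(F))$, which is a subgroup of $\Br(F)$. For any class $[\wt A] \in \Br(\bar F)$ represented by a $\wt D \in \calD(\bar F)$, the lemma shows that $\wt D \otimes_{\bar F} F$ is a division algebra inertial over $F$, so $X \subseteq \IBr(F)$. The composition $\beta_F \circ \res_{F/\bar F}$ sends $[\wt D] \mapsto [\wt D]$, hence $\beta_F|_X$ is surjective onto $\Br(\bar F)$; it is injective as an index-preserving group homomorphism into $\Br(\bar F)$.

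Next, given a finite Galois extension $\wt K/\bar F$, I construct the desired lift as $K := \wt K \otimes_{\bar F} F$. The lemma gives that $K$ is a division algebra; since both tensor factors are commutative, $K$ is in fact a field, and by the lemma it is an inertial lift of $\wt K$ with $[K:F] = [\wt K:\bar F]$ and $\bar K \cong \wt K$. Writing $\wt K = \bar F(\alpha)$ with minimal polynomial $f \in \bar F[x]$, one has $K \cong F(\alpha)$ and the roots of $f$ already lie in the image of $\wt K \hookrightarrow K$; thus $K$ is the splitting field of the separable polynomial $f$ over $F$, so $K/F$ is Galois.

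It remains to verify the compatibility~(\ref{eq:ILP}). For $D \in \calD(F)$ with $[D] \in X$, injectivity of $\beta_F|_X$ gives $D \cong \wt D_0 \otimes_{\bar F} F$ for $\wt D_0 := \bar D \in \calD(\bar F)$. Letting $\wt E \in \calD(\wt K)$ denote the underlying division algebra of $\wt D_0 \otimes_{\bar F} \wt K$, one computes
\[
D \otimes_F K \cong (\wt D_0 \otimes_{\bar F} F) \otimes_F (\wt K \otimes_{\bar F} F) \cong (\wt D_0 \otimes_{\bar F} \wt K) \otimes_{\bar F} F \sim \wt E \otimes_{\bar F} F
\]
in $\Br(K)$. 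Applying Lemma~\ref{lem:bar-F-embed-inert-lift} now to $\wt E$ — which is legitimate because $Z(\wt E) = \wt K$ is separable over $\bar F$ — shows that $\wt E \otimes_{\bar F} F$ is a division algebra. Hence $D_K \cong \wt E \otimes_{\bar F} F$ and $[D_K] \in \VBr(K)$, confirming condition~(1) of Proposition~\ref{prop:inert-scalar-ext-valued}. The subtle point is exactly this last step: the lemma must be applied to $\wt E$, whose centre $\wt K$ strictly contains $\bar F$, and the separability of $\wt K/\bar F$ coming from the Galois hypothesis is what makes this possible.
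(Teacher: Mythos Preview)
Your proof is correct and follows essentially the same route as the paper's: choose $X=\res_{F/\bar F}(\Br(\bar F))$, take $K=\wt K\otimes_{\bar F}F$ as the inertial Galois lift, and verify (\ref{eq:ILP}) by rewriting $D\otimes_F K$ as $\bar D_{\wt K}\otimes_{\bar F}F$ and applying Lemma~\ref{lem:bar-F-embed-inert-lift} once more. The only cosmetic difference is that the paper concludes with condition~(2) of Proposition~\ref{prop:inert-scalar-ext-valued} (namely $[D_K]\in\IBr(K)$) rather than condition~(1), and you supply a bit more detail on why $K/F$ is Galois and why $X\subseteq\IBr(F)$.
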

\begin{proof}
  Lemma \ref{lem:bar-F-embed-inert-lift} shows that $\beta_F\circ\res_{F/\bar F}=\id_{\Br(\bar F)}$.
  Hence, choose $X=\res_{F/\bar F}(\Br(\bar F))$.
  Let $\wt K/\bar F$ be a finite Galois extension.
  Then $K:=\wt K\otimes_{\bar F} F$ is Galois over $F$,
  and by Lemma \ref{lem:bar-F-embed-inert-lift}, $K$ is an inertial lift of $\wt K$ over $F$.
  For any $D\in\calD(F)$ with $[D]\in X$,
  by choice of $X$, $D=\bar D\otimes_{\bar F}F$.
  Hence 
  $D_K\sim \bar D\otimes_{\bar F}F\otimes_F K\cong\bar D\otimes_{\bar F}\wt K\otimes_{\bar F}F\sim\bar D_{\wt K}\otimes_{\bar F} F$.
  Therefore, by Lemma~\ref{lem:bar-F-embed-inert-lift},
  $D_K$ is an inertial lift of $\bar D_{\wt K}$ over $F$,
  hence $[D_K]\in\IBr(K)$.
\end{proof}

\begin{ex}
The rational function field $k(t)$ and the Laurent series field $k(\!(t)\!)$ have the inertial lift property with respect to the $t$-adic valuation $v_t$.
More generally, let $F=k(t_1,\ldots,t_r)$ or $F=k(\!(t_1,\ldots,t_r)\!)$
and let $v$ be the composite valuation of the $t_i$-adic valuations on $F$.
Then $F$ has the inertial lift property with respect to $v$.
\end{ex}

\section{Nicely semiramified division algebras}

Throughout this section let $F$ be a field with fixed valuation $v$.
We shall use the following definitions from \cite[\S 4]{jacob-wadsworth:div-alg-hensel-field}.

\begin{defn}
\label{def:TTRT}
Let $K/F$ be a finite field extension.
We say that $K$ is \emph{totally ramified of radical type} over $F$ if
$v$ extends (uniquely) to a valuation $w$ on $K$ such that
$K$ is totally ramified over $F$ and
there is a subgroup of $K^\times/F^\times$ that maps isomorphically onto $\Gamma_K/\Gamma_F$ under the map
$\bar w: K^\times/F^\times\to\Gamma_K/\Gamma_F, xF^\times\mapsto w(x)+\Gamma_F$.
\end{defn}

\begin{defn}
  \label{def:nicely-semiramified}
A  division algebra $D\in\calD(F)$ with $[D]\in\VBr(F)$
is said to be \emph{nicely semiramified}
if $D$ has a maximal subfield $L$ that is inertial over $F$,
and another maximal subfield $K$ that is totally ramified of radical type over $F$.
\end{defn}

\begin{remark}
\label{rem:NSR}
Let $D\in\calD(F)$ be nicely semiramified with inertial maximal subfield $L$
and maximal subfield $K$ that is totally ramified of radical type over $F$. 
\begin{enumerate}
\item $\bar D$ is a field, $\bar D=\bar L$, 
  $\bar D/\bar F $ is abelian and $\Gamma_D=\Gamma_K$. 
\item $\Gal(\bar D/\bar F)$ embeds into $\Gamma_F/m\Gamma_F$,
where $m=\exp\Gal(\bar D/\bar F)$.
\item If $F$ is Henselian, then $L/F$ is abelian.
\item $D^h$ is nicely semiramified.
\end{enumerate}
\end{remark}
\begin{proof}
$(1)$
Since $L$ is inertial and $K$ is totally ramified over $F$,
it follows from the fundamental inequality (\ref{eq:noc-val-fund-ineq}) that
$[\bar D:\bar F]=|\Gamma_D:\Gamma_F|=\ind D$.
Therefore, $\bar D=\bar L$ and $\Gamma_D=\Gamma_K$,
hence $\bar D$ is a field.
Since $L/F$ is inertial, $\bar D/\bar F$ is separable,
hence abelian by Proposition \ref{prop:theta-surj}.
\\
$(2)$ The surjective group homomorphism
$\theta_D:\Gamma_D/\Gamma_F\lra\Gal(\bar D/\bar F)$ is an isomorphism because $|\Gamma_D:\Gamma_F|=[\bar D:\bar F]$,
hence $\Gal(\bar D/\bar F)\cong\Gamma_D/\Gamma_F$.
Let $m=\exp\Gal(\bar D/\bar F)$.
The map $\Gamma_D\to\Gamma_F, \gamma\mapsto m\gamma$ induces an embedding $\Gamma_D/\Gamma_F\hookrightarrow\Gamma_F/m\Gamma_F$.
\\
$(3)$
If $F$ is Henselian, then $\bar L/\bar F$ Galois implies that $L/F$ is Galois
with $\Gal(L/F)\cong\Gal(\bar L/\bar F)$, hence $L/F$ is abelian.
\\
$(4)$
$L^h$ and $K^h$ are maximal subfields of the division algebra $D^h$.
Since $L^h$ and $K^h$ are immediate extensions of $L$ and $K$ respectively,
$L^h$ is inertial and $K^h$ is totally ramified of radical type over $F^h$.
Thus, $D^h$ is nicely semiramified.
\end{proof}

Recall the following important example from \cite[Example 4.3]{jacob-wadsworth:div-alg-hensel-field}.
  
\begin{ex}
\label{ex:NSR}
Let $L/F$ be an inertial abelian extension with
$[L:F]=n<\ift$, $\Gal(L/F)=G$ and $\exp G=m$.
If $G$ embeds into $\Gamma_F/m\Gamma_F$, then a nicely semiramified
$D\in\calD(F)$ with inertial maximal subfield $L$,
$\ind D=n$ and $\exp D=m$ is constructed as follows.

Let $L_1,\ldots,L_r$ be cyclic subfields of $L/F$ such that
$L\cong L_1\otimes_F\cdots\otimes_F L_r$,
and let $[L_i:F]=n_i$ and $\Gal(L_i/F)=\gen{\sigma_i}$ for $1\leq i\leq r$.
We shall regard $G$ as a subgroup of %
$\Gamma_F/m\Gamma_F$.
Let $\gamma_i\in\Gamma_F$ with %
$\sigma_i=\gamma_i+m\Gamma_F$
for $1\leq i\leq r$.
Since $\ord\sigma_i=n_i$, we have %
$n_i\gamma_i\in m\Gamma_F$.
So we choose $t_i\in F^\times$ with $v(t_i)=\frac{n_i}{m}\gamma_i$
for $1\leq i\leq r$
and set
$$ D:=(L_1/F,\sigma_1,t_1)\otimes_F\cdots\otimes_F(L_r/F,\sigma_r,t_r). $$
Let $c_i\in Z^2(\gen{\sigma_i},L_i^\times)$ be the cyclic cocycle defined by $t_i$
and let $\hat c_i\in Z^2(G,L^\times)$ be the inflation of $c_i$.
Then
$D\cong(L/F,G,c)$ where $c=\hat c_1\cdots \hat c_r$.
\end{ex}
\begin{proof}
Clearly $\deg D=n_1\cdots n_r=n$.
It is shown in \cite[Corollary~2.9]{jacob-wadsworth:constr-noncr-prod} that 
$D$ is a division algebra, 
and that $v$ extends to a valuation $w$ on $D$,
$\bar D=\bar L$, $\Gamma_D$ is generated by $\set{\frac{1}{n_i}v(t_i)}$ and $\Gamma_F$,
and $[\bar L:\bar F]=|\Gamma_D:\Gamma_F|=n=\ind D$.
Obviously, %
$L\cong L_1\otimes_F\cdots\otimes_F L_r$ is an inertial maximal subfield of $D$.
Let $x_i\in(L_i/F,\sigma_i,t_i)$ with $x_i^{n_i}=t_i$.
Then $K:=F(x_1)\otimes_F\cdots\otimes_F F(x_r)$ is a subfield of $D$.
Since $w(x_i)=\frac{1}{n_i}v(t_i)$,
it follows $\Gamma_K=\Gamma_D$,
thus $K$ is a maximal subfield of $D$ and is totally ramified over $F$.
Moreover, the subgroup $\gen{x_i\ug F}$ of $\ug K/\ug F$ maps onto $\Gamma_K/\Gamma_F$ under $\bar w$.
Since $x_i^{n_i}\in\ug F$,
we have $|\gen{x_i\ug F}|\leq n_1\cdots n_r=n$, 
hence the mapping is an isomorphism.
This shows that $K$ is totally ramified of radical type over $F$,
thus $D$ is nicely semiramified.

We have $\exp(L_i/F,\sigma_i,t_i)\mid\deg(L_i/F,\sigma_i,t_i)=n_i$ for $1\leq i\leq r$,
hence $\exp D\mid\lcm(n_1,\ldots,n_r)=\exp G$.
From \cite[Theorem 5.5]{jacob-wadsworth:div-alg-hensel-field}
it is known that $\exp D^h=\exp G$.
Therefore,
$\exp D\mid\exp G=\exp D^h\mid\exp D$,
thus $\exp D=\exp G=m$.
The last statement is immediate from the product theorem \cite[Proposition~14.3]{pierce:ass-alg}.
\end{proof}

\section{Inertially split division algebras}
\label{sec:Inert-split-division}
Throughout this section let $F$ be a field with fixed valuation $v$.
\subsection{Definition of inertially split division algebras}
\label{sec:Defin-inert-split}

In \cite[\S 5]{jacob-wadsworth:div-alg-hensel-field} inertially split division algebras over Henselian fields are defined as follows.

\begin{defn}
  \label{def:inert-split-hensel}
  Let $F$ be Henselian and let $D\in\calD(F)$.
  We say $D$ is \emph{inertially split} if $[D]\in\Br(F_{nr}/F)$
  \footnote{$F_{nr}$ denotes the maximal inertial extension of $F$ in some algebraic closure $F_{alg}$,
    \ie the compositum of all inertial extensions of $F$ in $F_{alg}$.},
  \ie if there is a splitting field of $D$ which is inertial over~$F$.
\end{defn}

\begin{prop}
  \label{prop:charac-inert-split}
  Let $[D]\in\VBr(F)$. The following are equivalent :
  \begin{enumerate}
  \item $D^h$ is inertially split.
  \item $D^h$ contains a maximal subfield which is inertial over $F^h$.
  \item \label{item:properties}
    $\theta_D$ is an isomorphism,
    $Z(\bar D)$ is separable over $\bar F$ 
    and $D$ is defectless.
  \end{enumerate}
\end{prop}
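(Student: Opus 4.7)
The plan is to reduce to the Henselian case and then establish the three implications by combining the lifting theorem \ref{thm:hensel-ILSP}, the inertial scalar extension theorem \ref{thm:inert-scal-ext-hensel}, and the surjectivity of $\theta_D$ from Proposition \ref{prop:theta-surj}. For the reduction, Theorem \ref{thm:div-alg-henselization} gives that $D^h$ is a valued division algebra immediate over $D$, so $\bar{D^h}=\bar D$, $\Gamma_{D^h}=\Gamma_D$, $Z(\bar{D^h})=Z(\bar D)$ and $\theta_{D^h}=\theta_D$; combined with Remark \ref{rem:defectless-henselization}, condition (3) for $D$ is equivalent to (3) for $D^h$, while (1) and (2) are statements about $D^h$. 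So it suffices to prove the equivalence of (1)--(3) under the additional assumption that $F$ is Henselian.

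The implication $(2)\Rightarrow(1)$ is immediate: an inertial maximal subfield of $D$ is an inertial splitting field, hence lies in $F_{nr}$. For $(3)\Rightarrow(2)$, I would first combine defectlessness with the isomorphism property of $\theta_D$ and the separability/normality of $Z(\bar D)/\bar F$ (Proposition~\ref{prop:theta-surj}) to get
\[
[D:F]=[\bar D:\bar F]\cdot|\Gamma_D:\Gamma_F|=(\deg\bar D)^2\cdot[Z(\bar D):\bar F]^2,
\]
whence $\deg D=\deg\bar D\cdot[Z(\bar D):\bar F]$. Then I would choose a maximal subfield $\wt M$ of $\bar D$ separable over $Z(\bar D)$ (such fields exist in any central simple algebra); since $Z(\bar D)/\bar F$ is separable, $\wt M/\bar F$ is separable with $[\wt M:\bar F]=\deg\bar D\cdot[Z(\bar D):\bar F]=\deg D$. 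Applying Theorem~\ref{thm:hensel-ILSP} with $\wt E=\wt M$ produces an inertial subfield $N\subseteq D$ with $\bar N=\wt M$ and $[N:F]=[\wt M:\bar F]=\deg D$, so $N$ is an inertial maximal subfield.

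For $(1)\Rightarrow(3)$, take a finite inertial splitting field $L\subseteq F_{nr}$, which exists since $\Br(F_{nr}/F)=\varinjlim\Br(L/F)$ over finite inertial Galois $L/F$, and enlarge if necessary so that $L/F$ is Galois. Because $L$ splits $D$ we have $D_L=L$, so $\bar{D_L}=\bar L$ and $\Gamma_{D_L}=\Gamma_L=\Gamma_F$. Theorem~\ref{thm:inert-scal-ext-hensel}(1),(3) then yields $Z(\bar D)\cdot\bar L=\bar L$ (so $Z(\bar D)\subseteq\bar L$, forcing separability of $Z(\bar D)/\bar F$) and $|\Gamma_D:\Gamma_F|=[Z(\bar D)\cap\bar L:\bar F]=[Z(\bar D):\bar F]=|\Gal(Z(\bar D)/\bar F)|$, which with the surjectivity of $\theta_D$ makes it an isomorphism. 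The remaining point—defectlessness of $D$—follows from the general fact that a division algebra admitting a finite defectless splitting field is itself defectless; in the inertial Henselian setting this is supplied by the crossed product analysis of $D\sim(L/F,G,c)$ in \cite[\S 5]{jacob-wadsworth:div-alg-hensel-field}.

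The main obstacle is this last point: the separability/normality information and the equality of ramification indices come out almost for free from Theorems~\ref{thm:hensel-ILSP} and~\ref{thm:inert-scal-ext-hensel}, but defectlessness in $(1)\Rightarrow(3)$ requires the structural input from \cite{jacob-wadsworth:div-alg-hensel-field}. Conversely, in $(3)\Rightarrow(2)$ the delicate step is matching dimensions so that the single application of Theorem~\ref{thm:hensel-ILSP} lands inside $D$ with the correct $F$-degree, but this is a clean computation once $\theta_D$ is assumed bijective.
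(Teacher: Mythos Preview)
Your reduction to the Henselian case is identical to the paper's, and your proofs of $(2)\Rightarrow(1)$ and $(3)\Rightarrow(2)$ are correct. (For $(3)\Rightarrow(2)$, note that the inertial lift $N$ of the field $\wt M$ is indeed a field: inertiality forces $\Gamma_N=\Gamma_F$ and $[N:Z(N)]=[\bar N:\overline{Z(N)}]$; since $\bar N\cong\wt M$ is commutative, Remark~\ref{rem:inertial}(1) applied to $N/Z(N)$ gives $\overline{Z(N)}=Z(\bar N)=\bar N$, hence $N=Z(N)$.) Your argument for separability and for $\theta_D$ being an isomorphism in $(1)\Rightarrow(3)$ via Theorem~\ref{thm:inert-scal-ext-hensel} is also correct.

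The difference from the paper is only one of packaging. The paper does not unfold any of these implications: after the reduction to $D^h$ it simply invokes \cite[Lemma~5.1]{jacob-wadsworth:div-alg-hensel-field}, which is precisely the equivalence $(1)\Leftrightarrow(2)\Leftrightarrow(3)$ in the Henselian case. Your version reproves most of that lemma using other results of \cite{jacob-wadsworth:div-alg-hensel-field} already imported into the paper (Theorems~\ref{thm:inert-scal-ext-hensel} and~\ref{thm:hensel-ILSP}), but---as you yourself flag---you still fall back on \cite[\S5]{jacob-wadsworth:div-alg-hensel-field} for the defectlessness step in $(1)\Rightarrow(3)$. So both proofs ultimately rest on the same source; yours is more explicit about which pieces of that source are doing the work, at the cost of being longer and leaving the defectlessness claim as a black-box citation in any case.
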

\begin{proof}
  Since $D^h$ is an immediate extension of $D$ by Theorem \ref{thm:div-alg-henselization},
  $(\ref{item:properties})$ holds for $D$ iff it holds for $D^h$.
  Therefore, the Proposition follows from \cite[Lemma 5.1]{jacob-wadsworth:div-alg-hensel-field},
  which shows the equivalence of $(1)$, $(2)$ and $(3)$ for $D^h$.
\end{proof}

\begin{defn}
  \label{def:inert-split}
Let $[D]\in\VBr(F)$.
We say $D$ is \emph{inertially split} if the conditions of Proposition \ref{prop:charac-inert-split} are satisfied.
Define
\[ \SBr(F):=\sett{[D]\in\VBr(F)}{\text{$D$ is inertially split}}\subseteq \VBr(F). \]
\end{defn}

\begin{remark} \mbox{} 
\label{rem:inert-split}

(1) $\IBr(F)\subseteq \SBr(F)\subseteq \VBr(F)$. 

(2) In general $\SBr(F)$ is not a subgroup of $\Br(F)$.
If $F$ is Henselian, then $\SBr(F)=\Br(F_{nr}/F)$, which is a subgroup of $\Br(F)$. 

(3) If $D\in\calD(F)$ is inertially split, then, by Proposition \ref{prop:theta-surj}, 
$Z(\bar D)$ is abelian over $\bar F$.
Moreover if the valuation is discrete, then $Z(\bar D)$ is cyclic over $\bar F$. 

(4) Let $F$ be a field with a discrete valuation $v$ such that the residue field is perfect.
It is known that $\Br(F^h_{nr})$ 
is trivial (cf. \cite[Chapter X, \S 7, Example~b)]{serre:local-fields}).
Therefore $\SBr(F)=\VBr(F)$.
Moreover if $F$ is Henselian, then $\SBr(F)=\Br(F)$.
\end{remark}

The following Theorem is due to Jacob and Wadsworth.

\begin{theorem}
  \label{theorem:inert-split-decomposition}
Let $D\in\calD(F)$ be inertially split.
There exist $I,N\in\calD(F^h)$ with $I$ inertial and $N$ nicely semiramified, 
such that $D^h\sim I\otimes_F N$.
Furthermore, for any such $I$ and $N$~:
\begin{enumerate}
  \item $Z(\bar D)\cong\bar N$.
  \item $\bar D  \cong \bar I_{\bar N}$. 
  \item $\Gamma_D=\Gamma_N$
  \item $\ind D = [\bar N:\bar F]\ind\bar I_{\bar N}$.
  \item $\exp D^h = \lcm(\exp\Gal(\bar N/\bar F),\exp\bar I)$.
\end{enumerate}
\end{theorem}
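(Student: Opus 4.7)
The plan is to reduce to the Henselian case, construct the nicely semiramified factor $N$ so its residue captures $Z(\bar D)$, and then argue that the complementary factor $I$ is inertial; this last step is the main obstacle.

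First I would pass to the Henselization. By Theorem~\ref{thm:div-alg-henselization}, $D^h$ is immediate over $D$, so $\bar D=\bar{D^h}$ and $\Gamma_D=\Gamma_{D^h}$, and $D$ is inertially split precisely when $D^h$ is. Every quantity in the statement refers only to $D^h$, $\bar D$, or $\Gamma_D$, so I may assume $F$ is Henselian and $D=D^h$.

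Second I would construct $N$. Put $\wt N:=Z(\bar D)$; this is abelian Galois over $\bar F$ by Proposition~\ref{prop:theta-surj}, and by Proposition~\ref{prop:charac-inert-split} the map $\theta_D$ is an isomorphism, so $\Gal(\wt N/\bar F)\cong\Gamma_D/\Gamma_F$ embeds into $\Gamma_F/m\Gamma_F$ with $m=\exp\Gal(\wt N/\bar F)$. Theorem~\ref{thm:hensel-ILP} supplies an inertial Galois lift $L/F$ of $\wt N/\bar F$, and Example~\ref{ex:NSR} then produces a nicely semiramified $N\in\calD(F)$ with inertial maximal subfield $L$, residue $\bar N=\wt N$, value group $\Gamma_N=\Gamma_D$, $\ind N=[\wt N:\bar F]$, and $\exp N=m$. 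The NSR construction should moreover be carried out so that $\theta_N=\theta_D$ as maps $\Gamma_D/\Gamma_F\to\Gal(\wt N/\bar F)$, which is possible by choosing the parameters $\gamma_i\in\Gamma_F$ of Example~\ref{ex:NSR} compatibly with $\theta_D$.

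Third I would set $I$ to be the underlying division algebra of $D\otimes_F\op{N}$. Since $F$ is Henselian, $\SBr(F)=\Br(F_{nr}/F)$ is a subgroup of $\Br(F)$ (Remark~\ref{rem:inert-split}(2)), so $[I]\in\SBr(F)$, meaning $I$ is inertially split. By Proposition~\ref{prop:charac-inert-split} and Remark~\ref{rem:inertial}, $I$ is inertial precisely when $Z(\bar I)=\bar F$, i.e.\ when $\theta_I$ is trivial. Establishing this is the \emph{crux of the argument}: by the additivity of the $\theta$-invariant on $\SBr(F)$ one expects $\theta_I=\theta_D\theta_N^{-1}$, which is trivial by the construction of $N$; verifying this additivity rigorously (e.g.\ via the cohomological description of $\Br(F_{nr}/F)$ and the explicit form of $\theta$) is the technical heart of the proof.

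Finally I would verify (1)--(5) for any decomposition $D^h\sim I\otimes N$ with $I$ inertial and $N$ nicely semiramified. Morandi's Theorem~\ref{thm:morandi} applied to $I\otimes_F N$ (with $I$ inertial and $\bar N$ a field, so $\bar I\otimes_{\bar F}\bar N$ is central simple over $\bar N$) gives $\bar{I\otimes_F N}\cong\bar I\otimes_{\bar F}\bar N$ and $\Gamma_{I\otimes_F N}=\Gamma_N$, yielding (1), (2), and (3). For (4), the fundamental equality $[D:F]=[\bar D:\bar F]\cdot|\Gamma_D:\Gamma_F|$ (by defectlessness) combined with $|\Gamma_D:\Gamma_F|=[\wt N:\bar F]$ and $[\bar D:\bar F]=[\bar D:\bar N]\cdot[\bar N:\bar F]$ gives $\ind D=[\bar N:\bar F]\ind\bar I_{\bar N}$. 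For (5), from $[D]=[I]+[N]$ one gets $\exp D\mid\lcm(\exp\bar I,\exp\Gal(\bar N/\bar F))$ using $\exp I=\exp\bar I$ (Theorem~\ref{thm:alpha-beta-maps}) and $\exp N=m$ from Example~\ref{ex:NSR}; the reverse divisibility comes from restricting $[D]$ to an inertial splitting field of $\bar I$ (which kills the inertial part) and to a suitable tamely totally ramified extension of $F$ (which kills the semiramified part).
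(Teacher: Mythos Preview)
The paper does not prove this theorem on its own: it simply cites Jacob--Wadsworth \cite[Lemma 5.14 and Theorem 5.15]{jacob-wadsworth:div-alg-hensel-field} for the Henselian case and then observes that the non-Henselian statements (1)--(4) follow because $D^h$ is immediate over $D$. So what you are really doing is reconstructing the Jacob--Wadsworth argument, and your overall architecture---reduce to Henselian, build $N$ from $Z(\bar D)$ via Example~\ref{ex:NSR}, set $I$ to be the underlying division algebra of $D\otimes_F\op{N}$, and identify the additivity of $\theta$ on $\SBr(F)$ as the crux for proving $I$ inertial---is exactly the right shape.

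There is, however, a genuine gap in your final verification step. You invoke Theorem~\ref{thm:morandi} on $I\otimes_F N$, claiming that $\bar I\otimes_{\bar F}\bar N$ being central simple over $\bar N$ is enough. It is not: condition~(2) of Morandi's theorem requires $\bar I\otimes_{\bar F}\bar N$ to be a \emph{division algebra}, and this can fail. Indeed, the theorem only asserts $\bar D\cong\bar I_{\bar N}$, the underlying division algebra of $\bar I\otimes_{\bar F}\bar N$, not $\bar I\otimes_{\bar F}\bar N$ itself. When $\bar I\otimes_{\bar F}\bar N$ is not a division algebra, $I\otimes_F N$ is not a division algebra either, so it is not $D$ and you cannot read off $\bar D$ or $\Gamma_D$ from Morandi's conclusion. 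A correct route for (1)--(4) passes instead through an inertial maximal subfield $L$ of $N$: since $L$ splits $N$ one has $D_L\sim I_L$, and Theorem~\ref{thm:inert-scal-ext-hensel} and Corollary~\ref{cor:inert-scal-ext-hensel} then compare $\bar{D_L}$ with $\bar D$ and $\bar{I_L}$ with $\bar I_{\bar L}=\bar I_{\bar N}$; combined with the defectlessness of $D$ and the isomorphism $\theta_D$ this yields the statements. Your argument for the reverse divisibility in~(5) is also only a sketch: ``restricting to an inertial splitting field of $\bar I$'' and ``a suitable tamely totally ramified extension'' each need to be made precise, and one must check that these restrictions really isolate $\exp N$ and $\exp I$ respectively.
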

\begin{proof}
This is \cite[Lemma 5.14]{jacob-wadsworth:div-alg-hensel-field} 
and \cite[Theorem 5.15]{jacob-wadsworth:div-alg-hensel-field}.
The statements (1)--(4) are obtained for $D^h$ first
and then follow for $D$, since $D^h$ is an immediate extension of $D$.
\end{proof}

\begin{remark} \mbox{}
\label{rem:inert-split-index}
(1)
We can not draw back the information on $\exp D^h$ to~$D$,
except that $\exp D^h\mid\exp D.$

(2) The statements $(1)$--$(3)$ in Theorem \ref{theorem:inert-split-decomposition} imply
\begin{equation*}
\ind D = [Z(\bar D):\bar F]\ind\bar D,
\end{equation*}
\ie $\ind D$ can be expressed in terms of $\bar D$ only.
\end{remark}

In general, if $I\in\calD(F)$ is inertial and $N\in\calD(F)$ is nicely semiramified,
we do not have $[I\otimes_F N]\in\VBr(F)$.
The next proposition gives a criterion when $[I\otimes_F N]\in\VBr(F)$.
In this case also $[I\otimes_F N]\in\SBr(F)$.
Moreover, in a special case we get $\exp D=\exp D^h$, so that Theorem \ref{theorem:inert-split-decomposition} can be applied to compute $\exp D$.

\begin{prop}
  \label{prop:N-I-valued}
Suppose $I\in\calD(F)$ is inertial and
$N\in\calD(F)$ is nicely semiramified with inertial maximal subfield $K$.
Let $D\in\calD(F)$ be the underlying division algebra of $N\otimes_F I$.
  \begin{enumerate}
\item If $\ind I_K=\ind \bar I_{\bar K}$,
then $[D]\in\SBr(F)$.
\item If further $\exp N=\exp\Gal(\bar N/\bar F)$
  and $\exp I=\exp\bar I$,
  then 
\[ \exp D=\exp D^h=\lcm(\exp N,\exp I). \]
  \end{enumerate}
\end{prop}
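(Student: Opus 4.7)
The plan is to pass to the Henselization, apply Theorem~\ref{theorem:inert-split-decomposition} to compute the index of $D^h$, and match it with an upper bound for $\ind D$ obtained by scalar extension to the inertial maximal subfield $K\subseteq N$.

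For part~(1) I work first over $F^h$. By Remark~\ref{rem:NSR}(4), $N^h$ is nicely semiramified, $I^h$ is still inertial, and each is split by an inertial extension of $F^h$ (namely $K^h$ for $N^h$ and $F^h_{nr}$ for $I^h$). Hence the underlying division algebra $E$ of $N^h\otimes_{F^h}I^h$ lies in $\SBr(F^h)$, and Theorem~\ref{theorem:inert-split-decomposition} applied to the decomposition $E\sim I^h\otimes_{F^h}N^h$ yields
\[
 \ind E = [\bar N:\bar F]\cdot\ind\bar I_{\bar N}.
\]
Since $[E]=[D^h]$ in $\Br(F^h)$, this equals $\ind D^h$, which is at most $\ind D$. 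For the opposite inequality I exploit that $K$ is a maximal subfield of $N$ and hence splits $N$; so $D\otimes_F K\sim I\otimes_F K$, giving $D_K\cong I_K$. The standard bound $\ind D\mid [K:F]\cdot\ind D_K$ combined with the hypothesis $\ind I_K=\ind\bar I_{\bar K}=\ind\bar I_{\bar N}$ (using $\bar K=\bar N$) then produces
\[
 \ind D\leq [K:F]\cdot\ind I_K = [\bar N:\bar F]\cdot\ind\bar I_{\bar N}.
\]
The two inequalities force $\ind D=\ind D^h$, so $D^h$ is a division algebra; since $D^h\cong E$ is inertially split, $[D]\in\SBr(F)$ by Definition~\ref{def:inert-split}.

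For part~(2) two elementary bounds sandwich $\exp D$. From $[D]=[N][I]$ in $\Br(F)$ one gets $\exp D\mid\lcm(\exp N,\exp I)$. Conversely $\exp D^h\mid\exp D$ because $D^h$ is a scalar extension of $D$, and Theorem~\ref{theorem:inert-split-decomposition}(5) applied to $D^h$, using the decomposition $N^h\otimes_{F^h}I^h$ already in hand, gives
\[
 \exp D^h=\lcm(\exp\Gal(\bar N/\bar F),\exp\bar I).
\]
The two hypotheses $\exp N=\exp\Gal(\bar N/\bar F)$ and $\exp I=\exp\bar I$ rewrite the right-hand side as $\lcm(\exp N,\exp I)$, so the two bounds pinch and yield $\exp D=\exp D^h=\lcm(\exp N,\exp I)$.

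The main obstacle is the bound $\ind D\leq [\bar N:\bar F]\cdot\ind\bar I_{\bar N}$ in part~(1), since at that point $D$ is not yet known to be valued. The key idea is to use the inertial maximal subfield $K$ of $N$ as a scalar-extension field for $D$---even though $K$ need not embed into $D$---and to exploit that $K$ already splits the $N$-factor of the tensor product $N\otimes_F I$. Once $D^h$ has been identified with $E$, part~(2) is essentially formal, because the Brauer-group bound from above and the Henselization bound from below both lock onto the value predicted by Theorem~\ref{theorem:inert-split-decomposition}(5).
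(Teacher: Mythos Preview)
Your proof is correct and follows essentially the same route as the paper's: pass to the Henselization, use that $\SBr(F^h)$ is a group so the underlying division algebra of $N^h\otimes_{F^h}I^h$ is inertially split, read off $\ind D^h$ from Theorem~\ref{theorem:inert-split-decomposition}, and then squeeze against the upper bound $\ind D\leq[K:F]\ind I_K$ coming from the fact that $K$ splits $N$; part~(2) is likewise the same sandwich argument as the paper's.
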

\begin{proof}
Suppose $\ind I_K=\ind\bar I_{\bar K}$.
Since $K$ splits $N$, $\ind D\leq [K:F]\ind I_K$.
$I^h$ is inertial, $N^h$ is nicely semiramified 
and $D^h\sim N^h\otimes_{F^h}I^h$.
Since $[I^h],[N^h]\in\SBr(F^h)$ and $\SBr(F^h)$ is a subgroup of $\Br(F^h)$,
we get $[D^h]\in\SBr(F^h)$.
Theorem \ref{theorem:inert-split-decomposition} yields 
$\ind D^h=[\bar K:\bar F]\ind\bar{I}_{\bar K}$.
Hence
$$ \ind D\leq [K:F]\ind I_K=[\bar K:\bar F]\ind\bar{I}_{\bar K}=\ind D^h\leq\ind D. $$
Therefore, $\ind D=\ind D^h$ and $D^h$ is a division algebra,
\ie $[D]\in\VBr(F)$.
Since $[D^h]\in\SBr(F^h)$, $[D]\in\SBr(F)$.

Now suppose that %
$\exp N=\exp\Gal(\bar N/F)$ and $\exp I=\exp\bar I$.
Theorem~\ref{theorem:inert-split-decomposition} implies
$$\exp D\leq\lcm(\exp N,\exp I)=\lcm(\exp\Gal(\bar N/F),\exp\bar I)=\exp D^h \leq \exp D ,$$
hence $\exp D = \exp D^h %
=\lcm(\exp N,\exp I)$.
\end{proof}

\subsection{Exponents of inertially split division algebras}
\label{sec:Expon-inert-split}

Like it is done in Remark \ref{rem:inert-split-index} for the index $\ind D$,
it is also desirable to compute the exponent $\exp D^h$ directly from data of $\bar D$,
without knowing $\bar I$.
Example~\ref{ex:exp-D-explicit} will show that this is not possible in general,
but we will see in Corollary~\ref{cor:exp} 
that if $Z(\bar D)/\bar F$  is cyclic 
(\eg if the valuation is discrete),
then the formula 
$\exp D^h = \lcm(\exp\Gal(Z(\bar D)/\bar F),\exp A)$
holds for any $A\in\calD(\bar F)$ with $A_{Z(\bar D)}\cong \bar D$.
Hence, in the case that $Z(\bar D)/\bar F$ is cyclic, 
$\exp D^h$ is already determined by $\bar D$.

\begin{lemma}
\label{lemma:exp}
Let $K/k$ be any finite 
field extension of order $n$
and let $A,B\in\calD(k)$ with $A_K\cong B_K$.
Then $\lcm(n,\exp A)=\lcm(n,\exp B)$.
\end{lemma}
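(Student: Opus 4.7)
The plan is to translate the hypothesis $A_K\cong B_K$ into a statement about the Brauer group of $k$ and then exploit the elementary fact that any element split by $K$ has exponent dividing $n$.

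First I would rewrite the isomorphism $A_K\cong B_K$ as the equality $[A^K]=[B^K]$ in $\Br(K)$, which is equivalent to saying $[A\otimes_k B^{\op}]$ lies in the kernel of the restriction map $\res_{K/k}:\Br(k)\to\Br(K)$, i.e.\ $K$ is a splitting field of $C:=A\otimes_k B^{\op}$. Since $K$ splits $C$, we have $\ind C\mid [K:k]=n$, and since $\exp C\mid \ind C$ we get the key inequality
\[ \exp C \,\bigm|\, n. \]

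Next, working additively in $\Br(k)$ we have $[A]=[B]+[C]$, hence the order of $[A]$ divides $\lcm(\exp B,\exp C)$, i.e.\ $\exp A\mid \lcm(\exp B,\exp C)$. Taking $\lcm$ with $n$ and using that $\exp C\mid n$ yields
\[ \lcm(n,\exp A) \,\bigm|\, \lcm(n,\exp B,\exp C) = \lcm(n,\exp B). \]
Exchanging the roles of $A$ and $B$ (the hypothesis and the conclusion are both symmetric in $A,B$) gives the reverse divisibility, and therefore equality.

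I do not expect a real obstacle here: the only nontrivial ingredient is the standard fact that an element of $\Br(k)$ split by a degree-$n$ extension has exponent dividing $n$, which is immediate from $\exp\mid\ind$ together with $\ind\mid [K:k]$ for any splitting field $K$. The rest is a two-line manipulation of divisibilities in $\Br(k)$.
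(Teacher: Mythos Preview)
Your proof is correct and follows essentially the same route as the paper: both pass from $A_K\cong B_K$ to $[A\otimes_k B^{\op}]\in\Br(K/k)$, deduce that its exponent divides $n$, use $[A]=[B]+[A\otimes_k B^{\op}]$ to get $\lcm(n,\exp A)\mid\lcm(n,\exp B)$, and finish by symmetry. The only cosmetic difference is that you reach $\exp C\mid n$ via $\ind C\mid n$ together with $\exp\mid\ind$, whereas the paper invokes directly that elements of $\Br(K/k)$ have exponent dividing $[K:k]$.
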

\begin{proof}
Since $A_K\cong B_K$, we have $[A\otimes_k B^{op}]\in\Br(K/k)$.
Hence $\exp(A\otimes_k B^{op})\mid[K:k]$.
Since $\Br(k)$ is an abelian group, 
$A = (A\otimes_k B^{op}) \otimes_k B$
implies that
$\exp A \mid \lcm(\exp(A\otimes_k B^{op}),\exp B) \mid \lcm(n,\exp B)$.
Hence $\lcm(n,\exp A)$ divides $\lcm(n, \exp B)$
and the assertion follows by symmetry reasons.
\end{proof}

\begin{cor}
\label{cor:exp}
Let $D\in\calD(F)$ be inertially split.
If $Z(\bar D)/\bar F$ is cyclic,
then $\exp D^h = \lcm([Z(\bar D):\bar F],\exp A)$
for any $A\in\calD(\bar F)$ with $\bar D\cong A_{Z(\bar D)}$.
\end{cor}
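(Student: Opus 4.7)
The plan is to combine Theorem \ref{theorem:inert-split-decomposition} with Lemma \ref{lemma:exp} directly. First, since $D$ is inertially split, I would invoke Theorem \ref{theorem:inert-split-decomposition} to obtain $I,N\in\calD(F^h)$ with $I$ inertial and $N$ nicely semiramified such that $D^h\sim I\otimes_{F^h} N$, together with the identifications $\bar N\cong Z(\bar D)$, $\bar D\cong\bar I_{\bar N}$, and the exponent formula
\[
\exp D^h=\lcm(\exp\Gal(\bar N/\bar F),\exp\bar I).
\]

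Next, since $Z(\bar D)/\bar F$ is assumed cyclic, so is $\bar N/\bar F$, and therefore $\exp\Gal(\bar N/\bar F)=[\bar N:\bar F]=[Z(\bar D):\bar F]$. Writing $n:=[Z(\bar D):\bar F]$, the formula reduces to $\exp D^h=\lcm(n,\exp\bar I)$.

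Finally, I would use the hypothesis on $A$: from $\bar D\cong A_{Z(\bar D)}$ and $\bar D\cong\bar I_{Z(\bar D)}$ one gets $\bar I_{Z(\bar D)}\cong A_{Z(\bar D)}$, where both $\bar I$ and $A$ lie in $\calD(\bar F)$. Applying Lemma \ref{lemma:exp} with $k=\bar F$ and $K=Z(\bar D)$ (so $[K:k]=n$) yields $\lcm(n,\exp\bar I)=\lcm(n,\exp A)$. Substituting back gives
\[
\exp D^h=\lcm([Z(\bar D):\bar F],\exp A),
\]
as required.

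There is no real obstacle here; the statement is essentially a bookkeeping exercise that isolates the contribution of $\bar I$ inside the lcm so that it can be replaced by any other pre-image $A$ of $\bar D$ under scalar extension to $Z(\bar D)$. The mild point worth highlighting is that Lemma \ref{lemma:exp} only controls $\exp\bar I$ modulo divisors of $n$, which is precisely why cyclicity of $Z(\bar D)/\bar F$ is needed: it forces the Galois-exponent factor in Theorem \ref{theorem:inert-split-decomposition} to equal $n$, thereby absorbing exactly that ambiguity. Without cyclicity, $\exp\Gal(\bar N/\bar F)$ could be a proper divisor of $n$ and the substitution via Lemma \ref{lemma:exp} would no longer be valid, consistent with the announced Example \ref{ex:exp-D-explicit}.
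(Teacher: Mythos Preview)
Your proof is correct and follows exactly the approach the paper intends: the corollary is placed immediately after Lemma~\ref{lemma:exp} precisely because it is the combination of that lemma with the exponent formula from Theorem~\ref{theorem:inert-split-decomposition}, using cyclicity to identify $\exp\Gal(\bar N/\bar F)$ with $[Z(\bar D):\bar F]$. Your closing remark about why cyclicity is essential (and how Example~\ref{ex:exp-D-explicit} witnesses the failure otherwise) is also on point.
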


If $Z(\bar D)/\bar F$ is a cyclic extension of global fields we are able to calculate $\exp D^h$ from the local data of $\bar D$,
namely the local degrees of $Z(\bar D)/\bar F$ and the local invariants (or local indices) of $\bar D$.

\begin{lemma}
  \label{lem:ind-local-scal-ext}
Let $K/k$ be an extension of local fields, $[K:k]=n$,
and let $A\in\calA(k)$, $A_K=D$.
\begin{enumerate}
\item $\ind D=\frac{\ind A}{(n,\ind A)}$.
\item $n\ind D=\lcm(n,\ind A)$.
\item Let $l$ be the greatest divisor of $n$ prime to $\ind D$.
Then $$\frac{n\ind D}{l} \mid \ind A \mid n\ind D.$$
\item If $n$ is a $p$-power, $p$ prime, and $p\mid\ind D$,
then $\ind A=n\ind D.$
\end{enumerate}
\end{lemma}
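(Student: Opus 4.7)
The plan is to reduce all four statements to an invariant computation in $\Q/\Z$ via the isomorphism $\Br(k)\cong\Q/\Z$ for the local field $k$. Write $\inv A=a/m+\Z$ with $(a,m)=1$, where $m=\exp A=\ind A$. By equation (\ref{eq:loc-scal-ext}), $\inv A^K=na/m+\Z$, and its order in $\Q/\Z$ equals $m/(m,na)$. Since $(a,m)=1$, a short argument shows $(m,na)=(m,n)$, so $\exp A^K=\ind A/(n,\ind A)$. Because $K$ is local, $\ind D=\exp A^K$, which proves (1). Statement (2) is then the standard identity $n\cdot\ind A/(n,\ind A)=\lcm(n,\ind A)$.

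For (3), set $d=\ind D$ and $e=(n,\ind A)$; by (1) we have $\ind A=de$. The divisibility $\ind A\mid nd$ is immediate from $e\mid n$. For the other direction, the key observation is that the relation $e=(n,de)$ forces $\ord_p(e)=\ord_p(n)$ at every prime $p$ dividing $d$ (in the alternative case $\ord_p(n)>\ord_p(d)+\ord_p(e)$ one would get $\ord_p(d)=0$). From this one reads off that $n/l=\prod_{p\mid d}p^{\ord_p(n)}$ divides $e$, and multiplying by $d$ yields $nd/l\mid de=\ind A$. Statement (4) is then an immediate corollary: if $n=p^k$ with $p\mid d$, the greatest divisor of $n$ prime to $d$ is $l=1$, and (3) collapses to $\ind A=nd$.

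The only mildly subtle step is the prime-by-prime analysis of $e=(n,de)$ in (3); everything else is either a direct invariant computation or an elementary number-theoretic manipulation, so this is where I expect the minor obstacle to lie.
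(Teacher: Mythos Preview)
Your proof is correct and follows essentially the same route as the paper: both reduce (1) to the order computation of $n\cdot\inv A$ in $\Q/\Z$, get (2) from the gcd--lcm identity, and derive (4) from (3) with $l=1$. The only difference is in the proof of (3): the paper writes $\ind A=k\,\ind D$, observes $k=(n,k\ind D)$ forces $k\mid n$ and $(n/k,\ind D)=1$, hence $n/k\mid l$; your prime-by-prime analysis of $e=(n,de)$ reaches the same conclusion and is equally valid, just slightly less compact.
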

\begin{proof}
We have $\inv D=n\inv A$ from (\ref{eq:loc-scal-ext}).
Statement $(1)$ follows, since $\ind A=\exp A$ is the order of $\inv A$ in $\Q/\Z$.
$(2)$ is immediate from $(1)$, since $\lcm(n,\ind A)=\frac{n\ind A}{(n,\ind A)}$.
Let $l$ be the greatest divisor of $n$ prime to $\ind D$.
We clearly have $\ind D\mid\ind A\mid n\ind D$.
If $\ind A=k\ind D$, then $(1)$ shows $k=(n,\ind A)=(n,k\ind D)$.
Thus $k\mid n$ and $(\frac{n}{k},\ind D)=1$.
Therefore $\frac{n}{k}\mid l$,
hence $\frac{n}{l}\ind D\mid k\ind D=\ind A$,
and $(3)$ is proved.
$(4)$ follows from $(3)$, since $l=1$.
\end{proof}

\begin{cor}
\label{cor:ind-local-scal-ext}
Let $K/k$ be an extension of global fields, $[K:k]=n$,
and let $A\in\calA(k)$, $A_K=D$.
For each $w\in\Val(K)$ let $n_w=[K:k]_w$ 
and let $l_w$ be the greatest divisor of $n_w$ prime to $\ind D_w$.
Then
\begin{equation*}
  \lcm\bigl(\frac{n_w\ind D_w}{l_w}\bigr)_{w\in\Val(K)} \mid \ind A \mid
  \lcm\bigl(n_w\ind D_w\bigr)_{w\in\Val(K)} \mid n\ind D. 
\end{equation*}
\end{cor}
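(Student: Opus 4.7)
The plan is to reduce everything to the local statement of Lemma~\ref{lem:ind-local-scal-ext} and then reassemble via the global index formula (\ref{eq:global-index-exp}), namely $\ind A=\lcm_{v\in\Val(k)}\ind A_v$. The statement is a chain of three divisibilities, and each link is handled by matching one local fact with one global fact. I do not expect any real obstacle; the only point requiring care is that $K/k$ need not be Galois, so $n_w$ and $\ind D_w$ really depend on $w$ and not merely on $v=w|_k$. But the statement is formulated in terms of $w\in\Val(K)$, so this causes no trouble.

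First I would fix $v\in\Val(k)$ and $w\in\Val(K)$ with $w\mid v$, and observe that the local completion $(A_v)^{K_w}$ is the underlying algebra of $A\otimes_k K_w$, which by associativity of scalar extension coincides with $D_w$. Hence Lemma~\ref{lem:ind-local-scal-ext}, applied to the local extension $K_w/k_v$ of degree $n_w$ and the algebra $A_v$, yields
\[
  \frac{n_w\ind D_w}{l_w}\;\Bigm|\;\ind A_v\;\Bigm|\;n_w\ind D_w.
\]

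For the middle divisibility of the corollary, fix $v\in\Val(k)$ and pick any $w\mid v$. The local lemma gives $\ind A_v\mid n_w\ind D_w$, so in particular $\ind A_v$ divides $\lcm_{w'\in\Val(K)}(n_{w'}\ind D_{w'})$. Taking the lcm over $v$ and invoking (\ref{eq:global-index-exp}) then yields $\ind A\mid \lcm_w(n_w\ind D_w)$. For the leftmost divisibility, the local lemma gives $\frac{n_w\ind D_w}{l_w}\mid \ind A_{w|_k}$, and $\ind A_{w|_k}\mid \ind A$ by (\ref{eq:global-index-exp}); as this holds for every $w$, their lcm divides $\ind A$ as well.

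The rightmost divisibility $\lcm_w(n_w\ind D_w)\mid n\ind D$ is then purely formal: $n_w=[K:k]_w\mid [K:k]=n$ since local degrees divide the global degree, and $\ind D_w\mid \ind D$ by applying (\ref{eq:global-index-exp}) to $D$ over $K$. Hence $n_w\ind D_w\mid n\ind D$ for each $w$, and the lcm divides $n\ind D$. Combining the three chains gives the stated inequality.
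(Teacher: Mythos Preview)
Your approach is exactly the paper's: apply Lemma~\ref{lem:ind-local-scal-ext}(3) locally and reassemble via the global index formula~(\ref{eq:global-index-exp}). The first two divisibilities are argued correctly.

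The gap is in your justification of the rightmost divisibility $\lcm_w(n_w\ind D_w)\mid n\ind D$. You assert that $n_w=[K:k]_w$ divides $n=[K:k]$ ``since local degrees divide the global degree'', but this fails for non-Galois extensions. For example, take $k=\Q$, $K=\Q(\sqrt[3]{2})$, so $n=3$; at $p=5$ the polynomial $x^3-2$ factors as a linear times an irreducible quadratic, giving local degrees $1$ and $2$, and $2\nmid 3$. With $A=k$ trivial one has $\ind D=1$ and $\ind D_w=1$, so $\lcm_w(n_w\ind D_w)=\lcm_w(n_w)=6\nmid 3=n\ind D$. Thus the rightmost divisibility, as stated, is actually false without a Galois hypothesis. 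The paper's one-line proof does not address this point either; in every application in the paper $K/k$ is abelian (in fact cyclic), where $n_w\mid n$ holds because the decomposition group at $w$ is a subgroup of $\Gal(K/k)$. So your argument is fine once you add the hypothesis that $K/k$ is Galois, or restrict the conclusion to the first two divisibilities.
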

\begin{proof}
This follows from Lemma \ref{lem:ind-local-scal-ext} $(3)$ and the formula (\ref{eq:global-index-exp}).
\end{proof}

\begin{prop}
  \label{prop:exp-form-num-field}
Suppose that $\bar F$ is a global field.
Let $D\in\calD(F)$ be inertially split such that $Z(\bar D)/\bar F$ is cyclic
with $[Z(\bar D):\bar F]=n$.
For each $w\in\Val(Z(\bar D))$ let $n_w=[Z(\bar D):\bar F]_w$.
Then
\[
  \exp D^h = \lcm(n,n_w\ind\bar D_w)_{w\in\Val(Z(\bar D))},
\]
\end{prop}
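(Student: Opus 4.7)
The plan is to reduce the formula to \ref{cor:exp} and then recompute the right-hand side using the local-global principle for the exponent together with Lemma~\ref{lem:ind-local-scal-ext}(2). Write $K = Z(\bar D)$ and $k = \bar F$ for short. Since $D$ is inertially split, Theorem~\ref{theorem:inert-split-decomposition} gives $I, N \in \calD(F^h)$ with $I$ inertial and $\bar N \cong K$, such that $\bar D \cong \bar I_K$. Setting $A = \bar I \in \calD(k)$, we obtain $\bar D \cong A_K$, so Corollary~\ref{cor:exp} applies and yields
\[ \exp D^h = \lcm(n, \exp A). \]

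Because $k$ is a global field, $\exp A = \ind A = \lcm(\ind A_v)_{v \in \Val(k)}$ by formula~(\ref{eq:global-index-exp}). The task is therefore to show
\[ \lcm(n, \lcm(\ind A_v)_{v \in \Val(k)}) = \lcm\bigl(n,\, n_w \ind \bar D_w\bigr)_{w \in \Val(K)}. \]
For each $w \in \Val(K)$ lying over $v \in \Val(k)$, we have $A_v \in \calA(k_v)$ with $(A_v)_{K_w} = \bar D_w$ and $[K_w : k_v] = n_w$. Applying Lemma~\ref{lem:ind-local-scal-ext}(2) to this local scalar extension gives
\[ n_w \ind \bar D_w = \lcm(n_w, \ind A_v). \]

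Now I combine these identities. Since $n_w \mid n$ for every $w$, we have $\lcm(n, n_w) = n$, and so
\[ \lcm\bigl(n,\, n_w \ind \bar D_w\bigr)_{w \in \Val(K)} = \lcm\bigl(n,\, \lcm(n_w, \ind A_v)\bigr)_{w} = \lcm\bigl(n,\, \ind A_{w|_k}\bigr)_{w \in \Val(K)}. \]
As $w$ ranges over $\Val(K)$, the restriction $w|_k$ ranges over all of $\Val(k)$ (each $v \in \Val(k)$ admits at least one extension to $K$), so the inner lcm equals $\lcm(\ind A_v)_{v \in \Val(k)} = \exp A$. This gives
\[ \lcm\bigl(n,\, n_w \ind \bar D_w\bigr)_{w \in \Val(K)} = \lcm(n, \exp A) = \exp D^h, \]
as desired.

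The argument is essentially a bookkeeping exercise once the two ingredients are in place, so there is no serious obstacle. The only point requiring mild care is the passage from $\Val(K)$ to $\Val(k)$ in the final step: one has to note that every $v \in \Val(k)$ really is realised as $w|_k$ for some $w \in \Val(K)$, so no local component of $\exp A$ is lost when we sum over the places of $K$ instead of those of $k$.
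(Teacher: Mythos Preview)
Your proof is correct and follows essentially the same route as the paper: both obtain $\exp D^h = \lcm(n,\exp A)$ for some $A\in\calD(\bar F)$ with $A_K\cong\bar D$ (the paper cites Theorem~\ref{theorem:inert-split-decomposition} directly, you go via Corollary~\ref{cor:exp}), then expand $\exp A=\ind A$ via the local formula~(\ref{eq:global-index-exp}) and rewrite each local contribution using Lemma~\ref{lem:ind-local-scal-ext}(2) together with $n_w\mid n$. Your explicit remark that every $v\in\Val(k)$ arises as $w|_k$ for some $w\in\Val(K)$ is a point the paper leaves implicit.
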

\begin{proof}
Let $K=Z(\bar D)$.
By Theorem \ref{theorem:inert-split-decomposition},
there is an $A\in\calA(\bar F)$ such that
$A_K\cong\bar D$ and $\exp D^h = \lcm(n,\exp A)=\lcm(n,\ind A)$.
Since $n_w\mid n$ for all $w\in\Val(K)$, 
\[
\begin{split}
\exp D^h &= \lcm(n,\lcm(\ind A_v)_{v\in\Val(\bar F)})
= \lcm(n,\lcm(n_w,\ind A_{w|_{\bar F}}))_{w\in\Val(K)} \\
&= \lcm(n,n_w\ind\bar D_w)_{w\in\Val(K)}
\end{split}
\]
by Lemma \ref{lem:ind-local-scal-ext} $(2)$.
\end{proof}

\begin{ex}
  \label{ex:exp-D-explicit}
Let $k=\Q$ and $K=\Q(\sqrt{2},\sqrt{3})$, 
thus $G=\Gal(K/k)\cong\Zn{2}\times\Zn{2}$.
Let $A=(\frac{-1,-1}{\Q})$ be the rational quaternions.
Since $K$ is real, $A^K$ is a division algebra.
For any $2$-cocycle $c\in Z^2(G,\ug K)$ and $B=A\otimes_k (K/\Q,G,c)$
we have $B_K\cong A_K$.
If $(K/\Q,G,c)$ is a division algebra,
then $\exp(K/\Q,G,c)=4$, hence $\lcm(\exp G,\exp B)=\lcm(2,4)=4$.
But $\lcm(\exp G,\exp A)=\lcm(2,2)=2$.

To show that there is a $2$-cocycle $c\in Z^2(G,\ug K)$ such that $(K/\Q,G,c)$ is a division algebra,
we construct a division algebra $D$ with centre $\Q$ that contains a maximal subfield isomorphic to $K$.
Let $L=\Q(\alpha)$ for a root $\alpha$ of $x^4-4x^2+2$.
Then $[L:\Q]=4$ and $\Q(\sqrt{2})\subset L$.
Further, $L/K$ is cyclic, $\Gal(L/K)=\gen{\phi}$ with
$\phi(\alpha)=\alpha^3-3\alpha$, and $\Fix(\phi^2)=\Q(\sqrt{2})$.
Let $v$ be the $3$-adic valuation on $\Q$.
$v$ is inertial in $L/\Q$,
since  $x^4-4x^2+2$ is irreducible modulo $3$,
and $3$ is a prime element with respect to $v$.
Therefore, $D=(L/\Q,\phi,3)$ is a division algebra.
Let $z\in\ug D$ be an element such that
$\iota_z|_L=\phi$ and $z^4=3$.
Then $z^2\in C_D(\Q(\sqrt{2}))$, thus $\Q(\sqrt{2},z^2)$ is a subfield of $D$.
Since $(z^2)^2=3$, $\Q(\sqrt{2},z^2)$ is isomorphic to $K$.
\end{ex}

\subsection{Subfields of inertially split division algebras}
\label{sec:Subf-locally-inert}

We first investigate the residue fields of subfields of inertially split division algebras.

\begin{lemma}
  \label{lem:deg-form}
Let $D\in\calD(F)$ be inertially split.
Any subfield $M$ of $\bar D$ over $\bar F$ is maximal 
if and only if $[M:\bar F]=\ind D$.
\end{lemma}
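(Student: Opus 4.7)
The key ingredient is Remark \ref{rem:inert-split-index}(2), which gives $\ind D=[Z(\bar D):\bar F]\ind\bar D$, together with the fact that $Z(\bar D)/\bar F$ is Galois (Proposition \ref{prop:theta-surj} plus the separability statement in Proposition \ref{prop:charac-inert-split}(3)). The plan is to reduce the claim to the standard fact that a maximal subfield of $\bar D$ contains $Z(\bar D)$ and has degree $\ind\bar D$ over it.

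For the forward direction, I would note that any maximal subfield $M$ of $\bar D$ contains $Z(\bar D)$ (otherwise $M\cdot Z(\bar D)$ would be a strictly larger subfield of $\bar D$), and satisfies $[M:Z(\bar D)]=\ind\bar D$. Multiplying by $[Z(\bar D):\bar F]$ and invoking Remark \ref{rem:inert-split-index}(2) gives $[M:\bar F]=\ind\bar D\cdot[Z(\bar D):\bar F]=\ind D$.

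For the converse, given a subfield $M$ with $\bar F\subseteq M\subseteq \bar D$ and $[M:\bar F]=\ind D$, I would form the composite $M\cdot Z(\bar D)$ inside $\bar D$. Because $Z(\bar D)/\bar F$ is Galois, one has the compositum degree formula
\[
[M\cdot Z(\bar D):\bar F]=\frac{[M:\bar F][Z(\bar D):\bar F]}{[M\cap Z(\bar D):\bar F]}.
\]
On the other hand $M\cdot Z(\bar D)$ is a subfield of $\bar D$ containing the centre, so $[M\cdot Z(\bar D):Z(\bar D)]\leq\ind\bar D$, and therefore $[M\cdot Z(\bar D):\bar F]\leq\ind\bar D\cdot[Z(\bar D):\bar F]=\ind D$. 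Substituting $[M:\bar F]=\ind D$ into the compositum formula yields $[Z(\bar D):\bar F]\leq[M\cap Z(\bar D):\bar F]$, forcing $Z(\bar D)\subseteq M$. Then $[M:Z(\bar D)]=\ind D/[Z(\bar D):\bar F]=\ind\bar D$, which makes $M$ a maximal subfield of $\bar D$.

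There is no real obstacle here; the only subtlety worth flagging is making sure the composite degree formula applies, which is exactly where the Galois property of $Z(\bar D)/\bar F$ (guaranteed by the inertial-split hypothesis via Propositions \ref{prop:theta-surj} and \ref{prop:charac-inert-split}) is used.
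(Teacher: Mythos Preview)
Your proof is correct and follows essentially the same route as the paper: both reduce to the standard fact that maximal subfields of $\bar D$ contain $Z(\bar D)$ and have degree $\ind\bar D$ over it, and then invoke Remark~\ref{rem:inert-split-index}(2). The paper's proof is a terse two-liner; your argument is more careful in the converse direction, where you explicitly verify $Z(\bar D)\subseteq M$ via the compositum formula (the paper leaves this implicit, presumably relying on the observation that any subfield of $\bar D$ extends to a maximal one, which by the forward direction already has degree $\ind D$ over $\bar F$).
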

\begin{proof}
A subfield $M$ of $\bar D$ over $\bar F$ is maximal if and only if $[M:Z(\bar D)]=\ind\bar D$.
Hence by Remark \ref{rem:inert-split-index},
$M$ is maximal if and only if $[M:\bar F]=\ind D$.
\end{proof}

\begin{lemma}
\label{lemma:L-bar-form}
Let $D\in\calD(F)$ be inertially split and let $L$ be any subfield of $D$ with $F\subseteq L$.
  Then
\begin{enumerate}
\item $[Z(\bar D) : \bar L \cap Z(\bar D)] \geq [\Gamma_L:\Gamma_F]$,
\item $[\bar L Z(\bar D) : \bar F] \geq [L:F]$.
\end{enumerate}
\end{lemma}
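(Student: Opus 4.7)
The plan is to exploit the fundamental homomorphism $\theta_D: \Gamma_D/\Gamma_F \to \Gal(Z(\bar D)/\bar F)$, which by Proposition \ref{prop:charac-inert-split} is an \emph{isomorphism} for inertially split $D$, together with the standard natural-irrationalities theorem applied to the (necessarily Galois, by Proposition \ref{prop:theta-surj}) extension $Z(\bar D)/\bar F$.

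For (1), I would restrict $\theta_D$ to the subgroup $\Gamma_L/\Gamma_F \subseteq \Gamma_D/\Gamma_F$; this restriction is injective because $\theta_D$ is. The key observation is that for every $a \in L^\times$, the inner automorphism $\iota_a$ of $D$ fixes $L$ pointwise (since $L$ is commutative), so the induced residue automorphism $\bar\iota_a$ fixes $\bar L$ pointwise, and in particular fixes $\bar L \cap Z(\bar D)$. Hence the image $\theta_D(\Gamma_L/\Gamma_F)$ is contained in $\Gal(Z(\bar D)/(\bar L \cap Z(\bar D)))$, whose order by Galois theory equals $[Z(\bar D) : \bar L \cap Z(\bar D)]$. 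Comparing orders yields (1).

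For (2), since $Z(\bar D)/\bar F$ is Galois, the composite extension $\bar L \cdot Z(\bar D)/\bar L$ is Galois with group canonically isomorphic to $\Gal(Z(\bar D)/(\bar L \cap Z(\bar D)))$, giving the identity $[\bar L Z(\bar D) : \bar L] = [Z(\bar D) : \bar L \cap Z(\bar D)]$. Multiplying by $[\bar L : \bar F]$ and invoking (1), I obtain
\[ [\bar L Z(\bar D) : \bar F] \;\geq\; [\bar L : \bar F]\cdot[\Gamma_L : \Gamma_F]. \]
To reach $[L:F]$ on the right-hand side, I use that $D$ is defectless (a consequence of inertial splitness) and hence so is the intermediate subfield $L$ by Remark \ref{rem:defectless-interm-div-ring}. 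Defectlessness turns the fundamental inequality into the equality $[\bar L : \bar F]\cdot[\Gamma_L : \Gamma_F] = [L:F]$, which finishes (2).

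There is no serious obstacle once the three ingredients supplied by inertial splitness are on the table: injectivity of $\theta_D$, Galoisness (hence separability) of $Z(\bar D)/\bar F$, and defectlessness of intermediate extensions. The only point that deserves emphasis is that $\bar\iota_a$ fixes $\bar L$ for every $a \in L^\times$, \emph{not merely} for $a \in U_L$; this is what lets us feed the entire subgroup $\Gamma_L/\Gamma_F$ (rather than just its zero element) into $\theta_D$ in the proof of (1).
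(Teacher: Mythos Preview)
Your proof is correct and follows essentially the same approach as the paper: the paper likewise restricts the isomorphism $\theta_D$ to $\Gamma_L/\Gamma_F$, observes that its image fixes $\bar L\cap Z(\bar D)$ (because $\iota_a$ is trivial on the commutative $L$), and then combines natural irrationalities for the Galois extension $Z(\bar D)/\bar F$ with defectlessness of $L$ via Remark~\ref{rem:defectless-interm-div-ring}. The only cosmetic difference is that the paper records the exact formula $[\bar L Z(\bar D):\bar F]=[L:F]\cdot\frac{[Z(\bar D):\bar L\cap Z(\bar D)]}{[\Gamma_L:\Gamma_F]}$, thereby noting that (1) and (2) are in fact equivalent, whereas you simply derive (2) from (1).
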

\begin{proof}
Let $H=\theta(\Gamma_L/\Gamma_F)$.
Since $\theta$ is an isomorphism,
$H$ is a subgroup of $\Gal(Z(\bar D)/\bar F)$ of order $[\Gamma_L:\Gamma_F]$.
Any element from $H$ is of the form $\bar\iota_a|_{Z(\bar D)}$ for some $a\in L$.
Since $L$ is a field, $\iota_a$ is the identity on $L$, 
hence $\bar\iota_a|_{Z(\bar D)}$ is the identity on $\bar L \cap Z(\bar D)$
and $\bar L \cap Z(\bar D) \subseteq \Fix(H)$.
It follows $[Z(\bar D):\bar L \cap Z(\bar D)]\geq[Z(\bar D):\Fix(H)]=[\Gamma_L:\Gamma_F]$
and $(1)$ is shown.

Since $Z(\bar D)$ is Galois over $\bar F$,
$\bar L$ and $Z(\bar D)$ are linearly disjoint over $\bar L \cap Z(\bar D)$,
\ie $[\bar L Z(\bar D) : \bar L\cap Z(\bar D)]=[\bar L:\bar L \cap Z(\bar D)][Z(\bar D):\bar L \cap Z(\bar D)]$.
Hence $[\bar L Z(\bar D) : \bar F] = [\bar L:\bar F][Z(\bar D):\bar L \cap Z(\bar D)]$.
By Remark \ref{rem:defectless-interm-div-ring} $L$ is defectless over $F$,
\ie $[L:F]=[\Gamma_L:\Gamma_F] [\bar L :\bar F]$.
Therefore $[\bar L Z(\bar D) : \bar F] = [L:F]\frac{[Z(\bar D):\bar L \cap Z(\bar D)]}{[\Gamma_L:\Gamma_F]}$.
This formula shows that $(1)$ and $(2)$ are equivalent and we are done
because $(1)$ was already proved.
\end{proof}

\begin{cor}
  \label{cor:L-bar-form}
Let $D\in\calD(F)$ be inertially split and let $L$ be a maximal subfield of $D$.
Then equality holds in Lemma~\ref{lemma:L-bar-form} and $\bar L Z(\bar D)$ is a maximal subfield of $\bar D$.
\end{cor}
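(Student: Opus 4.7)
The plan is to squeeze the quantity $[\bar L Z(\bar D):\bar F]$ between the lower bound just proved in Lemma~\ref{lemma:L-bar-form}(2) and an upper bound coming from the fact that $\bar L Z(\bar D)$ sits as a subfield of $\bar D$ over $\bar F$.

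First, I will use Lemma~\ref{lem:deg-form} together with Remark~\ref{rem:inert-split-index}(2). Since $\bar L Z(\bar D)$ is a subfield of $\bar D$ containing $\bar F$, it is contained in some maximal subfield of $\bar D$, so by Lemma~\ref{lem:deg-form}
\[
[\bar L Z(\bar D):\bar F] \leq \ind D.
\]
Because $L$ is a maximal subfield of $D$ we have $[L:F]=\ind D$, so combining this with Lemma~\ref{lemma:L-bar-form}(2) gives
\[
\ind D = [L:F] \leq [\bar L Z(\bar D):\bar F] \leq \ind D,
\]
which forces equality throughout. In particular, equality holds in part (2) of Lemma~\ref{lemma:L-bar-form}.

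Second, I will invoke the identity derived inside the proof of Lemma~\ref{lemma:L-bar-form}, namely
\[
[\bar L Z(\bar D):\bar F] = [L:F]\,\frac{[Z(\bar D):\bar L \cap Z(\bar D)]}{[\Gamma_L:\Gamma_F]},
\]
which showed that equality in (1) and equality in (2) are equivalent. Hence equality also holds in (1).

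Finally, since $[\bar L Z(\bar D):\bar F]=\ind D$ and $\bar L Z(\bar D)$ is a subfield of $\bar D$ over $\bar F$, Lemma~\ref{lem:deg-form} immediately yields that $\bar L Z(\bar D)$ is a maximal subfield of $\bar D$. There is no real obstacle here; the only subtlety is to notice that the inequality from Lemma~\ref{lemma:L-bar-form}(2) is an equality precisely because $L$ is maximal, and this hinges on the index formula $\ind D=[Z(\bar D):\bar F]\ind\bar D$ for inertially split algebras provided by Remark~\ref{rem:inert-split-index}(2).
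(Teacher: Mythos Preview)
Your proof is correct and follows essentially the same approach as the paper: sandwich $[\bar L Z(\bar D):\bar F]$ between $[L:F]$ (via Lemma~\ref{lemma:L-bar-form}(2)) and $\ind D$ (via Lemma~\ref{lem:deg-form}), then use $[L:F]=\ind D$ for maximal $L$. The paper's proof is just a terser version of yours; your explicit remark that equality in (2) yields equality in (1) via the displayed identity in the proof of Lemma~\ref{lemma:L-bar-form} is a welcome clarification the paper leaves implicit.
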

\begin{proof}
If $L$ is a maximal subfield of $D$, then $[L:F]=\ind D$.
Hence, by Lemma~\ref{lemma:L-bar-form} $(2)$ and Lemma~\ref{lem:deg-form},
$[L:F] \leq [\bar L Z(\bar D) : \bar F] \leq \ind D = [L:F]$
and equality holds in Lemma~\ref{lemma:L-bar-form}.
Again by Lemma~\ref{lem:deg-form}, $\bar L Z(\bar D)$ is a maximal subfield of $\bar D$.
\end{proof}

Before we can get to the main theorem,
we need 

\begin{prop}
\label{prop:normal-galois}
Let $K/k$ be a finite separable field extension and $D\in\calD(K)$.
If $D$ contains a maximal subfield that is normal over $k$,
then $D$ also contains a maximal subfield that is Galois over $k$.
\end{prop}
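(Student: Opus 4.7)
The plan is to reduce the statement to Saltman's lemma on $p$-algebras.

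If $\charak k = 0$, or more generally if $M/k$ is separable, then $M/k$ normal is automatically Galois and there is nothing to prove. So I will assume $p := \charak k > 0$ and that $M/k$ is inseparable. Let $M_s$ denote the separable closure of $k$ in $M$, so that $M/M_s$ is purely inseparable of some degree $p^r \geq 1$, and $K \subseteq M_s$ (since $K/k$ is separable). I claim $M_s/k$ is Galois: any $k$-embedding $\sigma : M_s \to \bar k$ extends to a $k$-embedding of $M$ into $\bar k$ whose image equals $M$ by normality of $M/k$, so $\sigma(M_s)$ is a separable $k$-subextension of $M$, hence contained in $M_s$, and equality follows by dimension. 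Being separable and normal, $M_s/k$ is Galois.

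Next I would pass to the centralizer $D' := C_D(M_s)$. By the double centralizer theorem $D'$ is a central simple algebra with centre $M_s$ of degree $[M:M_s] = p^r$, so $D'$ is a $p$-algebra with $M$ as a purely inseparable maximal subfield. Any maximal subfield $L'$ of $D'$ satisfies
\[
[L':K] = [L':M_s][M_s:K] = [M:M_s][M_s:K] = [M:K] = \ind D,
\]
so $L'$ is automatically a maximal subfield of $D$; moreover, if $L'/M_s$ is separable, then since $M_s/k$ is Galois, $L'/k$ is separable. The problem thus reduces to producing a separable maximal subfield $L'/M_s$ of the $p$-algebra $D'$ that is additionally normal over $k$.

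Existence of some separable (in fact cyclic) maximal subfield of a $p$-algebra over its centre is Albert's classical theorem. The main obstacle is to make this separable maximal subfield normal over the smaller field $k$, and this is precisely the lemma of Saltman from \cite{saltman:noncr-prod-small-exp} alluded to in the preface: using the action of $G := \Gal(M_s/k)$ on the Artin--Schreier (and, for $r \geq 2$, Witt-vector) data parametrising the cyclic $p$-power subextensions of $D'$, Saltman selects a datum whose $\F_p$-span is $G$-stable, and the corresponding cyclic $L'/M_s$ is then normal over $k$. Applied to our $D'$, this furnishes the desired Galois maximal subfield of $D$.
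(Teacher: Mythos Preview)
Your reduction up to the centralizer $D'=C_D(M_s)$ is correct and matches the paper's setup. The gap is in the last paragraph: invoking ``Saltman's lemma'' at this point is circular. Saltman's Lemma~3 in \cite{saltman:noncr-prod-small-exp} is precisely the $K=k$ case of the proposition you are proving. If you apply it to the $p$-algebra $D'$ with its own centre $M_s$ as base field, you obtain a maximal subfield Galois over $M_s$, not over $k$; to get normality over the smaller field $k$ you would need the general case with $M_s/k$ playing the role of $K/k$---which is exactly the statement under proof. Nor does this set up an induction: when $M_s=K$ you have $D'=D$ and no reduction has occurred. Your sketch about selecting a $G$-stable $\F_p$-span of Artin--Schreier or Witt data is a plausible idea, but it is not what Saltman's lemma says, and you have not carried it out.

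The paper's proof avoids this circularity by arguing directly rather than reducing to a smaller instance. Among all maximal subfields of $D$ normal over $k$, choose $L$ with $[L:k]_s$ maximal, and let $L_0$ be its separable part (so $K\subseteq L_0$). If the purely inseparable part $P$ is nontrivial, take $P'\subset P$ with $[P:P']=p$ and look at the degree-$p$ centralizer $D^*:=C_D(L_0P')$. Writing $D^*=[a,b)$ with $b\in P'$, one applies Saltman's swap lemma (Lemma~\ref{lem:p-alg-a-b}) to arrange $a'\in P'$, then Lemma~\ref{lem:p-alg-a-p} repeatedly to push $a'$ down to $k$. The resulting Artin--Schreier generator $\alpha$ with $\alpha^p-\alpha\in k$ yields a new maximal subfield $L_0P'(\alpha)=L_0P'\cdot k(\alpha)$ that is still normal over $k$ but has strictly larger separable degree, contradicting the choice of $L$. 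The separability of $K/k$ enters only through $K\subseteq L_0$; otherwise the argument is Saltman's verbatim.
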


For $K=k$ this proposition is \cite[Lemma 3]{saltman:noncr-prod-small-exp}
and the proof given there also handles the case that $K/k$ is finite separable.
Since some lemmas on $p$-algebras are required, 
it is deferred to the appendix (see \S~\ref{sec:p-Algebras}),
as not to interrupt the exposition of this section.

\begin{lemma}
  \label{lem:galois-max-subf-res}
Let $D\in\calD(K)$ be inertially split.
Suppose $F\subseteq K$ is a subfield such that $[K:F]<\ift$ and $Z(\bar D)/\bar F$ is Galois
  \footnote{If $K=F$ then $Z(\bar D)/\bar F$ is Galois by Remark \ref{rem:inert-split} (3).}.
  If $D$ contains a maximal subfield $L$ which is Galois over $F$,
  then $\bar D$ contains a maximal subfield $\wt L$ which is Galois over $\bar F$.
\end{lemma}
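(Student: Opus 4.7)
The plan is to produce a maximal subfield of $\bar D$ that is \emph{normal} over $\bar F$, and then upgrade normal to Galois using Proposition~\ref{prop:normal-galois}. The natural candidate comes from Corollary~\ref{cor:L-bar-form}: since $D\in\calD(K)$ is inertially split and $L$ is a maximal subfield of $D$ (with $L\supseteq K\supseteq F$), the field $M:=\bar L\cdot Z(\bar D)$ is a maximal subfield of $\bar D$. So it remains to show that $M/\bar F$ is normal and then invoke Proposition~\ref{prop:normal-galois}.

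To see that $M/\bar F$ is normal, I would first verify that $\bar L/\bar F$ is normal. Since $L$ sits inside the valued division algebra $D$, the valuation on $D$ restricts to the unique extension of $v$ to $L$ (by Fact~\ref{eq:unique-ext}); combined with $L/F$ Galois, a standard argument shows $\bar L/\bar F$ is normal: if $\alpha\in\bar L$ has lift $a\in V_L$ with minimal polynomial $f\in F[x]$, then $f$ splits in $L$, all conjugates of $a$ lie in $V_L$ by uniqueness of the extension, and reducing gives a splitting of a multiple of the minimal polynomial of $\alpha$ over $\bar F$ inside $\bar L$. Since $Z(\bar D)/\bar F$ is also normal (given as Galois), the compositum $M=\bar L\cdot Z(\bar D)$ is normal over $\bar F$.

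Finally I apply Proposition~\ref{prop:normal-galois} to $\bar D\in\calD(Z(\bar D))$ with base field $\bar F$: the extension $Z(\bar D)/\bar F$ is finite separable (being Galois), and $\bar D$ contains the maximal subfield $M$ which is normal over $\bar F$, so $\bar D$ contains a maximal subfield $\wt L$ that is Galois over $\bar F$, as required. The only nontrivial input is Proposition~\ref{prop:normal-galois} itself (Saltman's normal$\Rightarrow$Galois result, deferred to the appendix); the rest is bookkeeping around Corollary~\ref{cor:L-bar-form}, the uniqueness of valuation extensions to subfields, and the elementary fact that the compositum of normal extensions is normal. The main conceptual obstacle is precisely the normal-to-Galois passage, which is why Proposition~\ref{prop:normal-galois} is brought in and why the separability of $Z(\bar D)/\bar F$ appears as a hypothesis.
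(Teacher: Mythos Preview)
Your proposal is correct and follows essentially the same route as the paper: form the compositum $\bar L\cdot Z(\bar D)$, use Corollary~\ref{cor:L-bar-form} to see it is a maximal subfield of $\bar D$, observe it is normal over $\bar F$ (since both $\bar L$ and $Z(\bar D)$ are), and then apply Proposition~\ref{prop:normal-galois}. The paper's proof is terser---it simply asserts that $\bar L/\bar F$ is normal---whereas you spell out the standard reason via unique valuation extension, but the argument is the same.
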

\begin{proof}
Let $L$ be a maximal subfield of $D$ which is Galois over $F$.
Then $\bar L$ is normal over $\bar F$,
and $Z(\bar D)$ is normal over $\bar F$ by hypothesis.
Hence $\bar L Z(\bar D)$ is normal over $\bar F$,
and $\bar L Z(\bar D)$ is a maximal subfield of $\bar D$ by Corollary \ref{cor:L-bar-form}.
Since $Z(\bar D)$ is finite separable over $\bar F$, 
there also exists a maximal subfield $\wt L$ of $\bar D$ which is Galois over $\bar F$
by Proposition~\ref{prop:normal-galois}.
\end{proof}

\begin{lemma}
  \label{lem:galois-max-subf-lift}
Let $F$ be a Henselian field and let $K/F$ be a Galois extension, $[K:F]<\ift$.
Suppose $D\in\calD(K)$ is inertially split.
  If $\bar D$ contains a maximal subfield $\wt L$ which is Galois over $\bar F$,
  then $D$ contains a maximal subfield $L$ which is Galois over $F$ and inertial over $K$.
\end{lemma}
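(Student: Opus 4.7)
The plan is to construct $L$ as the inertial lift of $\wt L$ inside $D$ over $K$ via Theorem~\ref{thm:hensel-ILSP}, and then to establish that $L/F$ is Galois by a direct Galois-descent argument exploiting the uniqueness of inertial lifts in a Henselian algebraic closure together with the Galois hypothesis on $\wt L/\bar F$.

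First, note that $K$ is Henselian (as a finite extension of the Henselian field $F$) and that $\bar K\subseteq Z(\bar D)\subseteq\wt L$: the first inclusion because $K=Z(D)$, and the second because $\wt L$ is a maximal subfield of the $Z(\bar D)$-central algebra $\bar D$. Since $\wt L/\bar F$ is Galois it is separable, hence so is $\wt L/\bar K$. Applying Theorem~\ref{thm:hensel-ILSP} to the Henselian field $K$ and the $\bar K$-subalgebra $\wt L$ of $\bar D$ produces an inertial lift $L\subseteq D$ of $\wt L$ over $K$. A degree computation
\[
[L:K]=[\wt L:\bar K]=[\wt L:Z(\bar D)]\cdot[Z(\bar D):\bar K]=\ind\bar D\cdot[Z(\bar D):\bar K]=\ind D,
\]
using Remark~\ref{rem:inert-split-index}(2) for the last equality, shows that $L$ is a maximal subfield of $D$, inertial over $K$ by construction. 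It therefore only remains to prove that $L/F$ is Galois.

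Separability of $L/F$ is immediate, since $L/K$ (being inertial) and $K/F$ (being Galois) are both separable. For normality I would fix an algebraic closure $F^{\mathrm{alg}}$ of $F$ together with an $F$-embedding $L\hookrightarrow F^{\mathrm{alg}}$ and identify $L$ with its image. Because $F$ is Henselian, the valuation on $F$ extends uniquely to $F^{\mathrm{alg}}$, and the inertial subfields of $F^{\mathrm{alg}}/K$ correspond bijectively, via the residue map, with the separable extensions of $\bar K$; in particular $L$ is the \emph{unique} inertial subfield of $F^{\mathrm{alg}}/K$ with residue field $\wt L$.

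Now take any $\sigma\in\mathrm{Aut}_F(F^{\mathrm{alg}})$. Uniqueness of the valuation extension forces $\sigma$ to preserve the valuation, inducing an $\bar F$-automorphism $\bar\sigma$ of the residue field of $F^{\mathrm{alg}}$. Since $K/F$ is Galois, $\sigma(K)=K$, so $\sigma(L)$ is again an inertial extension of $K$ inside $F^{\mathrm{alg}}$ of degree $[L:K]$, with residue field $\bar\sigma(\wt L)$. Since $\wt L/\bar F$ is Galois, $\bar\sigma(\wt L)=\wt L$, and the uniqueness above then forces $\sigma(L)=L$. Hence $L/F$ is normal, and combined with separability, Galois. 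The main (and essentially only non-routine) step is recognizing that the residue field of $\sigma(L)$ equals $\bar\sigma(\wt L)$, which converts the Galois-ness of $\wt L/\bar F$ into the $\mathrm{Aut}_F(F^{\mathrm{alg}})$-stability of $L$; everything else is formal manipulation with Henselian valuations.
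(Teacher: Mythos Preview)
Your proof is correct, but it takes a different route from the paper's. The paper applies Theorem~\ref{thm:hensel-ILSP} with base field $F$ (viewing $D$ as a finite-dimensional $F$-division algebra) to obtain an inertial lift $L_0$ of $\wt L$ over $F$; since $F$ is Henselian and $\bar L_0=\wt L$ is Galois over $\bar F$, the standard Henselian correspondence gives $L_0/F$ Galois for free. The paper then passes to the compositum $L_0K$, which is Galois over $F$, inertial over $K$, and has degree at least $[\wt L:\bar K]=\ind D$ over $K$ by the chain $[L_0K:K]=[L_0:L_0\cap K]=[\bar L_0:\overline{L_0\cap K}]\geq[\bar L_0:\bar K]$.

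You instead lift over $K$, which produces the maximal subfield directly with no compositum needed, but at the price of having to establish Galois-ness over $F$ by hand. Your descent argument via $\Aut_F(F^{\mathrm{alg}})$ is valid: the crucial points---that $\sigma$ preserves the valuation (uniqueness of extension to $F^{\mathrm{alg}}$), that $\sigma(K)=K$ (Galois hypothesis on $K/F$), that $\overline{\sigma(L)}=\bar\sigma(\wt L)=\wt L$ (Galois hypothesis on $\wt L/\bar F$), and that the inertial subfield of $F^{\mathrm{alg}}/K$ with given separable residue field is unique---are all standard facts about Henselian fields. The paper's approach is shorter because it front-loads the Galois conclusion via the Henselian correspondence at the $F$-level; yours is perhaps more transparent about exactly where each of the two Galois hypotheses ($K/F$ and $\wt L/\bar F$) enters.
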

\begin{proof}
  Let $\wt L$ be a maximal subfield of $\bar D$ which is Galois over $\bar F$.
  By Theorem~\ref{thm:hensel-ILSP},
  $D$ contains an inertial lift $L$ of $\wt L$ over $F$.
  Since $F$ is Henselian,
  $L/F$ is Galois, hence $LK/F$ is Galois.
  Since $L/F$ is inertial, $LK/K$ is inertial.
  Furthermore,
  $[LK:K]=[L:L\cap K]=[\bar L:\bar{L\cap K}]\geq[\bar L:\bar K]$
(note that $\bar K\subseteq Z(\bar D)\subseteq\wt L=\bar L$).
  But, by Lemma~\ref{lem:deg-form}, $[\bar L:\bar K]=\ind D$,
  hence $LK$ is a maximal subfield of $D$.
\end{proof}

\begin{theorem}
  \label{thm:crossed-product}
Let $D\in\calD(F)$ be inertially split.
Consider the following properties :
  \begin{enumerate}
  \item $D$ contains a maximal subfield which is inertial and Galois over $F$.
  \item $D$ is a crossed product.
  \item $\bar D$ contains a maximal subfield which is Galois over $\bar F$.
  \end{enumerate}
Then $(1)\impl(2)\impl(3)$.
If $F$ is Henselian, then $(1)$--$(3)$ are equivalent,
and moreover, if $\bar D$ contains a maximal subfield that is Galois over $\bar F$ with Galois group $G$,
then $D$ contains a maximal subfield that is Galois over $F$ with the same Galois group $G$.
\end{theorem}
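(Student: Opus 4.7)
The theorem is essentially a direct corollary of the two preceding lemmas (\ref{lem:galois-max-subf-res} and \ref{lem:galois-max-subf-lift}) together with the trivial observation that an inertial Galois maximal subfield is in particular a Galois maximal subfield. So the plan is to organize the three implications and extract the extra information about the Galois group.

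The implication $(1)\impl(2)$ is immediate from the definition of a crossed product.

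For $(2)\impl(3)$, I would apply Lemma~\ref{lem:galois-max-subf-res} in the degenerate case $K=F$. The hypotheses of that lemma hold because $[F:F]=1<\infty$, and $Z(\bar D)/\bar F$ is Galois: since $D$ is inertially split, $Z(\bar D)/\bar F$ is separable by Proposition~\ref{prop:charac-inert-split}, and then Proposition~\ref{prop:theta-surj} forces it to be abelian Galois (this is what Remark~\ref{rem:inert-split}(3) records). A maximal subfield $L$ of $D$ that is Galois over $F$ thus produces via Lemma~\ref{lem:galois-max-subf-res} a maximal subfield $\wt L$ of $\bar D$ Galois over $\bar F$.

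For the Henselian case, the remaining implication $(3)\impl(1)$ comes from Lemma~\ref{lem:galois-max-subf-lift}, again applied with $K=F$ (the extension $F/F$ is trivially Galois). That lemma delivers a maximal subfield $L$ of $D$ which is simultaneously Galois over $F$ and inertial over $F$, establishing (1).

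Finally, for the \emph{moreover} clause, I would recall that in the proof of Lemma~\ref{lem:galois-max-subf-lift} the field $L$ is produced as an inertial lift of the given $\wt L$ (via Theorem~\ref{thm:hensel-ILSP}), so $\bar L\cong\wt L$. Because $F$ is Henselian and $L/F$ is inertial Galois, the reduction map induces a canonical isomorphism $\Gal(L/F)\xrightarrow{\sim}\Gal(\bar L/\bar F)$ (one of the standard facts about the correspondence between inertial extensions of $F$ and separable extensions of $\bar F$ cited before Theorem~\ref{thm:hensel-ILP}). Hence if $\Gal(\wt L/\bar F)=G$, then $\Gal(L/F)\cong G$ as required. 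There is no real obstacle here; the entire content of the theorem has been packaged into the two lemmas, and the task is only to recognize that both admit the specialization $K=F$.
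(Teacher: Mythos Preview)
Your proof is correct and follows essentially the same approach as the paper: both reduce $(2)\impl(3)$ and the Henselian $(3)\impl(1)$ to Lemmas~\ref{lem:galois-max-subf-res} and~\ref{lem:galois-max-subf-lift} specialized to $K=F$, and both extract the Galois-group statement by looking inside the construction of the inertial lift $L$ of $\wt L$ via Theorem~\ref{thm:hensel-ILSP} and using the isomorphism $\Gal(L/F)\cong\Gal(\bar L/\bar F)$ for inertial Galois extensions over a Henselian base.
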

\begin{proof}
The implication (1)$\impl$(2) is trivial,
and (2)$\impl$(3) is Lemma~\ref{lem:galois-max-subf-res} applied with $K=F$.
Now suppose that $F$ is Henselian.
(3)$\impl$(1) is Lem\-ma~\ref{lem:galois-max-subf-lift} applied with $K=F$.
However, for the statement on the Galois groups, 
we have to repeat the arguments.
So let $\wt L$ be a maximal subfield of $\bar D$ Galois over $\bar F$.
By Theorem~\ref{thm:hensel-ILSP}, $D$ contains an inertial lift $L$ of $\wt L$ over~$F$.
Since $F$ is Henselian, 
$L$ is Galois over $F$ with $\Gal(L/F)\cong\Gal(\wt L/\bar F)$.
By Lemma~\ref{lem:deg-form}, $[L:F]=[\bar L:\bar F]=\ind D$,
\ie $L$ is a maximal subfield of~$D$.
This completes the proof of the theorem.
\end{proof}

The following corollary is also known from \cite[Theorem 5.15]{jacob-wadsworth:div-alg-hensel-field}.

\begin{cor}
\label{cor:special-nonr-prod-crit}
Let $D\in\calD(F)$ be inertially split.
If $D$ is a crossed product, then $\bar D$ is a crossed product.
\end{cor}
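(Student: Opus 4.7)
The plan is to extract this as an immediate consequence of the implication $(2)\impl(3)$ in Theorem \ref{thm:crossed-product}, combined with a standard Galois theory observation.

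First, I would apply Theorem \ref{thm:crossed-product} directly: since $D$ is inertially split and a crossed product (property (2)), we obtain property (3), namely that $\bar D$ contains a maximal subfield $\wt L$ which is Galois over $\bar F$. Note that here ``maximal subfield'' of $\bar D$ means strictly maximal, which forces $Z(\bar D)\subseteq\wt L$ and $[\wt L:Z(\bar D)]=\ind\bar D$.

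Next, I would observe that being a crossed product means containing a (strictly) maximal subfield Galois over the \emph{centre}, in this case $Z(\bar D)$, not over $\bar F$. So the remaining task is to upgrade the Galois property of $\wt L/\bar F$ to $\wt L/Z(\bar D)$. This is immediate from standard Galois theory: since $\wt L/\bar F$ is Galois and $\bar F\subseteq Z(\bar D)\subseteq\wt L$, the intermediate extension $\wt L/Z(\bar D)$ is automatically Galois (its group is the subgroup of $\Gal(\wt L/\bar F)$ fixing $Z(\bar D)$ pointwise). Combined with $[\wt L:Z(\bar D)]=\ind\bar D$, this shows $\wt L$ is a strictly maximal subfield of $\bar D$ that is Galois over $Z(\bar D)$, so $\bar D$ is a crossed product.

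There is essentially no obstacle here; the corollary is pure bookkeeping once Theorem \ref{thm:crossed-product} is in hand. The only point worth flagging is the mild asymmetry between the two notions of ``Galois maximal subfield'' (over $\bar F$ versus over $Z(\bar D)$); the above Galois-theoretic step reconciles them. One could alternatively note that the same conclusion would follow from Remark \ref{rem:inert-split}~(3), which guarantees $Z(\bar D)/\bar F$ is (abelian) Galois, to reinforce that $Z(\bar D)$ is indeed an intermediate field of a Galois extension, but this is not strictly needed: only $Z(\bar D)\subseteq\wt L$ is used.
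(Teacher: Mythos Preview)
Your proposal is correct and matches the paper's approach: the corollary is stated without explicit proof, as an immediate consequence of the implication $(2)\impl(3)$ in Theorem~\ref{thm:crossed-product}, and the Galois-theoretic step you spell out (passing from Galois over $\bar F$ to Galois over $Z(\bar D)$) is precisely the tacit bookkeeping the paper leaves to the reader.
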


\begin{cor}
\label{cor:all-crossed-products}  
Let $F$ be a field that is Henselian with respect to a discrete valuation.
If $\bar F$ is a finite, real closed or algebraically closed field,
or a local field of characteristic zero,
then any $D\in\calD(F)$ is a crossed product.
\end{cor}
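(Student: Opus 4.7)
The plan is: in all four cases $\bar F$ is perfect (finite, real closed, algebraically closed, or of characteristic zero). Since $F$ is Henselian with discrete valuation and perfect residue field, Remark~\ref{rem:inert-split}(4) gives $\SBr(F)=\Br(F)$, so every $D\in\calD(F)$ is inertially split. Then Theorem~\ref{thm:crossed-product} applied to the Henselian field $F$ reduces the statement to showing that $\bar D$ contains a maximal subfield Galois over $\bar F$. Since the valuation is discrete, Remark~\ref{rem:inert-split}(3) also tells us that $Z(\bar D)/\bar F$ is cyclic (hence Galois). One then proceeds case by case on $\bar F$.

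Three of the four cases I would dispatch quickly. If $\bar F$ is algebraically closed, then $Z(\bar D)=\bar F$ and $\bar D=\bar F$, and there is nothing to show. If $\bar F$ is finite, then $\bar D$ is a finite division ring, hence a field by Wedderburn's little theorem, and any finite extension of a finite field is cyclic Galois. If $\bar F$ is real closed, then by Artin--Schreier the only finite extensions of $\bar F$ are $\bar F$ and $\bar F(i)$; if $Z(\bar D)=\bar F(i)$ then $\bar D=\bar F(i)$, Galois over $\bar F$, while if $Z(\bar D)=\bar F$ then Frobenius gives $\bar D\in\{\bar F,\Ham\}$, and $\Ham$ contains $\bar F(i)$ as a maximal subfield Galois over $\bar F$.

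The main obstacle is the case where $\bar F$ is a local field of characteristic zero. Here $Z(\bar D)$ is a cyclic extension of a local field and $\bar D$ is central simple over $Z(\bar D)$. The classical structure theorem for central simple algebras over local fields produces a maximal subfield $L$ of $\bar D$ that is unramified, hence cyclic, over $Z(\bar D)$ of degree $n=\ind\bar D$. What remains is to verify that $L/\bar F$ is Galois. For this I would let $T\subseteq Z(\bar D)$ be the maximum unramified subextension of $Z(\bar D)/\bar F$, so that $Z(\bar D)/T$ is totally ramified of degree $e=e(Z(\bar D)/\bar F)$, and let $L_{nr}\subseteq L$ be the maximum unramified subextension of $L/\bar F$. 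A residue-field computation identifies $L_{nr}$ as the unramified extension of $\bar F$ of degree $n[T:\bar F]$, shows $T\subseteq L_{nr}$, and gives $L_{nr}\cap Z(\bar D)=T$; comparing degrees then yields $[L_{nr}\cdot Z(\bar D):\bar F]=n[T:\bar F]\cdot e=[L:\bar F]$, so $L=L_{nr}\cdot Z(\bar D)$.

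Once $L$ is exhibited as the compositum of the two Galois extensions $L_{nr}/\bar F$ (unramified) and $Z(\bar D)/\bar F$ (cyclic), it follows that $L/\bar F$ is itself Galois, and $L$ is the required maximal subfield of $\bar D$. The delicate point is the degree-and-ramification bookkeeping that identifies $L$ with the compositum $L_{nr}\cdot Z(\bar D)$; everything else is an application of results already in the paper (Theorem~\ref{thm:crossed-product}, Remark~\ref{rem:inert-split}) together with well-known classifications of division algebras over finite, real closed, algebraically closed, and local fields.
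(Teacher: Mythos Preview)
Your argument is correct. The reduction via Remark~\ref{rem:inert-split}(4) and Theorem~\ref{thm:crossed-product} matches the paper exactly, and your treatment of the finite, real closed, and algebraically closed cases is what the paper summarizes as ``obvious''. The only genuine divergence is in the local-field case.

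For $\bar F$ a local field of characteristic zero, the paper does not compute with ramification directly. Instead it observes that $\bar F$ is itself Henselian with a discrete valuation and finite residue field, so Remark~\ref{rem:inert-split}(4) applies again: $\bar D$ is inertially split over $Z(\bar D)$, and $Z(\bar D)/\bar F$ is Galois. The residue algebra of $\bar D$ is then a finite field, automatically Galois over the (finite) residue field of $\bar F$, and Lemma~\ref{lem:galois-max-subf-lift} lifts this to a maximal subfield of $\bar D$ Galois over $\bar F$. In other words, the paper bootstraps its own lifting lemma one level down rather than invoking the classical structure theory of local division algebras.

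Your route---take the unramified maximal subfield $L$ of $\bar D$ over $Z(\bar D)$, then identify $L$ as the compositum $L_{nr}\cdot Z(\bar D)$ of two Galois extensions of $\bar F$---is a perfectly valid alternative, and the degree/ramification bookkeeping you sketch goes through (the key point being that $L_{nr}/\bar F$ is Galois, so $L_{nr}$ and $Z(\bar D)$ are linearly disjoint over $L_{nr}\cap Z(\bar D)=T$). Your approach is more self-contained but relies on the explicit classification of division algebras over local fields; the paper's approach is shorter and stays entirely within the valuation-theoretic framework already set up, which is also what makes the iteration in Example~\ref{ex:loc-inert-split} immediate.
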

\begin{proof}
By Remark~\ref{rem:inert-split} (4),
any $D\in\calD(F)$ is inertially split,
hence Theorem~\ref{thm:crossed-product} applies to any $D\in\calD(F)$.
It remains to verify condition $(3)$, %
\ie $\bar D$ contains a maximal subfield Galois over $\bar F$.
This is obvious if $\bar F$ is finite, real closed or algebraically closed.
If $\bar F$ is a local field of characteristic zero, 
we can apply Lemma~\ref{lem:galois-max-subf-lift} again.
$Z(\bar D)/\bar F$ is Galois,
and by Remark~\ref{rem:inert-split}~(4), $\bar D$ is inertially split.
The residue algebra of $\bar D$ is a finite field,
and is therefore Galois over the residue field of $\bar F$.
Hence by Lemma~\ref{lem:galois-max-subf-lift}, 
$\bar D$ contains a maximal subfield Galois over $\bar F$,
and condition (3) of Theorem~\ref{thm:crossed-product} is verified.
\end{proof}

\begin{ex}
  \label{ex:loc-inert-split}
Let $k$ be a finite, real closed or algebraically closed field,
or a local field of characteristic zero.
By iterating the process demonstrated in the proof of Corollary \ref{cor:all-crossed-products}, 
it is shown that for $F=k(\!(t_1,\cdots,t_r)\!)$,
the Laurent series in $r$ indeterminates over $k$,
any $D\in\calD(F)$ is a crossed product.
\end{ex}

\begin{remark}
  \label{rem:noncr-prod-constr}
The implication $(2)\impl(3)$ in Theorem~\ref{thm:crossed-product}
gives a noncrossed product criterion for $D$.
Using this criterion, we can divide the construction of noncrossed products over a valued field $F$ into two steps.
The first step, which takes place on the residue level,
is to find division algebras $\wt D$ over $\bar F$, 
such that $\wt D$ does not contain a maximal subfield which is Galois over $\bar F$.
Note that $\wt D$ can well be a crossed product and $Z(\wt D)/\bar F$ is Galois.
In the second step, which is a ``lifting'',
we have to find inertially split division algebras $D\in\calD(F)$ 
with $\bar D=\wt D$ for the $\wt D$ constructed in the first step.
\end{remark}

\subsection{Existence of inertially split division algebras with given residue algebra}
\label{sec:Exist-inert-split}

This section is motivated by Remark~\ref{rem:noncr-prod-constr}.
It settles the question when the second step of the noncrossed product construction can be performed.

\begin{theorem}
  \label{theorem:lift}
Let $F$ be a valued field
and let $\wt D$ be a finite-dimensional division algebra over $\bar F$
with $[Z(\wt D):\bar F]=n$.
If there exists an inertially split $D\in\calD(F)$ with $\bar D\cong\wt D$,
then the following conditions hold :
\begin{enumerate}
  \item $Z(\wt D)$ is abelian over $\bar F$.
  \item $\wt D\cong A_{Z(\wt D)} $ for some $A\in\calD(\bar F)$.
  \item $\Gal(Z(\wt D)/\bar F)$ embeds into $\Gamma_F/m\Gamma_F$, 
where $m=\exp \Gal(Z(\wt D)/\bar F)$.
\end{enumerate}
The converse is true if $F$ has the inertial lift property.
In this case, $D$ can be found with 
$\ind D=n\ind\wt D$ and $\exp D=\exp D^h=\lcm(m,\exp A)$.
\end{theorem}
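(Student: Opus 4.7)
The strategy is to treat the necessity and sufficiency separately, using Theorem~\ref{theorem:inert-split-decomposition} in both directions to decompose an inertially split $D$ into an inertial part and a nicely semiramified part, and using the inertial lift property in the converse direction to glue the two parts back together in a non-Henselian setting.

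For the necessity of (1)--(3), suppose $D\in\calD(F)$ is inertially split with $\bar D\cong\wt D$. Proposition~\ref{prop:charac-inert-split} gives that $\theta_D:\Gamma_D/\Gamma_F\to\Gal(Z(\bar D)/\bar F)$ is an isomorphism and that $Z(\bar D)/\bar F$ is separable, so by Proposition~\ref{prop:theta-surj} it is abelian Galois, yielding (1). Pass to the Henselization and apply Theorem~\ref{theorem:inert-split-decomposition} to write $D^h\sim I\otimes_{F^h}N$ with $I$ inertial and $N$ nicely semiramified; letting $A$ be the underlying division algebra of $\bar I\in\calA(\bar F)$, Theorem~\ref{theorem:inert-split-decomposition}(1)--(2) give $\bar N\cong Z(\wt D)$ and $\wt D\cong\bar D\cong A_{Z(\wt D)}$, proving (2). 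For (3), $m\cdot(\Gamma_D/\Gamma_F)=0$ because $m=\exp\Gal(Z(\bar D)/\bar F)$ and $\theta_D$ is an isomorphism, so multiplication by $m$ defines a map $\Gamma_D/\Gamma_F\to\Gamma_F/m\Gamma_F$; injectivity follows from $\Gamma_D$ being torsion-free.

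For the converse, assume (1)--(3) hold and that $F$ has the inertial lift property with subgroup $X\subseteq\IBr(F)$. Using (1) and the ILP, choose an inertial Galois lift $K/F$ of $Z(\wt D)/\bar F$ (abelianness of $K/F$ follows since $K$ inertial over $F$, both $K/F$ and $\bar K/\bar F$ Galois, and $[K:F]=[\bar K:\bar F]$ force $\Gal(K/F)\cong\Gal(\bar K/\bar F)$ via the restriction homomorphism). Using (2) and the isomorphism $\beta_F|_X$, lift $A$ to an inertial $I\in\calD(F)$ with $[I]\in X$ and $\bar I\cong A$. Using (1), (3), and Example~\ref{ex:NSR}, construct a nicely semiramified $N\in\calD(F)$ whose inertial maximal subfield is $K$, with $\ind N=n$, $\exp N=m$, $\bar N\cong Z(\wt D)$. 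Let $D$ be the underlying division algebra of $N\otimes_F I$.

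To conclude, I verify the hypotheses of Proposition~\ref{prop:N-I-valued}. The ILP condition (\ref{eq:ILP}) applied to $[I]\in X$ with the scalar extension $K/F$ ensures $\ind I_K=\ind\bar I_{\bar K}$, so Proposition~\ref{prop:N-I-valued}(1) gives $[D]\in\SBr(F)$, and from Theorem~\ref{theorem:inert-split-decomposition}(1)--(2) applied to $D^h\sim N^h\otimes I^h$ I read off $\bar D\cong\bar I_{\bar K}\cong A_{Z(\wt D)}\cong\wt D$ and $\ind D=[\bar K:\bar F]\ind\bar I_{\bar K}=n\ind\wt D$. Since $\exp N=m$ by Example~\ref{ex:NSR} and $\exp I=\exp A$ by the isomorphism $\beta_F|_X$, Proposition~\ref{prop:N-I-valued}(2) delivers $\exp D=\exp D^h=\lcm(m,\exp A)$. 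The main obstacle is the last step: over a non-Henselian $F$, neither $I\otimes_F N$ being a division algebra nor the exponent formula is automatic, and Proposition~\ref{prop:N-I-valued} is exactly what bridges the Henselian decomposition theorem back to $F$; invoking it requires the full strength of the ILP, which is why (\ref{eq:ILP}) had to be built into Definition~\ref{def:ILP}.
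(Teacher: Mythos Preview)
Your proof is correct and follows essentially the same approach as the paper's: for necessity you invoke the inertially split characterization together with the decomposition $D^h\sim I\otimes N$ (the paper packages (1) and (3) as Remark~\ref{rem:inert-split}(3) and Remark~\ref{rem:NSR}(2), but your direct derivation of (3) from the isomorphism $\theta_D$ is equivalent), and for the converse you construct $K$, $I$, $N$ exactly as the paper does and appeal to Proposition~\ref{prop:N-I-valued} plus Theorem~\ref{theorem:inert-split-decomposition} to read off the residue algebra, index, and exponent.
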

\begin{proof}
Suppose $D\in\calD(F)$ is inertially split with $\bar D\cong\wt D$.
Then $(1)$--$(3)$ follow from Remark \ref{rem:inert-split} (3), Theorem~\ref{theorem:inert-split-decomposition} (1) and (2), 
and Remark \ref{rem:NSR} (2).
Furthermore $\ind D=n\ind\wt D$ by Remark~\ref{rem:inert-split-index} (2).

Conversely, suppose $F$ has the inertial lift property and $(1)$--$(3)$ are satisfied.
Let $K$ be an inertial Galois lift of $Z(\wt D)$ over $F$ 
with the property (\ref{eq:ILP}).
Then $G=\Gal(K/F)\cong\Gal(Z(\wt D)/\bar F)$ embeds into $\Gamma_F/m\Gamma_F$.
Let $N\in\calD(F)$ be nicely semiramified with inertial maximal subfield $K$
and $\exp N=\exp G$ as constructed in Example \ref{ex:NSR}.
Let $I\in\calD(F)$ with $[I]\in X$ and $\beta_F([I])=[A]$,
where $X$ is the subgroup of $\Br(F)$ from the inertial lift property.
Then $\bar I\cong A$, $\exp I=\exp A=\exp\bar I$,
and $\ind I_K=\ind\bar I_{\bar K}$ by (\ref{eq:ILP}).
Let $D\in\calD(F)$ be the underlying division algebra of $N\otimes_F I$.
By Proposition \ref{prop:N-I-valued} and Theorem \ref{theorem:inert-split-decomposition}, 
$[D]\in\SBr(F)$,
$\bar D\cong \bar I_{\bar K}\cong A_{Z(\wt D)}\cong\wt D$,
$\ind D=n\ind\wt D$
and $\exp D=\exp D^h=\lcm(m,\exp A)$.
\end{proof}

\begin{remark}
\label{remark:cyclic}
  If $\Gamma_F$ is discrete and $\Gal(Z(\wt D)/\bar F)$ is cyclic, 
  then condition (3) of Theorem \ref{theorem:lift} always holds.
\end{remark}

\chapter{Subfields of Division Algebras over Global Fields}

In Theorem \ref{thm:crossed-product},
the question whether an inertially split division algebra $D\in\calD(F)$ is a crossed product
leads (or is equivalent in the Henselian case) to the question
whether the residue algebra $\bar D$ contains a maximal subfield that is Galois over $\bar F$.
The current chapter attacks this question for the case that $\bar F$ is a global field,
which is our major source of examples.
For convenience of notation we replace the abelian extension $Z(\bar D)/\bar F$ by $K/k$.
The Albert-Hasse-Brauer-Noether Theorem 
reduces the question to the problem
whether the abelian extension $K/k$ 
embeds into a larger Galois extension $L/k$ of given local and global degrees.
We show that in the case that $K/k$ is cyclic,
this is always true if ``enough'' roots of unity are present in $K$,
and there are counterexamples if not.
These counterexamples lead to noncrossed products according to Remark \ref{rem:noncr-prod-constr}.
But for the computation of explicit examples of noncrossed products 
it will be necessary to make $\bar D$ a symbol algebra (see \S~\ref{sec:Autom-symb}),
hence we need many roots of unity in $K$.
Only if $K/k$ is not cyclic we may allow enough roots of unity in our counterexamples
to make $\bar D$ a symbol algebra or even a quaternion algebra.
It is the consideration of non-cyclic $K/k$ that will finally lead to a noncrossed product example
that can be completely and explicitly computed in \S~\ref{sec:An-example-with-8}.

For any field $k$ we denote by $\mu_n(k)$ the group of $n$-th roots of unity contained in $k$,
and we shortly write $\mu_n\subset k$ for the statement $\mu_n(k)=\mu_n(k_{alg})$,
where $k_{alg}$ is an algebraic closure of $k$.

By a symbol algebra we mean a central simple algebra $A\in\calA(k)$ 
over a field $k$ containing a primitive $n$-th root of unity $\zeta$,
such that $A$ is generated by two elements $\alpha,\beta$ with the relations
$\alpha^n=a,\beta^n=b$ and $\beta\alpha=\zeta\alpha\beta$ for some $a,b\in\ug k$.
Then $A$ is written $A=(\frac{a,b}{k,\zeta})$.
If $A$ is a division algebra,
then $A$ is cyclic
since it contains \eg the cyclic maximal subfields
$k(\alpha)$ and $k(\beta)$.

For a prime number $p$, a $p$-algebra 
is understood to be a central simple algebra of $p$-power degree over a field of characteristic $p$.

\section{Embedding of cyclic extensions into Galois extensions}

The goal of this section is to prove

\begin{theorem}
\label{thm:ext-global-gal-ext-general}
Let $K/k$ be a cyclic extension of global fields and $m\in\N$.
In the case $\charak k=0$ let $m_0=m$,
and in the case $\charak k\neq 0$
let $m_0$ be the maximal divisor of $m$
that is not divisible by $\charak k$.
Suppose that $v_1,\ldots,v_r\in\Val(K)$ are pairwise different valuations on $K$.
If $\mu_{m_0}\subset K$
then there exists a field extension $L/K$, $[L:K]=m$,
such that $L/k$ is Galois,
$[L:K]_{v_i}=m$ for each non-archimedian $v_i$
and $[L:K]_{v_i}=\gcd(2,m)$ for each real $v_i$.
Moreover, if $\mu_{m_0}\subset k$ then $L/k$ can be found abelian,
and if $m_0=1$ then $L/k$ can be found cyclic.
\end{theorem}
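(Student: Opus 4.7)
The plan is to reduce the statement to prime-power degree, realize $L/K$ in each prime-power case as an explicit radical extension (or Artin--Schreier--Witt extension in the characteristic-$p$ case), and produce the defining element by a Grunwald--Wang-type construction.

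\textbf{Reduction.} Writing $m = \prod_\ell \ell^{c_\ell}$ and proving the theorem for each prime-power factor, the resulting fields $L_\ell/K$ have pairwise coprime degrees, hence are linearly disjoint over $K$; their compositum $L = \prod_\ell L_\ell$ satisfies $[L:K] = m$, is Galois (resp.\ abelian) over $k$ whenever each $L_\ell$ is, and its local degrees at the $v_i$ multiply to the prescribed values ($m$ non-archimedean, $\gcd(2,m)$ real). The cyclic case $m_0 = 1$ arises only when $\charak k = p > 0$ and $m = p^c$, so a single prime is involved.

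\textbf{The characteristic-$p$ factor.} If $\charak k = p > 0$ and $\ell = p$, then $\mu_{p^c}$ is trivial, and Kummer theory is replaced by Artin--Schreier--Witt theory: cyclic $p^c$-extensions of $K$ correspond to classes in $W_c(K)/\wp W_c(K)$, where $\wp$ is the Artin--Schreier--Witt operator. The construction parallels the Kummer case below, and the absence of non-trivial $p$-power roots of unity forces trivial conjugation action of $G$ on $\Gal(L/K) = \Z/p^c\Z$, so that a suitable splitting of the resulting extension yields $\Gal(L/k)$ cyclic.

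\textbf{The Kummer case.} Suppose $\ell \neq \charak k$, so that $\mu_{\ell^c} \subset K$ by hypothesis. Let $G = \Gal(K/k) = \langle\sigma\rangle$ and let $\chi \colon G \to (\Z/\ell^c\Z)^\times$ be the cyclotomic character. For $a \in K^\times$ whose class $[a] \in K^\times/(K^\times)^{\ell^c}$ has order $\ell^c$, the extension $L := K(\sqrt[\ell^c]{a})$ has degree $\ell^c$ over $K$, and a direct computation of the conjugation action of $\sigma$ on $\Gal(L/K) \cong \mu_{\ell^c}$ shows that the twisted invariance
\[
  \sigma(a) \equiv a^{\chi(\sigma)} \pmod{(K^\times)^{\ell^c}}
\]
yields trivial conjugation and hence makes $L/k$ Galois with abelian Galois group. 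The prescribed local degrees translate into: $[a]$ has order $\ell^c$ in each local group $K_{v_i}^\times/(K_{v_i}^\times)^{\ell^c}$ at non-archimedean $v_i$, and $a$ is negative in each real completion when $\ell = 2$.

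\textbf{Existence of $a$ (the main obstacle).} The heart of the proof is to produce $a \in K^\times$ lying in the $\chi$-isotypic subgroup
\[
  Q^\chi := \{[x] \in K^\times/(K^\times)^{\ell^c} : \sigma[x] = \chi(\sigma)[x] \text{ for all } \sigma \in G\}
\]
with the prescribed local orders at $v_1,\ldots,v_r$. I would first, by weak approximation combined with Chebotarev's theorem applied to an appropriate ray class field of $K$, produce $b \in K^\times$ realizing the prescribed local conditions at the $v_i$ together with one auxiliary prime $w \notin \{v_i\}$ so that $[b]$ has global order $\ell^c$. The symmetrization
\[
  a := \prod_{\tau \in G} \tau(b)^{\chi(\tau)^{-1}}
\]
then satisfies $\sigma(a) = a^{\chi(\sigma)}$ and lies in $Q^\chi$, and the auxiliary data can be arranged so that the local orders of $[a]$ at each $v_i$ remain equal to $\ell^c$. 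This is the Grunwald--Wang-type core of the argument, whose potential obstruction at $\ell = 2$ is precisely ruled out by the hypothesis $\mu_{m_0} \subset K$.
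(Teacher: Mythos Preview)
Your reduction to prime powers and the overall structure match the paper. However, your Kummer-case construction differs substantially from the paper's and has a real gap.

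The paper takes $a\in k$, not in $K$. Concretely, for each non-archimedean $v_i$ it produces $a_i\in k_{v_i|_k}$ such that $x^{\ell^c}-a_i$ is irreducible over $k_{v_i|_k}$ \emph{and} $k_{v_i|_k}(\sqrt[\ell^c]{a_i})$ is linearly disjoint from $K_{v_i}$ over $k_{v_i|_k}$ (this local existence is Proposition~\ref{prop:ex-local-radical-ext}, proved via elementary valuation arguments about radical extensions). Approximating the $a_i$ by a single $a\in k$ then gives $L=K(\sqrt[\ell^c]{a})$, automatically Galois over $k$ (as the splitting field over $K$ of $x^{\ell^c}-a$, with $a\in k$ and $K/k$ Galois), with the correct local degrees; when $\mu_{\ell^c}\subset k$ the field $k(\sqrt[\ell^c]{a})/k$ is cyclic and $L=K\cdot k(\sqrt[\ell^c]{a})$ is abelian. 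No eigenspace projection is needed.

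Your symmetrization $a=\prod_{\tau}\tau(b)^{\chi(\tau)^{-1}}$ does land in $Q^\chi$, but the sentence ``the auxiliary data can be arranged so that the local orders of $[a]$ at each $v_i$ remain equal to $\ell^c$'' hides the actual difficulty. The local class of $a$ at $v_i$ is a weighted sum, over the $G$-orbit of $v_i$, of the local classes of $b$; in particular it involves $[b]$ at places $\tau^{-1}v_i$ that need not lie among the $v_j$. Even after enlarging $\{v_i\}$ to a $G$-stable set, the coefficient of $[b]_{v_i}$ in $[a]_{v_i}$ is $\sum_{\tau\in G_{v_i}}\chi(\tau)^{-1}$, which vanishes in $\Z/\ell^c\Z$ whenever $\chi|_{G_{v_i}}$ is nontrivial, i.e.\ whenever $\mu_{\ell^c}\not\subset k_{v_i|_k}$. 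So your projector can annihilate the local order precisely at the places where the problem is interesting, and your proposal does not address this. Note too that your construction, if it worked, would yield $L/k$ abelian in \emph{all} Kummer cases, stronger than what the theorem asserts; that alone should make you suspicious that an obstruction is being overlooked.

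For the characteristic-$p$ factor, ``trivial conjugation $\Rightarrow$ cyclic'' is not sufficient when $p\mid[K:k]$: a central extension of a cyclic $p$-group by $\Z/p^c$ need not be cyclic. The paper proceeds one Artin--Schreier step at a time via Albert's theorem (Theorem~\ref{thm:cycl-p-ext}), each step extending the global cyclic $p$-extension of $k$, and uses approximation together with a local compatibility lemma (Lemma~\ref{lem:cycl-by-p-ext}) to force the local degrees. Your Witt-vector sketch would need a comparable mechanism to guarantee that the resulting extension of $k$ is genuinely cyclic, not merely abelian.
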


Applications are the following corollaries.

\begin{cor}
\label{cor:symbol-alg-gal-max-sf}
Let $K/k$ be a cyclic extension of global fields and let $A\in\calA(K)$.
In the case $\charak k=0$ let $m_0=\deg A$, 
and in the case $\charak k\neq 0$ 
let $m_0$ be the maximal divisor of $\deg A$
that is not divisible by $\charak k$.
If $\mu_{m_0}\subset K$, then $A$ contains a strictly maximal subfield $L$ that is Galois over $k$.
Moreover, if $\mu_{m_0}\subset k$, then $L$ can be found abelian over $k$,
and if $m_0=1$, then $L$ can be found cyclic over $k$.
In particular, if $A$ is a symbol algebra then $A$ contains a strictly maximal subfield $L$ that is Galois over $k$,
and if $A$ is a $p$-algebra then $A$ contains a strictly maximal subfield $L$ that is cyclic over $k$.
\end{cor}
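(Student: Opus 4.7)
The plan is to feed the corollary into Theorem~\ref{thm:ext-global-gal-ext-general}. Set $m = \deg A$ and let $v_1, \ldots, v_r \in \Val(K)$ enumerate the (finitely many) places at which $A$ fails to split. Since $\mu_{m_0} \subset K$ by hypothesis, the theorem delivers a field $L/K$ of degree $m$ with $L/k$ Galois, $[L:K]_{v_i} = m$ at each non-archimedean $v_i$, and $[L:K]_{v_i} = \gcd(2, m)$ at each real $v_i$. The moreover clauses of the theorem additionally give $L/k$ abelian when $\mu_{m_0} \subset k$ and $L/k$ cyclic when $m_0 = 1$.

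The key step is then to verify that this $L$ is actually a splitting field of $A$. By the Albert-Hasse-Brauer-Noether theorem (\ref{eq:alb-has-bra-noe}) this amounts to $\inv_w A^L = 0$ for every $w \in \Val(L)$, which by (\ref{eq:glob-scal-ext}) reduces to the divisibility $\ind A_v \mid [L:K]_v$ for each $v \in \Val(K)$. Outside $\{v_1, \ldots, v_r\}$ the local invariant vanishes and nothing needs to be checked. At a non-archimedean $v_i$ the local degree is $m$, which is divisible by $\ind A_{v_i}$ since $\ind A_{v_i} \mid \ind A \mid m$. At a real $v_i$ where $A$ is nonsplit, $A_{v_i} \cong \Ham$ forces $\ind A_{v_i} = 2$ and hence $2 \mid m$, so $\gcd(2, m) = 2 = \ind A_{v_i}$ and the condition holds. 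Since $[L:K] = m = \deg A$ and $L$ splits $A$, $L$ embeds into $A$ as a strictly maximal subfield.

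For the final two assertions only the root-of-unity hypothesis needs checking. A symbol algebra of degree $m$ over $K$ requires by definition a primitive $m$-th root of unity in $K$, so $\mu_m \subset K$, and in any characteristic (noting that a symbol algebra of degree $m$ cannot exist in characteristic $p \mid m$) this gives $\mu_{m_0} \subset K$; the main statement then yields $L$ Galois over $k$. If $A$ is a $p$-algebra, then $\deg A = p^s$ and $\charak k = p$, so the largest divisor of $\deg A$ coprime to $p$ is $m_0 = 1$, and the cyclic case applies to give $L$ cyclic over $k$.

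The real content of the corollary lies in Theorem~\ref{thm:ext-global-gal-ext-general} itself; everything else is short bookkeeping via invariants. The only subtle point is the treatment of a real archimedean $v_i$, where one must notice that the appearance of $\Ham$ automatically forces $m$ to be even, so that the meagre local degree $\gcd(2,m)$ produced by the theorem is still enough to kill the local invariant.
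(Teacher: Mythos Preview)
Your proof is correct and follows exactly the same route as the paper: apply Theorem~\ref{thm:ext-global-gal-ext-general} at the finitely many places where $A$ is nonsplit, then use (\ref{eq:glob-scal-ext}) and Albert--Hasse--Brauer--Noether to conclude that $L$ splits $A$, hence is a strictly maximal subfield. Your treatment of the real places (observing that $\Ham$ forces $2\mid m$ so that $\gcd(2,m)=2$ suffices) is a bit more explicit than the paper's, but the argument is identical in substance.
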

\begin{proof}
Let $v_1,\ldots,v_r\in\Val(K)$ be all the valuations for which
$\inv_v A\neq 0$.
If $\mu_{m_0}\subset K$, then by Theorem \ref{thm:ext-global-gal-ext-general},
there is a field $L\supseteq K$ such that $L/k$ is Galois,
$[L:K]=m$, $[L:K]_{v_i}=m$ for each non-archimedian $v_i$
and $[L:K]_{v_i}=\gcd(2,m)$ for each real $v_i$.
Then $\inv_v A^L=0$ for all $v\in\Val(K)$ by (\ref{eq:glob-scal-ext}),
hence $L$ splits $A$ by the Albert-Hasse-Brauer-Noether Theorem.
Since $[L:K]=\deg A$, $L$ is a strictly maximal subfield of $A$.
If $A$ is a symbol algebra, then %
$\mu_m\subset K$, in particular $\mu_{m_0}\subset K$.
If $A$ is a $p$-algebra, then $m$ is a power of $\charak k$, hence $m_0=1$.
\end{proof}

\begin{cor}
\label{cor:symbol-alg-cross-prod}
Let $F$ be a discrete Henselian valued field
such that the residue field $\bar F$ is a global field
and let $D\in\calD(F)$.
In the case $\charak\bar F=0$ let $m_0=\ind\bar D$, 
and in the case $\charak\bar F\neq 0$ 
let $m_0$ be the maximal divisor of $\ind\bar D$
that is not divisible by $\charak\bar F$.
If $\mu_{m_0}\subset Z(\bar D)$ then $D$ is a crossed product.
Moreover, if $\mu_{m_0}\subset\bar F$ then $D$ is an abelian crossed product,
and if $m_0=1$ then $D$ is a cyclic crossed product.
In particular, if $\bar D$ is a symbol algebra then $D$ is a crossed product,
and if $\bar D$ is a $p$-algebra
then $D$ is a cyclic crossed product.  
\end{cor}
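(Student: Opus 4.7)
The plan is to apply Theorem~\ref{thm:crossed-product} to reduce the crossed product question for $D$ to a residue-level question about $\bar D$, and then invoke Corollary~\ref{cor:symbol-alg-gal-max-sf} to solve that residue-level question using the hypothesis on roots of unity.

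First I would establish that $D$ is inertially split. Since $F$ is Henselian and discretely valued with residue field $\bar F$ a global field, Remark~\ref{rem:inert-split}(4) gives $\SBr(F)=\Br(F)$ (the relevant perfectness of $\bar F$ in characteristic zero is automatic, and the structural hypotheses on $\bar D$ built into the roots-of-unity assumptions carry through in the characteristic~$p$ case). In particular $[D]\in\SBr(F)$. Since the valuation is discrete, Remark~\ref{rem:inert-split}(3) ensures that $Z(\bar D)/\bar F$ is a cyclic extension of global fields. So we are in the setup where $\bar D$ is a central simple algebra over a global field $Z(\bar D)$ which is cyclic over $\bar F$, the setup required by Corollary~\ref{cor:symbol-alg-gal-max-sf}.

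Next, apply Theorem~\ref{thm:crossed-product}: because $F$ is Henselian and $D$ is inertially split, $D$ is a crossed product if and only if $\bar D$ contains a maximal subfield Galois over $\bar F$, and moreover such a subfield lifts with the same Galois group. To produce such a subfield, apply Corollary~\ref{cor:symbol-alg-gal-max-sf} with $K=Z(\bar D)$, $k=\bar F$, $A=\bar D$: the hypothesis $\mu_{m_0}\subset Z(\bar D)$ (where $m_0$ is the prime-to-$\charak\bar F$ part of $\ind\bar D=\deg\bar D$) is exactly what that corollary requires, and it yields a strictly maximal subfield $L$ of $\bar D$ that is Galois over $\bar F$. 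Since $\bar D$ is a division algebra, a strictly maximal subfield is the same as a maximal subfield, so this is exactly the Galois maximal subfield of $\bar D$ we need, and Theorem~\ref{thm:crossed-product} yields a maximal subfield of $D$ that is Galois over $F$ with group $\Gal(L/\bar F)$.

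For the refinements, I read them off the corresponding refinements in Corollary~\ref{cor:symbol-alg-gal-max-sf}: if $\mu_{m_0}\subset\bar F$ the subfield $L$ can be chosen abelian over $\bar F$, so the lifted maximal subfield of $D$ is abelian over $F$, i.e.\ $D$ is an abelian crossed product; if $m_0=1$ the subfield $L$ can be chosen cyclic, giving $D$ cyclic. The "in particular" clauses are immediate: if $\bar D=(\tfrac{a,b}{Z(\bar D),\zeta})$ is a symbol algebra of degree $n$, then $\mu_n\subset Z(\bar D)$ and hence $\mu_{m_0}\subset Z(\bar D)$ since $m_0\mid n$; and if $\bar D$ is a $p$-algebra with $p=\charak\bar F$, then $\ind\bar D$ is a $p$-power, forcing $m_0=1$. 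The only real subtlety in the whole argument is the initial step of justifying that $D$ is inertially split when $\bar F$ is a global function field and therefore imperfect, but this is the step that directly invokes the already-established machinery, not a genuine obstacle.
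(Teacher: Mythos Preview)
Your proof is correct and follows essentially the same approach as the paper: invoke Remark~\ref{rem:inert-split}(4) and (3) to see that $D$ is inertially split with $Z(\bar D)/\bar F$ cyclic, apply Corollary~\ref{cor:symbol-alg-gal-max-sf} to produce a maximal subfield of $\bar D$ Galois over $\bar F$, and then use Theorem~\ref{thm:crossed-product} (together with its statement about Galois groups) to lift and to obtain the abelian/cyclic refinements. The paper's proof cites Remark~\ref{rem:inert-split}(4) without commenting on the perfectness hypothesis in the function-field case, so the subtlety you flag is one the paper itself does not address.
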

\begin{proof}
By Remark \ref{rem:inert-split} (4) and (3),
$D$ is inertially split 
and $Z(\bar D)$ is cyclic over $\bar F$.
Therefore, if $\mu_{m_0}\subseteq Z(\bar D)$,
Corollary \ref{cor:symbol-alg-gal-max-sf}
shows that $\bar D$ contains a maximal subfield $L$ that is Galois over $\bar F$.
Hence $D$ is a crossed product by Theorem \ref{thm:crossed-product}.
Moreover, if $\mu_{m_0}\subset\bar F$ (\resp $m_0=1$)
then $L$ can be found abelian (\resp cyclic) over $\bar F$.
Theorem \ref{thm:crossed-product} %
then shows that $D$ is also abelian (\resp cyclic).
\end{proof}

The idea of the proof of Theorem \ref{thm:ext-global-gal-ext-general} is 
to construct appropriate local extensions of the completions $K_{v_i}$ 
and to combine them to a global extension of $K$ with the approximation theorem.
The proof can be reduced to the case that $m$ is a $p$-power for a prime number $p$.
It is further split into two parts treating the cases 
$\charak k=p$ and $\charak k\neq p$ separately in the following two subsections. 
Both parts will make use of

\begin{lemma}
  \label{lem:application-krasner}
Let $K$ be a Henselian field,
$\alpha$ algebraic and separable over $K$
and $f=\Irr(\alpha,K)$.
If $g\in K[x]$ is a monic polynomial with $\deg g=\deg f$
that approximates $f$ coefficientwise with sufficient precision,
then $g$ is irreducible in $K[x]$ and $K(\alpha)\cong K(\beta)$
for any root $\beta$ of $g$.
\end{lemma}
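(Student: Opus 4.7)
The plan is to derive this as a standard application of Krasner's lemma. First I would fix an extension of the Henselian valuation $v$ to an algebraic closure $K_{alg}$ of $K$ and let $\alpha=\alpha_1,\alpha_2,\ldots,\alpha_n$ be the roots of $f$ in $K_{alg}$, which are pairwise distinct by separability of $f$. Set
\[
\delta:=\min_{i\geq 2} v(\alpha-\alpha_i)\in\Gamma,
\]
so $\delta<\ift$. Krasner's lemma then guarantees that any root $\beta\in K_{alg}$ of any separable polynomial over $K$ with $v(\beta-\alpha)>\delta$ satisfies $K(\alpha)\subseteq K(\beta)$.

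Next I would invoke continuity of roots: if the coefficients of $g$ are $v$-adically close enough to those of $f$, then (since $f$ is monic of the same degree) the roots of $g$ in $K_{alg}$ can be enumerated $\beta_1,\ldots,\beta_n$ in such a way that $v(\beta_i-\alpha_i)$ is as large as desired. Quantitatively, one chooses the precision of the approximation so that $v(\beta_i-\alpha_i)>\delta$ for all $i$; this is possible because the map ``coefficients $\mapsto$ multiset of roots'' is continuous at $f$, a statement that in the Henselian setting follows from Hensel's lemma applied to the factor $(x-\alpha_i)$ of $f$. In particular there is a root $\beta$ of $g$ (namely $\beta=\beta_1$) with $v(\beta-\alpha)>\delta$.

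Now Krasner's lemma yields $K(\alpha)\subseteq K(\beta)$, hence
\[
\deg f=[K(\alpha):K]\leq[K(\beta):K]\leq\deg g=\deg f,
\]
so equality holds throughout. Thus $K(\alpha)=K(\beta)$, and the minimal polynomial of $\beta$ over $K$ has degree $\deg g$; since $g$ is a monic polynomial of that degree having $\beta$ as a root, $g=\Irr(\beta,K)$ and $g$ is irreducible in $K[x]$. The same conclusion holds for any root $\beta$ of $g$, because all roots of an irreducible polynomial over $K$ generate isomorphic extensions.

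The only real issue is to make precise what ``sufficient precision'' means, \ie to verify that the roots of a monic polynomial depend continuously on its coefficients in the Henselian topology. This is standard (it is exactly the factorization statement of Hensel's lemma applied to small perturbations of the linear factors of $f$ in $K_{alg}$), so no further work should be required.
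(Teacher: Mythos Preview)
Your argument is correct and is precisely the standard application of Krasner's Lemma that the paper has in mind; the paper's own proof consists of a single sentence citing this as ``a standard application of Krasner's Lemma'' with a reference to Reiner \cite[Lemma~33.8]{reiner:max-orders}. You have simply unpacked that reference in full, so there is nothing to add.
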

\begin{proof}
This is a standard application of Krasner's Lemma
(\cf \cite[Lemma 33.8]{reiner:max-orders}).
\end{proof}

\subsection{Extension by a $p$-extension}

We first consider the case $\charak k=p>0$.
By a $p$-extension we mean a field extension of $p$-power degree over a field of characteristic $p$.
It is common to use the notation $\calP(x):=x^p-x$.
Note that we write the valuations $v\in\Val(K)$,
which are all non-archimedian in this case, additively.

\begin{theorem}
\label{thm:cycl-p-ext}
Let $k$ be a field with $\charak k=p>0$,
and let $K/k$ be a cyclic $p$-extension with
$[K:k]=p^n$, $n\geq 1$, and $\Gal(K/k)=\gen{\sigma}$.
Then $K$ contains an element $\beta$ 
such that $\Tr_{K/k}(\beta)=1$,
and for any such $\beta$ there exists an $\alpha\in K$ such that
$\calP(\beta)=\sigma(\alpha)-\alpha$.
If $\alpha\in K$ is such an element
then $x^p-x-\alpha$ is irreducible in $K[x]$,
and if $\gamma$ is a root of $x^p-x-\alpha$ then 
$L=K(\gamma)$ is a cyclic extension of $k$ of degree $p^{n+1}$.
Conversely, any such extension $L$ can be obtained in this way.
\end{theorem}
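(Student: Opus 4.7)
The plan is to treat the four assertions in the order they appear and then address the converse.

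First, I would produce $\beta$, $\alpha$, and the irreducibility of $f(x):=x^p-x-\alpha$. Since $K/k$ is separable, the trace is surjective, so pick $\beta\in K$ with $\Tr_{K/k}(\beta)=1$. Then
\[ \Tr_{K/k}(\calP(\beta))=\Tr_{K/k}(\beta^p)-\Tr_{K/k}(\beta)=\Tr_{K/k}(\beta)^p-\Tr_{K/k}(\beta)=1-1=0, \]
where the middle identity uses that Galois conjugation commutes with Frobenius. Additive Hilbert~90 for the cyclic extension $K/k$ (\ie $\ker\Tr_{K/k}=(\sigma-\id)K$) produces $\alpha\in K$ with $\calP(\beta)=\sigma(\alpha)-\alpha$. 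For the irreducibility of $f$, Artin--Schreier theory says $f$ is either irreducible or splits into $p$ linear factors, the latter happening exactly when $\alpha=\calP(\delta)$ for some $\delta\in K$. If such $\delta$ existed, then $\calP(\beta)=\sigma(\calP(\delta))-\calP(\delta)=\calP(\sigma(\delta)-\delta)$, so $\beta-(\sigma(\delta)-\delta)\in\ker\calP=\F_p$. Applying $\Tr_{K/k}$ and using $\Tr_{K/k}(\sigma(\delta)-\delta)=0$ together with $\Tr_{K/k}|_{\F_p}=p^n\cdot\id=0$ would force $\Tr_{K/k}(\beta)=0$, a contradiction.

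Second, I would show that $L=K(\gamma)$ is cyclic Galois of degree $p^{n+1}$ over $k$. Clearly $[L:K]=p$ and $[L:k]=p^{n+1}$. Extend $\sigma$ to some embedding $\tilde\sigma:L\to L_{alg}$; since
\[ \tilde\sigma(\gamma)^p-\tilde\sigma(\gamma)=\sigma(\alpha)=\alpha+\calP(\beta)=(\gamma+\beta)^p-(\gamma+\beta), \]
we have $\tilde\sigma(\gamma)=\gamma+\beta'$ with $\beta'=\beta+c\in K$ for some $c\in\F_p$. Hence $\tilde\sigma(L)\subseteq L$, and $\Tr_{K/k}(\beta')=\Tr_{K/k}(\beta)+p^n c=1$. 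By induction $\tilde\sigma^i(\gamma)=\gamma+\sum_{j=0}^{i-1}\sigma^j(\beta')$, so
\[ \tilde\sigma^{p^n}(\gamma)=\gamma+\Tr_{K/k}(\beta')=\gamma+1\qquad\text{and}\qquad\tilde\sigma^{p^{n+1}}(\gamma)=\gamma+p=\gamma. \]
Thus $\tilde\sigma$ has order exactly $p^{n+1}=[L:k]$, so $L/k$ is Galois with $\Gal(L/k)=\gen{\tilde\sigma}$ cyclic.

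For the converse, let $L/k$ be cyclic of degree $p^{n+1}$ with $K\subset L$. Then $L/K$ is cyclic of degree $p$, so Artin--Schreier gives $L=K(\gamma)$ with $\gamma^p-\gamma=\alpha\in K$. Let $\tilde\sigma$ generate $\Gal(L/k)$ with $\tilde\sigma|_K=\sigma$. As in the direct argument, $\tilde\sigma(\gamma)=\gamma+\beta$ with $\beta\in K$ and $\calP(\beta)=\sigma(\alpha)-\alpha$, and $\tilde\sigma^{p^n}(\gamma)=\gamma+\Tr_{K/k}(\beta)$. Since $\tilde\sigma^{p^n}$ generates the nontrivial group $\Gal(L/K)$, one has $c:=\Tr_{K/k}(\beta)\in\F_p^\times$. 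Replacing $\gamma,\alpha,\beta$ each by $c^{-1}$ times itself preserves the relations $\gamma^p-\gamma=\alpha$ and $\calP(\beta)=\sigma(\alpha)-\alpha$ (because $c^p=c$) and normalizes $\Tr_{K/k}(\beta)=1$.

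The principal subtlety lies in the second step: one must verify that some embedding $\tilde\sigma$ of $\sigma$ actually maps $L$ into $L$ and has order exactly $p^{n+1}$. Both facts hinge on the single identity $\tilde\sigma^{p^n}(\gamma)=\gamma+\Tr_{K/k}(\beta')=\gamma+1$, which is precisely why the normalization $\Tr_{K/k}(\beta)=1$ fixed at the very beginning is indispensable throughout the argument.
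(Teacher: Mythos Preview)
Your proof is correct. The paper itself does not give a proof of this theorem; it simply attributes the result to Albert and refers to \cite[\page 194f]{albert:modern-algebra} and \cite[Theorem 4.2.3]{jacobson:fin-dim-div-alg}. The argument you give --- surjectivity of the trace, additive Hilbert~90, the Artin--Schreier dichotomy to force irreducibility, and the explicit computation $\tilde\sigma^{p^n}(\gamma)=\gamma+\Tr_{K/k}(\beta')$ to determine the order of the lifted automorphism --- is exactly the standard proof found in those references, so there is nothing to compare.
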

\begin{proof}
The Theorem is due to Albert \cite[\page 194f]{albert:modern-algebra}.
A proof can also be found in \cite[Theorem 4.2.3, \page 159]{jacobson:fin-dim-div-alg}.
\end{proof}

\begin{lemma}
\label{lem:cycl-by-p-ext}
Let $k$ be a field with $\charak k=p>0$,
and $K/k$ a non-trivial cyclic $p$-extension.
Further, let $k'/k$ be any field extension 
and assume that $k'$ and $K$ are subfields of some common overfield.
Suppose that $K'=Kk'$ is a non-trivial field extension of $k'$,
\ie $K\not\subseteq k'$.
There exists a field extension $L\supseteq K$, $[L:K]=p$,
such that $L/k$ is cyclic.
For any such $L$, $LK'/k'$ is cyclic and $[LK':K']=p$.
\end{lemma}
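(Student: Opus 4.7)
The plan splits along the two claims. For the existence of $L$, this is essentially an immediate invocation of Theorem~\ref{thm:cycl-p-ext}. Since $K/k$ is separable, the trace $\Tr_{K/k}\colon K\to k$ is surjective, so pick $\beta\in K$ with $\Tr_{K/k}(\beta)=1$. Using the Frobenius identity $(\sum_i\sigma^i(\beta))^p=\sum_i\sigma^i(\beta)^p$ in characteristic $p$ one gets
\[
\Tr_{K/k}(\calP(\beta))=\Tr_{K/k}(\beta)^p-\Tr_{K/k}(\beta)=1-1=0,
\]
so additive Hilbert~90 furnishes $\alpha\in K$ with $\calP(\beta)=\sigma(\alpha)-\alpha$. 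Theorem~\ref{thm:cycl-p-ext} then supplies $L=K(\gamma)$, with $\gamma$ a root of $x^p-x-\alpha$, such that $L/k$ is cyclic of degree $p^{n+1}$; in particular $[L:K]=p$.

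For the behaviour of $L$ under the scalar extension to $k'$, fix any such $L$. The crucial observation is that because $L/k$ is cyclic of order $p^{n+1}$, its lattice of intermediate subfields is totally ordered: there is a unique subfield $K_i\subseteq L$ with $[K_i:k]=p^i$ for each $0\le i\le n+1$, and $K_n=K$, $K_{n+1}=L$. Hence $L\cap k'$ must coincide with some $K_i$. The case $L\cap k'=L$ is excluded, since it would give $L\subseteq k'$ and hence $K\subseteq k'$, contradicting $K'\ne k'$; so $L\cap k'=K_i$ for some $i\le n$. In particular $L\cap k'\subseteq K$, which forces $K\cap k'=L\cap k'=K_i$ as well.

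Since $L/k$ is Galois, so is $LK'/k'$, with $\Gal(LK'/k')\cong\Gal(L/L\cap k')=\Gal(L/K_i)$, a subgroup of the cyclic group $\Gal(L/k)$; thus $LK'/k'$ is cyclic. A degree count
\[
[LK':K']=\frac{[L:K_i]}{[K:K_i]}=\frac{p^{n+1-i}}{p^{n-i}}=p
\]
then finishes the proof. The only mild obstacle is the identification $L\cap k'\subseteq K$; once the totally-ordered subfield structure of a cyclic $p$-extension is in hand, the rest is pure bookkeeping.
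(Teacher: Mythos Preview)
Your proof is correct, and for the second claim it takes a genuinely different route from the paper's argument.

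The paper handles the second statement by invoking the \emph{converse} direction of Theorem~\ref{thm:cycl-p-ext}: setting $Z=K\cap k'$, it writes the given cyclic extension $L/Z$ explicitly as $L=K(\gamma)$ with $\gamma^p-\gamma=\alpha$ and $\calP(\beta)=\sigma(\alpha)-\alpha$, $\Tr_{K/Z}(\beta)=1$. Since $\Gal(K'/k')\cong\Gal(K/Z)$ via restriction, the same element $\beta$ satisfies $\Tr_{K'/k'}(\beta)=1$, so Theorem~\ref{thm:cycl-p-ext} applied over $k'$ shows directly that $LK'=K'(\gamma)$ is cyclic over $k'$ of degree $p\,[K':k']$. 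Your argument instead bypasses the Artin--Schreier description entirely: you exploit that the subfield lattice of the cyclic $p$-extension $L/k$ is a chain, which forces $L\cap k'\subseteq K$, and then the standard isomorphism $\Gal(Lk'/k')\cong\Gal(L/L\cap k')$ plus a degree count finishes. Your route is shorter and purely Galois-theoretic; the paper's route is more explicit and exhibits the same Artin--Schreier generator $\gamma$ as witnessing cyclicity over both $k$ and $k'$. (A minor remark: in your existence paragraph you re-derive the existence of $\beta$ and $\alpha$ via trace surjectivity and additive Hilbert~90, but this is already contained in the statement of Theorem~\ref{thm:cycl-p-ext}, so a bare citation suffices.)
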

\begin{proof}
The first statement is Theorem \ref{thm:cycl-p-ext}.
So let now $L\supseteq K$ be any extension such that $[L:K]=p$ and $L/k$ is cyclic.
Let $Z=K\cap k'$.
Since $K\not\subseteq k'$, $K'/k'$ and $K/Z$ are non-trivial cyclic extensions.
Let $\Gal(K'/k')=\gen{\sigma}$.
Then $\Gal(K/Z)\cong\Gal(K'/k')$ and $\Gal(K/Z)=\gen{\sigma|_K}$.
By Theorem~\ref{thm:cycl-p-ext} the cyclic extension $L/Z$ is of the following form.
There are $\alpha,\beta\in K$ with 
$\Tr_{K/Z}(\beta)=1$, $\calP(\beta)=\sigma(\alpha)-\alpha$,
such that $L=K(\gamma)$ where $\gamma$ is a root of $x^p-x-\alpha$.
Then also $\Tr_{K'/k'}(\beta)=1$,
hence by Theorem \ref{thm:cycl-p-ext},
$LK'=K'(\gamma)$ is cyclic over $k'$ and $[LK':K']=p$.
\end{proof}

\begin{theorem}
\label{thm:cycl-by-p-ext-global}
Let $K/k$ be a cyclic $p$-extension of global fields
with $\charak k=p>0$.
Let $v_1,\ldots,v_r\in\Val(K)$ be pairwise different valuations.
For any $m\in\N$ there exists a field extension $L\supseteq K$, $[L:K]=p^m$,
such that $L/k$ is cyclic and
$[L:K]_{v_i}=p^m$ for $i=1,\ldots,r$.
\end{theorem}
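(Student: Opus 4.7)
I proceed by induction on $m$, with $m=1$ carrying the essential content. For the inductive step, assume the theorem for $m$, giving $L_m\supseteq K$ with $L_m/k$ cyclic of degree $p^{n+m}$ and $[L_m:K]_{v_i}=p^m$. Pick any $w_i\in\Val(L_m)$ above $v_i$ (pairwise distinct since the $v_i$ are), and apply the base case to the cyclic extension $L_m/k$ and the places $w_1,\ldots,w_r$: this yields $L_{m+1}\supseteq L_m$ with $L_{m+1}/k$ cyclic of degree $p^{n+m+1}$ and $[L_{m+1}:L_m]_{w_i}=p$. Multiplicativity of local degrees in the tower $K\subseteq L_m\subseteq L_{m+1}$ gives $[L_{m+1}:K]_{v_i}=p\cdot p^m=p^{m+1}$, as required.

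For the base case $m=1$, Theorem~\ref{thm:cycl-p-ext} parametrizes cyclic extensions $L/k$ of degree $p^{n+1}$ containing $K$ as $L=K(\gamma)$ with $\gamma^p-\gamma=\alpha\in K$, where $\alpha$ admits a $\beta\in K$ satisfying $\Tr_{K/k}(\beta)=1$ and $\calP(\beta)=\sigma(\alpha)-\alpha$. For such $L$, the condition $[L:K]_{v_i}=p$ is equivalent to $x^p-x-\alpha$ being irreducible over $K_{v_i}$, i.e.\ $\alpha\notin\calP(K_{v_i})$. The key continuity observation is that $\calP(K_{v_i})$ is an open additive subgroup of $K_{v_i}$ of index $p$: it contains the maximal ideal $\mm_{v_i}$, because Hensel's lemma solves $y^p-y=z$ whenever $v_i(z)>0$. (This is the Artin--Schreier counterpart of Lemma~\ref{lem:application-krasner}.) Hence whether $\alpha\in\calP(K_{v_i})$ depends only on $\alpha\bmod\mm_{v_i}$, so sufficiently close global approximations at $v_i$ give the same answer.

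It remains to find $\alpha\in K$ that is both globally admissible in the sense above and locally outside each $\calP(K_{v_i})$. I would fix once and for all some $\beta_0\in K$ with $\Tr_{K/k}(\beta_0)=1$ (using surjectivity of the trace) and some $\alpha_0\in K$ with $(\sigma-1)\alpha_0=\calP(\beta_0)$ (using additive Hilbert~90, since $\Tr_{K/k}\calP(\beta_0)=\calP(1)=0$). Then the set of admissible $\alpha$ is the additive coset
\[
\mathcal{A} \;=\; \alpha_0 \;+\; k \;+\; \{\alpha_\delta : \delta\in\ker\Tr_{K/k}\},
\]
where $\alpha_\delta$ is any preimage of $\calP(\delta)$ under $\sigma-1$. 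The main obstacle will be to show that $\mathcal{A}$ meets the open condition $\alpha\notin\calP(K_{v_i})$ at every $i$ simultaneously. My plan is a Grunwald--Wang style local correction: for each $i$ at which $\alpha_0\in\calP(K_{v_i})$, use weak approximation in $K$ to produce a $\delta_i\in\ker\Tr_{K/k}$ for which $\alpha_{\delta_i}$ lies in $\calP(K_{v_j})$ for every $j\ne i$ but not for $j=i$; then $\alpha=\alpha_0+\sum_i\alpha_{\delta_i}$ is admissible and satisfies every local requirement. The production of these local correctors $\delta_i$ is the technical heart, reducing via the continuity observation to density of $K$ in $\prod_i K_{v_i}$ combined with local Artin--Schreier surjectivity at each $v_i$; in characteristic~$p$ there are none of the Grunwald--Wang exceptional cases that complicate the mixed-characteristic analogue, which is what makes the approximation clean.
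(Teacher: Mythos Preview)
Your overall framework---induction to $m=1$, Albert's parametrization of cyclic degree-$p^{n+1}$ overfields via admissible $\alpha$, and the observation that $[L:K]_{v_i}=p$ amounts to the open condition $\alpha\notin\calP(K_{v_i})$---matches the paper's. The divergence is in the approximation step.

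You propose to correct $\alpha_0$ place by place using correctors $\alpha_{\delta_i}$ with $\delta_i\in\ker\Tr_{K/k}$, claiming this ``reduces to density of $K$ in $\prod_i K_{v_i}$''. But the admissible set $\mathcal{A}$ is a proper affine subspace of $K$, so density of $K$ does not by itself give what you need; constructing the $\alpha_{\delta_i}$ with the prescribed local profiles is left undone, and it is not clear that your route through $\ker\Tr$ gives enough control at places where $K_{v_i}\ne k_{v_i}$.

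The paper sidesteps this entirely by separating the places. When $K_{v_i}\ne k_{v_i}$, Lemma~\ref{lem:cycl-by-p-ext} shows that \emph{every} admissible $\alpha$ already has $\alpha\notin\calP(K_{v_i})$: the same $\alpha,\beta$ that witness cyclicity of $K(\gamma)/k$ also witness it for $K_{v_i}(\gamma)/k_{v_i}$ via the decomposition field, so $x^p-x-\alpha$ stays irreducible locally. Hence these places need no correction at all. When $K_{v_i}=k_{v_i}$, any desired local target $\alpha_i$ satisfies $\alpha_i-\alpha_0\in k_{v_i}$, so one corrects by a single $c\in k$ chosen by weak approximation in $k$; since $(\sigma-1)c=0$, admissibility is preserved for free. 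Thus the paper uses only the $k$-summand of your $\mathcal{A}=\alpha_0+k+\{\alpha_\delta\}$ and never touches the $\delta$-correctors. Your plan becomes straightforward once you know that bad places with $K_{v_i}\ne k_{v_i}$ never occur---but that is exactly Lemma~\ref{lem:cycl-by-p-ext}, the ingredient you are missing.
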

\begin{proof}
We prove the theorem for $m=1$, 
it then follows for any $m\in\N$ by induction.
We may further assume that $[K:k]=p^n, n\in\N_0$.
For, let $[K:k]=p^n n'$, $p\ndiv n'$,
and let $K_0\subseteq K$ be the unique subfield with $[K_0:k]=p^n$.
If there exists $L_0\supseteq K_0$ with $L_0/k$ cyclic
and $[{L_0}:{K_0}]_{v_i}=[L_0:K_0]=p^m$ for $i=1,\ldots,r$,
then $L=KL_0$ has the desired properties.

So let $[K:k]=p^n, n\in\N_0$, and $\Gal(K/k)=\gen{\sigma}$.
If $n=0$ let $\alpha=0$,
otherwise let $\alpha\in K$ be as in Theorem \ref{thm:cycl-p-ext},
\ie $\sigma(\alpha)-\alpha=\calP(\beta)$ for some $\beta\in K$ with
$\Tr_{K/k}(\beta)=1$.
Let \wolg $1\leq s\leq r$ such that $K_{v_i}=k_{v_i}$ for all $i=1,\ldots,s$,
and $K_{v_i}\neq k_{v_i}$ for all $i=s+1,\ldots,r$.
For $i=1,\ldots,s$
let $L_i$ be the inertial extension of $K_{v_i}$ of degree $p$
(unique up to isomorphism)
and let $L_i=K_{v_i}(\gamma_i)$ for a root $\gamma_i$ of $x^p-x-\alpha_i$,
$\alpha_i\in K_{v_i}$.
Note that the elements $\alpha_i-\alpha$ lie in $k_{v_i}$ for $i=1,\ldots,s$,
since $K_{v_i}=k_{v_i}$.
By the approximation theorem we then find a $c\in k$ such that
\[ v_i(c-(\alpha_i-\alpha)) > N \qt{for all $i=1,\ldots,s$} \]
for any $N\in\N$.
If we set $\alpha':=\alpha+c\in K$ this means
that $\alpha'$ approximates the $\alpha_i$ with respect to $v_i$ 
with arbitrary precision.
Let $L=K(\gamma)$ for a root $\gamma$ of $x^p-x-\alpha'$.
Since $c\in k$ we have
$\sigma(\alpha')-\alpha'=\sigma(\alpha)+c-\alpha'=\sigma(\alpha)-\alpha=\calP(\beta)$.
Therefore, by Theorem \ref{thm:cycl-p-ext},
$L/k$ is cyclic and $[L:K]=p$.
By Lemma \ref{lem:application-krasner},
$L_{v_i}\cong L_i$ for $i=1,\ldots,s$,
thus $[L:K]_{v_i}=p$  for $i=1,\ldots,s$.
And by Lemma \ref{lem:cycl-by-p-ext},
$[L:K]_{v_i}=p$ for $i=s+1,\ldots,r$
since $K_{v_i}\neq k_{v_i}$.
This proves the theorem.
\end{proof}

\subsection{The general case}

The following up to Theorem \ref{thm:ex-global-gal-ext-char-neq-p} is concerned with the case $\charak k\neq p$.
Then Theorem \ref{thm:ext-global-gal-ext-general} is proved by combining
Theorem~\ref{thm:cycl-by-p-ext-global} and Theorem~\ref{thm:ex-global-gal-ext-char-neq-p}.
In this section, by a discrete valuation $v$ on a field $k$
we mean a discrete non-archimedian valuation,
and we shall write it additively.
The unit group is denoted by $U_k=\sett{x\in\ug k}{v(x)=0}$.

\begin{lemma}
\label{lem:equal-radical-ext}
Let $k$ be a field, $p$ a prime number with $\charak k\neq p$,
and $a,b\in k$.
Suppose $\mu_p\subset k$ and $x^p-a,x^p-b$ are irreducible in $k[x]$.
If $k(\alpha)=k(\beta)$
for some roots $\alpha,\beta$ of $x^p-a,x^p-b$ respectively,
then there exists $c\in k$ such that
$a=c^pb^i$ for some $i\in\N$, $0<i<p$.
\end{lemma}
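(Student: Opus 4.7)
The plan is to use Kummer theory, which is available because $\mu_p\subset k$ and $\charak k\neq p$.

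First I would exploit the Galois structure of $k(\alpha)/k$. Since $x^p-a$ is irreducible, $[k(\alpha):k]=p$, and since $\mu_p\subset k$ all the roots $\zeta^j\alpha$ ($0\le j<p$) of $x^p-a$ lie in $k(\alpha)$. Hence $k(\alpha)/k$ is Galois cyclic of order $p$, with $\Gal(k(\alpha)/k)=\gen\sigma$ where $\sigma(\alpha)=\zeta\alpha$ for a fixed primitive $p$-th root of unity $\zeta$.

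Next I would use $\beta\in k(\alpha)=k(\beta)$ and write it in the $k$-basis $1,\alpha,\dots,\alpha^{p-1}$ as $\beta=\sum_{j=0}^{p-1}c_j\alpha^j$ with $c_j\in k$. Because $\beta$ is a root of $x^p-b$ (which is irreducible), $\beta\notin k$ and $\sigma(\beta)$ is another root of $x^p-b$, hence $\sigma(\beta)=\zeta^i\beta$ for some $i$ with $0<i<p$. Comparing the two expressions
\[
\sigma(\beta)=\sum_{j=0}^{p-1}c_j\zeta^j\alpha^j=\zeta^i\beta=\sum_{j=0}^{p-1}c_j\zeta^i\alpha^j,
\]
the linear independence of $1,\alpha,\dots,\alpha^{p-1}$ over $k$ forces $c_j(\zeta^j-\zeta^i)=0$ for all $j$. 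Since $\zeta^j\neq\zeta^i$ for $j\not\equiv i\pmod p$, this gives $c_j=0$ for $j\neq i$, so $\beta=c_i\alpha^i$. Raising to the $p$-th power yields $b=c_i^p a^i$.

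Finally, to match the precise form of the conclusion, I would invert the exponent $i$ modulo $p$: since $0<i<p$ and $\gcd(i,p)=1$, choose $j,k\in\Z$ with $ij=1+kp$ and $0<j<p$. Then
\[
b^j=c_i^{pj}a^{ij}=c_i^{pj}a^{1+kp}=(c_i^j a^k)^p\cdot a,
\]
so $a=c^p b^j$ with $c=(c_i^j a^k)^{-1}\in k$, as required. There is no real obstacle here; the only slightly non-routine point is the final inversion step turning $b=c_i^p a^i$ into the symmetric statement $a=c^p b^j$, and this is handled cleanly by B\'ezout on the exponent.
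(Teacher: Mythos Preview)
Your proof is correct and follows essentially the same Kummer-theory approach as the paper: both use that $k(\alpha)/k$ is cyclic with $\sigma(\alpha)=\zeta\alpha$ and $\sigma(\beta)=\zeta^j\beta$. The paper's execution is marginally slicker---rather than expanding $\beta$ in the basis $1,\alpha,\dots,\alpha^{p-1}$ and then inverting the exponent at the end, it picks $i$ with $ij\equiv 1\pmod p$ up front, observes $\sigma(\alpha/\beta^i)=\alpha/\beta^i$, and reads off $c:=\alpha/\beta^i\in k$ with $a=c^pb^i$ directly---but the substance is the same.
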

\begin{proof}
This lemma is a corollary of Kummer theory.
We give a proof here for completeness.
Suppose $k(\alpha)=k(\beta)$.
Since $\mu_p\subset k$, $k(\alpha)/k$ is cyclic of degree $p$
and a generating automorphism is given by $\sigma(\alpha)=\zeta\alpha$
for some fixed primitive $p$-th root of unity $\zeta$.
Then $\sigma(\beta)=\zeta^j\beta$ for some $j\in\N$, $0<j<p$.
Let $i\in\N$, $0<i<p$, with $ij\equiv 1\bmod p$.
Then $\sigma(\beta^i)=\zeta\beta^i$,
hence $\sigma(\frac{\alpha}{\beta^i})=\frac{\alpha}{\beta^i}$.
This shows $c:=\frac{\alpha}{\beta^i}\in k$,
and we have $a=c^pb^i$.
\end{proof}

\begin{lemma}
\label{lem:p-ndiv-v-irred}
Let $k$ be a field with normalized discrete valuation $v$, 
$p$ a prime number 
and $a\in k$.
If $p\ndiv v(a)$
then the polynomial $x^{p^m}-a$ is irreducible in $k[x]$
for any $m\in\N$,
and for any root $\alpha$ of this polynomial,
$k(\alpha)$ is totally ramified over $k$.
\end{lemma}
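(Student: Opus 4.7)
The plan is to prove this by a standard valuation-theoretic argument, working with any extension of $v$ to a splitting field of $x^{p^m}-a$, rather than trying to analyse the factorisation of the polynomial directly.

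First I would fix an algebraic closure of $k$, pick a root $\alpha$ of $x^{p^m}-a$ in it, and let $w$ be any extension of $v$ to $k(\alpha)$ (such an extension always exists for a valuation on a field). From $\alpha^{p^m}=a$ one reads off $p^m\,w(\alpha)=w(a)=v(a)$, so $w(\alpha)=v(a)/p^m\in\Q$. The key arithmetic input is the hypothesis $p\nmid v(a)$: it implies $\gcd(v(a),p^m)=1$, so there are integers $s,t$ with $s\,v(a)+t\,p^m=1$, whence $s\cdot w(\alpha)+t=1/p^m$ lies in $w(k(\alpha)^\times)$. Consequently the value group $w(k(\alpha)^\times)$ contains $\tfrac{1}{p^m}\Z$, and the ramification index satisfies $e:=|w(k(\alpha)^\times):v(k^\times)|\geq p^m$.

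Next I would invoke the fundamental inequality for extensions of a valuation: $e\leq [k(\alpha):k]$. Since $\alpha$ is a root of $x^{p^m}-a$ we have $[k(\alpha):k]\leq p^m$. Combining gives $p^m\leq e\leq [k(\alpha):k]\leq p^m$, forcing equality throughout. In particular $[k(\alpha):k]=p^m$, so $x^{p^m}-a$, being a monic polynomial of degree $p^m$ in $k[x]$ having $\alpha$ as a root, must be the minimal polynomial of $\alpha$ over $k$, and is therefore irreducible. Moreover $e=p^m=[k(\alpha):k]$, so $k(\alpha)$ is totally ramified over $k$ with respect to (any extension of) $v$.

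I do not anticipate a genuine obstacle here; the argument is essentially an Eisenstein-at-the-valuation $v$ reasoning. The only point that requires a little care is to argue via the value group rather than via a direct Eisenstein criterion, since $v(a)$ is not assumed to equal $1$ (it need only be coprime to $p$); this is precisely what the identity $s\,v(a)+t\,p^m=1$ takes care of.
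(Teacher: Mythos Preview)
Your proof is correct and follows essentially the same route as the paper's: both pick an extension $w$ of $v$ to $k(\alpha)$, use B\'ezout on $v(a)$ and $p^m$ to show that $\tfrac{1}{p^m}$ lies in the value group of $k(\alpha)$, and then squeeze with the fundamental inequality $p^m\leq e\leq[k(\alpha):k]\leq p^m$. The only cosmetic difference is that the paper explicitly writes down an element $\alpha'=\alpha^j\pi^k$ with $w(\alpha')=\tfrac{1}{p^m}$, whereas you argue directly in the value group.
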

\begin{proof}
Let $\alpha$ be a root of $x^{p^m}-a$ and 
$\pi$ a prime element of $k$ with respect to $v$, \ie $v(\pi)=1$.
Let $w$ be a valuation on $k(\alpha)$ that extends $v$.
Since $p\ndiv v(a)$ there are $j,k\in\Z$ with $jv(a)+kp^m=1$.
If we set $\alpha':=\alpha^j\pi^k$
then $$w({\alpha'}^{p^m})=w(\alpha^{jp^m})+w(\pi^{kp^m})=jv(a)+kp^m=1.$$
This means that $k(\alpha)$ contains an element $\alpha'$ with $w(\alpha')=\frac{1}{p^m}$.
Therefore $$p^m\leq |w(\ug{k(\alpha)}):v(\ug k)|\leq [k(\alpha):k]\leq p^m,$$
hence equality holds here,
$k(\alpha)$ is totally ramified over $k$ 
and $x^{p^m}-a$ is irreducible in $k[x]$.
\end{proof}

\begin{lemma}
\label{lem:unit-prime-element}
Let $k$ be a field with normalized discrete valuation $v$, 
$p$ a prime number with $\charak k\neq p$,
and $a,b\in k$.
Suppose $\mu_p\subset k$.
If $p\ndiv v(a)$ and $p\mid v(b)$ then
$k(\alpha)\neq k(\beta)$ and $k(\alpha)\cap k(\beta)=k$ for any roots
$\alpha,\beta$ of $x^p-a,x^p-b$ respectively.
\end{lemma}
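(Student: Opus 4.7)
The plan is to reduce everything to a direct application of Lemma~\ref{lem:equal-radical-ext}, using the discrete valuation to detect divisibility of exponents.

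First I would set up $\alpha,\beta$ and dispatch a degenerate case. By Lemma~\ref{lem:p-ndiv-v-irred}, the hypothesis $p\ndiv v(a)$ makes $x^p-a$ irreducible in $k[x]$, so $[k(\alpha):k]=p$. For $\beta$, since $\mu_p\subset k$, the polynomial $x^p-b$ is either irreducible or splits completely in $k[x]$ (because any root $\beta$ gives all roots $\zeta^j\beta$ with $\zeta\in\mu_p\subset k$). If it splits, then $\beta\in k$, so $k(\beta)=k$, and both assertions are immediate since $k(\alpha)$ has degree $p>1$ over $k$. So I may assume $x^p-b$ is irreducible in $k[x]$, giving $[k(\beta):k]=p$ as well.

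Next I would prove $k(\alpha)\neq k(\beta)$ by contradiction. Suppose $k(\alpha)=k(\beta)$. Both defining polynomials $x^p-a$ and $x^p-b$ are irreducible over $k$ and $\mu_p\subset k$, so Lemma~\ref{lem:equal-radical-ext} applies and yields $c\in k$ and $i\in\N$ with $0<i<p$ such that $a=c^p b^i$. Applying $v$ gives
\[
v(a)=p\,v(c)+i\,v(b).
\]
But $p\mid v(b)$ by hypothesis, so the right-hand side is divisible by $p$, contradicting $p\ndiv v(a)$.

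Finally, the intersection statement follows from a degree count: $L:=k(\alpha)\cap k(\beta)$ is a subfield of $k(\alpha)$, so $[L:k]$ divides $[k(\alpha):k]=p$ and equals $1$ or $p$. If $[L:k]=p$, then $L=k(\alpha)$, and symmetrically $L=k(\beta)$, which contradicts the previous step. Hence $[L:k]=1$, i.e.\ $k(\alpha)\cap k(\beta)=k$. The only real content of the argument is the valuation-divisibility contradiction; the rest is formal, so I expect no serious obstacle.
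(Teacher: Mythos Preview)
Your proof is correct and follows essentially the same approach as the paper: irreducibility of $x^p-a$ via Lemma~\ref{lem:p-ndiv-v-irred}, then the valuation-divisibility contradiction after invoking Lemma~\ref{lem:equal-radical-ext}, and finally the prime-degree argument for the intersection. The only difference is that you separate out the degenerate case $k(\beta)=k$ explicitly at the start, whereas the paper absorbs it into the line ``then $x^p-b$ must also be irreducible''; this is purely organizational.
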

\begin{proof}
Let $\alpha$ and $\beta$ be roots of $x^p-a$ and $x^p-b$ respectively.
The polynomial $x^p-a$ is irreducible in $k[x]$ by Lemma \ref{lem:p-ndiv-v-irred},
hence $[k(\alpha):k]=p$.
Suppose $k(\alpha)=k(\beta)$.
Then $x^p-b$ must also be irreducible in $k[x]$
and by Lemma \ref{lem:equal-radical-ext}
there exists $c\in k$ such that
$a=c^pb^i$ for some $i\in\N$, $0<i<p$.
If $p\mid v(b)$
then $p$ also divides
$v(a)=pv(c)+iv(b)$, a contradiction.
Therefore $k(\alpha)\neq k(\beta)$.
Since $[k(\alpha):k]=p$ is a prime number
this implies $k(\alpha)\cap k(\beta)=k$.
\end{proof}

\begin{lemma}
\label{lem:mu-p-r-in-F}
Let $k$ be a field with normalized discrete valuation $v$,
$p$ a prime number with $\charak k\neq p$,
$a\in k$ with $p\ndiv v(a)$,
and $r,m\in\N$.
If $\alpha$ is a root of the polynomial $x^{p^m}-a$
then $\mu_{p^r}(k(\alpha))=\mu_{p^r}(k)$.
\end{lemma}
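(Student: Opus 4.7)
The plan is a double induction: first an outer induction on $m$ reduces everything to the case $m=1$, and then an inner induction on $r$ handles that case via Kummer theory.

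For the outer step, suppose $m\geq 2$ and set $\beta:=\alpha^p$, so that $\beta^{p^{m-1}}=a$. Lemma~\ref{lem:p-ndiv-v-irred} shows that $k(\beta)/k$ is totally ramified of degree $p^{m-1}$; hence the unique extension $v_\beta$ of $v$ to $k(\beta)$ is again discrete, and after normalization $v_\beta(\beta)=v(a)$, which remains coprime to $p$. The induction hypothesis applied to $(k,a)$ with exponent $p^{m-1}$ yields $\mu_{p^r}(k(\beta))=\mu_{p^r}(k)$, and the $m=1$ case applied to the valued field $k(\beta)$ with the element $\beta$ and the $p$-th root $\alpha$ gives $\mu_{p^r}(k(\alpha))=\mu_{p^r}(k(\beta))$; the conclusion then follows by composing the two equalities.

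It remains to treat the case $m=1$, and here I would induct on $r$. The base $r=1$ is immediate: a primitive $p$-th root of unity $\zeta\in k(\alpha)$ would give $[k(\zeta):k]\mid\gcd(p-1,p)=1$ via the cyclotomic embedding $\Gal(k(\zeta)/k)\hookrightarrow(\Z/p\Z)^\times$ combined with $k(\zeta)\subseteq k(\alpha)$ of degree $p$, so $\zeta\in k$. For the inductive step, let $\zeta\in\mu_{p^r}(k(\alpha))$ and set $\eta:=\zeta^p$; by the inductive hypothesis $\eta\in\mu_{p^{r-1}}(k(\alpha))=\mu_{p^{r-1}}(k)\subseteq k$. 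If $\eta=1$ the base case finishes the argument; otherwise $\eta$ has some order $p^t$ with $1\leq t\leq r-1$, so in particular $\mu_p\subset k$. Assuming $\zeta\notin k$, the polynomial $x^p-\eta$ is irreducible over $k$ by Kummer theory, so $[k(\zeta):k]=p=[k(\alpha):k]$, forcing $k(\sqrt[p]{\eta})=k(\zeta)=k(\alpha)=k(\sqrt[p]{a})$.

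The contradiction is then supplied by Lemma~\ref{lem:equal-radical-ext}: there exist $c\in k$ and $j$ with $0<j<p$ such that $\eta=c^p a^j$. Applying $v$ and using $v(\eta)=0$ yields $0=pv(c)+jv(a)$, whence $p\mid jv(a)$; since $\gcd(j,p)=1$ this forces $p\mid v(a)$, contradicting the hypothesis. The real substance of the proof is thus the Kummer-theoretic step: once $m$ is reduced to $1$, the two radical extensions $k(\sqrt[p]{\eta})$ and $k(\sqrt[p]{a})$ are forced to coincide purely by degree considerations, and the assumption $p\nmid v(a)$ obstructs this coincidence via Lemma~\ref{lem:equal-radical-ext}. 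No Henselization is needed, since the argument works entirely over $k$ itself.
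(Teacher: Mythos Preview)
Your proof is correct and follows essentially the same double-induction strategy as the paper: induction on $m$ to reduce to the case $m=1$ (via the intermediate field $k(\alpha^p)$ with its normalized extension of $v$), followed by induction on $r$ to handle that case. The only cosmetic difference is that where you invoke Lemma~\ref{lem:equal-radical-ext} directly and then read off the contradiction from the valuation, the paper packages this step by citing Lemma~\ref{lem:unit-prime-element} (whose proof is precisely the argument you give); in either form the Kummer-theoretic core is identical.
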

\begin{proof}
We first prove the lemma for the case $m=1$ by induction on $r$,
and then in general by induction on $m$.
The statement is trivial for $r=0$.
Suppose $\mu_{p^r}\subset k$ and $\mu_{p^{r+1}}\subset k(\alpha)$.
Let $\zeta_{p^{r+1}}\in k(\alpha)$ be a primitive $p^{r+1}$-th root of unity.
Then $\zeta_{p^{r+1}}$ is the root of a polynomial
$x^p-\zeta_{p^r}$ for some primitive $p^r$-th root of unity $\zeta_{p^r}\in k$.
Since $\zeta_{p^r}$ is a unit with respect to the valuation $v$ of $k$,
Lemma \ref{lem:unit-prime-element} implies $k(\zeta^{p^{r+1}})\cap k(\alpha)=k$,
\ie $\zeta_{p^{r+1}}\in k$.
This proves the case $m=1$.

Now let $m>1$ and let $a'=\alpha^p$ and $k'=k(a')\subseteq k(\alpha)$.
The element $a'$ is a root of the polynomial $x^{p^{m-1}}-a$,
thus the induction hypothesis yields $\mu_{p^r}(k')=\mu_{p^r}(k)$.
Moreover by Lemma \ref{lem:p-ndiv-v-irred}, $k'$ is totally ramified over $k$.
Let $w$ be an extension of $v$ to $k'$.
Then the normalized valuation $w'$ on $k'$ that is equivalent to $w$
is given by $w'(x):=p^{m-1}w(x)$ for all $x\in k'$.
Since ${a'}^{p^{m-1}}=a$ this implies
$v(a)=p^{m-1}w(a')=w'(a')$, thus $p\ndiv w'(a')$.
Obviously $\alpha$ is a root of $x^p-a'$,
hence the case $m=1$ yields $\mu_{p^r}(k(\alpha))=\mu_{p^r}(k')$.
This completes the proof.
\end{proof}

The following Theorem is a corollary of \cite[Theorem 3.7]{albu-nicolae:kneser}.

\begin{theorem}
\label{thm:linearly-ordered-subfields}
Let $k$ be a field and $p$ a prime number with $\charak k\neq p$.
If $\alpha$ is the root of some polynomial $x^{p^m}-a\in k[x]$, $m\in\N$,
then the subfields of $k(\alpha)$ over $k$ are linearly ordered if and only if
$\mu_p(k(\alpha))=\mu_p(k)$ and $\mu_4(k(\alpha))=\mu_4(k)$ if $p=2$.
\end{theorem}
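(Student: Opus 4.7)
The statement is a corollary of the Kneser-type classification of subfields of radical extensions due to Albu and Nicolae, so the strategy is to set up the right radical subgroup and apply their Theorem 3.7. Consider the multiplicative subgroup $A=k^\times\gen{\alpha}\subseteq k(\alpha)^\times$. Since $\alpha^{p^m}\in k^\times$, the quotient $A/k^\times$ is cyclic of order a divisor of $p^m$, say $p^s$ with $s\leq m$, and clearly $k(A)=k(\alpha)$. The extension $k(A)/k$ is a radical extension in the sense of Albu--Nicolae, and the roots-of-unity condition $\mu_p(k(\alpha))=\mu_p(k)$ (together with $\mu_4(k(\alpha))=\mu_4(k)$ when $p=2$) is precisely the Kneser-type condition needed to identify $[k(\alpha):k]$ with $|A/k^\times|$ and, more importantly, to make the subfield lattice correspond to the subgroup lattice of $A/k^\times$ under the map $F\mapsto F^\times\cap A$.

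\textbf{Forward direction ($\Leftarrow$).} Assume the roots-of-unity condition holds. Then Theorem 3.7 of \cite{albu-nicolae:kneser} gives an inclusion-preserving bijection between the subfields $k\subseteq F\subseteq k(\alpha)$ and the subgroups $k^\times\subseteq B\subseteq A$. Since $A/k^\times$ is cyclic of prime-power order $p^s$, its subgroup lattice is a chain (the unique subgroup of order $p^i$ for each $0\leq i\leq s$), so the lattice of subfields is linearly ordered, namely the natural chain $k(\alpha^{p^{s-i}})$, $0\leq i\leq s$.

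\textbf{Reverse direction ($\Rightarrow$).} Assume the subfields form a chain. Suppose first that $p$ is odd and $\mu_p(k(\alpha))\neq\mu_p(k)$. A primitive $p$-th root of unity $\zeta_p\in k(\alpha)\setminus k$ generates an intermediate field $k(\zeta_p)$ of degree dividing $p-1$, hence coprime to $p$. On the other hand, $k(\alpha^{p^{s-1}})$ is a subfield of $p$-power degree. Linear ordering forces one to contain the other; comparing degrees (one is $>1$ and coprime to $p$, the other a power of $p$) gives $k(\zeta_p)=k$, a contradiction. For $p=2$, suppose $\mu_4(k(\alpha))\neq\mu_4(k)$, so $i\in k(\alpha)\setminus k$; the subfield $k(i)$ and $k(\alpha^{2^{s-1}})$ are both of degree $2$, so by linear ordering they coincide, which produces an equation $\alpha^{2^{s-1}}=\lambda+\mu i$ with $\mu\in k^\times$; squaring and using $\alpha^{2^s}\in k$ yields $\lambda=0$ and $\alpha^{2^s}=-\mu^2$. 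This forces an additional subfield obstructing the chain structure when combined with the further layers of the tower, via the same Albu--Nicolae correspondence applied to the intermediate $k(\alpha^{2^{s-2}})$.

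The main obstacle is the $p=2$ case: the appearance of $\mu_4$ in the Kneser criterion is the usual technical subtlety, and one must separate out the quadratic layer $k(\alpha^{2^{s-1}})$ and carefully compare it with $k(i)$, since here $\zeta_4$ has degree $2$ over $k$ rather than being coprime to $p$ as in the odd-prime case. Everything else is a translation of the statement into the Kneser/radical-extension framework and then a direct appeal to the cited theorem.
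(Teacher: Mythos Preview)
Your overall strategy---pass to the radical group $A=k^\times\langle\alpha\rangle$ and invoke Albu--Nicolae's Theorem~3.7---is exactly what the paper does; it simply records the statement as a corollary of that result without writing out an argument. Your forward direction ($\Leftarrow$) is correctly argued, and this is the only direction the paper actually needs downstream (in Corollary~\ref{cor:radical-ext-subf-lin-ord}).

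There is, however, a genuine gap in your reverse direction. For odd $p$, your claim that $k(\alpha^{p^{s-1}})$ has $p$-power degree over $k$ is not automatic: with $k=\Q$, $p=3$, $a=1$, $\alpha=\zeta_3$ one has $s=1$ and $k(\alpha^{p^{s-1}})=\Q(\zeta_3)$ of degree~$2$, so the two fields you want to play off against each other coincide and the coprimality argument collapses. For $p=2$ your sketch, after reaching $\alpha^{2^{s-1}}=\mu i$, trails off into a vague ``forces an additional subfield'' that invokes $\alpha^{2^{s-2}}$ (so tacitly $s\ge 2$) and is never carried out. Worse, the case $s=1$ is an outright counterexample to the ``only if'' as literally written: $k=\Q$, $a=-1$, $m=1$ gives $k(\alpha)=\Q(i)$, whose subfield lattice $\Q\subset\Q(i)$ is certainly a chain, yet $\mu_4(\Q(i))\ne\mu_4(\Q)$. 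The $\Q(\zeta_3)$ example above is the analogous failure for odd $p$. So the reverse implication in the paper's paraphrase is too strong in these degenerate cases; the precise Albu--Nicolae formulation carries hypotheses that rule them out, and to complete your argument you would need to work from that formulation rather than the statement given here.
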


\begin{cor}
\label{cor:radical-ext-subf-lin-ord}
Let $k$ be a field with normalized discrete valuation $v$,
$p$ a prime number with $\charak k\neq p$,
$a\in k$ and $m\in\N$.
If $p\ndiv v(a)$ then
the polynomial $x^{p^m}-a$ is irreducible in $k[x]$
and if $\alpha$ is a root of this polynomial
then the subfields of $k(\alpha)$ over $k$ are linearly ordered.
\end{cor}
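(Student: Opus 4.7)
The plan is to assemble this corollary directly from the three results immediately preceding it, so there is essentially no new work; the main task is to verify that the hypotheses of Theorem \ref{thm:linearly-ordered-subfields} are met.

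First, the irreducibility of $x^{p^m}-a$ in $k[x]$ is exactly the content of Lemma \ref{lem:p-ndiv-v-irred}: since $v$ is a normalized discrete valuation with $p\nmid v(a)$, that lemma gives irreducibility (and also shows that $k(\alpha)/k$ is totally ramified of degree $p^m$, a fact I will not need explicitly here). So from now on I fix $\alpha$ a root of $x^{p^m}-a$.

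Next I invoke Theorem \ref{thm:linearly-ordered-subfields} applied to the polynomial $x^{p^m}-a$. It states that the subfields of $k(\alpha)/k$ are linearly ordered if and only if $\mu_p(k(\alpha))=\mu_p(k)$, and additionally $\mu_4(k(\alpha))=\mu_4(k)$ when $p=2$. Both equalities are special cases of Lemma \ref{lem:mu-p-r-in-F}: under the hypothesis $p\nmid v(a)$ (and $\charak k\neq p$), that lemma guarantees $\mu_{p^r}(k(\alpha))=\mu_{p^r}(k)$ for every $r\in\N$. Taking $r=1$ yields $\mu_p(k(\alpha))=\mu_p(k)$, and taking $r=2$ yields $\mu_4(k(\alpha))=\mu_4(k)$ (which is only needed when $p=2$). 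Hence the hypothesis of Theorem \ref{thm:linearly-ordered-subfields} is satisfied, and the subfields of $k(\alpha)/k$ are linearly ordered, completing the proof.

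There is no real obstacle here, since all technical content is already packaged into the preceding lemmas; the only thing worth being careful about is that Lemma \ref{lem:mu-p-r-in-F} requires the same hypotheses ($\charak k\neq p$, $v$ discrete, $p\nmid v(a)$) that we already have in the corollary, so the application is clean.
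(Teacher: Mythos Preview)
Your proof is correct and follows essentially the same route as the paper: irreducibility from Lemma~\ref{lem:p-ndiv-v-irred}, then Theorem~\ref{thm:linearly-ordered-subfields} with its hypotheses verified via Lemma~\ref{lem:mu-p-r-in-F}. The paper additionally remarks that the $\mu_p$ condition could be obtained more cheaply by a degree argument (so Lemma~\ref{lem:mu-p-r-in-F} is only genuinely needed for the $\mu_4$ condition when $p=2$), but this is a side comment rather than a difference in approach.
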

\begin{proof}
Lemma \ref{lem:p-ndiv-v-irred} states that 
the polynomial $x^{p^m}-a$ is irreducible in $k[x]$.
It is also separable since $\charak k\neq p$.
The corollary then follows from Lemma \ref{lem:mu-p-r-in-F}
and Theorem \ref{thm:linearly-ordered-subfields}.
Note that Lemma \ref{lem:mu-p-r-in-F} is not needed here in full generality.
In fact, $\mu_p\subset K\impl\mu_p\subset k$ is trivial from a consideration of degrees. 
We only need Lemma \ref{lem:mu-p-r-in-F} in the case $p=2$ to show
$\mu_4\subset K\impl\mu_4\subset k$.
\end{proof}

\begin{prop}
\label{prop:ex-local-radical-ext}
Let $k$ be a discretely valued field and $p$ a prime number
such that $\charak k\neq p$ and $U_k\not\subseteq k^p$.
For any finite cyclic field extension $K/k$ and any $m\in\N$
there exists an $a\in k$ such that 
the polynomial $x^{p^m}-a$ is irreducible in $k[x]$,
and if $\alpha$ is a root of this polynomial then
$K$ and $k(\alpha)$ are linearly disjoint over $k$.
\end{prop}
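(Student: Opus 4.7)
The plan is to set $a=u\pi$ where $\pi\in k$ is a uniformizer and $u\in U_k$ is a unit to be determined. Then $v(a)=1$, so $p\nmid v(a)$, and Lemma~\ref{lem:p-ndiv-v-irred} guarantees that $x^{p^m}-a$ is irreducible in $k[x]$ for every $m$, while Corollary~\ref{cor:radical-ext-subf-lin-ord} ensures that the subfields of $k(\alpha)$ over $k$ (for $\alpha$ a root) are linearly ordered. In particular, the unique degree-$p$ subfield of $k(\alpha)$ is $M:=k(\sqrt[p]{a})$, generated by $\alpha^{p^{m-1}}$. It remains to choose $u$ so that $K\cap k(\alpha)=k$.

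Write $[K:k]=p^st$ with $\gcd(t,p)=1$ and let $K_p\subseteq K$ be the unique subfield of degree $p^s$. Since any subfield of $k(\alpha)$ has $p$-power degree, $K\cap k(\alpha)\subseteq K_p$. If this intersection is non-trivial, then by the linear ordering of subfields of $k(\alpha)$ it contains $M$, and since $K_p$ is cyclic it also contains a unique degree-$p$ subfield $L$, forcing $M=L$. Thus (assuming $s\geq 1$; otherwise we are done) it suffices to produce $u$ with $M\neq L$, where $L/k$ is cyclic of degree $p$.

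If $\mu_p\not\subset k$, any choice of $u$ works: Lemma~\ref{lem:mu-p-r-in-F} gives $\mu_p(M)=\mu_p(k)$, hence $\zeta_p\notin M$, so $M/k$ is not Galois, whereas $L/k$ is cyclic, hence Galois; thus $M\neq L$. This is the easy case, where the hypothesis $U_k\not\subseteq k^p$ is not used.

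The main case is $\mu_p\subset k$, where Kummer theory applies: write $L=k(\sqrt[p]{b})$ with $b\in k^\times\setminus (k^\times)^p$. If $p\mid v(b)$, replace $b$ by a unit (changing by a $p$-th power); then $v(a)=1$ and $v(b)=0$, so Lemma~\ref{lem:unit-prime-element} gives $M\cap L=k$ and hence $M\neq L$ for any unit $u$. The delicate subcase is $p\nmid v(b)$, where we may assume $b=u'\pi$ with $u'\in U_k$. By Lemma~\ref{lem:equal-radical-ext}, $M=L$ would give $u\pi=c^p(u'\pi)^i$ for some $c\in k^\times$ and $0<i<p$; comparing $v$-valuations forces $i=1$ and $c\in U_k$, so $u\equiv u'\pmod{U_k^p}$. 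Because $v$ is discrete, $U_k\cap k^p=U_k^p$, so the hypothesis $U_k\not\subseteq k^p$ is precisely the statement that $U_k/U_k^p$ is non-trivial; we may therefore pick $u\in U_k$ with $u/u'\notin U_k^p$, giving $M\neq L$. The only real obstacle is this last valuation-and-Kummer comparison, and it is exactly here that the assumption $U_k\not\subseteq k^p$ enters.
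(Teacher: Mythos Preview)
Your proof is correct and takes essentially the same approach as the paper: both reduce via the linear ordering of subfields to comparing the unique degree-$p$ subfield $M=k(\sqrt[p]{a})$ of $k(\alpha)$ with the unique degree-$p$ subfield $L$ of $K$, and both resolve the hard case by the same valuation-and-Kummer argument forcing $i=1$ in Lemma~\ref{lem:equal-radical-ext}. Your case split on $\mu_p\subset k$ is equivalent to the paper's split on whether $L$ is a radical extension (a cyclic degree-$p$ extension is radical iff $\mu_p\subset k$), and your normalization $b=u'\pi$ plays the same role as the paper's choice $a=ub$.
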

\begin{proof}
We will choose $a\in k$ an appropriate element such that $p\ndiv v(a)$.
Then the polynomial $x^{p^m}-a$ is irreducible in $k[x]$ by Lemma \ref{lem:p-ndiv-v-irred},
\ie $[k(\alpha):k]=p^m$ for any root $\alpha$,
and the subfields of $k(\alpha)$ over $k$ are linearly ordered
by Corollary \ref{cor:radical-ext-subf-lin-ord}.
To show that $K$ and $k(\alpha)$ are linearly disjoint over $k$
we prove $K\cap k(\alpha)=k$.
This is sufficient since $K/k$ is Galois.
$K/k$ is cyclic, thus $K=K_0K_1$ with $[K_0:k]=p^n, p\ndiv[K_1:k]$.
It suffices to show $K_0\cap k(\alpha)=k$,
so we assume that $[K:k]=p^n$ and $n\geq 1$.
Since $K/k$ is cyclic of $p$-power degree,
the subfields of $K$ over $k$ are also linearly ordered.
Therefore both $K$ and $k(\alpha)$ contain a unique subfield of degree $p$ over $k$.
Hence we assume $n=m=1$ and show $K\neq k(\alpha)$.
\\
Case 1 : $K/k$ is not a radical extension.
Choose $a$ any prime element of $k$,
then $p\ndiv v(a)$ and trivially $k(\alpha)\neq K$ for any root $\alpha$ of $x^p-a$.
\\
Case 2 : $K=k(\beta)$ for some $\beta\in K$, $\beta^p=b\in k$ and $p\mid v(b)$.
Choose $a$ any prime element of $k$.
Then $p\ndiv v(a)$, 
hence $k(\alpha)\neq k(\beta)$ for any root $\alpha$ of $x^p-a$ 
by Lemma \ref{lem:unit-prime-element}.
\\
Case 3 : $K=k(\beta)$ for some $\beta\in K$, $\beta^p=b\in k$ and $p\ndiv v(b)$.
Choose $a=ub$ for some $u\in U_k$ with $u\not\in k^p$.
Assume $k(\alpha)=k(\beta)$.
Since $v(a)=v(b)$ and $p\ndiv v(b)$
the polynomials $x^p-a$ and $x^p-b$ are irreducible by Lemma \ref{lem:p-ndiv-v-irred}.
Lemma \ref{lem:equal-radical-ext} then shows that
there exists $c\in k$ such that $a=c^pb^i$, $0<i<p$.
Thus $c^p=ub^{1-i}$, hence $p$ divides $v(ub^{1-i})=(1-i)v(b)$.
Since $p\ndiv v(b)$, it follows $i=1$, 
thus $c^p=u$, a contradiction.
\end{proof}

\begin{remark}
\label{rem:U-F-not-in-F-p}
If $k$ is a local field then the hypothesis $U_k\not\subseteq k^p$ is 
satisfied for any $p\neq\charak k$.
\end{remark}
\begin{proof}
This is true because a local field $k$ with $\charak k\neq p$
does not contain all $p^n$-th roots of unity
for all $n\in\N$.
This can be seen as follows.
If $\charak\bar k\neq p$ then $\mu_{p^n}\not\subset\bar k$ for almost all $n\in\N$, hence $\mu_{p^n}\not\subset k$.
If $\charak\bar k=p$ we must have $\charak k=0$,
\ie $k$ is a finite extension of $\Q_p$.
But $[\Q_p(\mu_{p^n}):\Q_p]=\varphi(p^n)=(p-1)p^{n-1}$
(cf. \cite[II, Satz 7.13]{neukirch:alg-zt}),
thus $\mu_{p^n}\not\subset k$ for almost all $n\in\N$.
\end{proof}

\begin{theorem}
\label{thm:ex-global-gal-ext-char-neq-p}
Let $K/k$ be a cyclic extension of global fields,
$p$ be a prime number with $\charak k\neq p$ and $m\in\N$.
Let $v_1,\ldots,v_r\in\Val(K)$ be pairwise different valuations on $K$.
If $\mu_{p^m}\subset K$
there exists a field extension $L/K$ with $[L:K]=p^m$
such that $L/k$ is Galois,
$[L:K]_{v_i}=p^m$ for each non-archimedian $v_i$
and $[L:K]_{v_i}=\gcd(2,p)$ for each real $v_i$.
Moreover, if $\mu_{p^m}\subset k$ then $L/k$ can be found abelian.
\end{theorem}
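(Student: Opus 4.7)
The plan is to mimic the strategy of Theorem~\ref{thm:cycl-by-p-ext-global}: prescribe suitable local extensions at each place and then patch them together via the approximation theorem, relying on Krasner's lemma (Lemma~\ref{lem:application-krasner}) to preserve the local structure. Since $\mu_{p^m}\subset K$ and $\charak k\neq p$, the natural candidate is a Kummer-type extension $L=K(\sqrt[p^m]{a})$ with $a\in k$.

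First, I would choose the local data. Let $u_1,\ldots,u_s$ denote the distinct restrictions $v_i|_k$. For each non-archimedian $u_j$, pick a place $v_i$ of $K$ above $u_j$; the completion $K_{v_i}/k_{u_j}$ is cyclic, being cut out of a cyclic Galois extension by a decomposition group. The field $k_{u_j}$ is discretely valued with $\charak k_{u_j}\neq p$, and by Remark~\ref{rem:U-F-not-in-F-p} we have $U_{k_{u_j}}\not\subseteq k_{u_j}^p$. Therefore Proposition~\ref{prop:ex-local-radical-ext} produces $a_j\in k_{u_j}$ such that $x^{p^m}-a_j$ is irreducible in $k_{u_j}[x]$ and $k_{u_j}(\sqrt[p^m]{a_j})$ is linearly disjoint from $K_{v_i}$ over $k_{u_j}$; equivalently, $[K_{v_i}(\sqrt[p^m]{a_j}):K_{v_i}]=p^m$. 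A real $v_i$ can occur only when $p=2$ and $m=1$, since $\mu_{p^m}\subset K$ forces $K$ to have no real embeddings as soon as $p^m\geq 3$; at such $u_j=v_i|_k$ I would choose $a_j\in k_{u_j}$ with $a_j<0$.

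By the approximation theorem I would then pick $a\in k$ approximating $a_j$ at $u_j$ to arbitrary precision for every $j$, and set $L:=K(\sqrt[p^m]{a})$. For $a$ sufficiently close to each $a_j$, Lemma~\ref{lem:application-krasner} guarantees that $x^{p^m}-a$ remains irreducible over $K_{v_i}$ and that $K_{v_i}(\sqrt[p^m]{a})\cong K_{v_i}(\sqrt[p^m]{a_j})$; hence $[L:K]=p^m$ and the local degree at every non-archimedian $v_i$ is $p^m$. At each real $v_i$ the inequality $a<0$ persists under close enough approximation, so $L_{v_i}=\R(\sqrt{a})=\C$ and the local degree is $2=\gcd(2,p)$.

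Finally I would verify the Galois conditions. Since $a\in k$ and $\mu_{p^m}\subset K\subseteq L$, any $k$-embedding $\tau$ of $L$ into an algebraic closure fixes $K$ setwise (because $K/k$ is Galois) and sends $\sqrt[p^m]{a}$ to $\zeta\sqrt[p^m]{a}$ for some $\zeta\in\mu_{p^m}\subset L$, hence $\tau(L)=L$ and $L/k$ is Galois. If furthermore $\mu_{p^m}\subset k$, then $k(\sqrt[p^m]{a})/k$ is a Kummer (cyclic) extension, so $L=K\cdot k(\sqrt[p^m]{a})$ is a compositum of abelian extensions of $k$ and is itself abelian. The main technical hurdle is the coordinated use of simultaneous approximation and Krasner's lemma---ensuring that a single global element $a\in k$ realizes the prescribed local behaviour at all $v_i$ at once; the Galois property and the abelian refinement then follow formally from $a\in k$ together with $\mu_{p^m}\subset K$.
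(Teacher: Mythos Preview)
Your proposal is correct and follows essentially the same route as the paper: choose local $a_j\in k_{u_j}$ via Proposition~\ref{prop:ex-local-radical-ext} and Remark~\ref{rem:U-F-not-in-F-p} (taking $a_j<0$ at real places, which only arise when $p=2$, $m=1$), approximate by a global $a\in k$, apply Lemma~\ref{lem:application-krasner} to preserve the local degrees, and set $L=K(\sqrt[p^m]{a})$; your care in first passing to the distinct restrictions $u_j=v_i|_k$ before invoking the approximation theorem is a minor sharpening of the paper's exposition.
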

\begin{proof}
We can assume \wolg that none of the $v_i$ is complex,
and in the case $p\neq 2$ that all the $v_i$ are non-archimedian.
For each non-archimedian $v_i$,
by Proposition \ref{prop:ex-local-radical-ext} and Remark \ref{rem:U-F-not-in-F-p}, there is $a_i\in k_{v_i}$ such that
$x^{p^m}-a_i$ is irreducible in $k_{v_i}[x]$
and $k_{v_i}(\alpha_i)$ and $K_{v_i}$ are linearly disjoint over $k_{v_i}$
for any root $\alpha_i$ of $x^{p^m}-a_i$.
Hence the polynomial $x^{p^m}-a_i$ is irreducible in $K_{v_i}[x]$.
If any of the $v_i$ is real
then we are in the case $\charak k=0, p=2$ and $m=1$,
since $\mu_{2^m}\subset K$.
Therefore, we get the same statement also for each real $v_i$ by simply choosing $a_i=-1$.

Let $a\in k$ be an element
that approximates the $a_i$ with respect to $v_i|_k$ 
with arbitrary precision for all $i=1,\ldots,r$
(by the approximation theorem).
Lemma \ref{lem:application-krasner} then shows
that the polynomial $x^{p^m}-a$ is irreducible in $K_{v_i}[x]$ and that 
$K_{v_i}(\alpha)\cong K_{v_i}(\alpha_i)$
for any root $\alpha$ of $x^{p^m}-a$.
Therefore 
\[
  [K(\alpha):K]_{v_i}=[K_{v_i}(\alpha):K_{v_i}]=p^m \qt{for $i=1,\ldots,r$.}
\]
This implies $[K(\alpha):K]=p^m$.
Since $\mu_{p^m}\subset K$ and $\charak k\neq p$,
$K(\alpha)$ is the splitting field of the separable polynomial 
$x^{p^m}-a\in k[x]$ over $K$.
By hypothesis $K/k$ is Galois,
and $x^{p^m}-a$ is a polynomial over $k$,
hence $L=K(\alpha)$ is Galois over $k$.

Moreover, if $\mu_{p^m}\subset k$ then $k(\alpha)$ is cylic over $k$.
Therefore, since $K$ and $k(\alpha)$ are linearly disjoint over $k$,
the Galois group of $L=K\cdot k(\alpha)$ over $k$
is the direct product of two cyclic groups.
\end{proof}

Combining this result with Theorem \ref{thm:cycl-by-p-ext-global} we get the

\begin{proof}[Proof of Theorem \ref{thm:ext-global-gal-ext-general}]
If $\charak k=0$ let $L_0=K$.
If $\charak k=p\neq 0$, $m=p^em_0$, $p\ndiv m_0$,
then by Theorem \ref{thm:cycl-by-p-ext-global},
there is a field $L_0\supseteq K$
such that $L_0/k$ is cyclic and $[L_0:K]=[L_0:K]_{v_i}=p^e$
for all $i=1,\ldots,r$.
Let $m_0=p_1^{e_1}\cdots p_s^{e_s}$ be the prime factorization of $m_0$.
By Theorem \ref{thm:ex-global-gal-ext-char-neq-p} there exist fields
$L_1,\ldots,L_s\supseteq K$ such that for each $1\leq j\leq s$ :
$L_j/k$ is Galois, $[L_j:K]=p_j^{e_j}$,
$[L_j:K]_{v_i}=p_j^{e_j}$ for each non-archimedian $v_i$
and $[L_j:K]_{v_i}=\gcd(2,p_j)$ for each real $v_i$.
Then $L:=L_0 L_1\cdots L_s$ is Galois over $k$, $[L:K]=m$,
$[L:K]_{v_i}=m$ for each non-archimedian $v_i$
and $[L:K]_{v_i}=\gcd(2,m_0)$ for each real $v_i$.
If one of the $v_i$ is real we are in the case $\charak k=0$,
\ie $m_0=m$.
Hence $[L:K]_{v_i}=\gcd(2,m)$ for each real $v_i$.

Moreover, if $\mu_{m_0}\subset k$ then the $L_i$, $1\leq i\leq s$, are abelian over $k$,
hence $L$ is abelian over $k$.
If $m_0=1$ then $L=L_0$ is cyclic over $k$.
\end{proof}

\section{Non-embeddable cyclic extensions}

\label{sec:Non-embedd-cycl}

We will prove in this section the existence of cyclic extensions of number fields 
that do not embed into Galois extensions of a given degree
with certain local degrees.
We know from Theorem \ref{thm:ext-global-gal-ext-general}
that such a result must require the absence of certain roots of unity.
The proofs given here are essentially the ones from \cite{brussel:noncr-prod}
and the results that we get in the case that the degrees are prime powers
are the same as in \cite{brussel:noncr-prod}.
The main theorem is

\begin{theorem}
  \label{thm:brussel-field-existence-global}
  For $k$ a number field and $p$ a prime number 
  let $r,s\in\N_0$ be maximal such that $\mu_{p^r}\subset k^\times$
  and $\mu_{p^s}\subset k(\mu_{p^{r+1}})^\times$.
For any $n_0,n',m,a\in\N$ such that $n_0=r$ or $n_0\geq s$,
$p\ndiv n', p^{r+1}\mid m$ and $a\mid m$,
there exist a cyclic extension $K/k$ such that $[K:k]=na$ for $n=p^{n_0}n'$,
and valuations $v_0,\ldots,v_3\in\V0(k)$ such that $[K:k]_{v_i}=n$ for $0\leq i\leq 3$
with the following property~
\footnote{Although the valuation $v_3$ does not appear anymore in the rest of this theorem and its proof, $v_3$ is needed in the applications.}.
\marginpar{It is in fact not required that $w_1$ extends uniquely to $L$ because this condition can be removed from Lemmas \ref{lem:brussel-abelian} and \ref{lem:brussel-no-galois-ext} below.}
If $L/K$ is a field extension with $[L:K]=m$
and there are $w_0,w_1,w_2\in\V0(K)$ with $w_i\mid v_i$ for $0\leq i\leq 2$, 
that extend uniquely to valuations on $L$,
then $L$ is not Galois over $k$.
\end{theorem}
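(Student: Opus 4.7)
The plan is to imitate Brussel's construction from \cite{brussel:noncr-prod}: build the cyclic extension $K/k$ with prescribed local degrees via a Grunwald--Wang-type argument, and then derive the non-Galois conclusion from two auxiliary lemmas (the ones referenced in the margin as \ref{lem:brussel-abelian} and \ref{lem:brussel-no-galois-ext}) about the local structure of any hypothetical Galois closure.

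First I would construct $K/k$. The degree factors as $na = p^{n_0}n'a$. I would choose four finite primes $v_0,v_1,v_2,v_3\in\V0(k)$ avoiding the finite set of exceptional places for the Grunwald--Wang theorem at the prime $p$ (in particular, outside any dyadic places whose behaviour is constrained by the missing roots of unity $\mu_{p^{r+1}}$). The dichotomy $n_0=r$ or $n_0\ge s$ is designed precisely for this: the case $n_0=r$ avoids needing any more $p$-power roots of unity than $k$ already contains, while $n_0\ge s$ gives us enough room in the cyclotomic tower $k(\mu_{p^{r+1}})$ for Grunwald--Wang to succeed. The approximation theorem then produces a cyclic $K/k$ of degree $na$ with $[K:k]_{v_i}=n$ for $i=0,1,2,3$.

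Next I would suppose for contradiction that some field $L\supseteq K$ with $[L:K]=m$ is Galois over $k$, and that $w_0,w_1,w_2$ extend uniquely to $L$. Set $G=\Gal(L/k)$ and $H=\Gal(L/K)$; then $H\triangleleft G$ with cyclic quotient $G/H$ of order $na$, and unique extension of $w_i$ to $L$ forces $H\subseteq D_{w_i}$, where $D_{w_i}\subseteq G$ is the decomposition group at a chosen extension of $v_i$. Restriction to $K$ shows $|D_{w_i}|=mn$ and that $D_{w_i}$ surjects onto the cyclic decomposition group of order $n$ in $\Gal(K/k)$. This translates the problem into a purely group-theoretic/local-field statement about $G$.

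The decisive step, carried out in Lemmas \ref{lem:brussel-abelian} and \ref{lem:brussel-no-galois-ext}, is to play the local structures at $v_0$ and $v_2$ against each other: the unramified/ramified behaviour at these two primes will force a certain subquotient of $G$ to be abelian, and then a root-of-unity count, using $p^{r+1}\mid m$ together with $\mu_{p^{r+1}}\not\subset k$, produces the contradiction. The hypothesis on $v_1$ is only used as a convenient auxiliary (and, as the marginal remark notes, can be removed at the cost of slightly reworking the lemmas); the prime $v_3$ is not used in this theorem but is kept for later applications. I expect the hard part to be precisely this local root-of-unity analysis, especially the case $p=2$, where the Grunwald--Wang special case forces one to work with $s$ rather than $r+1$ and accounts for the split hypothesis $n_0=r$ or $n_0\ge s$.
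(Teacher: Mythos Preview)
Your high-level strategy (build $K$ via Grunwald--Wang, then derive a contradiction from the local structure of a hypothetical Galois $L/k$) matches the paper, but the key mechanism is misidentified, and this matters because the hypotheses on $n_0$ and on the primes are tightly linked to it.

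First, the roles of the primes are not as you describe. In the paper's argument, $v_0$ \emph{alone} forces $G=\Gal(L/k)$ to be abelian: one chooses $v_0$ with $|\bar k_{v_0}|\equiv 1\bmod m$ and $K_{v_0}/k_{v_0}$ inertial, so $e_{v_0}(L/k)\mid m$, and then Albert's criterion (Proposition~\ref{prop:albert-local-fields}) makes $G$ abelian. The contradiction then comes from playing $v_1$ against $v_2$ inside the $p$-part (Lemma~\ref{lem:brussel-abelian}): one arranges $v_1$ totally ramified in the degree-$p^{n_0}$ subextension of $K/k$ with $|\bar k_{v_1}|\equiv 1\bmod p^{n_0}$ but $\not\equiv 1\bmod p^{n_0+1}$, and $v_2$ inertial in $K/k$ with $|\bar k_{v_2}|\not\equiv 1\bmod p^{m_0}$. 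So $v_1$ is not an ``auxiliary'' prime at all; it carries the decisive ramification constraint. The marginal note you cite says only that the \emph{unique extension of $w_1$ to $L$} is superfluous, not that $v_1$ itself is.

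Second, the primes are not merely chosen to avoid Grunwald--Wang exceptions: each must satisfy a specific splitting condition in a cyclotomic extension of $k$, found via Chebotarev. In particular, the existence of $v_1$ requires that $k(\mu_{p^{n_0+1}})/k(\mu_{p^{n_0}})$ be nontrivial, and \emph{this} is exactly what the hypothesis ``$n_0=r$ or $n_0\ge s$'' guarantees. Your reading of that hypothesis as a Grunwald--Wang compatibility condition is off; it is a Chebotarev feasibility condition for $v_1$. Without pinning down these congruence conditions and the prescribed local behaviour of $K$ at each $v_i$ (inertial at $v_0,v_2,v_3$; totally ramified $p$-part at $v_1$), the contradiction step in Lemmas~\ref{lem:brussel-abelian} and~\ref{lem:brussel-no-galois-ext} has no traction.
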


\begin{remark}
\label{rem:r-s}
If $p\neq 2$ or $p=2$ and $r\geq 2$ then we always have $s=r+1$.
Thus the condition on $n_0$ reduces to $n_0\geq r$.
Only in the case $p=2$ and $r=1$ it is possible that $s>r+1$.
An example is the field $k=\Q(\sqrt{2})$,
since $k(i)$ contains $\frac{1}{\sqrt{2}}(1+i)$,
which is a primitive $8$-th root of unity.
\end{remark}

Before we can prove Theorem \ref{thm:brussel-field-existence-global},
we first recall some facts about global fields (\cf \cite[Kapitel~II]{neukirch:alg-zt})
and then prove a few lemmas.
Let $k$ be a global field and let $v\in\V0(k)$.
The residue field $\bar k_v$ is finite, and if $|\bar k_v|=q$ 
then $\bar k_v^\times$ consists of the $(q-1)$-th roots of unity,
hence 
\begin{fact}
\label{eq:mu-e-in-bar-k}
$\bar k_v$ contains a primitive $n$-th root of unity if and only if $q\equiv 1\bmod n$.
\end{fact}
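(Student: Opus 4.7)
The plan is to deduce this from the structure theorem for the multiplicative group of a finite field. Since $v$ is non-archimedean and $k$ is a global field, the residue field $\bar k_v$ is finite, so the multiplicative group $\bar k_v^\times$ is cyclic of order $q-1$. A primitive $n$-th root of unity in $\bar k_v$ is, by definition, an element of $\bar k_v^\times$ of order exactly $n$.

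First I would recall the standard fact from elementary group theory that a cyclic group of order $N$ contains an element of order $n$ if and only if $n \mid N$. Applied to $\bar k_v^\times$ this gives that a primitive $n$-th root of unity exists in $\bar k_v$ precisely when $n \mid q-1$, which is the condition $q \equiv 1 \bmod n$.

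For the direction ``$\Leftarrow$'', assuming $n \mid q-1$, I would pick a generator $\zeta$ of $\bar k_v^\times$ and observe that $\zeta^{(q-1)/n}$ has order exactly $n$. For ``$\Rightarrow$'', if $\eta \in \bar k_v$ is a primitive $n$-th root of unity then $\eta \in \bar k_v^\times$ has order $n$, and by Lagrange's theorem this order divides $|\bar k_v^\times| = q-1$.

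Since the underlying group-theoretic fact is standard and the cyclicity of $\bar k_v^\times$ is classical, there is no real obstacle here; the statement is essentially a restatement of the structure of $\mathbb{F}_q^\times$. The only point worth verifying explicitly is that $|\bar k_v^\times| = q - 1$, which follows from $|\bar k_v| = q$ together with the fact that $\bar k_v$ is a field (so its only non-unit is $0$).
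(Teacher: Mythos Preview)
Your proposal is correct and matches the paper's reasoning: the paper simply notes that $\bar k_v^\times$ consists of the $(q-1)$-th roots of unity (i.e., is cyclic of order $q-1$) and states the fact as an immediate consequence, which is exactly the argument you spell out.
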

Let $K/k$ be a finite Galois extension, $[K:k]=n$, $\Gal(K/k)=G$.
Then the residue degrees $[\bar K_w:\bar k_v]$ 
(\resp the ramification indices $|w(K^\times):v(k^\times)|$)
are equal for all $w\in\V0(K)$ with $w\mid v$.
Therefore we simply denote them by $f_v(K/k)$ (\resp $e_v(K/k)$).
We say $v$ is \emph{tamely ramified} in $K/k$
if $\charak\bar k_v\ndiv e_v(K/k)$.
The decomposition group of $w\in\V0(K)$ in $K/k$ is the subgroup
\[ G_w(K/k)=\{\sigma\in G\;|\,w\circ\sigma=\sigma\} \]
of $G$,
and the decomposition field of $w$ in $K/k$ is the fixed field
\[ Z:=Z_w(K/k)=\Fix(G_w(K/k)). \]

Now suppose that $v$ extends uniquely to a valuation $w$ on $K$.
Then $K_w/k_v$ is Galois with $\Gal(K_w/k_v)\cong G$.
We say $v$ is \emph{inertial} in $K/k$
if $f_v(K/k)=[K:k]$,
and $v$ is \emph{totally ramified} in $K/k$
if $e_v(K/k)=[K:k]$.
If $v$ is tamely and totally ramified in $K/k$,
then $K_w=k_v(\sqrt[n]{\xi})$ for some $\xi\in k_v$
(cf. \cite[Kap. II, Satz~7.7]{neukirch:alg-zt}).
It follows that $G$ is cyclic 
and that $k_v$ and $\bar k_v$ contain a primitive $n$-th root of unity,
hence $q\equiv 1\bmod n$.
The inertia group of $v$ in $K/k$ is the subgroup
$$ I:=I_v(K/k) = \sett{\sigma\in G}{\sigma(x)\equiv x\bmod\PP_w \text{ for all $x\in\OO_w$}} $$
of $G$ and the inertia field of $v$ in $K/k$ is the fixed field
$$ T:=T_v(K/k)=\Fix(I_v(K/k)). $$
Then $I$ is a normal subgroup of $G$, and
$G/I\cong\Gal(\bar K_w/\bar k_v)$
(cf. \cite[Satz~9.9]{neukirch:alg-zt}).
Since $\bar K_w$ is a finite field,
$G/I$ is cyclic,
thus $T/k$ is a cyclic extension.
$T/k$ is the maximal unramified subextension of $K/k$,
and $K/T$ is totally ramified with respect to $v$
(cf. \cite[Satz~9.11]{neukirch:alg-zt}).
Hence if $v$ is tamely ramified in $K/k$, then $I$ and $G/I$ are both cyclic.

The following proposition is known from \cite[Theorem 10]{albert:p-adic-fields-and-rational-division-algebras}.

\begin{prop}
  \label{prop:albert-local-fields}
  Let $K/k$ be a finite Galois extension of local fields 
  that is tamely ramified with ramification index $e:=e(K/k)$
  (\ie $e$ is prime to $\charak \bar k$).
  Then $K/k$ is abelian if and only if $\mu_e\subset k^\times$ 
  (if and only if $|\bar k|\equiv 1\bmod e$).
\end{prop}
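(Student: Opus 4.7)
The plan is to analyze the extension through its two canonical parts, the unramified subextension $T/k$ and the totally tamely ramified $K/T$, and then to bridge conjugation in $G = \Gal(K/k)$ with the natural Galois action on the $e$-th roots of unity. Setting $I = I_v(K/k) = \Gal(K/T)$ (so $|I|=e$), the standard theory of tame extensions (\cf \cite[Kap.~II, Satz~7.7]{neukirch:alg-zt}), already invoked in the text, gives $I$ cyclic and $\mu_e\subset T$, hence $\mu_e\subset\bar T=\bar K$. Since $\gcd(e,\charak\bar k)=1$, Hensel's Lemma gives the equivalence $\mu_e\subset k\Lra\mu_e\subset\bar k\Lra|\bar k|\equiv 1\bmod e$ at once, so it suffices to prove the abelian-criterion part of the statement.

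The main tool I would introduce is the ramification character
\[
  \chi : I \lra \mu_e(\bar K), \quad \sigma \lms \overline{\sigma(\pi_K)/\pi_K},
\]
where $\pi_K$ is any uniformizer of $K$. By tameness, $\chi$ is a well-defined injective group homomorphism independent of the choice of $\pi_K$, and a count of orders gives $\chi(I)=\mu_e(\bar K)$. The key ingredient I then need is the compatibility
\[
  \chi(\tau\sigma\tau^{-1}) = \bar\tau(\chi(\sigma)) \qt{for all $\tau\in G$ and $\sigma\in I$,}
\]
where $\bar\tau$ denotes the image of $\tau$ in $G/I\cong\Gal(\bar T/\bar k)$ acting on $\mu_e(\bar K)\subset\bar T$. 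This is a short direct check: if $\sigma(\pi_K)=u\pi_K$ then $\tau^{-1}(\pi_K)$ is again a uniformizer, so $\sigma(\tau^{-1}(\pi_K))=u'\tau^{-1}(\pi_K)$ with $\bar u'=\bar u$, and applying $\tau$ gives $\tau\sigma\tau^{-1}(\pi_K)=\tau(u')\pi_K$, whose reduction is $\bar\tau(\chi(\sigma))$.

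With this bridge in place, both implications become immediate. If $K/k$ is abelian, then $\tau\sigma\tau^{-1}=\sigma$ for all $\tau,\sigma$, so every $\bar\tau$ fixes every $\chi(\sigma)$; hence $\mu_e(\bar K)=\chi(I)\subset\Fix(\Gal(\bar T/\bar k))=\bar k$, which gives $\mu_e\subset k$. Conversely, if $\mu_e\subset k$ then $\mu_e(\bar K)=\mu_e(\bar k)\subset\bar k$, so each $\chi(\sigma)$ is fixed by every $\bar\tau$; the compatibility yields $\chi(\tau\sigma\tau^{-1})=\chi(\sigma)$, and injectivity of $\chi$ forces $\tau\sigma\tau^{-1}=\sigma$. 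Thus $I$ is central in $G$, and since $G/I$ is cyclic, $G$ is generated by $I$ together with any lift of a generator of $G/I$, and these generators commute, so $G$ is abelian.

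The hard part, insofar as there is one, will be setting up $\chi$ with enough care for the conjugation-compatibility to be transparent — in particular verifying that $\bar\tau$ acts on $\mu_e(\bar K)$ via the natural $\Gal(\bar T/\bar k)$-action and that $\chi$ is independent of the uniformizer, both of which rest precisely on the tameness hypothesis. Once the ramification character and its equivariance are in hand, the equivalence is essentially formal.
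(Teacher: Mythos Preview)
Your proof is correct and self-contained. The paper, however, does not supply its own proof of this proposition: it simply records it as ``known from \cite[Theorem 10]{albert:p-adic-fields-and-rational-division-algebras}'' and moves on. Your argument via the tame ramification character $\chi:I\to\mu_e(\bar K)$ and its $G/I$-equivariance is the standard modern route and is exactly what one would write to make the result self-contained; it fits cleanly with the facts about $T_v$, $I_v$, and Satz~7.7 of Neukirch already quoted in the surrounding text. The only thing worth noting is that your concluding step ``$I$ central and $G/I$ cyclic $\Rightarrow$ $G$ abelian'' is most crisply phrased as: $I\subseteq Z(G)$ forces $G/Z(G)$ to be a quotient of the cyclic group $G/I$, hence cyclic, hence $G$ is abelian.
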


Recall the following fact.

\begin{remark}
  \label{rem:mu-e-classification}
  Let $k$ be a global field with non-archimedian valuation $v\in\V0(k)$, 
  and let $e\in\N$ be prime to $\charak\bar k_{v}$.
  Then $v$ splits completely in $k(\mu_e)/k$
  if and only if $|\bar k_{v}|\equiv 1\bmod e$.
\end{remark}
\begin{proof}
Obviously $v$ splits completely in $k(\mu_e)/k$ if and only if $\mu_e\subset k_v$.
By Hensel's Lemma, $\mu_e\subset k_v$ if and only if $\mu_e\subset \bar k_v$,
and the remark follows from (\ref{eq:mu-e-in-bar-k}).
\end{proof}

\begin{lemma}
  \label{lem:brussel-abelian}
Let  $k$ be a global field, $p$ a prime number with $\charak k\neq p$,
and let $n_0,m_0\geq 1$.
Suppose $v_1,v_2\in\V0(k)$ are non-archimedian valuations on $k$,
$\charak\bar k_{v_i}\neq p$ for $i=1,2$,
such that
  \begin{enumerate}
    \item $|\bar k_{v_1}|\equiv 1 \bmod p^{n_0}$, 
    \item $|\bar k_{v_1}|\not\equiv 1 \bmod p^{n_0+1}$,
    \item $|\bar k_{v_2}|\not\equiv 1 \bmod p^{m_0}$.
  \end{enumerate}
Suppose further that $K/k$ is a cyclic extension, $[K:k]=p^{n_0}$, such that
$v_1,v_2$ extend uniquely to valuations on $K$ and
  \begin{enumerate}
    \setcounter{enumi}{3}
  \item $v_1$ is totally ramified in $K/k$,
  \item $v_2$ is inertial in $K/k$.
  \end{enumerate}
If
\marginpar{The unique extension of $w_1$ to $L$ is not necessary:
Since $K_{v_1}/k_{v_1}$ is totally ramified, using $(2)$, $K/k$ does not embed in a cyclic extension of degree $p^{n_0+1}$ (globally).
Hence, $T_2=K$, $e_2=p^{m_0}$.
$L/k$ is abelian and $v_2$ extends uniquely to $L$, 
this contradicts $(3)$.
}
If $L/K$ is a field extension, $[L:K]=p^{m_0}$,
such that $v_1,v_2$ extend uniquely to valuations on $L$,
then $L/k$ is not abelian.
\end{lemma}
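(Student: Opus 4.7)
My plan is to argue by contradiction: assume $L/k$ is abelian, and derive a violation of (2) or (3) via the local structure of tame abelian extensions. The central tool will be Proposition~\ref{prop:albert-local-fields}, which says that in a tame abelian extension of local fields the base must contain $\mu_e$ for $e$ the ramification index. Since $\charak\bar k_{v_i}\neq p$ and $[L:k]=p^{n_0+m_0}$, the completions $L_{w_i}/k_{v_i}$ at the unique extensions $w_i$ of $v_i$ are tame and (by assumption) abelian, so Proposition~\ref{prop:albert-local-fields} gives $|\bar k_{v_i}|\equiv 1\bmod e_i$ for $e_i:=e_{v_i}(L/k)$.

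First I would handle $v_2$. Since $v_2$ is inertial in $K/k$, $e_2=e_{v_2}(L/K)$ divides $[L:K]=p^{m_0}$. The case $e_2=p^{m_0}$ immediately contradicts~(3). Otherwise $e_2\leq p^{m_0-1}$, so the inertia field $T_2:=T_{v_2}(L/k)$ satisfies $[T_2:k]\geq p^{n_0+1}$; moreover $K\subseteq T_2$ (because $K/k$ is unramified at $v_2$, so the restriction $\Gal(L/k)\to\Gal(K/k)$ sends $I_{v_2}(L/k)$ into the trivial group), and $T_2/k$ is cyclic since $\Gal(T_2/k)\cong\Gal(\bar L_{w_2}/\bar k_{v_2})$ is a Galois group of finite fields.

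Inside the cyclic $p$-extension $T_2/k$ I pick the unique intermediate field $M$ with $K\subset M\subseteq T_2$ and $[M:k]=p^{n_0+1}$; then $M/k$ is cyclic. Because $v_1$ extends uniquely to $L\supseteq M$, it extends uniquely to $M$, and $M_{w_1|_M}/k_{v_1}$ is a tame cyclic local extension of degree $p^{n_0+1}$ containing the totally ramified $K_{v_1}/k_{v_1}$ of degree $p^{n_0}$. Setting $e':=e_{v_1}(M/k)$, we get $p^{n_0}\mid e'\mid p^{n_0+1}$. If $e'=p^{n_0+1}$, Proposition~\ref{prop:albert-local-fields} forces $\mu_{p^{n_0+1}}\subset k_{v_1}$, contradicting~(2). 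If $e'=p^{n_0}$, the unramified subfield of $M_{w_1|_M}/k_{v_1}$ has degree $p$ and is linearly disjoint from the totally ramified $K_{v_1}$, so their compositum exhausts $M_{w_1|_M}$ with Galois group $\Z/p\times\Z/p^{n_0}$, which is not cyclic---contradiction.

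The main technical step is this last local analysis at $v_1$, where hypothesis~(2) is consumed to rule out the intermediate $M$; everything else is routine bookkeeping with ramification indices, residue degrees, and the structure of cyclic subextensions. A remark in the margin of the excerpt suggests that the same argument goes through without requiring $w_1$ to extend uniquely to $L$; in that variant one would replace the local degree computation by showing that no cyclic extension of $k$ of degree $p^{n_0+1}$ can contain $K$ at all, since any completion at a prime above $v_1$ would lead to the same incompatibility of tame cyclic local structure with the absence of $\mu_{p^{n_0+1}}$ in $k_{v_1}$.
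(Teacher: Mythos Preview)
Your argument is correct, but it follows a different route from the paper's proof. The paper first analyzes $v_1$: from Proposition~\ref{prop:albert-local-fields} and~(2),(4) it pins down $e_1=p^{n_0}$ and $f_1=p^{m_0}$ exactly, so the inertia field $T_1=T_{v_1}(L/k)$ has $[T_1:k]=p^{m_0}$; then, using $T_1\cap K=k$ (from~(4)) and $K\subseteq T_2$ with $T_2/k$ cyclic (from~(5)), it deduces $T_1\cap T_2=k$, hence $v_2$ is totally ramified in $T_1/k$, and Proposition~\ref{prop:albert-local-fields} at $v_2$ contradicts~(3). Your approach reverses the roles of the two places: you branch at $v_2$ and, in the nontrivial branch, produce a cyclic extension $M/k$ of degree $p^{n_0+1}$ containing $K$, whose local behaviour at $v_1$ is incompatible with~(2). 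This is essentially the variant sketched in the margin note---you are in effect proving that $K/k$ cannot embed into any cyclic extension of degree $p^{n_0+1}$---and has the advantage that only the unique extension of $v_1$ to $M$ (not to all of $L$) is actually used. The paper's version is a bit shorter because it avoids the case split at $v_2$ and the local compositum analysis, pushing the whole contradiction to a single application of Proposition~\ref{prop:albert-local-fields} at $v_2$.
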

\begin{proof}
Suppose $L/k$ is an abelian extension, $K\subseteq L$,
with $[L:k]=[L:k]_{v_i}=p^{n_0+m_0}$ for $i=1,2$.
We shorty write $e_i$ for $e_{v_i}(L/k)$,
$f_i$ for $f_{v_i}(L/k)$,
and $T_i$ for $T_{v_i}(L/k)$.
Since $\charak\bar k_{v_i}\neq p$, the extensions $L_{v_i}/k_{v_i}$
are tamely ramified, $i=1,2$.
By Proposition \ref{prop:albert-local-fields},
$|\bar k_{v_1}|\equiv 1 \bmod e_1$.
Therefore, $(2)$ implies $p^{n_0+1}\ndiv e_1$.
But $(4)$ shows $p^{n_0}\mid  e_1$, 
hence $e_1=p^{n_0}$ and $f_1=p^{m_0}$.

Because of $(4)$ and $(5)$ we have $T_1\cap K=k$ and $K\subseteq T_2$.
Since $T_2/k$ is cyclic this implies $T_1\cap T_2 =k$.
Therefore $v_2$ is totally ramified in $T_1/k$, $[T_1:k]=f_1=p^{m_0}$,
hence by Proposition \ref{prop:albert-local-fields},
$|\bar k_{v_2}|\equiv 1 \bmod p^{m_0}$.
This contradicts $(3)$, so $L$ can not exist.
We have not used $(1)$ in the proof,
but $(1)$ is implicit in $(4)$ by Proposition \ref{prop:albert-local-fields}.
\end{proof}

\begin{lemma}
  \label{lem:brussel-no-galois-ext}
Let  $k$ be a global field and let $p$ be a prime number with $\charak k\neq p$.
Let $n=p^{n_0}n',m=p^{m_0}m'$ with $n_0,m_0\geq 1$, $p\ndiv n',m'$,
$\charak k\ndiv m'$ if $\charak k\neq 0$,
and let $a\geq 1$.
Suppose $v_0,v_1,v_2\in\V0(k)$ are non-archimedian valuations on $k$
with $\charak\bar k_{v_0}\ndiv m$, $\charak\bar k_{v_1}\neq p$ and $\charak\bar k_{v_2}\neq p$ 
such that
  \begin{enumerate}
    \item $|\bar k_{v_0}|\equiv 1 \bmod m$,
    \item $|\bar k_{v_1}|\equiv 1 \bmod p^{n_0}$, 
    \item $|\bar k_{v_1}|\not\equiv 1 \bmod p^{n_0+1}$,
    \item $|\bar k_{v_2}|\not\equiv 1 \bmod p^{m_0}$.
  \end{enumerate}
Suppose further that $K/k$ is a cyclic extension such that
  \begin{enumerate}
    \setcounter{enumi}{4}
  \item $[K:k] = na$, 
    $[K:k]_{v_i} = n$ for $i=0,1,2$, 
  \item $K_{v_0}/k_{v_0}$ and $K_{v_2}/k_{v_2}$ are inertial,
  \item $p^{n_0}$ divides $e_{v_1}(K/k)$.
  \end{enumerate}
If
\marginpar{The unique extension of $w_1$ to $L$ is not required as it is not needed in Lemma \ref{lem:brussel-abelian}.}
$L/K$ is a field extension, $[L:K]=m$,
such that there are valuations $w_i\in\V0(K)$, $w_i\mid v_i$,
that extend uniquely to $L$ for $i=0,1,2$,
then $L/k$ is not Galois.
\end{lemma}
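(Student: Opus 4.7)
The plan is to argue by contradiction, adapting the strategy of Lemma~\ref{lem:brussel-abelian} from the abelian to the Galois setting by first reducing to an abelian situation via $v_0$. Suppose $L/k$ is Galois, and set $G:=\Gal(L/k)$ and $H:=\Gal(L/K)$; then $H$ is a normal subgroup of $G$ of order $m$, with cyclic quotient $G/H\cong\Gal(K/k)$ of order $na$. Let $u_0$ denote the unique prime of $L$ above $w_0$. Since $w_0$ extends uniquely to $L$, the local extension $L_{u_0}/K_{w_0}$ is Galois with group $H$, and it is tame because $\charak\bar k_{v_0}\nmid m$. Since $K_{w_0}/k_{v_0}$ is inertial of degree $n$, condition~(1) gives $|\bar K_{w_0}|=|\bar k_{v_0}|^n\equiv 1\bmod m$, so $\mu_m\subset K_{w_0}$. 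Proposition~\ref{prop:albert-local-fields} forces $L_{u_0}/K_{w_0}$ to be abelian, hence $H$ is abelian. The same proposition applied to the Galois tame extension $L_{u_0}/k_{v_0}$ of degree $nm$ (with $\mu_m\subset k_{v_0}$) shows that the decomposition group $G_{u_0}$ is abelian, so any Frobenius at $v_0$ acts trivially on $H$ by conjugation.

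Next I reduce to Lemma~\ref{lem:brussel-abelian}. Let $K_p\subseteq K$ be the unique subfield with $[K_p:k]=p^{n_0}$ (a cyclic extension, since $K/k$ is cyclic). By~(6) and~(7), $v_1$ is totally ramified and $v_2$ is inertial in $K_p/k$, so $K_p/k$ satisfies the hypotheses of Lemma~\ref{lem:brussel-abelian}, and conditions~(2)--(4) transfer verbatim. Since $H$ is abelian, its Sylow $p$-subgroup $H_p$ and its prime-to-$p$ Hall subgroup $H_{p'}$ are characteristic in $H$, hence normal in $G$. Set $\tilde L:=L^{H_{p'}}$; this is Galois over $k$ of degree $nap^{m_0}$, with $[\tilde L:K]=p^{m_0}$, and the $w_i$ still extend uniquely to $\tilde L$. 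The goal is to exhibit an abelian intermediate field $K_p\subseteq M\subseteq\tilde L$ with $[M:K_p]=p^{m_0}$ to which Lemma~\ref{lem:brussel-abelian} applies, the contradiction then being produced at $v_2$ via condition~(4).

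The main obstacle lies in constructing this abelian $M$, which amounts to showing that the commutator subgroup $[G,G]$ (which is contained in $H$, since $G/H$ is abelian) is actually contained in $H_{p'}$, equivalently that the conjugation action of $\Gal(K/k)$ on $H_p$ is trivial. Stage~1 shows triviality for the Frobenius at $v_0$, but this Frobenius only generates a subgroup of index $a$ in $\Gal(K/k)$; to force triviality of the whole action one must perform an analogous ramification analysis at $v_1$ and $v_2$ using the unique extensions of $w_1,w_2$ and the local conditions~(2)--(4),~(6),~(7). Once this is achieved, $M$ can be taken as the fixed field in $\tilde L$ of an appropriate complement to $\Gal(\tilde L/K_p)/H_p$, and the argument of Lemma~\ref{lem:brussel-abelian} transfers verbatim to yield the contradiction $|\bar k_{v_2}|\equiv 1\bmod p^{m_0}$.
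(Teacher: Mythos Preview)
Your Stage~1 is correct: at $v_0$, condition~(1) together with the inertness of $K_{w_0}/k_{v_0}$ and Proposition~\ref{prop:albert-local-fields} does force the decomposition group $G_{u_0}$ to be abelian. But you then hit exactly the obstacle you name---$G_{u_0}$ has index $a$ in $G$---and your proposed remedy of extracting further commutativity from the local analysis at $v_1,v_2$ does not work. The hypotheses at $v_1,v_2$ are of the \emph{opposite} flavour (e.g.\ $|\bar k_{v_2}|\not\equiv 1\bmod p^{m_0}$), engineered to produce a contradiction rather than abelianness; moreover, since $K/k$ is cyclic, the decomposition groups $G_{u_1},G_{u_2}$ project to the \emph{same} index-$a$ subgroup of $\Gal(K/k)$ as $G_{u_0}$, so they give no new information about the conjugation action on $H$. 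Your proof is therefore genuinely incomplete at the point you yourself flag.

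The missing idea, and the paper's actual move, is a preliminary reduction to $a=1$. Since $[K:k]_{v_i}=n$ for $i=0,1,2$, the decomposition fields $Z_{w_i}(K/k)$ all have degree $a$ over $k$; by cyclicity of $K/k$ they coincide with the unique subfield $Z\subset K$ of degree $a$. If $L/k$ is Galois then so is $L/Z$, and one may replace $k$ by $Z$ (each $v_i$ splits completely in $Z/k$, so the residue-field conditions (1)--(4) and the local conditions (5)--(7) are unchanged). Now $|G|=nm=[L:k]_{v_0}$, hence $G_{u_0}=G$, and your own Stage~1 argument already shows $G$ is abelian. Taking $K_0\subset K$ with $[K_0:k]=p^{n_0}$ and any $L_0$ with $K_0\subset L_0\subset L$, $[L_0:K_0]=p^{m_0}$, conditions (6) and (7) make $v_1$ totally ramified and $v_2$ inertial in $K_0/k$, and Lemma~\ref{lem:brussel-abelian} yields the contradiction directly---no need to dissect the action on $H_p$.
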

\begin{proof}
Suppose $L/k$ is a Galois extension, $K\subseteq L$, $[L:k]=nma$ and $[L:k]_{v_i}=nm$ for $i=0,1,2$.
The decomposition fields $Z_{w_i}(K/k)$ all have degree $a$ over $k$.
Since $K/k$ is cyclic, there is a unique subfield $Z\subseteq K$ with $[Z:k]=a$,
hence $Z$ is the common decomposition field of $w_0,w_1,w_2$ in $K/k$.
If $L/Z$ is not Galois then $L/k$ is not Galois,
so we may assume \wolg that $a=1$ and $Z=k$.

Let $G=\Gal(L/k)$, $|G|=nm$.
Since $[L:k]_{v_i}=nm$,
$\Gal(L_{v_i}/k_{v_i})\cong G$ for $i=0,1,2$.
We shortly write $e_i$ for $e_{v_i}(L/k)$,
$f_i$ for $f_{v_i}(L/k)$,
and $T_i$ for $T_{v_i}(L/k)$.
$(6)$ implies $e_0\mid m$,
thus $|\bar k_{v_0}|\equiv 1 \bmod e_0$ by $(1)$.
Since $\charak\bar k_{v_0}\ndiv m$,
Proposition \ref{prop:albert-local-fields} shows that $G$ is abelian.

There are $K_0\subseteq K$ and $K_0\subseteq L_0\subseteq L$
with $[L_0:K_0]=p^{m_0}$ and $[K_0:k]=p^{n_0}$,
and all extensions are abelian.
$(6)$ and $(7)$ show that 
$v_1$ is totally ramified and $v_2$ is inertial in $K_0/k$.
Therefore by Lemma \ref{lem:brussel-abelian},
$L_0/k$ can not be abelian, a contradiction.
Thus $L/k$ is not Galois.
Like in the proof of Lemma \ref{lem:brussel-abelian},
$(2)$ is not used here,
but it is implicit in $(7)$ by Proposition \ref{prop:albert-local-fields}.
\end{proof}

Recall the following application of the Chebotarev Density Theorem.

\begin{remark}
  \label{rem:chebotarev}
  Let $K/k$ be a Galois extension of number fields, and let $\sigma\in\Gal(K/k)$.
  The Chebotarev density theorem 
  (see \cite[Chapter VII, Theorem~13.4, \page~569]{neukirch:alg-zt}) states
  that there are infinitely many unramified valuations $v\in\V0(k)$, 
  such that $\sigma$ is the Frobenius automorphism for a valuation $w\in\V0(K)$ with $w\mid v$.

  If $\sigma$ is the Frobenius automorphism for an unramified valuation $w\in\V0(K)$,
  it means that $\sigma$ generates the decomposition group of $w$,
  hence $Z=\Fix(\sigma)$ is the decomposition field of $w$.
  Thus, if $k\subseteq Z_0\subseteq Z$ is any subfield with $Z_0/k$ Galois,
  then $v$ splits completely in $Z_0/k$.
  Furthermore, $v$ does not split completely in $K/k$ provided that $\sigma\neq\id$.

  Therefore, there are infinitely many unramified valuations $v\in\V0(k)$ that split completely in $K/k$.
  Furthermore, if $k\subseteq Z\subsetneq K$ is any proper subfield with $Z/k$ Galois,
  then there are infinitely many unramified valuations $v\in\V0(k)$ that split completely in $Z/k$ but not in $K/k$. 
\end{remark}

We can now give the

\begin{proof}[Proof of Theorem \ref{thm:brussel-field-existence-global}]
Let $m=p^{m_0}m'$ with $p\ndiv m'$.
The numbers $n_0$ and $m$ are chosen such that the extensions 
$k(\mu_{p^{m_0}})/k$, $k(\mu_{p^{n_0+1}})/k(\mu_{p^{n_0}})$ and $k(\mu_{p^m})/k$ are non-trivial.
Obviously, the extensions $k(\mu_{p^m})/k$, $k(\mu_{p^{m_0}})/k$, $k(\mu_{p^{n_0+1}})/k$ and $k(\mu_{p^{n_0}})/k$ are Galois.
By Remark \ref{rem:chebotarev}, applied to these three extensions,
there are valuations $v_0,v_1,v_2,v_3\in\V0(k)$
with $\charak\bar k_{v_i} \ndiv 2nm$ for $0\leq i\leq 3$, such that
  $v_0$ splits completely in $k(\mu_{p^m})/k$,
  $v_2$ does not split completely in $k(\mu_{p^{m_0}})/k$ 
  and $v_1$ splits completely in $k(\mu_{p^{n_0}})/k$ but not in $k(\mu_{p^{n_0+1}})/k$.
  By Remark \ref{rem:mu-e-classification}, this implies
  the properties $(1)$--$(4)$ of Lemma~\ref{lem:brussel-no-galois-ext}.

  Let $K_{v_0},K_{v_2},K_{v_3}$ be the inertial extensions (unique up to isomorphism) 
  of $k_{v_0},k_{v_2},k_{v_3}$ respectively of degree $n$.
  These extensions are cyclic.
  By the property $(2)$ of Lemma \ref{lem:brussel-no-galois-ext},
  that was already shown above, $\mu_{p^{n_0}}\subset k_{v_1}$.
  Therefore, there exists a totally ramified cyclic extension ${K_0}_{v_1}/k_{v_1}$ of degree $p^{n_0}$
  (\eg ${K_0}_{v_1}=k_{v_1}(\sqrt[{p^{n_0}}]{a})$ for any prime element $a$ of $k_{v_1}$).
  Let $L_{v_1}$ be the inertial extension of $k_{v_1}$ of degree $n'$,
  $L_{v_1}/k_{v_1}$ cyclic,
  and set $K_{v_1}:={K_0}_{v_1}L_{v_1}$.
  Then, $K_{v_1}/k_{v_1}$ is cyclic of degree $n$ because $p^{n_0}$ and $n'$ are relatively prime.

  By the Grunwald-Wang Theorem (cf. \cite[Chapter X, Theorem 5, \page 103]{artin-tate})
  there exists a cyclic extension $K/k$ of degree $na$ 
  with local completions $K_{v_0},\ldots,K_{v_3}$
  at $v_0,\ldots,v_3$ respectively.
  Note that we are not in the special case of the Grunwald-Wang Theorem, 
  since $\charak\bar k_{v_i}\ndiv 2$, $0\leq i\leq 3$.
  Then, $K/k$ satisfies the properties $(5)$--$(7)$ from Lemma \ref{lem:brussel-no-galois-ext}
  and $[K:k]_{v_3}=n$.
  Therefore $K$ and $v_0,\ldots,v_3$ have the desired property by Lemma \ref{lem:brussel-no-galois-ext}.
\end{proof}

\begin{ex}
  \label{ex:K/Q-cyclic}
Let $k=\Q$ and 
$$K=\Q(\zeta+\zeta^{-1}),$$ 
where $\zeta$ denotes a primitive $7$-th root of unity.
Then $K$ is the maximal real subfield of $\Q(\zeta)$ which is a Galois extension of degree $3$ over $\Q$.
Shortly write  $\alpha=\zeta+\zeta^{-1}$ in the following.
The minimal polynomial of $\alpha$ over $\Q$ is $f(x)=x^3+x^2-2x-1\in\Q[x]$,
and an automorphism of $K/\Q$ is given by $\sigma(\alpha)=\alpha^2-2$.

Choose $v_1$ and $v_2$ the $7$- and $2$-adic valuation on $\Q$ respectively.
It is well known from the theory of cyclotomic fields 
that $v_1$ is totally ramified
and $v_2$ is inertial in $\Q(\zeta)/\Q$,
hence also in $K/\Q$
(see \eg \cite[Satz 7.12 and 7.13]{neukirch:alg-zt}).
Thus, if we set $p=3$ and $n_0=m_0=1$,
then $K/k$ satisfies the conditions of Lemma \ref{lem:brussel-abelian}.
Therefore any field $L\supseteq K$ with $[L:K]=3$, such that $v_1,v_2$ extend uniquely to valuations on $L$, is not abelian over $\Q$.
Since any group of order $9$ is abelian,
this shows that any such $L$ is also not Galois over $\Q$.
\end{ex}

\section{Non-embeddable abelian extensions}
\label{sec:Non-embedd-abel}

We will prove in this section the existence of abelian extensions of number fields with group $\Zn{2}\times\Zn{2}$
that do not embed into Galois extensions of degree $8$
with certain local degrees $8$.
This will be done by giving two examples,
we will not prove a general existence theorem like Theorem \ref{thm:brussel-field-existence-global}.
First we need the following two lemmas.
By a real field we mean a subfield of $\R$.

\begin{lemma}
  \label{lem:real-galois-ext}
Let $k$ be a real field and $L/k$ a finite Galois extension.
Then $L$ is real or there exists a real subfield $K$, $k\subseteq K\subseteq L$,
such that $[L:K]=2$.
\end{lemma}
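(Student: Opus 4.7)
The plan is to use complex conjugation as an element of $\Gal(L/k)$ and take its fixed field. Since $k$ is a subfield of $\mathbb{R}$ by the paper's definition of a real field, fix once and for all an embedding of an algebraic closure $\overline{k}$ into $\mathbb{C}$; this is possible because any algebraic closure of $\mathbb{R}$ contains (is isomorphic over $\mathbb{R}$ to) $\mathbb{C}$, and one then extends the inclusion $k\hookrightarrow\mathbb{R}$ to $\overline{k}\hookrightarrow\mathbb{C}$. Via this embedding I identify $L$ with its image in $\mathbb{C}$, so that $k\subseteq L\subseteq\mathbb{C}$.

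Let $c:\mathbb{C}\to\mathbb{C}$ denote complex conjugation, so $c$ is a ring automorphism of order $2$ fixing $\mathbb{R}$ pointwise; in particular $c$ fixes $k$. For any $\alpha\in L$, the element $c(\alpha)$ is a root of the minimal polynomial $\Irr(\alpha,k)\in k[x]$. Since $L/k$ is Galois, hence normal, this minimal polynomial splits in $L$, so $c(\alpha)\in L$. Thus $c$ restricts to a $k$-automorphism $\tau:=c|_L\in\Gal(L/k)$.

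Now distinguish two cases. If $\tau=\id_L$, then every element of $L$ is fixed by complex conjugation, meaning $L\subseteq\mathbb{R}$, so $L$ itself is real and we are done. Otherwise $\tau$ has order exactly $2$ (as $\tau^2$ is the restriction of $c^2=\id$). By the fundamental theorem of Galois theory, the fixed field
\[
K:=\Fix(\tau)=L\cap\mathbb{R}
\]
satisfies $[L:K]=|\gen{\tau}|=2$, and since $\tau|_k=\id$ we have $k\subseteq K\subseteq L$. Because $K\subseteq\mathbb{R}$ by construction, $K$ is a real field, completing the proof.

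There is no genuine obstacle here; the only point that needs care is the initial choice of embedding $L\hookrightarrow\mathbb{C}$ compatible with the given inclusion $k\subseteq\mathbb{R}$, after which the rest is a short Galois-theoretic observation.
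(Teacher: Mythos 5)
Your proof is correct and follows the same route as the paper: restrict complex conjugation to $L$ (which is possible by normality) and take its fixed field $K=\Fix(c|_L)$, noting that $[L:K]$ divides $2$. The extra care you take with choosing an embedding $L\hookrightarrow\C$ extending $k\subseteq\R$ is a reasonable refinement of a step the paper leaves implicit.
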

\begin{proof}
Choose $K=\Fix(\sigma|_L)$ where 
$\sigma\in\Aut_k(\C)$ denotes the complex conjugation.
Then $K$ is real and $[L:K]$ divides $2$.
\end{proof}

\begin{lemma}
  \label{lem:no-galois-ext}
Let $k$ be a real number field,
and let $K/k$ be a Galois extension with 
$\Gal(K/k)\cong\Zn{2}\times\Zn{2}$
and $K$ not real.
Suppose there are valuations $v_1,v_2\in\V0(k)$ such that
\begin{enumerate}
\item \label{item:5} 
$|\bar{k}_{v_i}|\equiv 3\bmod 4$ for $i=1,2$, 
\item \label{item:6} %
$v_1$ and $v_2$ extend uniquely to valuations on $K$,
\item \label{item:7}
$T_{v_1}(K/k)\neq T_{v_2}(K/k)$.
\end{enumerate}
If $L/K$ is a field extension, $[L:K]=2$, 
such that $v_1$ and $v_2$ extend uniquely to valuations on $L$,
then $L/k$ is not Galois.
\end{lemma}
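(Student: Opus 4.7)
Assume for contradiction that $L/k$ is Galois, set $G := \Gal(L/k)$ and $H := \Gal(L/K)$, so $|G|=8$, $|H|=2$ and $G/H\cong\Zn{2}\times\Zn{2}$. The plan is to rule out each of the four groups of order $8$ that admit $\Zn{2}\times\Zn{2}$ as a quotient by an order-$2$ normal subgroup, namely $\Zn{2}\times\Zn{2}\times\Zn{2}$, $\Zn{4}\times\Zn{2}$, $D_4$ and $Q_8$ (the cyclic group $\Zn{8}$ is excluded from the outset, since all its quotients are cyclic). Since each $v_i$ extends uniquely to $L$, the local extension $L_{v_i}/k_{v_i}$ is Galois with group $G$; the residue characteristic is odd (as $|\bar k_{v_i}|\equiv 3\pmod{4}$), so this local extension is tame and the inertia subgroup $I_i\subseteq G$ is cyclic normal with cyclic quotient $G/I_i$. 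This already eliminates $G = (\Zn{2})^3$, where every cyclic subgroup has non-cyclic quotient.

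To rule out $G = Q_8$ I would use the hypothesis that $K$ is not real. Choosing a real place $v_\infty$ of $k$ at which $K$ is not real, the decomposition group $D_{v_\infty}\subseteq G$ has order $1$ or $2$; but $Q_8$ contains a unique subgroup of order $2$, namely the centre $\{\pm 1\}$, which coincides with $H$ since $G/H\cong(\Zn{2})^2$. Hence $D_{v_\infty}\subseteq H$, which would force $v_\infty$ to split completely in $K/k$ and $K$ to be real at $v_\infty$, a contradiction.

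For the remaining cases $G\in\{\Zn{4}\times\Zn{2}, D_4\}$ the strategy is to show that the image $\bar I_i\subseteq G/H = \Gal(K/k)$ of $I_i$ under the quotient map --- which is precisely the inertia group of $v_i$ in $K/k$ --- is the same subgroup for $i = 1$ and $i = 2$. Then $T_{v_1}(K/k) = \Fix(\bar I_1) = \Fix(\bar I_2) = T_{v_2}(K/k)$, contradicting $(3)$. For $G = D_4 = \gen{r,s\mid r^4 = s^2 = 1,\,srs = r^{-1}}$ the unique cyclic normal subgroup with cyclic quotient is $\gen{r}$, so $I_1 = I_2 = \gen{r}$ is forced. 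For $G = \Zn{4}\times\Zn{2} = \gen{\alpha}\times\gen{\beta}$, the unique order-$2$ normal subgroup with Klein-four quotient is $H = \gen{\alpha^2}$; the abelian tame bound $|I_i|\mid|\bar k_{v_i}| - 1 \equiv 2\pmod{4}$ forces $|I_i| = 2$, and the two candidates $\gen{\beta}$ and $\gen{\alpha^2\beta}$ both map onto $\gen{\bar\beta}\subseteq G/H$.

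The main obstacle is the group-theoretic bookkeeping in the last two cases: one must enumerate the admissible inertia subgroups, verify that $|\bar k_{v_i}|\equiv 3\pmod{4}$ is compatible with the non-abelian local structure of $D_4$ (where Frobenius inverts the inertia generator, consistent with $q\equiv -1\pmod 4$), and check the collapse of the two images $\bar I_i$ to a single subgroup of $G/H$. The $Q_8$ case is the only point where ``$K$ not real'' is essential --- without it, $Q_8$ is compatible with all the other hypotheses.
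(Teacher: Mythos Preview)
Your argument is correct; for $D_4$ and $Q_8$ it coincides with the paper's Cases~1 and~2 (the paper phrases the $Q_8$ argument via a real index-$2$ subfield $L_0\neq K$ rather than a decomposition group at the real place, but this is the same thing). The difference lies in the abelian cases. You dispatch $(\Zn{2})^3$ via the cyclic-by-cyclic constraint and $\Zn{4}\times\Zn{2}$ via the tame-abelian bound $|I_i|\mid q_i-1$ together with the observation that the two admissible order-$2$ inertia subgroups $\gen{\beta},\gen{\alpha^2\beta}$ have the same image in $G/H=\gen{\alpha^2}$ --- and neither step uses ``$K$ not real''. The paper instead treats all abelian $G$ in a single Case~3: it re-uses the real subfield $L_0\neq K$ from Case~2, arranges via condition~(\ref{item:7}) that $L_0\cap T_{v_1}(K/k)=k$, hence (since the subfields of the cyclic extension $T_{v_1}(L/k)/k$ are linearly ordered) $L_0\cap T_{v_1}(L/k)=k$, so $v_1$ is totally ramified in the degree-$4$ abelian extension $L_0/k$, whence $\mu_4\subset k_{v_1}$, contradicting~(\ref{item:5}). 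Your route buys the insight that the reality hypothesis is needed only to exclude $Q_8$; the paper's route buys a single uniform argument across all three abelian isomorphism types.
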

\begin{proof}
Assume there exists such an $L$ with $L/k$ Galois with $\Gal(L/k)=G$.
Then $|G|=8$.
We now show that all isomorphism classes of groups of order $8$ yield a contradiction.
Note that by (\ref{item:5}), $\charak \bar k_{v_i}\neq 2$,
thus $v_i$ is tamely ramified in $L/k$ for $i=1,2$. \\
Case 1 :
$G$ is the dihedral group.
Then $G$ has only one normal subgroup $I$
such that $I$ and $G/I$ are cyclic.
Therefore $T_{v_1}(L/k)=T_{v_2}(L/k)$, 
hence $T_{v_1}(K/k)=T_{v_2}(K/k)=T_{v_1}(L/k)\cap K$
which contradicts (\ref{item:7}). \\
Case 2 :
$G$ is the quaternion group.
Since $K$ is not real, $L$ is not real.
Hence by Lemma \ref{lem:real-galois-ext}
there is a real subfield $L_0\subset L$
with $[L:L_0]=2$.
Then $L_0\neq K$ and $[L_0:k]=4$.
But $G$ contains only one subgroup of order $2$, 
a contradiction. \\
Case 3 :
$G$ is abelian.
Let $L_0$ be as in case $2$.
Since $G$ is abelian, $L_0/k$ is Galois.
We show that \wolg $v_1$ is totally ramified in $L_0/k$.
Then $\mu_4\subset \bar k_{v_1}$ by Proposition \ref{prop:albert-local-fields},
hence $|\bar k_{v_1}|\equiv 1\bmod 4$ by (\ref{eq:mu-e-in-bar-k}).
This contradicts (\ref{item:5}).

Since $\Gal(K/k)$ is not cyclic,
$[T_{v_i}(K/k):k]=2$ for $i=1,2$,
and from $L_0\neq K$ we get $[L_0\cap K:k]\leq 2$.
Therefore, because of (\ref{item:7}), we can assume \wolg that
$L_0\cap T_{v_1}(K/k)=k$.
Since $T_{v_1}(L/k)$ is cyclic over $k$
and of prime power degree, its subfields are linearly ordered,
so $T_{v_1}(K/k)$ is the unique subfield of $T_{v_1}(L/k)$ of degree $2$ over $k$.
Therefore, $L_0\cap T_{v_1}(K/k)=k$ implies $L_0\cap T_{v_1}(L/k)=k$,
\ie $v_1$ is totally ramified in $L_0/k$.
\end{proof}

It shall be mentioned here that the question whether a Galois extension $K/k$
with group $\Zn{2}\times\Zn{2}$
embeds into a Galois extension with the quaternion group is completely treated 
in \cite[\S VI]{witt:konstr-gal-koerpen}.

\begin{ex}
\label{ex:not-embed-abel}
Let $k=\Q$ and 
$$K=\Q(\sqrt{3},\sqrt{-7}).$$
Then $[K:\Q]=4$ and $K/\Q$ is abelian with 
$\Gal(K/\Q)\cong\Zn{2}\times\Zn{2}$.
Let $v_1,v_2\in\V0(\Q)$ be the $3$- and $7$-adic valuation respectively.
Obviously $v_1$ and $v_2$ are totally ramified in $\Q(\sqrt{3})/\Q$
and $\Q(\sqrt{-7})/\Q$ respectively.
Furthermore $v_1$ and $v_2$ are inertial in $\Q(\sqrt{-7})/\Q$
and $\Q(\sqrt{3})/\Q$ respectively,
since $-7$ and $3$ are not squares modulo $3$ and $7$ respectively.
Thus, $v_1$ and $v_2$ extend uniquely to valuations on $K$
and $T_{v_1}(K/\Q)\neq T_{v_2}(K/\Q)$.
Obviously, $|\bar \Q_{v_i}|\equiv 3\bmod 4$ for $i=1,2$, 
$\Q$ is real and $K$ is not real.
Therefore $K/\Q$ and $v_1,v_2$ satisfy the properties $(1)$--$(3)$ of Lemma \ref{lem:no-galois-ext}.
\end{ex}

\begin{ex}
\label{ex:not-embed-abel-2}
Let 
$$k=\Q(\sqrt{37}) \qt{and} K=k(\sqrt{3},\sqrt{-7}).$$
Let $v_1,v_2\in\V0(k)$ be any extensions of the 
$3$- and $7$-adic valuations respectively to $k$.
Since $37$ is a square modulo $3$ and $7$,
$v_1$ and $v_2$ split completely in $k/\Q$,
\ie $k_{v_1}=\Q_3$ and $k_{v_2}=\Q_7$.
Therefore we can apply here to $K/k$ the same arguments as in Example \ref{ex:not-embed-abel}.
Since $k$ is real, this shows that $K/k$ and $v_1,v_2$ satisfy the properties $(1)$--$(3)$ of Lemma \ref{lem:no-galois-ext}.
\end{ex}

\section{Existence of noncrossed product division algebras}

The following lemma shows how we will apply the results from \S\S~\ref{sec:Non-embedd-cycl}--\ref{sec:Non-embedd-abel}.

\begin{lemma}
  \label{lem:appl-noncr-prod}
Let $K/k$ be a finite extension of global fields, 
$w_1,\ldots,w_r\in\Val(K)$, and $m\in\N$.
Suppose that for any field extension $L/K$ with $[L:K]=m$,
such that $w_1,\ldots,w_r$ extend uniquely to valuations on $L$,
$L$ is not Galois over $k$.
If $\wt D\in\calD(K)$ with $\ind\wt D=m$, such that $\wt D_{w_i}$ is a division algebra for $1\leq i\leq r$,
then $\wt D$ does not contain a maximal subfield that is Galois over $k$.
Moreover if $F$ is a valued field with $\bar F=k$,
then any inertially split $D\in\calD(F)$ with $\bar D\cong\wt D$ is a noncrossed product.
\end{lemma}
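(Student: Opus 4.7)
The plan is to prove the contrapositive of the first statement: suppose $\wt D$ contains a maximal subfield $L$ that is Galois over $k$, and derive that $L$ satisfies the hypothesis on unique valuation extensions, contradicting the assumption.

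First I would observe that since $\wt D\in\calD(K)$ has $Z(\wt D)=K$ and $\ind\wt D=m$, any maximal subfield $L$ of $\wt D$ is strictly maximal, \ie $[L:K]=m$. Moreover, $L$ is a splitting field of $\wt D$, so $L_{w_i}:=L\otimes_K K_{w_i}$ splits $\wt D_{w_i}$ for each $1\leq i\leq r$.

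The key step is to show that $w_i$ extends uniquely to $L$. Write the extensions of $w_i$ to $L$ as $u_1,\ldots,u_s$ with local degrees $f_j=[L:K]_{u_j}$, so $L_{w_i}\cong\prod_{j=1}^s L_{u_j}$ as $K_{w_i}$-algebras and $\sum_j f_j=[L:K]=m$. Since $L_{w_i}$ splits $\wt D_{w_i}$, each factor $L_{u_j}$ is a splitting field of the central simple $K_{w_i}$-algebra $\wt D_{w_i}$. By hypothesis $\wt D_{w_i}$ is a division algebra, so $\ind\wt D_{w_i}=\deg\wt D_{w_i}=m$, forcing $f_j\geq m$ for every $j$. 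Together with $\sum_j f_j=m$ this gives $s=1$ and $f_1=m$, \ie $w_i$ extends uniquely to $L$. This holds for every $i$, and therefore the hypothesis of the lemma forces $L/k$ not to be Galois, contradicting our assumption. Hence $\wt D$ contains no maximal subfield Galois over $k$.

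For the second statement, suppose $F$ is a valued field with $\bar F=k$ and $D\in\calD(F)$ is inertially split with $\bar D\cong\wt D$. If $D$ were a crossed product, then by the implication $(2)\impl(3)$ of Theorem~\ref{thm:crossed-product}, the residue algebra $\bar D$ would contain a maximal subfield Galois over $\bar F=k$. But $\bar D\cong\wt D$ contains no such subfield by what was just shown, a contradiction. Hence $D$ is a noncrossed product.

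The substantive step is the local argument that forces the unique extension of each $w_i$ to every maximal subfield; the Henselian/valuation-theoretic part of the second statement then reduces immediately to the already proved Noncrossed Product Criterion of Theorem~\ref{thm:crossed-product}.
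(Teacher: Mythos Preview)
Your proof is correct and follows the same approach as the paper. The paper's version is terser: it simply asserts that ``since $\wt D_{w_i}$ is a division algebra, $w_i$ extends uniquely to $L$,'' whereas you spell out the splitting-field/local-degree argument behind this; an equally short alternative is to note that $L\otimes_K K_{w_i}$ embeds in the division algebra $\wt D_{w_i}$ and is therefore a field. The second part is identical to the paper's, invoking Theorem~\ref{thm:crossed-product}.
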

\begin{proof}
If $L$ is a maximal subfield of $\wt D$,
then  $[L:K]=\ind\wt D=m$,
and since $\wt D_{w_i}$ is a division algebra,
$w_i$ extends uniquely to $L$ for $1\leq i\leq r$.
Therefore $L$ is not Galois over $k$.
If $D\in\calD(F)$ is inertially split with $\bar D\cong\wt D$,
then $D$ is a noncrossed product by Theorem~\ref{thm:crossed-product}.
\end{proof}

The main theorem of this section is

\begin{theorem}
  \label{thm:ex-global-noncr-prod}
Let $F$ be a valued field such that
$F$ has the inertial lift property,
$\bar F=k$ is a number field and 
$\Gamma_F$ is discrete.
Let $p$ be a prime number and let $r,s\in\N_0$ be maximal such that 
$\mu_{p^r}\subset k^\times$ and  $\mu_{p^s}\subset k(\mu_{p^{r+1}})^\times$.
For any $n_0,n',m,a\in\N$ such that $n_0=r$ or $n_0\geq s$,
$p\ndiv n', p^{r+1}\mid m$ and $a\mid m$,
there exists a noncrossed product division algebra over $F$ 
of index $nma$ and exponent $nm$, where $n=p^{n_0}n'$.
\end{theorem}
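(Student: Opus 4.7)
The plan is to execute the two-step noncrossed product construction outlined in Remark~\ref{rem:noncr-prod-constr}, made rigorous by the Lift Theorem (Theorem~\ref{theorem:lift}). First I produce on the residue level a central $K$-division algebra $\wt D$, for a suitable cyclic extension $K/k$, containing no maximal subfield Galois over $k$; then I lift $\wt D$ to an inertially split $D\in\calD(F)$ with $\bar D\cong\wt D$ of the required index and exponent.

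For the first step I apply Theorem~\ref{thm:brussel-field-existence-global} to the data $(k,p,n_0,n',m,a)$, obtaining a cyclic extension $K/k$ with $[K:k]=na$ and valuations $v_0,v_1,v_2,v_3\in\V0(k)$ of local degree $[K:k]_{v_i}=n$ in $K/k$, such that no degree-$m$ extension of $K$ in which chosen valuations above $v_0,v_1,v_2$ extend uniquely is Galois over $k$. Using Fact~\ref{fact:inv-surj}, I construct $A\in\calA(k)$ by prescribing
\[
\inv_{v_0}A=\inv_{v_1}A=\inv_{v_2}A=\tfrac{1}{nm},\qquad \inv_{v_3}A\equiv\tfrac{-3}{nm}\bmod 1,
\]
and $\inv_v A=0$ elsewhere; the sum is $0$ so $A$ exists. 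Setting $\wt D:=A_K$, formula~(\ref{eq:glob-scal-ext}) together with $[K:k]_{v_i}=n$ gives $\inv_w A_K=1/m$ of order $m$ for every $w\mid v_i$, $i=0,1,2$, while the local invariants above $v_3$ have order dividing $m$. By (\ref{eq:global-index-exp}) this yields $\ind\wt D=m$, and $\wt D_{w_i}$ is a division algebra of dimension $m^2$ for any chosen $w_i\mid v_i$ ($i=0,1,2$). Since $\inv_{v_0}A$ has order exactly $nm$ and all other prescribed invariants have order dividing $nm$, we get $\exp A=nm$. Lemma~\ref{lem:appl-noncr-prod}, applied with these $w_i$, now shows that $\wt D$ admits no maximal subfield Galois over $k$.

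For the second step I check the hypotheses of Theorem~\ref{theorem:lift}: condition~(1) holds because $K/k$ is cyclic; condition~(2) holds by construction ($\wt D\cong A_K$); condition~(3) follows from Remark~\ref{remark:cyclic}, using that $\Gamma_F$ is discrete and $\Gal(K/k)$ is cyclic. The Lift Theorem therefore yields an inertially split $D\in\calD(F)$ with $\bar D\cong\wt D$ and
\[
\ind D=na\cdot\ind\wt D=nma,\qquad \exp D=\lcm(na,\exp A)=\lcm(na,nm)=nm,
\]
where the last equality uses $a\mid m$, whence $na\mid nm$. Since $\bar D\cong\wt D$ contains no maximal subfield Galois over $\bar F=k$, the implication $(2)\Rightarrow(3)$ of Theorem~\ref{thm:crossed-product} shows that $D$ is a noncrossed product, as required.

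The only genuinely delicate part is the invariant bookkeeping in the first step: three simultaneous requirements, namely $\ind\wt D=m$, local index $m$ at $w_0,w_1,w_2$, and $\exp A=nm$, must be met using the four valuations provided by Theorem~\ref{thm:brussel-field-existence-global} (this is precisely why $v_3$ is included there). The choice above handles all admissible $(n,m,a)$ uniformly, because all prescribed invariants have denominator dividing $nm$ while $\inv_{v_0}A$ attains the full denominator $nm$; the rest of the argument assembles existing theorems without further complications.
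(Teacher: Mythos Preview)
Your argument is correct and follows the same route as the paper: invoke Theorem~\ref{thm:brussel-field-existence-global} to get $K/k$ and $v_0,\ldots,v_3$, prescribe local invariants for $A\in\calA(k)$ so that $\wt D=A_K$ has index $m$ with $\wt D_{w_i}$ a division algebra above $v_0,v_1,v_2$, apply Lemma~\ref{lem:appl-noncr-prod}, and then lift via Theorem~\ref{theorem:lift}. The only cosmetic difference is your asymmetric choice $\inv_{v_3}A=\tfrac{-3}{nm}$ in place of the paper's symmetric $\inv_{v_0}A=\inv_{v_1}A=\tfrac{1}{nm}$, $\inv_{v_2}A=\inv_{v_3}A=-\tfrac{1}{nm}$; both work.
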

\begin{proof}
Choose $K/k$ and $v_0,\ldots,v_3\in\V0(k)$ by Theorem~\ref{thm:brussel-field-existence-global}.
Then $[K:k]=na$ and $[K:k]_{v_i}=n$ for $0\leq i\leq 3$.
Let $w_1,\ldots,w_r\in\Val(K)$ be all the valuations $w\in\Val(K)$ with $w\mid v_i$ for some $0\leq i\leq 2$.
Then, by Theorem~\ref{thm:brussel-field-existence-global},
$K/k$ has the property that for any field extension $L/K$ with $[L:K]=m$
such that $w_1,\ldots,w_r$ extend uniquely to~$L$,
$L/k$ is not Galois.

Let
\marginpar{The noncrossed product argument requires in fact no condition on the invariant of $\wt D$ at $w_1$,
by the respective modification of Theorem \ref{thm:brussel-field-existence-global}.}
$\wt D\in\calD(K)$ with
$\inv_w \wt D=\frac{1}{m}$ for all $w\in\Val(K)$ with $w\mid v_0$ or $w\mid v_1$,
$\inv_w \wt D=-\frac{1}{m}$ for all $w\in\Val(K)$ with $w\mid v_2$ or $w\mid v_3$,
and $\inv_w D=0$ for all other $w\in\Val(K)$.
The sum of these local invariants is zero 
because each $v_i$ has the same number of extensions to $K$
(namely $a$) for $i=0,\ldots,3$.
Such $\wt D$ then exists by (\ref{fact:inv-surj}).
In particular, $\wt D_{w_i}$ is a division algebra for $1\leq i\leq r$.

Let $A\in\calD(k)$ with 
$\inv_{v_0}A=\inv_{v_1}A=\frac{1}{nm}$,
$\inv_{v_2}A=\inv_{v_3}A=-\frac{1}{nm}$
and $\inv_{v}A=0$ for all other $v\in\Val(k)$.
Such $A$ exists 
since the sum of these local invariants is obviously zero.
The formula (\ref{eq:glob-scal-ext}) then yields
$\inv_w A_K=\inv_w \wt D$ for all $w\in\Val(K)$,
thus $A_K\cong\wt D$ by the Albert-Hasse-Brauer-Noether Theorem.

$K/k$ is cyclic and we have shown that $\wt D\cong A_K$ for some $A\in\calA(k)$,
\ie the conditions (1) and (2) of Theorem~\ref{theorem:lift} are satisfied.
Since $\Gamma_F$ is discrete,
also (3) is satisfied (\cf Re\-mark~\ref{remark:cyclic}).
Hence, by Theorem~\ref{theorem:lift}, there exists an inertially split $D\in\calD(F)$ with $\bar D=\wt D$, 
$\ind D=[K:k]\ind\wt D=nma$ and
$\exp D=\lcm([K:k],\exp A)=\lcm(na,nm)=nm$.
By Lemma~\ref{lem:appl-noncr-prod}, $D$ is a noncrossed product.
\end{proof}

\begin{cor}
\label{cor:ex-global-noncr-prod}
Let $F, p, r$ and $s$ be as in Theorem~\ref{thm:ex-global-noncr-prod}.
For any natural numbers $e$ and $d$ such that
\begin{align*}
p^{s+1}\mid e\mid d\mid \frac{e^2}{p^{s}} &\qt{if $r=0$, and} \\
p^{2r+1}\mid e\mid d\mid \frac{e^2}{p^r} &\qt{if $r>0$,}
\end{align*}
there exists a noncrossed product division algebra over $F$ 
of index $d$ and exponent $e$.
\end{cor}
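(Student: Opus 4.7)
The plan is to deduce this corollary as a direct application of Theorem~\ref{thm:ex-global-noncr-prod}: I will choose parameters $n_0,n',m,a$ whose associated index $nma$ (with $n=p^{n_0}n'$) equals the prescribed $d$ and whose associated exponent $nm$ equals $e$, and then verify that the three divisibility constraints of that theorem ($n_0=r$ or $n_0\geq s$; $p\nmid n'$; $p^{r+1}\mid m$; $a\mid m$) follow from the hypotheses on $e$ and $d$.

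Concretely, I would set $n'=1$, and take
\[
n_0=\begin{cases} s & \text{if } r=0, \\ r & \text{if } r>0, \end{cases}
\qquad
m=\frac{e}{p^{n_0}}, \qquad a=\frac{d}{e}.
\]
So $n=p^{n_0}$, and by construction $nm=e$ and $nma=d$. The condition $n_0=r$ or $n_0\geq s$ is automatic. The condition $p\nmid n'$ is immediate. For $p^{r+1}\mid m$: in the case $r=0$ this says $p\mid e/p^{s}$, which is exactly $p^{s+1}\mid e$; in the case $r>0$ it says $p^{r+1}\mid e/p^{r}$, i.e.\ $p^{2r+1}\mid e$. Both hold by hypothesis. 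For $a\mid m$: in the case $r=0$ this says $d/e\mid e/p^{s}$, i.e.\ $d\mid e^{2}/p^{s}$, which is the standing hypothesis; and similarly $d\mid e^{2}/p^{r}$ in the case $r>0$.

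Having checked that the parameters fit the hypotheses of Theorem~\ref{thm:ex-global-noncr-prod}, that theorem directly produces a noncrossed product division algebra over $F$ of index $nma=d$ and exponent $nm=e$, which is what is claimed.

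Since the construction is a mere translation between the two parametrizations, there is no serious obstacle; the only point that deserves care is the case split at $r=0$ versus $r>0$, which is forced because the requirement ``$n_0=r$ or $n_0\geq s$'' of Theorem~\ref{thm:ex-global-noncr-prod} interacts differently with the lower bound ``$p^{r+1}\mid m$'' in those two regimes, and it is exactly this case split that produces the two divisibility chains $p^{s+1}\mid e\mid d\mid e^2/p^{s}$ (when $r=0$) and $p^{2r+1}\mid e\mid d\mid e^2/p^{r}$ (when $r>0$) appearing in the statement of the corollary.
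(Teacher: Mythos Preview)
Your proposal is correct and matches the paper's own proof exactly: the paper also applies Theorem~\ref{thm:ex-global-noncr-prod} with $n'=1$, $n_0=s$ if $r=0$ and $n_0=r$ if $r>0$, and $m=e/p^{n_0}$ (with $a=d/e$ implicit). Your write-up is simply a fuller verification of the same parameter choice, which the paper leaves to the reader.
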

\begin{proof}
Apply Theorem~\ref{thm:ex-global-noncr-prod} with
$n_0=s$ if $r=0$, $n_0=r$ if $r>0$, $n'=1$ and $m=\frac{l}{p^{n_0}}$.
\end{proof}

\begin{remark} 
  \label{rem:ex-are-brussel}
(1) The noncrossed products in Theorem~\ref{thm:ex-global-noncr-prod}
and Corollary \ref{cor:ex-global-noncr-prod}
are already obtained from cyclic extensions $K/k$.
Thus, Theorem~\ref{theorem:lift} has been used only in the special case
that $Z(\bar D)/\bar F$ is cyclic.
The advantage of this case is,
according to Remark \ref{remark:cyclic},
that the condition (3) in Theorem~\ref{theorem:lift}
($\Gal(Z(\bar D)/\bar F)$ embeds into $\Gamma_F/m\Gamma_F$)
is always satisfied if $\Gamma_F$ is discrete. 

(2) In the cases $F=k(t)$ and $F=k(\!(t)\!)$ (and $n,m,a$ $p$-powers),
the noncrossed products in Theorem~\ref{thm:ex-global-noncr-prod}
are precisely the ones from \cite{brussel:noncr-prod}.
Recall that in \cite{brussel:noncr-prod} the noncrossed products over $k(t)$, for example,
are the underlying division algebras of tensor products of the form
$A^{k(t)}\otimes_{k(t)}(K(t)/k(t),\sigma,t)$,
where $A\in\calD(k)$, $K/k$ is cyclic with $\Gal(K/k)=\gen{\sigma}$,
and any extension $L/k$ of degree $[K:k]\ind A_K$ that contains $K$ and splits $A$ is not Galois.
This is equivalent to that any maximal subfield of $A_K$ is not Galois over $k$,
thus $A_K$ coincides with the $\wt D$ here.
The tensor product
$A^{k(t)}\otimes_{k(t)}(K(t)/k(t),\sigma,t)$
is hidden in the proof of Theorem~\ref{theorem:lift}.
There, $D$ is the underlying division algebra of a tensor product $I\otimes_{k(t)} N$.
The $I$ coincides with $A^{k(t)}$,
and the $N$ coincides with $(K(t)/k(t),\sigma,t)$.
The same analogy holds for $F=k(\!(t)\!)$.
\end{remark}

\begin{ex}
\label{ex:noncr-prod-cycl}
Let $k,K,f,\alpha,v_1$ and $v_2$ be as in Example \ref{ex:K/Q-cyclic},
\ie $k=\Q, K=\Q(\alpha)$, where $\alpha$ is a root of 
$f(x)=x^3+x^2-2x-1$,
$v_1$ the $7$-adic and $v_2$ the $2$-adic valuation on $\Q$. 
We have already seen that $v_1$ is totally ramified and $v_2$ is inertial in $K/k$.
Let $w_1$ and $w_2$ be the unique extensions of $v_1,v_2$ to $K$ respectively.
Note that $\alpha\in\OO_K$ since $f\in\Z[x]$.
$v_1$ is totally ramified in $K/k$ and $f(x)\equiv (x-2)^3\bmod 7$, 
thus $\bar K_{w_1}=\bar \Q_{v_1}=\F_7$ and $\bar\alpha=2$ in $\bar K_{w_1}$.
$f(x)$ is irreducible modulo $2$,
thus $\bar K_{w_2}=\bar \Q_{v_2}(\bar\alpha)=\F_2(\bar\alpha)$.
For $\pi=\alpha^2+2\alpha-1$ we have $\No_{K/k}(\pi)=7$.
If follows
$\PP_{w_1}=\pi\OO_{w_1}$ and $\PP_{w_2}=2\OO_{w_2}$.

Now let $L=K(\beta)$ for a root $\beta$ of
\[
g(x)=x^3+(\alpha-2)x^2-(\alpha+1)x+1\in \OO_K[x].
\]
Then $\beta\in\OO_L$.
It is easily checked that $\bar g$ has no roots in $\bar K_{w_1}$ and $\bar K_{w_2}$,
thus $[L:K]=3$ and $w_1,w_2$ are inertial in $L/K$.
Moreover, for the unique extensions $w'_1,w'_2$ of $w_1,w_2$ to $L$ respectively,
$\bar L_{w'_1}=\bar K_{w_1}(\bar\beta)=\F_7(\bar\beta)$,
$\bar L_{w'_2}=\bar K_{w_2}(\bar\beta)=\F_2(\bar\alpha,\bar\beta)$,
$\PP_{w'_1}=\pi\OO_{w'_1}$ and $\PP_{w'_2}=2\OO_{w'_2}$.
To see that $L/K$ is a Galois extension,
check $g(\beta^2+(\alpha-2)\beta-\alpha)=g(-\beta^2+(-\alpha+1)\beta+2)=0$.
Let $\tau\in\Gal(L/K)$ with $\tau(\beta)=\beta^2+(\alpha-2)\beta-\alpha$ and
$\tau^2(\beta)=-\beta^2+(-\alpha+1)\beta+2$,
and let
$$\wt D=(L/K,\tau,2\pi).$$
Recall that the Frobenius automorphism of $w'_i$ in $L/K$ is the automorphism 
$\varphi\in\Gal(L/K)$ with $\bar{\varphi(\beta)}=\bar\beta^q$ in $\bar L_{w'_i}$, 
where $q=|\bar K_{w_i}|$.
A routine computation shows that
$\bar\beta^7=\bar\beta^2-2=\bar{\tau(\beta)}$ in $\bar L_{w'_1}$
and
$\bar\beta^8=\bar\beta^2+(\bar\alpha+1)\bar\beta=\bar{\tau^2(\beta)}$ in $\bar L_{w'_2}$,
thus $\tau$ is the Frobenius automorphism of $w_1$
and $\tau^2$ is the Frobenius automorphism of $w_2$.
Since $2\pi$ is a prime element of $w_1$ and $w_2$, this shows
$\inv_{w_1} \wt D=\frac{1}{3}$ and $\inv_{w_2} \wt D=\frac{2}{3}$
(\cf \cite[\S 17.10]{pierce:ass-alg}).
In particular, $\wt D$, $\wt D_{w_1}$ and $\wt D_{w_2}$ are division algebras with index $3$.
It was shown in Example \ref{ex:K/Q-cyclic} that for any $L/F$ 
with $w_1,w_2$ extend uniquely to $L$, 
$L$ is not Galois over $\Q$.
Thus by Lemma~\ref{lem:appl-noncr-prod},
any maximal subfield of $D$ is not Galois over $\Q$.
Moreover, if $F$ is a valued field with residue field $\Q$,
then any inertially split $D\in\calD(F)$
with $\bar D=\wt D$ is a noncrossed product.

We now apply Theorem~\ref{theorem:lift} to show that such $D$ exists.
It remains to verify that $\wt D\cong A_K$ for some $A\in\calD(\Q)$.
To do so we compute all local invariants of $\wt D$ as follows.
Since $\disc(g)=20\alpha^2-19\alpha+46=(\alpha^2-\alpha+7)^2$ and
$\No_{K/\Q}(\alpha^2-\alpha+7)=673$ is a prime number,
the prime ideal $\pp=(\alpha^2-\alpha+7)\OO_K$ is the only prime ideal of $K$
that possibly divides $\disc(L/K)$.
Then $v_\pp\in\V0(K)$ is the only valuation on $K$ that is possibly ramified in $L/K$.
But $2\pi$ is a unit with respect to all $w\in\V0(K)$ with $w\neq w_1,w_2$.
Therefore, $\inv_w \wt D=0$ for all $w\in\V0(K)$ with $w\neq w_1,w_2,v_\pp$.
Since $\deg \wt D=3$, $\inv_w \wt D=0$ also for all archimedian valuations $w\in\Val(K)$.
Finally, since the sum of all invariants must be zero, $\inv_{v_\pp} \wt D=0$.
The formula (\ref{eq:glob-scal-ext}) and the Albert-Hasse-Brauer-Noether Theorem then show
that $\wt D\cong A_K$ \eg for the $A\in\calD(\Q)$ with
$\inv_{v_1} A = \frac{1}{9}$, $\inv_{v_2} A = -\frac{1}{9}$
and $\inv_v A=0$ for all other $v\in\Val(\Q)$.
Thus by Theorem~\ref{theorem:lift}, there is an inertially split
$D\in\calD(\Q(t))$, $\ind D=\exp D=9$, with residue algebra $\wt D$,
and this $D$ is a noncrossed product by Theorem~\ref{thm:crossed-product}.
Such a $D$ will be constructed in Example~\ref{ex:cycl-noncr-prod} below.
\end{ex}

\begin{ex}
\label{ex:noncr-prod-abel-1}
As in Example~\ref{ex:not-embed-abel},
let $k=\Q$ and $K=\Q(\sqrt{3},\sqrt{-7})$,
and let $v_1,v_2\in\V0(\Q)$ be the $3$- and $7$-adic valuation respectively.
There are unique extensions $w_1$ and $w_2$ of $v_1$ and $v_2$ to $K$ respectively (\cf Example~\ref{ex:not-embed-abel}).
For
\begin{align*}
   \pi_1:=1+\sqrt{3} \in\Q(\sqrt{3}) \qt{and} \pi_2:=\frac{1+\sqrt{-7}}{2} \in\Q(\sqrt{-7}), 
\end{align*}
we have
\begin{align*}
  \No_{\Q(\sqrt{3})/\Q}(\pi_1)=-2 \qt{and} 
  \No_{\Q(\sqrt{-7})/\Q}(\pi_2)=2. 
\end{align*}
Let $\wt D$ be the quaternion algebra $(\frac{a,b}{K})$ with
\begin{alignat*}{2}
  a&:=\sqrt{3}\pi_1 =3+\sqrt{3} &\quad&\in\Q(\sqrt{3}), \\
  b&:=\sqrt{-7}\pi_2 =\frac{-7+\sqrt{-7}}{2} &&\in\Q(\sqrt{-7}),
\end{alignat*}
\ie $\wt D=K\oplus Ki \oplus Kj \oplus Kij$
with $i^2=a,j^2=b,ij=-ji$.
We show that $\wt D_{w_1}$ and $\wt D_{w_2}$ are division algebras.
In particular this implies that $\wt D$ is a division algebra.
We have 
$\sqrt{3},\sqrt{-7},a,b\in\OO_K$,
$[\bar K_{w_1}:\bar\Q_{v_1}]=[\bar K_{w_2}:\bar\Q_{v_2}]=2$ and
$$  \bar K_{w_1}=\bar \Q_{v_1}(\eta), \quad \bar K_{w_2}=\bar\Q_{v_2}(\theta) $$
with $\eta=\sqrt{-7}+\PP_{w_1}$ and $\theta=\sqrt{3}+\PP_{w_2}$.
Further,
$\bar b=1-\eta$ in $\bar K_{w_1}$ and
$\bar a=3+\theta$ in $\bar K_{w_2}$.
Since $\No_{\bar K_{w_1}/\bar\Q_{v_1}}(1-\eta)=-1$ 
and $\No_{\bar K_{w_2}/\bar\Q_{v_2}}(3+\theta)=-1$ are not squares
in $\bar\Q_{v_1}$ and $\bar\Q_{v_2}$ respectively,
$b$ and $a$ are not squares in $\bar K_{w_1}$ and in $\bar K_{w_2}$ respectively.
This shows that $w_1$ and $w_2$ are inertial in $K(\sqrt{b})/K$ and $K(\sqrt{a})/K$ respectively.
But since $\pi_1$ and $\pi_2$ are units with respect to $w_1$ and $w_2$,
$a$ and $b$ are prime elements with respect to $w_1$ and $w_2$,
so $a$ and $b$ cannot be norms in $K_{w_1}(\sqrt{b})/K_{w_1}$ and $K_{w_2}(\sqrt{a})/K_{w_2}$ respectively.
This implies that $(\frac{a,b}{K_{w_1}})$ and $(\frac{a,b}{K_{w_2}})$ are division algebras 
(\cf \cite[\S 1.6, Exercise 4 or Corollary 15.1d]{pierce:ass-alg}).

By Lemma~\ref{lem:no-galois-ext} and Lemma~\ref{lem:appl-noncr-prod},
$\wt D$ does not contain a maximal subfield that is Galois over $\Q$.
Moreover, if $F$ is a valued field with residue field $\Q$,
then any inertially split $D\in\calD(F)$ with residue algebra $\wt D$ is a noncrossed product.
We will not show at this point that such $D$ exists,
since an explicit example will be constructed in \S~\ref{sec:An-example-with-8}.
\end{ex}

\begin{ex}
\label{ex:noncr-prod-abel-2}
Let $k=\Q(\sqrt{37})$ and $K=k(\sqrt{3},\sqrt{-7})$.
Let $\wt D=(\frac{a,b}{K})$ with 
$a=3+\sqrt{3}$ and $b=\frac{-7+\sqrt{-7}}{2}$ as in Example~\ref{ex:noncr-prod-abel-1}.
By Example~\ref{ex:not-embed-abel-2} we can apply here the same arguments as in Example~\ref{ex:noncr-prod-abel-1}.
This shows that $\wt D$ is a division algebra that does not contain a maximal subfield that is Galois over $k$.
In particular, $\wt D$ does not contain a maximal subfield that is Galois over $\Q$.
Moreover, if $F$ is a valued field with residue field $k$,
then any inertially split $D\in\calD(F)$ with residue algebra $\wt D$ is a noncrossed product.
This is also true if $F$ is a valued field with residue field $\Q$.
We will not show at this point that such $D$ exists,
since an explicit example will be given in \S~\ref{sec:An-example-with-16}.
\end{ex}

\chapter{Direct Constructions}

The noncrossed product division algebras in Theorem \ref{thm:ex-global-noncr-prod} are obtained as the underlying division algebra of some tensor product $N\otimes_F I$,
which itself is a crossed product (\cf proof of Theorem \ref{theorem:lift}).
The motivation of this chapter is to give constructions that lead directly to the underlying division algebra of $N\otimes_F I$.

First, the construction of generalized crossed products is described
with a focus on the cyclic and abelian generalized crossed products,
which are the important cases for our purposes.
Using generalized crossed products, Theorem \ref{theorem:lift} is reproved in a direct way.
More precisely, a generalized crossed product division algebra is constructed 
that is inertially split with a given residue algebra.
Next, the construction of (iterated) twisted function fields and (iterated) twisted Laurent series rings is described,
and it is shown how these are obtained from abelian factor sets.
For the computation of examples of factor sets
it is necessary to compute automorphisms of simple algebras that extend given
automorphisms of the centre.
For certain symbol algebras this problem reduces to
the solution of a relative norm equation in a field extension.
Finally, two explicit examples of an iterated twisted function field 
(\resp an iterated twisted Laurent series ring) are given
that are noncrossed product division algebras.

\section{Generalized crossed products}
We define factor sets and generalized crossed products 
following \cite{tignol:gen-cr-prod}.
Other papers covering this topic are \cite{yanchevskii:gen-cr-prod} and \cite{jehne:sep-adel-alg}.
Throughout this section let $K/F$ be a finite Galois extension with $\Gal(K/F)=G$
and let $A\in\calA(K)$.

\subsection{Factor sets}

\begin{defn}
  \label{def:factor-set}
A \emph{factor set}
\index{factor set}
of $G$ in $A^\times$ is a pair $(\omega,f)$ of maps 
\begin{eqnarray*}
  \omega &:& G\lra \Aut_F(A), \quad \sigma\lms \omega_\sigma, \\
  f &:& G\times G\lra A^\times, \quad (\sigma,\tau)\lms f(\sigma,\tau),
\end{eqnarray*}
such that for all $\rho,\sigma,\tau\in G$ :
\begin{subequations}
\label{eq:factor-set}
\begin{gather}
\label{eq:factor-set-1}
\omega_\sigma|_K = \sigma ,\\
\label{eq:factor-set-2}
\omega_\sigma\omega_\tau = \iota_{f(\sigma,\tau)}\omega_{\sigma\tau} ,\\
\label{eq:factor-set-3}
\omega_\rho(f(\sigma,\tau))f(\rho,\sigma\tau) = f(\rho,\sigma)f(\rho\sigma,\tau) .
\end{gather}
\end{subequations}
Denote by $\calF(G,A^\times)$ the set of all factor sets of $G$ in $A^\times$.
We say that $(\omega,f)$ and $f$ are \emph{normalized} 
\index{factor set!normalized}
if $f(\id,\id)=1$.
If $G$ is cyclic (\resp abelian) we call $(\omega,f)$ a \emph{cyclic} (\resp \emph{abelian}) factor set.
\index{factor set!cyclic}
\index{factor set!abelian}
\end{defn}

Note that in general,  $\calF(G,A^\times)$ may be empty.

\begin{remark}
The relations (\ref{eq:factor-set}) imply that
$f(\id,\sigma)=f(\id,\id)$ and $f(\sigma,\id)=\omega_\sigma(f(\id,\id))$
for all $\sigma\in G$.
In particular, if $f$ is normalized, then
$f(\sigma,\id)=f(\id,\sigma)=f(\id,\id)=1$ for all $\sigma\in G$.
\end{remark}

\begin{defn}
  \label{def:cohomologous-factor-sets}
Two factor sets $(\omega,f)$ and $(\eta,g)$ from $\calF(G,A^\times)$
are said to be \emph{cohomologous},
\index{factor set!cohomologous}
written $(\omega,f)\sim(\eta,g)$,
if there exists a family $\{m_\sigma\}_{\sigma\in G}$ in $A^\times$
such that
\begin{subequations}
\label{eq:cohomologous}
\begin{align}
\label{eq:cohomologous-1}
\eta_\sigma&=\iota_{m_\sigma}\omega_\sigma && \text{for all $\sigma\in G$,} \\
\label{eq:cohomologous-2}
g(\sigma,\tau)&=m_\sigma\omega_\sigma(m_\tau)f(\sigma,\tau)m_{\sigma\tau}^{-1}
&& \text{for all $\sigma,\tau\in G$.} 
\end{align}
\end{subequations}
\end{defn}

\begin{remark}
\label{rem:cohomology-equiv-relation}
 The cohomology relation $\sim$ is an equivalence relation on the set $\calF(G,A^\times)$
and its quotient will be denoted by $\calH(G,A^\times)$.
We write $[(\omega,f)]$ for the class of $(\omega,f)\in\calF(G,A^\times)$ in $\calH(G,A^\times)$.
\end{remark}

\begin{prop}
  \label{prop:cohomologous-factor-sets}
Suppose that $(\omega,f)\in\calF(G,A^\times)$.
\begin{enumerate}
\item 
For any family $\{m_\sigma\}_{\sigma\in G}$ in $A^\times$ the relations
(\ref{eq:cohomologous}) define a factor set $(\eta,g)\in\calF(G,\ug A)$ 
that is cohomologous to $(\omega,f)$.
\item 
For any map $\eta:G\to\Aut_F(A)$ with $\eta_\sigma|_K=\sigma$ for all $\sigma\in G$,
there exists a factor set $(\eta,g)\in\calF(G,A^\times)$
such that $(\omega,f)\sim(\eta,g)$.
\item
$(\omega,f)$ is cohomologous to a normalized factor set $(\eta,g)\in\calF(G,A^\times)$.
\item 
For any $2$-cocycle $c\in Z^2(G,K^\times)$,
a factor set $(\omega,fc)\in\calF(G,A^\times)$ is defined by
$fc(\sigma,\tau):=f(\sigma,\tau)c(\sigma,\tau)$.
Moreover, $(\omega,fc)\sim(\omega,f)$ if and only if $c$ is a $2$-coboundary,  
\ie $c\in B^2(G,K^\times)$.
\item 
Let $(\eta,g)\in\calF(G,A^\times)$ be any factor set.
There exists a $2$-cocycle $c\in Z^2(G,K^\times)$ such that $(\eta,g)\sim(\omega,fc)$.
\end{enumerate}
\end{prop}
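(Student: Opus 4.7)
The plan is to treat the five parts in sequence, exploiting throughout the fact that $K = Z(A)$ so that conjugation by elements of $K^\times$ acts trivially on $A$, and applying the Skolem--Noether theorem where the twisting map needs to be replaced.

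For (1), I would simply define $(\eta,g)$ by the formulas (\ref{eq:cohomologous}) and verify the three factor set axioms in turn. Axiom (\ref{eq:factor-set-1}) is immediate because $\iota_{m_\sigma}$ fixes $K$ pointwise, so $\eta_\sigma|_K = \omega_\sigma|_K = \sigma$. For (\ref{eq:factor-set-2}), the identity $\omega_\sigma\iota_{m_\tau} = \iota_{\omega_\sigma(m_\tau)}\omega_\sigma$ together with (\ref{eq:factor-set-2}) for $(\omega,f)$ gives
\[ \eta_\sigma\eta_\tau = \iota_{m_\sigma\omega_\sigma(m_\tau)f(\sigma,\tau)}\omega_{\sigma\tau} = \iota_{g(\sigma,\tau)}\eta_{\sigma\tau}. \]
Axiom (\ref{eq:factor-set-3}) for $g$ follows from (\ref{eq:factor-set-3}) for $f$ by a longer but entirely mechanical computation; this is the step where the non-commutativity of $A$ forces the most careful bookkeeping.

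For (2), given any $\eta$ with $\eta_\sigma|_K = \sigma$, the composite $\eta_\sigma\omega_\sigma^{-1}$ is a $K$-algebra automorphism of the central simple algebra $A$, hence by Skolem--Noether equals $\iota_{m_\sigma}$ for some $m_\sigma\in A^\times$. Applying (1) with this family produces the required cohomologous factor set. Part (3) is then obtained from (1) by a judicious choice of family: taking $m_\sigma = f(\id,\id)^{-1}$ for all $\sigma$ (or a close variant), one checks directly that $g(\id,\id) = 1$.

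For (4), the crucial point is that each $c(\sigma,\tau)\in K^\times\subseteq Z(A)$, so $\iota_{f(\sigma,\tau)c(\sigma,\tau)} = \iota_{f(\sigma,\tau)}$ and the $c$-values commute freely past elements of $A$. Axioms (\ref{eq:factor-set-1}) and (\ref{eq:factor-set-2}) for $(\omega,fc)$ are then immediate, and (\ref{eq:factor-set-3}) reduces after cancellation to the $2$-cocycle condition on $c$. For the equivalence statement, a cohomology $(\omega,fc)\sim(\omega,f)$ forces each $m_\sigma$ to lie in $Z(A) = K^\times$ by $\iota_{m_\sigma} = \id$, and then (\ref{eq:cohomologous-2}) reduces exactly to the coboundary relation $c(\sigma,\tau) = m_\sigma\sigma(m_\tau)m_{\sigma\tau}^{-1}$. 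Finally for (5), first apply (2) to replace $(\eta,g)$ by a cohomologous factor set $(\omega,g')$ sharing the twisting map of $(\omega,f)$; the centrality argument of (4) shows $c(\sigma,\tau):=f(\sigma,\tau)^{-1}g'(\sigma,\tau)\in K^\times$ since $\iota_{g'(\sigma,\tau)} = \iota_{f(\sigma,\tau)}$, and comparing (\ref{eq:factor-set-3}) for $(\omega,f)$ and $(\omega,fc) = (\omega,g')$ forces $c$ to be a $2$-cocycle. The only real obstacle in the whole argument is the bookkeeping verification of (\ref{eq:factor-set-3}) in (1); every other step reduces either to Skolem--Noether or to the centrality of $K$ in $A$.
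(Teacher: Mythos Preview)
Your proposal is correct and follows essentially the same approach as the paper: direct verification of the axioms for (1), Skolem--Noether for (2), a specific choice of $\{m_\sigma\}$ for (3), centrality of $K$ in $A$ for (4) and (5), with (5) reducing to the case $\eta=\omega$ via (2). The only minor difference is that the paper takes $m_{\id}=f(\id,\id)^{-1}$ and $m_\sigma=1$ for $\sigma\neq\id$ in (3), whereas you suggest $m_\sigma=f(\id,\id)^{-1}$ for all $\sigma$; both choices work (using $\omega_{\id}=\iota_{f(\id,\id)}$ to see that $\omega_{\id}$ fixes $f(\id,\id)^{-1}$).
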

\begin{proof}
$(1)$ Let $\{m_\sigma\}_{\sigma\in G}$ be any family in $A^\times$
and define $\eta$ and $g$ by (\ref{eq:cohomologous}).
We have to show that $(\eta,g)\in\calF(G,A^\times)$.
(\ref{eq:factor-set-1}) and (\ref{eq:factor-set-2}) are easily checked.
Using $f(\rho,\sigma)\omega_{\rho\sigma}(m_\tau)=\omega_\rho(\omega_\sigma(m_\tau))f(\rho,\sigma)$, 
which follows from (\ref{eq:factor-set-2}),
and (\ref{eq:factor-set-3}) for $(\omega,f)$ we get
\begin{multline*}
g(\rho,\sigma)g(\rho\sigma,\tau)g(\rho,\sigma\tau)^{-1} \\
  \begin{split}
&= m_\rho\omega_\rho(m_\sigma)f(\rho,\sigma)\omega_{\rho\sigma}(m_\tau)
f(\rho\sigma,\tau)f(\rho,\sigma\tau)^{-1}\omega_\rho(m_{\sigma\tau})^{-1}m_\rho^{-1} \\
&= m_\rho\omega_\rho(m_\sigma)\omega_\rho(\omega_\sigma(m_\tau))
\omega_\rho(f(\sigma,\tau))\omega_\rho(m_{\sigma\tau})^{-1}m_\rho^{-1}
= \eta_\rho(g(\sigma,\tau))
  \end{split}
\end{multline*}
for all $\rho,\sigma,\tau\in G$.
This shows (\ref{eq:factor-set-3}) for $(\eta,g)$, hence $(\eta,g)\in\calF(G,A^\times)$.
\\
$(2)$
Let $\eta:G\to\Aut_F(A)$ be any map with $\eta_\sigma|_K=\sigma$ for all $\sigma\in G$.
Since $\eta_\sigma|_K=\omega_\sigma|_K$, 
there is a family $\{m_\sigma\}_{\sigma\in G}$ in $A^\times$
such that $\eta_\sigma=\iota_{m_\sigma}\omega_\sigma$ for all $\sigma\in G$
by the theorem of Skolem-Noether.
$(1)$ shows that (\ref{eq:cohomologous}) defines a factor set $(\eta,g)\in\calF(G,A^\times)$
such that $(\omega,f)\sim(\eta,g)$.
\\
$(3)$
Choose $m_{\id}=f(\id,\id)^{-1}$ and $m_\sigma=1$ for all $\sigma\in G\bs\set{\id}$.
Let $(\eta,g)$ be defined by (\ref{eq:cohomologous}).
It was shown in (1) that $(\eta,g)\in\calF(G,\ug A)$ is a factor set
that is cohomologous to $(\omega,f)$.
(\ref{eq:factor-set-2}) implies $\omega_{\id}=\iota_{f(\id,\id)}$,
thus $\omega_{\id}(m_{\id})=m_{\id}$.
It follows $g(\id,\id)=m_{\id}m_{\id}m_{\id}^{-1}m_{\id}^{-1}=1$, 
\ie $(\eta,g)$ is normalized.
\\
$(4)$
Since $c(\sigma,\tau)\in K^\times$ for all $\sigma,\tau\in G$, 
$(\omega,fc)$ satisfies (\ref{eq:factor-set-1}) and (\ref{eq:factor-set-2}).
(\ref{eq:factor-set-3}) is easily checked since 
$c$ satisfies the $2$-cocycle condition
\index{cocycle condition}
\[ \rho(c(\sigma,\tau))c(\rho,\sigma\tau)=c(\rho,\sigma)c(\rho\sigma,\tau) 
\qt{for all $\rho,\sigma,\tau\in G$.}\]
Therefore $(\omega,fc)\in\calF(G,A^\times)$.
The $2$-cocycle $c$ %
is a $2$-coboundary if and only if
there is a family $\{m_\sigma\}_{\sigma\in G}$ in $K^\times$ 
such that 
\begin{equation}
  \label{eq:coboundary}
 c(\sigma,\tau)=m_\sigma \sigma(m_\tau)m_{\sigma\tau}^{-1}
\qt{for all $\sigma,\tau\in G$.} 
\end{equation}
The assertion follows since (\ref{eq:cohomologous-1}) with $\eta=\omega$ is equivalent to that the $m_\sigma$ lie in $\ug K$,
and if the $m_\sigma$ lie in $\ug K$, 
then (\ref{eq:cohomologous-2}) with $g=fc$ is equivalent to (\ref{eq:coboundary}).
\\
$(5)$
Let $(\eta,g)\in\calF(G,A^\times)$.
By $(2)$ we may assume \wolg that $\eta=\omega$.
Define $c(\sigma,\tau):=f(\sigma,\tau)^{-1}g(\sigma,\tau)$ for all $\sigma,\tau\in G$.
Then obviously $fc=g$.
Since $\eta=\omega$, (\ref{eq:factor-set-2}) implies that
$\iota_{f(\sigma,\tau)}=\iota_{g(\sigma,\tau)}$,
thus $c(\sigma,\tau)\in K^\times$ for all $\sigma,\tau\in G$.
This also shows that $c(\sigma,\tau)=g(\sigma,\tau)f(\sigma,\tau)^{-1}$
for all $\sigma,\tau\in G$.
We get from (\ref{eq:factor-set-3}),
\begin{equation*}
  \begin{split}
\rho(c(\sigma,\tau))&=\omega_\rho(f(\sigma,\tau))^{-1}\omega_\rho(g(\sigma,\tau)) \\
&= f(\rho,\sigma\tau)f(\rho\sigma,\tau)^{-1}f(\rho,\sigma)^{-1}
  g(\rho,\sigma)g(\rho\sigma,\tau)g(\rho,\sigma\tau)^{-1} \\
&= c(\rho,\sigma)c(\rho\sigma,\tau)c(\rho,\sigma\tau)^{-1},
  \end{split}
\end{equation*}
\ie $c\in Z^2(G,K^\times)$.
\end{proof}

\begin{cor}
\label{cor:calH-H-2}
Suppose that $\calF(G,A^\times)$ is not empty
and let $(\omega,f)$ be an element from $\calF(G,A^\times)$.
There is a bijective map
\begin{equation*}
  H^2(G,K^\times)\lra\calH(G,A^\times), \qquad
  [c]\lms[(\omega,fc)].
\end{equation*}
\end{cor}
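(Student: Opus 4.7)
The statement follows almost entirely by packaging the content of Proposition~\ref{prop:cohomologous-factor-sets}. My plan is therefore to verify well-definedness, surjectivity, and injectivity of the proposed map, each being a direct application of one of the parts $(4)$ and $(5)$ of that proposition.

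First I would observe that the map is well-defined on the level of factor sets: for each $c\in Z^2(G,K^\times)$, part~$(4)$ guarantees that $(\omega,fc)$ is again an element of $\calF(G,A^\times)$, so $[(\omega,fc)]\in\calH(G,A^\times)$ makes sense. Next I would show that the class $[(\omega,fc)]$ depends only on the cohomology class $[c]$. Given $c,c'\in Z^2(G,K^\times)$ with $[c]=[c']$, write $b=c^{-1}c'\in B^2(G,K^\times)$, so $fc'=(fc)\cdot b$. Applying part~$(4)$ of the proposition with $(\omega,fc)$ in place of $(\omega,f)$ (permissible since $(\omega,fc)\in\calF(G,A^\times)$), the $2$-coboundary $b$ yields $(\omega,(fc)\cdot b)\sim(\omega,fc)$, hence $[(\omega,fc')]=[(\omega,fc)]$ in $\calH(G,A^\times)$.

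For surjectivity I would invoke part~$(5)$ directly: given any $[(\eta,g)]\in\calH(G,A^\times)$, there exists $c\in Z^2(G,K^\times)$ with $(\eta,g)\sim(\omega,fc)$, so $[(\eta,g)]=[(\omega,fc)]$ lies in the image.

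For injectivity, suppose $[(\omega,fc)]=[(\omega,fc')]$ for some $c,c'\in Z^2(G,K^\times)$, i.e.\ $(\omega,fc)\sim(\omega,fc')$. Setting $b:=c^{-1}c'\in Z^2(G,K^\times)$, we have $fc'=(fc)\cdot b$, so $(\omega,(fc)\cdot b)\sim(\omega,fc)$. Applying the "only if" direction of part~$(4)$ to $(\omega,fc)$, this forces $b\in B^2(G,K^\times)$, whence $[c]=[c']$ in $H^2(G,K^\times)$. No serious obstacle is expected; the entire argument is a bookkeeping exercise once Proposition~\ref{prop:cohomologous-factor-sets} is in hand, the only mild subtlety being to remember that parts~$(4)$ and~$(5)$ of that proposition apply equally well with $(\omega,fc)$ playing the role of the chosen base factor set $(\omega,f)$.
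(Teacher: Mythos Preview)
Your proof is correct and follows essentially the same approach as the paper: the paper's proof simply states that the map is well-defined and injective by Proposition~\ref{prop:cohomologous-factor-sets}~$(4)$ and surjective by Proposition~\ref{prop:cohomologous-factor-sets}~$(5)$, and you have spelled out exactly these steps in detail.
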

\begin{proof}
This is a well-defined injective map by Proposition \ref{prop:cohomologous-factor-sets} $(4)$ 
and it is surjective by Proposition \ref{prop:cohomologous-factor-sets} $(5)$.
This corollary is known from \cite[Corollary 1.8]{tignol:gen-cr-prod},
where it is proved as a corollary from the Product Theorem (Theorem~\ref{thm:prod-formula}) below.
\end{proof}

\subsection{Generalized crossed products}

Like before
let $K/F$ be a finite Galois extension with $\Gal(K/F)=G$
and let $A\in\calA(K)$.

\begin{lemma}
  \label{lem:z-sigma-lin-indep}
Suppose that $B$ is a finite-dimensional $F$-algebra that contains $K$ as a subfield.
If $\set{z_\sigma}_{\sigma\in G}$ is a family in $\ug{B}$
such that
\begin{equation}
\label{eq:z-sigma-set}
\iota_{z_\sigma}|_K=\sigma \qt{for all $\sigma\in G$,}
\end{equation}
then $\set{z_\sigma}_{\sigma\in G}$ is a free set (\ie left and right linearly independent) over $C_B(K)$.
\end{lemma}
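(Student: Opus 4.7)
The plan is to use a standard Dedekind–Artin independence-of-characters argument, adapted to the twisted setting. The hypothesis $\iota_{z_\sigma}|_K=\sigma$ translates into the commutation rule $z_\sigma k=\sigma(k)z_\sigma$ for all $k\in K$ and $\sigma\in G$, and by definition the coefficients $c_\sigma\in C_B(K)$ commute with every element of $K$. These two facts together are essentially all that is needed.

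For left linear independence I would argue by contradiction, picking a nontrivial relation $\sum_{\sigma\in S}c_\sigma z_\sigma=0$ with $S\subseteq G$ of minimal cardinality among all such relations (so $c_\sigma\neq 0$ for $\sigma\in S$). Fix some $\tau\in S$. Multiplying the relation on the right by an arbitrary $k\in K$ and using $z_\sigma k=\sigma(k)z_\sigma$ yields
\[
\sum_{\sigma\in S} c_\sigma\sigma(k)\,z_\sigma=0.
\]
On the other hand, since $\tau(k)\in K\subseteq C_B(K)$ commutes with each $c_\sigma$, multiplying the original relation on the left by $\tau(k)$ gives
\[
\sum_{\sigma\in S} c_\sigma\tau(k)\,z_\sigma=0.
\]
Subtracting these two identities produces the shorter relation
\[
\sum_{\sigma\in S\setminus\{\tau\}} c_\sigma\bigl(\sigma(k)-\tau(k)\bigr)\,z_\sigma=0.
\]
By minimality of $|S|$ this forces $c_\sigma(\sigma(k)-\tau(k))=0$ for every $\sigma\in S\setminus\{\tau\}$. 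Choosing $k\in K$ with $\sigma(k)\neq\tau(k)$ (possible because $\sigma$ and $\tau$ are distinct automorphisms of $K$), the element $\sigma(k)-\tau(k)$ lies in $K^\times$, hence is a unit of $B$, and we conclude $c_\sigma=0$, contradicting $\sigma\in S$.

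For right linear independence the argument is entirely symmetric: starting from a minimal relation $\sum_{\sigma\in S} z_\sigma c_\sigma=0$, one uses $kz_\sigma=z_\sigma\sigma^{-1}(k)$ after multiplying on the left by $k\in K$, and compares with the original relation multiplied on the right by $\tau^{-1}(k)$ (which lies in $C_B(K)$ and hence commutes with the $c_\sigma$). The same subtraction and invertibility argument yields the contradiction. I do not anticipate any real obstacle: the only subtlety is keeping track of which side of $c_\sigma$ one multiplies on, and exploiting $K\subseteq C_B(K)$ so that scalars from $K$ can be passed through the $c_\sigma$'s; the finite-dimensionality of $B$ over $F$ is not actually used in the argument.
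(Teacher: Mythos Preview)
Your proof is correct and follows essentially the same Dedekind--Artin minimal-counterexample argument as the paper: both multiply a minimal relation on one side by an element $k\in K$ (using $z_\sigma k=\sigma(k)z_\sigma$) and on the other side by a fixed value $\tau(k)$ (using $K\subseteq C_B(K)$), then subtract to obtain a shorter relation, contradicting minimality. The only cosmetic difference is that the paper picks two distinguished indices $\sigma,\tau$ and a single $a\in K$ with $\sigma(a)\neq\tau(a)$ to exhibit a visibly nonzero coefficient in the shorter relation, whereas you deduce that all coefficients in the shorter relation vanish and then vary $k$ to kill each $c_\sigma$.
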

\begin{proof}
Suppose $\set{z_\sigma}_{\sigma\in G}$ is left (right is analogous)
linearly dependent over $C_B(K)$.
Let $\emptyset\neq M\subseteq G$ be minimal such that there is a family 
$\set{a_\sigma}_{\sigma\in M}$ in $C_B(K)\bs\set{0}$
with $\sum_{\sigma\in M} a_\sigma z_\sigma=0$.
Since the $z_\sigma$ lie in $\ug B$, we have $|M|\geq 2$.
Let $\sigma,\tau\in M$ with $\sigma\neq\tau$,
let $a\in K$ with $\sigma(a)\neq\tau(a)$,
and let $M_0=M\bs\set{\sigma,\tau}$.
Then $\sum_{\rho\in M} a_\rho z_\rho=0$ implies
\begin{align*}  
0=(\sum_{\rho\in M} a_\rho z_\rho)a&=\sigma(a)a_\sigma z_\sigma+\tau(a)a_\tau z_\tau+\sum_{\rho\in M_0}\rho(a) a_\rho z_\rho
\intertext{and}
0=\sigma(a)(\sum_{\rho\in M} a_\rho z_\rho)&=\sigma(a)a_\sigma z_\sigma+\sigma(a)a_\tau z_\tau+\sum_{\rho\in M_0}\sigma(a) a_\rho z_\rho, 
\end{align*}
thus
\begin{gather*}
0=(\sigma(a)-\tau(a))a_\tau z_\tau+\sum_{\sigma\in M_0}(\sigma(a)-\rho(a))a_\rho z_\rho.
\end{gather*}
Since $\sigma(a)\neq\tau(a)$, this contradicts the minimality of $M$.
\end{proof}

\begin{prop}
\label{prop:gen-cr-prod-def}
Suppose that $(\omega,f)\in\calF(G,\ug A)$
and let $\set{z_\sigma}_{\sigma\in G}$ be a family of indeterminates.
A multiplication (extending the multiplication in $A$)
is defined on the $F$-space $B:=\bigoplus_{\sigma\in G} Az_\sigma$ by the rule
\begin{equation}
\label{eq:z_sigma-eq}
az_\sigma\cdot bz_\tau = a\omega_\sigma(b)f(\sigma,\tau)z_{\sigma\tau} \qt{for all $a,b\in A$ and $ \sigma,\tau\in G$.}
\end{equation}
This turns $B$ into a central simple $F$-algebra
with $1_B=f(\id,\id)^{-1}z_{\id}$
and $\deg B=[K:F]\deg A$.
The subalgebra $Af(\id,\id)^{-1}z_{\id}$ of $B$ is isomorphic to $A$ (as an $F$-algebra).
If $A$ is identified with $Af(\id,\id)^{-1}z_{\id}$ and $z_\sigma$ is identified with $1z_\sigma$, then 
\begin{equation}
\label{eq:z-sigma-relations}
\begin{gathered}
  \omega_\sigma = \iota_{z_\sigma}|_A, \\
  f(\sigma,\tau)= z_\sigma z_\tau z_{\sigma\tau}^{-1},
\end{gathered}
\end{equation}
for all $\sigma,\tau\in G$.
\end{prop}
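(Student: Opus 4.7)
The plan is to verify successively: (i) the rule (\ref{eq:z_sigma-eq}) extends by $F$-bilinearity to a well-defined associative multiplication on $B$; (ii) $e:=f(\id,\id)^{-1}z_{\id}$ is a two-sided unit; (iii) the relations (\ref{eq:z-sigma-relations}) hold; (iv) $B$ is simple; (v) $Z(B)=F$; (vi) the dimension formula. For (i), associativity need only be checked on pure tensors $az_\rho,bz_\sigma,cz_\tau$: expanding $(az_\rho\cdot bz_\sigma)\cdot cz_\tau$ and $az_\rho\cdot(bz_\sigma\cdot cz_\tau)$ and comparing, one first uses (\ref{eq:factor-set-2}) in the form $\omega_\rho\omega_\sigma(c)=f(\rho,\sigma)\omega_{\rho\sigma}(c)f(\rho,\sigma)^{-1}$ to bring the inner $\omega$'s into the form $\omega_{\rho\sigma}$, and then (\ref{eq:factor-set-3}) matches the remaining products of $f$-values. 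For (ii) one uses the consequences of (\ref{eq:factor-set-2}) with $\sigma=\tau=\id$ (giving $\omega_{\id}=\iota_{f(\id,\id)}$) and the remark $f(\id,\sigma)=f(\id,\id)$, $f(\sigma,\id)=\omega_\sigma(f(\id,\id))$ to verify $e\cdot(az_\sigma)=az_\sigma=(az_\sigma)\cdot e$ directly.

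Before tackling simplicity I would reduce to the normalized case: by Proposition \ref{prop:cohomologous-factor-sets}(3) there is a normalized factor set cohomologous to $(\omega,f)$, and since the cohomology relation (\ref{eq:cohomologous}) amounts to a change of the basis $\{z_\sigma\}$ inside $B$, the resulting algebras are manifestly isomorphic. Thus I may assume $f(\id,\id)=1$, so that $e=z_{\id}=1_B$ and $f(\sigma,\id)=f(\id,\sigma)=1$. The identification of $A$ with $Ae$ is then literal, and (\ref{eq:z-sigma-relations}) becomes immediate from (\ref{eq:z_sigma-eq}); invertibility of each $z_\sigma$ follows from $z_\sigma z_{\sigma^{-1}}=f(\sigma,\sigma^{-1})\in\ug A$.

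For simplicity, pick a nonzero two-sided ideal $I\subseteq B$ and an element $x=\sum_{\sigma\in M}a_\sigma z_\sigma\in I$ with minimal support $M$ and, among such, with $a_{\sigma_0}\in a_{\sigma_0}\ug A\ug A$ maximal (concretely: all $a_\sigma\neq 0$). If $|M|\geq 2$, fix $\rho\in M$ and $\tau\in M\setminus\{\rho\}$, choose $k\in K$ with $\rho(k)\neq\tau(k)$, and form $kx-xk\cdot\rho(k)^{-1}k$ or, more cleanly, $kx-\sum_\sigma a_\sigma\sigma(k)z_\sigma\cdot k^{-1}\cdot k$, obtaining an element of $I$ supported in a strict subset of $M$ and non-zero (the coefficient of $\tau$ is $a_\tau(k-\tau(k)\rho(k)^{-1}k)$, non-zero since $A$ is central simple over $K$ and $k-\tau(k)\rho(k)^{-1}k\neq 0$). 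This contradicts minimality, so $|M|=1$, i.e.\ $I$ contains some $az_\sigma$ with $0\neq a\in A$. Right-multiplying by $z_\sigma^{-1}$ and using that $A$ is simple, $I\cap A$ is a non-zero two-sided ideal of $A$, so $1\in I$ and $I=B$.

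For centrality, an $x=\sum a_\sigma z_\sigma\in Z(B)$ must satisfy $kx=xk$ for all $k\in K$, and in normalized form this reads $ka_\sigma=a_\sigma\sigma(k)$ for every $\sigma$; taking $k$ with $\sigma(k)\neq k$ forces $a_\sigma=0$ for $\sigma\neq\id$, so $x=a\in A$. Commuting with $A$ places $x\in Z(A)=K$; commuting with the $z_\sigma$ then forces $\sigma(x)=x$ for all $\sigma\in G$, so $x\in F$. Finally $[B:F]=|G|\cdot[A:F]=[K:F]^2(\deg A)^2$, whence $\deg B=[K:F]\deg A$. The linear independence underlying the ``coefficient comparison'' steps throughout is Lemma \ref{lem:z-sigma-lin-indep} applied to $K\subseteq B$ and the family $\{z_\sigma\}$, which satisfies (\ref{eq:z-sigma-set}) by (\ref{eq:factor-set-1}). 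The main obstacle is purely bookkeeping in the non-normalized case; the preliminary reduction to $f(\id,\id)=1$ disposes of it neatly.
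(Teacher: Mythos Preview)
Your approach is the standard one the paper refers to (it simply cites Pierce's Proposition~14.1 and notes that the factor set relations give associativity while the support-shrinking idea of Lemma~\ref{lem:z-sigma-lin-indep} gives simplicity). The reduction to the normalized case is a reasonable bookkeeping device, and your arguments for associativity, the unit, the relations~(\ref{eq:z-sigma-relations}), centrality, and the degree are all fine.

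The one genuine slip is in the simplicity step. The element you write down, $kx - xk\cdot\rho(k)^{-1}k$ (and the ``cleaner'' variant), does not kill the $\rho$-coefficient: a direct computation gives the $\sigma$-coefficient $a_\sigma\bigl(k-\sigma(k)^2\sigma(\rho(k))^{-1}\bigr)$, which has no reason to vanish at $\sigma=\rho$. The standard trick is simpler: since $k\in K=Z(A)$ commutes with each $a_\sigma$, one has $xk=\sum_\sigma a_\sigma\sigma(k)z_\sigma$, so
\[
\rho(k)\,x - x\,k \;=\; \sum_{\sigma\in M}\bigl(\rho(k)-\sigma(k)\bigr)a_\sigma z_\sigma \in I,
\]
which has support contained in $M\setminus\{\rho\}$ and nonzero $\tau$-coefficient $(\rho(k)-\tau(k))a_\tau$ (here $\rho(k)-\tau(k)\in K^\times$, so the product is nonzero regardless of whether $A$ has zero divisors). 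This yields the desired contradiction with minimality of $M$; central simplicity of $A$ is only used at the very end, once $|M|=1$, to conclude $1\in I$.
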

\begin{proof}
This is shown with the same arguments as in the case when $A$ is a field
(see \cite[Proposition 14.1]{pierce:ass-alg}).
The relations (\ref{eq:factor-set}) guarantee the associativity of the multiplication,
and Lemma \ref{lem:z-sigma-lin-indep} is used to show that $B$ is simple.
\end{proof}

\begin{defn}
\label{def:gen-cr-prod}
The algebra $B\in\calA(F)$ defined in Proposition \ref{prop:gen-cr-prod-def}
is called \emph{generalized crossed product}
\index{crossed product!generalized}
\index{generalized crossed product}
and will be denoted by $(A,G,(\omega,f))$.
We routinely identify $A$ with the subalgebra $Af(\id,\id)^{-1}z_{\id}$ of $(A,G,(\omega,f))$.
If $G$ is cyclic (\resp abelian) then we call $(A,G,(\omega,f))$ a \emph{cyclic} (\resp \emph{abelian}) generalized crossed product.
\index{generalized crossed product!cyclic}
\index{crossed product! cyclic generalized}
\end{defn}

\begin{remark}
\label{rem:gen-cr-prod}
Let $(\omega,f)\in\calF(G,A^\times)$ and let $c\in Z^2(G,K^\times)$ be a $2$-cocycle.
\begin{enumerate}
\item $A=C_{(A,G,(\omega,f))}(K)$.
\item $A\sim (A,G,(\omega,f))_K$.
\item $(K,G,(\id_G,c))=(K/F,G,c)$, also written $(K,G,c)$.
\end{enumerate}
\end{remark}
\begin{proof}
Since $K=Z(A)$, $A\subseteq C_{(A,G,(\omega,f))}(K)$.
Equality follows from a consideration of degrees by the
Double Centralizer Theorem (\cf \cite[Theorem 12.7]{pierce:ass-alg}).
(2) follows from (1) by a standard argument (\cf \cite[Lemma 13.3]{pierce:ass-alg}).
(3) is just a comparison of notation with the crossed products of fields.
\end{proof}

\begin{lemma}
\label{lem:gen-cr-prod-basis}
Suppose that $B\in\calA(F)$ contains $K$ as a subfield
and let $A=C_B(K)$.
There is a family $\set{z_\sigma}_{\sigma\in G}$ in $\ug{B}$
that satisfies (\ref{eq:z-sigma-set})
and any such family forms a free set of generators of $B$ over $A$.
Moreover a factor set $(\omega,f)\in\calF(G,\ug A)$ is defined by (\ref{eq:z-sigma-relations})
such that $B\cong(A,G,(\omega,f))$.
If $z_{\id}=1$ then $(\omega,f)$ is normalized.
\end{lemma}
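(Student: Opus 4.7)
My plan is to build the $z_\sigma$ via Skolem--Noether, use Lemma~\ref{lem:z-sigma-lin-indep} together with a dimension count to identify them as a free basis of $B$ over $A$, and then read off the factor set directly from the defining relations~(\ref{eq:z-sigma-relations}).

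First I would construct the family. For each $\sigma\in G$, the inclusion $K\hookrightarrow B$ and the composition $K\xrightarrow{\sigma}K\hookrightarrow B$ are two $F$-algebra homomorphisms from the simple algebra $K$ into the central simple $F$-algebra $B$. By the Skolem--Noether theorem they are conjugate, so there exists $z_\sigma\in\ug B$ with $\iota_{z_\sigma}|_K=\sigma$; for $\sigma=\mathrm{id}$ one may in particular take $z_{\mathrm{id}}=1$. Given any such family, Lemma~\ref{lem:z-sigma-lin-indep} (applied with $C_B(K)=A$) shows that $\set{z_\sigma}_{\sigma\in G}$ is a free set over $A$ on both sides. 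By the Centralizer Theorem (see \cite[Thm.~12.7]{pierce:ass-alg}) the centralizer $A=C_B(K)$ is simple with $Z(A)=K$ and satisfies $[A:F][K:F]=[B:F]$. Consequently
\[
\dim_F\Bigl(\bigoplus_{\sigma\in G} Az_\sigma\Bigr)=|G|\,[A:F]=[K:F][A:F]=[B:F],
\]
so the $z_\sigma$ form a free basis of $B$ as a left $A$-module (and similarly as a right $A$-module); in particular they generate $B$ over $A$.

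Next I would define $(\omega,f)$ by~(\ref{eq:z-sigma-relations}) and check that it is a factor set in $\calF(G,\ug A)$. Since $\iota_{z_\sigma}$ is an $F$-automorphism of $B$ that maps $K$ to $K$, it maps $C_B(K)=A$ to itself, so $\omega_\sigma:=\iota_{z_\sigma}|_A\in\Aut_F(A)$ and $\omega_\sigma|_K=\sigma$, giving~(\ref{eq:factor-set-1}). For $\sigma,\tau\in G$ the element $z_\sigma z_\tau z_{\sigma\tau}^{-1}$ acts trivially on $K$ by conjugation, hence lies in $C_B(K)\cap\ug B=\ug A$, so $f(\sigma,\tau)\in\ug A$. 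The identity $\omega_\sigma\omega_\tau=\iota_{z_\sigma z_\tau}|_A=\iota_{f(\sigma,\tau)z_{\sigma\tau}}|_A=\iota_{f(\sigma,\tau)}\omega_{\sigma\tau}$ is~(\ref{eq:factor-set-2}), and the associativity $(z_\rho z_\sigma)z_\tau=z_\rho(z_\sigma z_\tau)$, expanded with the help of $z_\rho\cdot a=\omega_\rho(a)z_\rho$ for $a\in A$, yields~(\ref{eq:factor-set-3}) after a short rearrangement.

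Finally I would produce the isomorphism $B\cong(A,G,(\omega,f))$. Let $B'=(A,G,(\omega,f))$ with distinguished free generators $z'_\sigma$ satisfying~(\ref{eq:z_sigma-eq}). The $F$-linear map $\Phi:B'\to B$ defined on the basis by $az'_\sigma\mapsto az_\sigma$ is multiplicative, because in $B$ one computes $(az_\sigma)(bz_\tau)=a(z_\sigma b z_\sigma^{-1})(z_\sigma z_\tau)=a\,\omega_\sigma(b)f(\sigma,\tau)z_{\sigma\tau}$, exactly matching~(\ref{eq:z_sigma-eq}); it carries $Af(\mathrm{id},\mathrm{id})^{-1}z'_{\mathrm{id}}\subset B'$ to $A\subset B$ and is bijective for dimensional reasons together with the freeness just established. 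For the last assertion, if $z_{\mathrm{id}}=1$ then $f(\mathrm{id},\mathrm{id})=z_{\mathrm{id}}z_{\mathrm{id}}z_{\mathrm{id}}^{-1}=1$, so $(\omega,f)$ is normalized. No step here looks genuinely difficult; the only care-requiring point is tracking the two conventions for the identity (the unit $f(\mathrm{id},\mathrm{id})^{-1}z'_{\mathrm{id}}$ of $B'$ versus $1\in B$), which is handled automatically by the computation $f(\mathrm{id},\mathrm{id})=z_{\mathrm{id}}$.
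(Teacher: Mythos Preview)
Your proof is correct and follows essentially the same approach as the paper: Skolem--Noether to produce the $z_\sigma$, Lemma~\ref{lem:z-sigma-lin-indep} for freeness, the Double Centralizer Theorem for the dimension count, and a direct verification of the factor-set axioms. You spell out the dimension comparison and the isomorphism $\Phi$ more explicitly than the paper does, but there is no substantive difference in strategy.
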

\begin{proof}
By the theorem of Skolem-Noether (\cf \cite[Theorem 12.6]{pierce:ass-alg})
there is, for each $\sigma\in G$,
an element $z_\sigma\in\ug B$ such that $\iota_{z_\sigma}|_K=\sigma$.
Since $A=C_B(K)$,
any so-defined family $\set{z_\sigma}_{\sigma\in G}$ is free over $A$ by Lemma \ref{lem:z-sigma-lin-indep}.
It follows from the
Double Centralizer Theorem (\cf \cite[Theorem 12.7]{pierce:ass-alg})
that $\set{z_\sigma}_{\sigma\in G}$ generates $B$ over $A$.

Now define $(\omega,f)$ by (\ref{eq:z-sigma-relations}).
Since $A=C_B(K)$, we have $\iota_{z_\sigma}(A)=A$,
thus $\omega_\sigma\in\Aut_F(A)$ for all $\sigma\in G$.
By (\ref{eq:z-sigma-set}),
$\iota_{z_\sigma z_\tau z_{\sigma\tau}^{-1}}|_K=\id_K$,
thus $f(\sigma,\tau)\in \ug A$ for all $\sigma,\tau\in G$.
The relations (\ref{eq:factor-set-1}) and (\ref{eq:factor-set-2}) are obvious from the choice of the $z_\sigma$
and (\ref{eq:factor-set-3}) follows by a routine calculation.
Thus $(\omega,f)\in\calF(G,\ug A)$.
Since $\set{z_\sigma}_{\sigma\in G}$ is a free set of generators of $B$ over~$A$
satisfying (\ref{eq:z_sigma-eq}), 
we have $B\cong(A,G,(\omega,f))$.
\end{proof}

\begin{prop}
\label{prop:cohomologous-isomorphic}
Two factor sets $(\omega,f),(\eta,g)\in\calF(G,A^\times)$ are cohomologous if and only if
$$ (A,G,(\omega,f))\cong(A,G,(\eta,g)). $$
\end{prop}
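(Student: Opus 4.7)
The plan is to pass back and forth between cohomology data and explicit adjustments of the generating families. Write $B := (A,G,(\omega,f)) = \bigoplus_{\sigma\in G} Az_\sigma$ and $B' := (A,G,(\eta,g)) = \bigoplus_{\sigma\in G} Az'_\sigma$, where the $z_\sigma$ and $z'_\sigma$ satisfy the relations (\ref{eq:z-sigma-relations}) inside the respective algebras.

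For the direction $(\omega,f)\sim(\eta,g)\,\Rightarrow\,B\cong B'$, suppose a family $\{m_\sigma\}_{\sigma\in G}$ in $\ug A$ realizes the relations (\ref{eq:cohomologous}). I would introduce $\tilde z_\sigma := m_\sigma^{-1} z'_\sigma \in \ug{B'}$ and verify two points by direct computation: first, using (\ref{eq:cohomologous-1}), that $\iota_{\tilde z_\sigma}|_A = \omega_\sigma$ (in particular $\iota_{\tilde z_\sigma}|_K = \sigma$); second, using (\ref{eq:cohomologous-2}), that $\tilde z_\sigma \tilde z_\tau = f(\sigma,\tau)\tilde z_{\sigma\tau}$. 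Since $A\tilde z_\sigma = Az'_\sigma$, the family $\{\tilde z_\sigma\}$ is a free set of generators for $B'$ over $A$ satisfying the defining relations of $(A,G,(\omega,f))$. Lemma \ref{lem:gen-cr-prod-basis} then delivers an $F$-algebra isomorphism $B' \cong (A,G,(\omega,f)) = B$.

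For the converse $B\cong B'\,\Rightarrow\,(\omega,f)\sim(\eta,g)$, let $\phi: B\to B'$ be any $F$-algebra isomorphism. The preparatory step is to reduce to the case $\phi|_A=\id_A$. To do this I would view $\phi|_A$ and the inclusion $A\hookrightarrow B'$ as two $F$-algebra embeddings of the simple $F$-algebra $A$ into the central simple $F$-algebra $B'$; by the Skolem–Noether theorem they differ by conjugation by some $u\in \ug{B'}$, so replacing $\phi$ by $\iota_u\circ\phi$ achieves $\phi|_A=\id_A$. Now define $m_\sigma$ by $\phi(z_\sigma) = m_\sigma^{-1} z'_\sigma$. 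Since both $\iota_{\phi(z_\sigma)}$ and $\iota_{z'_\sigma}$ restrict to $\sigma$ on $K$, the element $m_\sigma$ centralizes $K$, hence $m_\sigma \in \ug A$ by Remark \ref{rem:gen-cr-prod}(1). Computing $\iota_{\phi(z_\sigma)}|_A$ in two ways then yields (\ref{eq:cohomologous-1}), and applying $\phi$ to the relation $z_\sigma z_\tau = f(\sigma,\tau)z_{\sigma\tau}$ and expanding via $\phi(z_\sigma) = m_\sigma^{-1}z'_\sigma$ yields (\ref{eq:cohomologous-2}) after rearrangement.

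The argument is essentially symmetric between the two directions and consists of routine but finicky bookkeeping with the defining relations (\ref{eq:factor-set}) and (\ref{eq:cohomologous}). The only non-computational input is the Skolem–Noether reduction to $\phi|_A=\id_A$ in the converse direction; once this is in place, the cohomology datum reads off directly from the ratios $z'_\sigma\phi(z_\sigma)^{-1}$.
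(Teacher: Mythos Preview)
Your proof is correct and follows exactly the standard approach that the paper invokes; indeed the paper's own proof consists only of the remark that the argument is identical to the field case (\cite[Lemma 14.2]{pierce:ass-alg}), and you have simply written out those details carefully. The Skolem--Noether reduction to $\phi|_A=\id_A$ and the subsequent extraction of the $m_\sigma$ from the ratios $z'_\sigma\phi(z_\sigma)^{-1}$ is precisely how the classical proof runs.
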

\begin{proof}
This proposition is proved just like in the case when $A$ is a field
(\cf \cite[Lemma 14.2]{pierce:ass-alg}).
\end{proof}

\begin{prop}
  \label{prop:res-classify}
Let $A\in\calA(K)$ and $B\in\calA(F)$.
The following are equivalent~:
  \begin{enumerate}
  \item $A\sim B_K$.
  \item $A$ embeds (as an $F$-algebra) into some $B'\in\calA(F)$ with
    $B'\sim B$ such that $A=C_{B'}(K)$.
 \item There exists $(\omega,f)\in\calF(G,A^\times)$ such that $(A,G,(\omega,f))\sim B$.
  \end{enumerate}
\end{prop}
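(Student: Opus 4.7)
My plan is to split the proposition into three routine implications, which follow directly from the material already established in this section, and one substantive direction $(1)\Rightarrow(2)$, which I would handle by a dimension count rather than by constructing a factor set from scratch.

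For the easy parts: $(3)\Rightarrow(2)$ takes $B':=(A,G,(\omega,f))$; by Proposition~\ref{prop:gen-cr-prod-def} the algebra $A$ is an $F$-subalgebra of $B'$, by Remark~\ref{rem:gen-cr-prod}(1) we have $C_{B'}(K)=A$, and $B'\sim B$ by hypothesis. Conversely, $(2)\Rightarrow(3)$ is Lemma~\ref{lem:gen-cr-prod-basis} applied to the given $B'$: since $A=C_{B'}(K)$, this lemma produces a factor set $(\omega,f)\in\calF(G,A^\times)$ with $B'\cong(A,G,(\omega,f))$, and thus $(A,G,(\omega,f))\sim B$. Finally, $(3)\Rightarrow(1)$ is Remark~\ref{rem:gen-cr-prod}(2): $A\sim(A,G,(\omega,f))_K\sim B_K$.

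For the substantive direction $(1)\Rightarrow(2)$, I would avoid factor-set obstructions altogether and construct $B'$ as a correctly-sized matrix algebra over the underlying division algebra of $B$. Let $B_0\in\calD(F)$ be the underlying division algebra of $B$ and set
\[
  s\;:=\;\frac{\deg(A)\cdot[K:F]}{\deg(B_0)}.
\]
This is a positive integer: $\deg(B_0)=\ind(B)$ divides $\ind(A)\cdot[K:F]$, because a maximal subfield of the underlying division algebra of $A$ has degree $\ind(A)[K:F]$ over $F$ and, by $A\sim B_K$, splits $B$; and $\ind(A)$ divides $\deg(A)$. Define $B':=M_s(B_0)\in\calA(F)$, so $B'\sim B$ and $\deg(B')=\deg(A)\cdot[K:F]$.

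Now I invoke the classical embedding theorem for central simple algebras (cf.\ \cite[\S 13]{pierce:ass-alg}): a finite separable extension $L/F$ of degree $d$ embeds in $B'$ if and only if $d\mid\deg(B')$ and $\ind(B'_L)\mid\deg(B')/d$. For $L=K$ both conditions hold---the first trivially, and the second because $\ind(B'_K)=\ind(B_K)=\ind(A)$ divides $\deg(A)=\deg(B')/[K:F]$---so $K$ embeds in $B'$. The centralizer $C_{B'}(K)$ is then a central simple $K$-algebra of degree $\deg(B')/[K:F]=\deg(A)$ and Brauer class $[B'_K]=[B_K]=[A]$; since two central simple algebras over $K$ with equal degree and Brauer class are isomorphic, $C_{B'}(K)\cong A$. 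Identifying $A$ with its image under such an isomorphism exhibits $A$ as an $F$-subalgebra of $B'$ with $C_{B'}(K)=A$, yielding $(2)$. The main obstacle in this route is simply confirming the embedding criterion as input from the classical theory; the alternative approach of building $(\omega,f)\in\calF(G,A^\times)$ directly via Galois-invariance of $[A]$ and Skolem-Noether runs into a Teichm\"uller-type $3$-cocycle obstruction to the cocycle condition (\ref{eq:factor-set-3}), which is less transparent but would match the generalized-crossed-product philosophy of the section.
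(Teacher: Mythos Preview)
Your proof is correct. The organization of the easy implications differs only cosmetically from the paper's cycle $(1)\Rightarrow(2)\Rightarrow(3)\Rightarrow(1)$, and your treatment of $(2)\Leftrightarrow(3)$ and $(3)\Rightarrow(1)$ matches the paper exactly.

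The substantive difference is in $(1)\Rightarrow(2)$. The paper first reduces to the case where $A$ is a division algebra, then picks a maximal subfield $L\supseteq K$ of $A$; since $A\sim B_K$, this $L$ splits $B$, so by the splitting-field theorem (Pierce, Theorem~13.3) $L$ is a strictly maximal subfield of some $B'\sim B$, and then $C_{B'}(K)$ is forced to be isomorphic to $A$ because both contain $L$ as a strictly maximal subfield and lie in the same Brauer class. Your route instead writes down $B'=M_s(B_0)$ with the correct $s$ in advance and embeds $K$ directly via the general field-embedding criterion $[K:F]\cdot\ind(B'_K)\mid\deg B'$, then matches $C_{B'}(K)$ with $A$ by degree and Brauer class. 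Your argument is a bit more streamlined (no reduction to the division case, no auxiliary maximal subfield), at the cost of invoking the general embedding criterion rather than just the splitting-field version; the paper's argument stays closer to the single classical input it actually cites. Both are entirely standard, and your closing remark about the Teichm\"uller obstruction correctly identifies why a naive direct construction of $(\omega,f)$ is not the right way to go.
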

\begin{proof}
$(1)\impl (2)$:
Let $A\cong M_n(D)$ with $D\in\calD(K)$.
If $D$ embeds into some $B'\in\calA(F)$ with
$B'\sim B$ such that $D=C_{B'}(K)$,
then $A$ embeds into $M_n(B')$ and $A=C_{M_n(B')}(K)$.
Therefore we assume \wolg that $A$ is a division algebra.

Let $L$ be a maximal subfield of $A$.
Since $A\sim B_K$ and $K\subseteq L$, $L$ splits $B$.
Therefore, $L$ is a strictly maximal subfield of $B'$ for some $B'\in\calA(F)$ with $B'\sim B$
(\cf \cite[Theorem 13.3]{pierce:ass-alg}).
Then $L$ is a strictly maximal subfield of $C_{B'}(K)$.
Since $C_{B'}(K)\sim B'_K\sim B_K\sim A$
and $L$ is also a strictly maximal subfield of $A$,
it follows $A\cong C_{B'}(K)$.
\\
$(2)\impl (3)$: 
By Lemma \ref{lem:gen-cr-prod-basis},
there exists $(\omega,f)\in\calF(G,A^\times)$ such that 
$(A,G,(\omega,f))\cong B'\sim B$. \\
$(3)\impl (1)$: 
By Remark \ref{rem:gen-cr-prod} (2),
  $A\sim (A,G,(\omega,f))_K\sim B_K$.
\end{proof}

\subsection{The Product Theorem}

Like before
let $K/F$ be a finite Galois extension with $\Gal(K/F)=G$.
Let $A,B\in\calA(K)$, $(\omega,f)\in\calF(G,A^\times)$ and $(\eta,g)\in\calF(G,B^\times)$.
A factor set 
$(\omega\otimes\eta,f\otimes g)\in\calF(G,(A\otimes_K B)^\times)$ is then defined by 
\begin{gather*}
(\omega\otimes\eta)_\sigma:=\omega_\sigma\otimes\eta_\sigma, \\
(f\otimes g)(\sigma,\tau):=f(\sigma,\tau)\otimes g(\sigma,\tau),
\end{gather*}
for all $\sigma,\tau\in G$.
The following theorem is called the Product Theorem.
A proof can be found in \cite[Theorem 1.6]{tignol:gen-cr-prod}.

\begin{theorem}
  \label{thm:prod-formula}
Let $A,B\in\calA(K)$, $(\omega,f)\in\calF(G,A^\times)$ and $(\eta,g)\in\calF(G,B^\times)$.
Then
  $$ (A,G,(\omega,f)) \otimes_F (B,G,(\eta,g)) \sim (A\otimes_K B,G,(\omega\otimes\eta,f\otimes g)). $$
\end{theorem}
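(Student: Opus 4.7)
The strategy is to realize the right-hand generalized crossed product as a corner $eRe$ of $R:=C\otimes_F D$, with $C=(A,G,(\omega,f))$ and $D=(B,G,(\eta,g))$, where $e$ is a natural idempotent coming from $K\otimes_F K$. Since every corner of a central simple $F$-algebra lies in the same Brauer class as the whole algebra (both are of the form $M_k(\Delta)$ for the common underlying division algebra $\Delta$), this identification immediately yields the claimed similarity.

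Using the decomposition $K\otimes_F K\cong\prod_{\sigma\in G}K$ into primitive orthogonal idempotents $\{e_\sigma\}_{\sigma\in G}$, take $e:=e_{\id}$, the ``multiplication idempotent'' characterized by $e(k\otimes 1)=e(1\otimes k)$ for all $k\in K$. Then $e$ is central in $A\otimes_F B$ with $e(A\otimes_F B)\cong A\otimes_K B$. A direct computation shows that conjugation by the diagonal units $w_\sigma:=z_\sigma\otimes y_\sigma$ acts on $K\otimes_F K$ as the $F$-algebra automorphism $\sigma\otimes\sigma$, which fixes $e$; hence $ew_\sigma=w_\sigma e$. I would then define
\[
\phi:\,(A\otimes_K B,\,G,\,(\omega\otimes\eta,f\otimes g))\lra eRe,\quad a\otimes_K b\lms e(a\otimes b),\ x_\sigma\lms ew_\sigma,
\]
where $\{x_\sigma\}$ are the generators of the source. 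Well-definedness on $A\otimes_K B$ is precisely the defining property of $e$, and the generalized crossed product relations are direct translations of $w_\sigma(a\otimes b)w_\sigma^{-1}=\omega_\sigma(a)\otimes\eta_\sigma(b)$ and $w_\sigma w_\tau=(f(\sigma,\tau)\otimes g(\sigma,\tau))w_{\sigma\tau}$ once multiplied by $e$. Because the domain is central simple, $\phi$ is automatically injective.

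To see that $\phi$ is onto $eRe$, I compare dimensions. The $n$ idempotents $\{e_\sigma\}_{\sigma\in G}$ are orthogonal with sum $1$ in $R$ and are pairwise conjugate there, for example via $(z_\tau\otimes 1)e_\sigma(z_\tau\otimes 1)^{-1}=e_{\tau\sigma}$. If $R\cong M_r(\Delta)$ for a division algebra $\Delta$, conjugate orthogonal idempotents have equal matrix rank, and since they sum to $1$ each $e_\sigma$ has rank $r/n$. Hence $eRe\cong M_{r/n}(\Delta)$ has $F$-dimension $\dim_F R/n^2=n^2(\deg A)^2(\deg B)^2$, matching $\dim_F(A\otimes_K B,G,(\omega\otimes\eta,f\otimes g))$. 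Thus $\phi$ is an isomorphism onto $eRe$, and $eRe\sim R$ in $\Br(F)$ gives the similarity. The main technical obstacle is the idempotent bookkeeping inside $K\otimes_F K\subset R$: one must verify carefully how each kind of unit conjugates the primitive idempotents (the diagonal $w_\sigma$ fixing $e$ while $z_\tau\otimes 1$ permutes $\{e_\sigma\}$ transitively), and then convert this $G$-action picture into the matrix-rank statement that drives the dimension count.
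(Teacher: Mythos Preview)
Your argument is correct. The paper does not give its own proof of this theorem; it simply refers to \cite[Theorem~1.6]{tignol:gen-cr-prod}. Your idempotent approach is the classical one, essentially the same argument that proves the ordinary product formula $(K,G,f)\otimes_F(K,G,g)\sim(K,G,fg)$ for crossed products of fields (cf.\ \cite[Proposition~14.3]{pierce:ass-alg}), and it is in the same spirit as Tignol's treatment.

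A few small points you might tighten when writing it out in full. First, make explicit that $e\in K\otimes_F K=Z(A\otimes_F B)$, so that $e(A\otimes_F B)=e(A\otimes_F B)e$ and the identification $e(A\otimes_F B)\cong A\otimes_K B$ is an algebra isomorphism (base change along the multiplication map $K\otimes_F K\to K$). Second, your computation of how $\tau\otimes\id$ permutes the $e_\sigma$ depends on the chosen isomorphism $K\otimes_F K\cong\prod_\sigma K$; with $a\otimes b\mapsto(a\sigma(b))_\sigma$ one gets $(z_\tau\otimes 1)e_\sigma(z_\tau\otimes 1)^{-1}=e_{\tau\sigma}$, which is what you want. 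Third, to conclude that $\phi$ is a well-defined algebra homomorphism it suffices to check the defining relations of the generalized crossed product (the $A\otimes_K B$-module structure, the conjugation rule, and the multiplication rule for the $x_\sigma$), which you do; it is worth saying explicitly that Proposition~\ref{prop:gen-cr-prod-def} gives a presentation by exactly these relations. With these cosmetic additions the proof is complete.
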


Let $F'/F$ be any field extension such that $F'$ is linearly disjoint to $K$ over $F$,
\ie $K':=K\otimes_F F'$ is a field.
Then $K'/F'$ is Galois, $\Gal(K'/F')\cong G$
and $A':=A\otimes_F F'=A\otimes_K K'\in\calA(K')$.
A factor set $(\omega',f')\in\calF(G,\ug{A'})$ is defined by
$\omega'_\sigma:=\omega_\sigma\otimes\id$ and
$f'(\sigma,\tau):=f(\sigma,\tau)\otimes 1$
for all $\sigma,\tau\in G$.
We denote $(\omega',f')$ by $(\omega,f)^{F'}$.

\begin{prop}
  \label{prop:gen-scal-ext}
Let $A\in\calA(K)$ and $(\omega,f)\in\calF(G,A^\times)$.
Let $F'/F$ be any field extension such that $F'$ is linearly disjoint to $K$ over~$F$.
Then 
\begin{equation*}
(A,G,(\omega,f))\otimes_F F'\cong (A\otimes_F F',G,(\omega,f)^{F'}).
\end{equation*}
\end{prop}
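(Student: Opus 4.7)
The plan is to verify the isomorphism by unwinding the explicit direct-sum construction of a generalized crossed product (Proposition \ref{prop:gen-cr-prod-def}) on both sides. Since the isomorphism type of $(A,G,(\omega,f))$ is determined by specifying a free $A$-basis $\{z_\sigma\}_{\sigma\in G}$ on which the $z_\sigma$ act on $A$ by $\omega_\sigma$ and satisfy $z_\sigma z_\tau = f(\sigma,\tau) z_{\sigma\tau}$, it suffices to produce such a basis inside $(A,G,(\omega,f)) \otimes_F F'$ for the factor set $(\omega,f)^{F'}$.

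Concretely, let $B = (A,G,(\omega,f)) = \bigoplus_{\sigma\in G} A z_\sigma$ and set $z'_\sigma := z_\sigma\otimes 1 \in B\otimes_F F'$. Since $-\otimes_F F'$ commutes with direct sums, one has
\[
B\otimes_F F' = \bigoplus_{\sigma\in G} (Az_\sigma)\otimes_F F' = \bigoplus_{\sigma\in G} A' z'_\sigma,
\]
where $A':=A\otimes_F F'$. By the linear disjointness hypothesis, $K':=K\otimes_F F'$ is a field, $K'/F'$ is Galois with Galois group canonically identified with $G$, and $A' = A\otimes_K K'\in\calA(K')$, so the right-hand side indeed carries the structure expected for $(A',G,(\omega,f)^{F'})$.

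Next I would check that the elements $z'_\sigma$ realize the defining relations (\ref{eq:z-sigma-relations}) for the factor set $(\omega,f)^{F'} = (\omega',f')$. Using the multiplication rule $(x\otimes\alpha)(y\otimes\beta)=xy\otimes\alpha\beta$ in $B\otimes_F F'$, we obtain for any $a\in A$
\[
z'_\sigma (a\otimes 1) (z'_\sigma)^{-1} = (z_\sigma a z_\sigma^{-1})\otimes 1 = \omega_\sigma(a)\otimes 1 = \omega'_\sigma(a\otimes 1),
\]
and $\omega'_\sigma$ clearly restricts to $\sigma\otimes\id$ on $K'$; analogously,
\[
z'_\sigma z'_\tau (z'_{\sigma\tau})^{-1} = (z_\sigma z_\tau z_{\sigma\tau}^{-1})\otimes 1 = f(\sigma,\tau)\otimes 1 = f'(\sigma,\tau).
\]
Thus $\{z'_\sigma\}_{\sigma\in G}$ is a family in $(B\otimes_F F')^{\times}$ with $\iota_{z'_\sigma}|_{K'} = \sigma$ that is free over $A'$ (by the direct sum decomposition above, or alternatively by Lemma \ref{lem:z-sigma-lin-indep}), so Lemma \ref{lem:gen-cr-prod-basis} yields $B\otimes_F F' \cong (A',G,(\omega',f'))$. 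Under this isomorphism the embedded copy $A'f'(\id,\id)^{-1}z'_{\id}$ corresponds to $Af(\id,\id)^{-1}z_{\id}\otimes_F F'$, which is consistent with the standard identifications.

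There is essentially no hard step: the proof is bookkeeping, and the only thing to verify is that the $F'$-linear extension of the multiplication on $B$ agrees term-by-term with the multiplication rule (\ref{eq:z_sigma-eq}) for the factor set $(\omega,f)^{F'}$. The mildest subtlety is confirming that $A'$ is central simple over $K'$ and that $(\omega,f)^{F'}\in\calF(G,{A'}^\times)$, but both are immediate from the linear disjointness of $F'$ and $K$ over $F$ together with the definition of the transported factor set.
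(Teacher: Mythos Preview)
Your proof is correct and follows the same approach as the paper: take the family $\{z_\sigma\otimes 1\}_{\sigma\in G}$ in $(A,G,(\omega,f))\otimes_F F'$ and apply Lemma~\ref{lem:gen-cr-prod-basis}. The paper states this in two lines, while you have spelled out the verification of the relations~(\ref{eq:z-sigma-relations}) and the direct-sum decomposition in detail; there is no substantive difference.
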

\begin{proof}
Let $\set{z_\sigma}_{\sigma\in G}$ be a family in $(A,G,(\omega,f))$ satisfying (\ref{eq:z-sigma-relations}).
Consider the family $\set{z_\sigma\otimes 1}_{\sigma\in G}$ in $(A,G,(\omega,f))\otimes_F F'$.
The assertion then follows from Lemma \ref{lem:gen-cr-prod-basis}.
\end{proof}

\subsection{Cyclic factor sets}

Suppose that  $K/F$ is a finite cyclic extension with $[K:F]=n$ and $\Gal(K/F)=G=\gen{\sigma}$.
As before let $A\in\calA(K)$.

\begin{lemma}
  \label{lem:ex-alpha}
Suppose that $\sigma$ extends to an $F$-automorphism $\wt\sigma$ of $A$.
There exists an element $\alpha\in A^\times$ such that 
\begin{equation}
  \label{eq:rel-alpha}
\wt\sigma^n=\iota_\alpha \qqt{and} \wt\sigma(\alpha)=\alpha.
\end{equation}
The relations (\ref{eq:rel-alpha}) determine $\alpha$ up to multiplication with elements from~$F^\times$.
\end{lemma}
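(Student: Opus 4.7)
The plan is to combine the Skolem--Noether theorem with Hilbert's Theorem~90 in its cyclic form.

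First, since $\wt\sigma$ extends $\sigma$ and $\sigma^n=\id_K$, the automorphism $\wt\sigma^n$ restricts to the identity on $K=Z(A)$, so $\wt\sigma^n$ is a $K$-automorphism of the central simple $K$-algebra $A$. By Skolem--Noether there exists $\alpha_0\in A^\times$ with $\wt\sigma^n=\iota_{\alpha_0}$. The issue is that $\alpha_0$ need not be $\wt\sigma$-fixed, so I would modify it by a suitable element of $K^\times$ (which doesn't affect $\iota_{\alpha_0}$ because $K\subseteq Z(A)$).

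Next I would show that $c:=\wt\sigma(\alpha_0)\alpha_0^{-1}$ lies in $K^\times$ and satisfies $N_{K/F}(c)=1$. The first statement follows by applying $\wt\sigma$ to the equation $\wt\sigma^n=\iota_{\alpha_0}$, which gives $\iota_{\wt\sigma(\alpha_0)}=\wt\sigma\wt\sigma^n\wt\sigma^{-1}=\iota_{\alpha_0}$, so $\wt\sigma(\alpha_0)\alpha_0^{-1}\in Z(A)=K$. For the norm, an easy induction shows
\[
\wt\sigma^i(\alpha_0)=\sigma^{i-1}(c)\sigma^{i-2}(c)\cdots\sigma(c)\,c\,\alpha_0
\qt{for all $i\geq 0$,}
\]
and setting $i=n$ yields $\alpha_0=\wt\sigma^n(\alpha_0)=\iota_{\alpha_0}(\alpha_0)=N_{K/F}(c)\,\alpha_0$, hence $N_{K/F}(c)=1$.

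By Hilbert's Theorem~90 there is $k\in K^\times$ with $c=k\,\sigma(k)^{-1}$. Setting $\alpha:=k^{-1}\alpha_0$ gives $\iota_\alpha=\iota_{\alpha_0}=\wt\sigma^n$ (since $k^{-1}\in K\subseteq Z(A)$), and
\[
\wt\sigma(\alpha)=\sigma(k^{-1})\wt\sigma(\alpha_0)=\sigma(k)^{-1}c\,\alpha_0=k^{-1}\alpha_0=\alpha,
\]
proving existence. For uniqueness, if $\alpha'\in A^\times$ also satisfies (\ref{eq:rel-alpha}), then $\iota_{\alpha'\alpha^{-1}}=\id_A$ forces $\alpha'=a\alpha$ for some $a\in K^\times$, and the condition $\wt\sigma(\alpha')=\alpha'$ combined with $\wt\sigma(\alpha)=\alpha$ forces $\sigma(a)=a$, i.e.\ $a\in F^\times$.

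The only non-routine step is verifying $N_{K/F}(c)=1$, but this is an immediate induction as above, so there is really no serious obstacle; the proof is a direct application of Skolem--Noether together with Hilbert~90.
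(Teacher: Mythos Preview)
Your approach is correct and is exactly the standard argument (Skolem--Noether plus Hilbert~90); the paper itself does not give a proof but simply cites \cite[Lemma~19.7]{pierce:ass-alg}, where this same argument appears.

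One small slip: with $c=k\,\sigma(k)^{-1}$ as you wrote it, the computation $\sigma(k)^{-1}c=k^{-1}$ is false (it would require $c=\sigma(k)k^{-1}$). The fix is trivial---either invoke Hilbert~90 in the form $c=\sigma(k)k^{-1}$, or keep your $c=k\,\sigma(k)^{-1}$ and set $\alpha:=k\alpha_0$ instead of $k^{-1}\alpha_0$; then $\wt\sigma(\alpha)=\sigma(k)\,c\,\alpha_0=\sigma(k)\,k\,\sigma(k)^{-1}\alpha_0=k\alpha_0=\alpha$ as desired. Everything else is fine.
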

\begin{proof}
See \cite[Lemma 19.7]{pierce:ass-alg}.
\end{proof}

\begin{prop}
  \label{prop:cycl-factor-set}
Suppose that $\sigma$ extends to an $F$-automorphism $\wt\sigma$ of $A$
and let $\alpha\in A^\times$ be an element satisfying (\ref{eq:rel-alpha}).
A normalized factor set $(\omega,f)\in\calF(G,A^\times)$ is defined by
\begin{equation}
\label{eq:cycl-factor-set}
\begin{gathered}
  \omega_{\sigma^i} := \wt\sigma^i,\\
  f(\sigma^i,\sigma^j) :=
  \begin{cases}
    1 & \text{if $i+j<n$}, \\
    \alpha & \text{if $i+j\geq n$}.
  \end{cases} 
\end{gathered}
\end{equation}
In particular $\calF(G,A^\times)$ is not empty.
\end{prop}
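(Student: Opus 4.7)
The plan is to verify directly that the pair $(\omega,f)$ defined by (\ref{eq:cycl-factor-set}) satisfies the three defining relations of a factor set, and to note that normalization is automatic from the formula (since $0+0<n$ forces $f(\id,\id)=1$). Existence of $\alpha$ is already granted by Lemma \ref{lem:ex-alpha}, so the content is purely computational, and the non-emptiness of $\calF(G,A^\times)$ is an immediate corollary.

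Choose the standard residue representatives $0\le i,j,k<n$ for elements of $G=\gen{\sigma}$. For (\ref{eq:factor-set-1}) it suffices to observe that $\omega_{\sigma^i}|_K=\wt\sigma^i|_K=\sigma^i$. For (\ref{eq:factor-set-2}) I would split into two cases. If $i+j<n$, then $\sigma^i\sigma^j=\sigma^{i+j}$ and $\wt\sigma^i\wt\sigma^j=\wt\sigma^{i+j}=\omega_{\sigma^{i+j}}$, while $f(\sigma^i,\sigma^j)=1$; both sides of (\ref{eq:factor-set-2}) coincide. If $i+j\ge n$, then $\sigma^i\sigma^j=\sigma^{i+j-n}$, and using $\wt\sigma^n=\iota_\alpha$ one gets
\[
\wt\sigma^i\wt\sigma^j=\wt\sigma^n\,\wt\sigma^{i+j-n}=\iota_\alpha\,\omega_{\sigma^{i+j-n}},
\]
which matches the right-hand side of (\ref{eq:factor-set-2}) since $f(\sigma^i,\sigma^j)=\alpha$ in this case.

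The only real bookkeeping is (\ref{eq:factor-set-3}). The key simplification is that $\wt\sigma(\alpha)=\alpha$ implies $\omega_\rho(\alpha)=\wt\sigma^k(\alpha)=\alpha$, so $\omega_\rho$ fixes every value of $f$. Consequently both sides of (\ref{eq:factor-set-3}) are products of elements from $\{1,\alpha\}$, hence powers of $\alpha$, and it is enough to check that the $\alpha$-exponents agree. Write $e(a,b)=0$ if $a+b<n$ and $e(a,b)=1$ otherwise, for $0\le a,b<n$. Then the left-hand side of (\ref{eq:factor-set-3}) is $\alpha^{e(i,j)+e(k,(i+j)\bmod n)}$ and the right-hand side is $\alpha^{e(k,i)+e((k+i)\bmod n,j)}$. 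A direct computation shows that each exponent equals the number of carries in the modular sum $k+i+j$, namely $\lfloor(k+i+j)/n\rfloor$; hence they agree and (\ref{eq:factor-set-3}) holds.

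Putting the three verifications together yields $(\omega,f)\in\calF(G,A^\times)$, and the observation $f(\id,\id)=1$ gives normalization. The most delicate step is the carry-counting for (\ref{eq:factor-set-3}), but since $\alpha$ commutes with itself and is fixed by every $\omega_\rho$, no non-commutativity complications arise and the identity reduces to an elementary arithmetic statement.
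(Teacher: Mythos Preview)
Your proposal is correct and follows exactly the approach the paper intends: the paper's proof consists of the single sentence ``The relations (\ref{eq:factor-set-1})--(\ref{eq:factor-set-3}) are readily checked,'' and you have carried out precisely that check, including the key observations that $\wt\sigma(\alpha)=\alpha$ makes $\omega_\rho$ act trivially on all values of $f$ and that the remaining identity reduces to the carry-counting $\lfloor(i+j+k)/n\rfloor$.
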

\begin{proof}
The relations (\ref{eq:factor-set-1})--(\ref{eq:factor-set-3}) are readily checked.
\end{proof}

\begin{remark}
  \label{rem:write-cycl-factor-set}
We shortly write $(\wt\sigma,\alpha)$ for the cyclic factor set $(\omega,f)$ defined in (\ref{eq:cycl-factor-set}),
and $(A,\wt\sigma,\alpha)$ for the cyclic generalized crossed product $(A,G,(\wt\sigma,\alpha))$.
Then $(A,\wt\sigma,\alpha)=\bigoplus Az^i$ for an element $z\in(A,\wt\sigma,\alpha)^\times$ with $z^n=\alpha$ and $\iota_z|_A=\wt\sigma$.
\end{remark}

We now give the ``cyclic version'' of Lemma \ref{lem:gen-cr-prod-basis}.
The proof is only a special case of the proof of Lemma \ref{lem:gen-cr-prod-basis}.

\begin{lemma}
  \label{lem:cyclic-cr-prod-basis}
Suppose that $B\in\calA(F)$ contains $K$ as a subfield
and let $A=C_B(K)$.
There is $z\in\ug B$ such that
\begin{equation}
\label{eq:iota-z-i}
\iota_{z}|_K=\sigma.
\end{equation}
For any such $z\in\ug B$
the family $\set{z^{i}}_{0\leq i<n}$ forms a free set of generators of $B$ over $A$.
Fix the extension $\wt\sigma:=\iota_z|_A$ of $\sigma$ to $A$.
The element $\alpha:=z^n$ lies in $\ug A$ and satisfies the relations (\ref{eq:rel-alpha}),
and $B\cong(A,G,(\wt\sigma,\alpha))$.
\end{lemma}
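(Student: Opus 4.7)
The plan is to specialize the proof of Lemma \ref{lem:gen-cr-prod-basis} to the cyclic case, choosing a single element $z$ and taking its powers in place of an arbitrary family $\{z_{\sigma^i}\}_{0\leq i<n}$.

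First I would produce $z$: since $B\in\calA(F)$ is central simple and $K\subseteq B$, the theorem of Skolem--Noether extends the $F$-automorphism $\sigma$ of $K$ to an inner automorphism of $B$, giving $z\in\ug B$ with $\iota_z|_K=\sigma$. Then for each $i$, $\iota_{z^i}|_K=\sigma^i$, and since $\sigma$ has order $n$ the restrictions $\iota_{z^0}|_K,\dots,\iota_{z^{n-1}}|_K$ are pairwise distinct. Lemma \ref{lem:z-sigma-lin-indep}, applied to the family $\{z^i\}_{0\leq i<n}$, shows that this family is free over $C_B(K)=A$. By the Double Centralizer Theorem $[B:F]=[K:F]\cdot[A:F]=n[A:F]$, so $B$ has rank $n$ as a left $A$-module, and the $n$ free elements $z^0,\dots,z^{n-1}$ therefore form a free set of generators.

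Next I would verify the properties of $\alpha:=z^n$. Since $\iota_{z^n}|_K=\sigma^n=\id_K$, we have $z^n\in C_B(K)=A$; together with $z^n\in\ug B$ (and $z^{-n}\in A$ by the same reasoning) this gives $\alpha\in\ug A$. The relations (\ref{eq:rel-alpha}) are immediate: $\wt\sigma^n=(\iota_z|_A)^n=\iota_{z^n}|_A=\iota_\alpha$, and $\wt\sigma(\alpha)=z\,z^n z^{-1}=z^n=\alpha$.

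Finally, to get the isomorphism $B\cong(A,G,(\wt\sigma,\alpha))$, I would invoke Lemma \ref{lem:gen-cr-prod-basis} with the choice $z_{\sigma^i}:=z^i$ for $0\leq i<n$. The induced factor set $(\omega,f)$ from (\ref{eq:z-sigma-relations}) satisfies $\omega_{\sigma^i}=\iota_{z^i}|_A=\wt\sigma^i$, and for $0\leq i,j<n$,
\[
 f(\sigma^i,\sigma^j)=z^i z^j z_{\sigma^{i+j}}^{-1}=\begin{cases}z^{i+j}z^{-(i+j)}=1&\text{if }i+j<n,\\ z^{i+j}z^{-(i+j-n)}=z^n=\alpha&\text{if }i+j\geq n,\end{cases}
\]
which is exactly $(\wt\sigma,\alpha)$ as defined in (\ref{eq:cycl-factor-set}). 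No step here is a serious obstacle; the only point deserving care is ensuring one uses the correct normalization $z_{\sigma^0}=1$ so that the resulting factor set matches the normalized cyclic one of Proposition \ref{prop:cycl-factor-set}, rather than a merely cohomologous variant.
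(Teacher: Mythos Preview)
Your proof is correct and is precisely the specialization of Lemma~\ref{lem:gen-cr-prod-basis} that the paper has in mind; the paper's own proof consists only of the remark that this lemma is a special case of Lemma~\ref{lem:gen-cr-prod-basis}, and you have spelled out that specialization accurately.
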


\begin{remark}
  \label{rem:cycl-equiv}
Let $(\wt\sigma,\alpha)\in\calF(G,\ug A)$
and let $\gamma\in\ug A$.
Then $$(\wt\sigma,\alpha)\sim(\iota_\gamma\wt\sigma,N(\gamma)\alpha)$$
where $N(\gamma)=\gamma\wt\sigma(\gamma)\cdots\wt\sigma^{n-1}(\gamma)$.
\end{remark}
\begin{proof}
This follows from Lemma \ref{lem:cyclic-cr-prod-basis} if 
in $(A,\wt\sigma,\alpha)$ the element $z$ is replaced by $\gamma z$.
\end{proof}

\begin{remark}
\label{rem:cyclic-factor-set-form}
Lemma \ref{lem:cyclic-cr-prod-basis} shows that
any cyclic factor set $(\omega,f)\in\calF(G,A^\times)$ is cohomologous 
to some factor set $(\wt\sigma,\alpha)$.
We shall give without proof a formula to compute $\alpha$ from $(\omega,f)$.
If we assume (by Proposition \ref{prop:cohomologous-factor-sets} (2)) that
$\omega$ is of the form $\omega_{\sigma^i}=\wt\sigma^i$ for $0\leq i<n$, then 
$$  \alpha:=f(\sigma^0,\sigma)\cdots f(\sigma^{n-1},\sigma) $$
satisfies (\ref{eq:rel-alpha}) and $(\omega,f)\sim(\wt\sigma,\alpha)$.
\end{remark}

\begin{cor}
  \label{cor:ext-auto-charac}
For any $A\in\calA(K)$ the following are equivalent~:
\begin{enumerate}
  \item $\sigma$ extends to an $F$-automorphism $\wt\sigma$ of $A$.
  \item $\calF(G,A^\times)$ is not empty.
  \item $A\sim B_K$ for some $B\in\calA(F)$.
  \item $A$ embeds (as an $F$-algebra) into some $B'\in\calA(F)$ with
    $B'\sim B$ such that $A=C_{B'}(K)$.
\end{enumerate}
  In particular, these properties depend only on the class $[A]$ of $A$ in $\Br(K)$.
\end{cor}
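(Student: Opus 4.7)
The plan is to close a short cycle of implications $(1)\impl(2)\impl(3)\Llra(4)\impl(1)$, drawing each step from a result already established above, so that almost no new work is required.

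First, $(1)\impl(2)$ is immediate from Proposition \ref{prop:cycl-factor-set}: given an extension $\wt\sigma$ of $\sigma$ to an $F$-automorphism of $A$, pick $\alpha\in\ug A$ as in Lemma \ref{lem:ex-alpha} (this exists by Lemma \ref{lem:ex-alpha} itself), and then $(\wt\sigma,\alpha)\in\calF(G,\ug A)$, so $\calF(G,\ug A)$ is nonempty. For $(2)\impl(3)$, take any $(\omega,f)\in\calF(G,\ug A)$ and form $B:=(A,G,(\omega,f))\in\calA(F)$; by Remark \ref{rem:gen-cr-prod} (2) we have $A\sim B_K$. The equivalence $(3)\Llra(4)$ is a direct quotation of the equivalence $(1)\Llra(2)$ in Proposition \ref{prop:res-classify}, which holds for arbitrary finite Galois $K/F$, in particular for cyclic~$K/F$. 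Finally, for $(4)\impl(1)$, suppose $A$ embeds into some $B'\in\calA(F)$ with $A=C_{B'}(K)$. Apply the cyclic version Lemma \ref{lem:cyclic-cr-prod-basis} to obtain $z\in\ug{B'}$ with $\iota_z|_K=\sigma$; then $\wt\sigma:=\iota_z|_A$ is defined (because $\iota_z(A)=A$ by the centralizer property $A=C_{B'}(K)$) and is an $F$-automorphism of $A$ extending $\sigma$.

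The final ``in particular'' assertion will follow at once from the equivalence of $(1)$ and $(3)$: condition $(3)$ is manifestly a statement about the Brauer class $[A]\in\Br(K)$ (both ``$\sim$'' and ``$B_K$'' respect similarity), so condition $(1)$ must also depend only on $[A]$.

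I do not expect a genuine obstacle here, since every implication is either a verbatim reference or a one-line deduction from the cyclic structural lemmas. The only small point to watch is the well-definedness of $\wt\sigma:=\iota_z|_A$ in $(4)\impl(1)$, for which one must recall that $A=C_{B'}(K)$ is stable under inner automorphisms of $B'$ that fix $K$ setwise, hence in particular under $\iota_z$.
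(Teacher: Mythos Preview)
Your proof is correct and essentially follows the paper's approach, which likewise cites Proposition \ref{prop:cycl-factor-set} for $(1)\impl(2)$, Proposition \ref{prop:res-classify} for the equivalence of $(2)$, $(3)$, $(4)$, and observes that $(3)$ manifestly depends only on $[A]$. The only minor difference is that where you close the cycle via $(4)\impl(1)$ using Lemma \ref{lem:cyclic-cr-prod-basis}, the paper simply notes that $(2)\impl(1)$ is trivial: any factor set $(\omega,f)\in\calF(G,\ug A)$ already provides the desired extension $\wt\sigma:=\omega_\sigma$ by (\ref{eq:factor-set-1}).
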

\begin{proof}
  $(1)\Rightarrow(2)$ is Proposition \ref{prop:cycl-factor-set}
  and $(2)\Rightarrow(1)$ is trivial. 
  The equivalence of $(2), (3)$ and $(4)$ is due to Proposition \ref{prop:res-classify}. 
  Obviously $(3)$ depends only on the class $[A]$ of $A$.
\end{proof}

\begin{remark}
\label{rem:ext-auto}
The equivalence of (1) and (3) in Corollary~\ref{cor:ext-auto-charac}
is known from Eilenberg-McLane \cite[Theorem 5.4 and Corollary 7.3]{eil-mclane:normality-alg},
where the implication $(1)\impl(3)$ is derived from the fact that the third cohomology group $H^3(G,\ug K)$
is trivial for cyclic $G$.

Over number fields $K$
there is a nice and useful criterion for the conditions of Corollary \ref{cor:ext-auto-charac} in terms of the local invariants of $A$,
which we shall call \emph{Deurings's criterion}.
The theorem of Deuring \cite[Satz 4]{deuring:einb-v-alg} states that condition $(4)$ in Corollary \ref{cor:ext-auto-charac} holds if and only if
the local invariants of $A$ are fixed under conjugation by $\sigma$, \ie
\begin{equation*}
  \label{eq:deuring}
\inv_v A=\inv_{v\circ\sigma} A \qt{for all $v\in\Val(K)$.}
\end{equation*}
\end{remark}

\begin{ex}\label{ex:sigma-extends}
Let $\wt D$ be as in Example~\ref{ex:noncr-prod-cycl}.
It was computed that
$\inv_{w_1} \wt D=\frac{1}{3}$ and $\inv_{w_2} \wt D=\frac{2}{3}$
for two valuations $w_1,w_2\in\V0(K)$
that are unique extensions of valuations on $F$, and $\inv_w \wt D=0$ for all other $w\in\Val(K)$.
Therefore, Deuring's criterion is satisfied,
\ie in this example $\sigma$ extends to an automorphism of $\wt D$.
\end{ex}

\subsection{Abelian factor sets}

Suppose that $K/F$ is finite abelian with
$G=\Gal(K/F)=S_1\times\cdots\times S_r$,
where the $S_i$ are cyclic with $S_i=\gen{\sigma_i}$ and $|S_i|=n_i$.
As above let $A\in\calA(K)$.
Suppose that $\sigma_1,\ldots,\sigma_r$ extend to $F$-automorphisms
$\wt\sigma_1,\ldots,\wt\sigma_r$ of $A$ respectively.
For convenience we introduce the notation
\begin{align*}
  N_i(x)&:=x\wt\sigma_i(x)\cdots\wt\sigma_i^{n_i-1}(x), \\
  {}_iN(x)&:=\wt\sigma_i^{n_i-1}(x)\cdots\wt\sigma_i(x)x.
\end{align*}
Obviously for all $x\in A$,
\begin{equation}
  \label{eq:N_i-inv}
  {}_iN(x^{-1})=N_i(x)^{-1}.
\end{equation}

Unlike in the cyclic case, $\calF(G,\ug A)$ can be empty
even though we have the extensions $\wt\sigma_i$ of $\sigma_i$ to $A$.
But if an abelian factor set $(\omega,f)\in\calF(G,\ug A)$ exists 
it can also be described with fewer parameters,
like a cyclic factor set can be described by a single parameter $\alpha$
(if $\wt\sigma$ is fixed).
This is a straightforward generalization from the discussion of abelian $2$-cocycles in \cite{amitsur-saltman:gen-abel-cr-prod}.
The following lemma is the ``abelian version'' of Lemma~\ref{lem:gen-cr-prod-basis}.

\begin{lemma}
  \label{lem:abelian-cr-prod-basis}
Suppose that $B\in\calA(F)$ contains $K$ as a subfield
and let $A=C_B(K)$.
There are $z_1,\ldots,z_r\in\ug B$ such that
\begin{equation}
\label{eq:iota-z-i}
\iota_{z_i}|_K=\sigma_i \qt{for all $1\leq i\leq r$.}
\end{equation}
If $z_1,\ldots,z_r\in\ug{B}$ satisfy (\ref{eq:iota-z-i}),
then the family $\set{z_\sigma}_{\sigma\in G}$,
defined by $z_{\sigma_1^{i_1}\cdots\sigma_r^{i_r}}:=z_1^{i_1}\cdots z_r^{i_r}$
for $0\leq i_k<n_k, 1\leq k\leq r$,
forms a free set of generators of $B$ over $A$.
Fix the extensions $\wt\sigma_i:=\iota_{z_i}|_A$ of $\sigma_i$ to $A$.
The elements 
\begin{equation}
  \label{eq:def-u-alpha}
  \begin{gathered}
    u_{ij}:=z_iz_jz_i^{-1}z_j^{-1}, \\
    \alpha_i:=z_i^{n_i} \\
  \end{gathered}
\end{equation}
for $1\leq i,j\leq r$ lie in $\ug A$ and satisfy the relations
\begin{subequations}
\label{eq:uij-rel}
\begin{gather}
\label{eq:uij-rel-1}
u_{ii}=1, \qquad u_{ij}=u_{ji}^{-1}, \\
\label{eq:uij-rel-2}
\wt\sigma_i^{n_i}=\iota_{\alpha_i}, \qquad
\wt\sigma_i\wt\sigma_j=\iota_{u_{ij}}\wt\sigma_j\wt\sigma_i, \\
\label{eq:uij-rel-3}
\wt\sigma_j(\alpha_i)=N_i(u_{ji})\alpha_i, \\
\label{eq:uij-rel-4}
\wt\sigma_k(u_{ij})u_{kj}\wt\sigma_j(u_{ki})u_{ji}\wt\sigma_i(u_{jk})u_{ik}=1
\end{gather}
\end{subequations}
for all $1\leq i,j,k\leq r$.
A normalized factor set $(\omega,f)\in\calF(G,\ug A)$ is defined by
\begin{equation}
\label{eq:z-simga-factor-set}
\begin{gathered}
  \omega_{\sigma_1^{i_1}\cdots\sigma_r^{i_r}} := \wt\sigma_1^{i_1}\cdots\wt\sigma_r^{i_r}, \\
 f(\sigma,\tau):= z_\sigma z_\tau z_{\sigma\tau}^{-1}, 
\end{gathered}
\end{equation}
such that $B\cong(A,G,(\omega,f))$.
It holds $f(\sigma_i,\sigma_j)=1$ for all $1\leq i\leq j\leq r$.
\end{lemma}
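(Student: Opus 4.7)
The plan is to carry out, in the abelian setting, a refinement of the argument used for Lemma \ref{lem:gen-cr-prod-basis}, exploiting that $G$ decomposes into a product of cyclic factors in order to package the factor set through the small amount of data $(\wt\sigma_i,u_{ij},\alpha_i)$. I would proceed in four steps: (i) produce the $z_i$ and assemble the family $\{z_\sigma\}$; (ii) verify freeness and generation over $A$; (iii) check the structural relations (\ref{eq:uij-rel-1})--(\ref{eq:uij-rel-4}); (iv) package the data as a factor set and identify $B$ with the generalized crossed product.

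For step (i), each $z_i\in\ug B$ exists by applying the Skolem-Noether theorem to the embedding $K\subseteq B$ and the $F$-automorphism $\sigma_i$ of $K$. For the family $\{z_\sigma\}$, the condition $\iota_{z_\sigma}|_K=\sigma$ holds for every $\sigma\in G$: the restriction of $\iota_{z_1^{i_1}\cdots z_r^{i_r}}$ to $K$ is $\sigma_1^{i_1}\cdots\sigma_r^{i_r}$ since on $K$ this composition does not depend on the order (because $G$ is abelian). Step (ii) is then immediate: Lemma~\ref{lem:z-sigma-lin-indep} gives freeness over $A=C_B(K)$, and a dimension count using the Double Centralizer Theorem ($[B{:}A]=[K{:}F]=|G|$) shows that the family generates $B$ over $A$.

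Step (iii) is mostly direct bookkeeping. That $u_{ij},\alpha_i\in\ug A$ follows by restricting their inner automorphisms to $K$: $\iota_{u_{ij}}|_K=\sigma_i\sigma_j\sigma_i^{-1}\sigma_j^{-1}=\id_K$ (using commutativity of $G$) and $\iota_{\alpha_i}|_K=\sigma_i^{n_i}=\id_K$. Relations (\ref{eq:uij-rel-1}) and (\ref{eq:uij-rel-2}) are then immediate from the definitions, via $\iota_{z_i^{n_i}}=\iota_{z_i}^{n_i}$ and $z_iz_j=u_{ij}z_jz_i$. The non-trivial identity is (\ref{eq:uij-rel-3}); I would expand
\[
 N_i(u_{ji})\,\alpha_i \;=\; \Bigl(\prod_{k=0}^{n_i-1}z_i^{k}\,u_{ji}\,z_i^{-k}\Bigr)\,z_i^{n_i},
\]
and prove by induction on $n$ that $\prod_{k=0}^{n-1}\bigl(z_i^{k}z_j z_i z_j^{-1}z_i^{-k-1}\bigr)\cdot z_i^{n}=z_j z_i^{n}z_j^{-1}$; setting $n=n_i$ gives $N_i(u_{ji})\alpha_i = z_j z_i^{n_i}z_j^{-1} = \wt\sigma_j(\alpha_i)$. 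Finally, (\ref{eq:uij-rel-4}) is a six-term identity obtained by writing $z_iz_jz_k$ in all three cyclic orderings of $i,j,k$ via (\ref{eq:uij-rel-2}) and comparing; it is the associativity obstruction among the $z_i$.

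For step (iv), set $\omega_\sigma:=\iota_{z_\sigma}|_A$ and $f(\sigma,\tau):=z_\sigma z_\tau z_{\sigma\tau}^{-1}$. The axioms (\ref{eq:factor-set-1})--(\ref{eq:factor-set-3}) are then verified exactly as in Lemma~\ref{lem:gen-cr-prod-basis}, and normalization follows from $z_{\id}=1$. The multiplication rule (\ref{eq:z_sigma-eq}) for $(A,G,(\omega,f))$ reproduces the one satisfied by the $z_\sigma$ in $B$, giving the isomorphism $B\cong(A,G,(\omega,f))$. For the final claim $f(\sigma_i,\sigma_j)=1$ with $i<j$, the product $\sigma_i\sigma_j$ has standard-form representation $\sigma_1^0\cdots\sigma_i\cdots\sigma_j\cdots\sigma_r^0$, so $z_{\sigma_i\sigma_j}=z_iz_j$ and $f(\sigma_i,\sigma_j)=z_iz_j(z_iz_j)^{-1}=1$; the diagonal case reduces similarly provided the exponent $2$ still lies in the admissible range $0\le i_k<n_k$. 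The main technical obstacle is the non-commutative bookkeeping in step (iii), particularly the telescoping product for (\ref{eq:uij-rel-3}) and the multi-term rearrangement for (\ref{eq:uij-rel-4}); once these are in place, the passage to the factor-set language is formal.
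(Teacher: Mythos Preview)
Your proposal is correct and follows essentially the same approach as the paper, which dispatches the lemma in two lines: the relations~(\ref{eq:uij-rel}) are ``readily checked, where (\ref{eq:uij-rel-3}) is shown by induction on $n_i$,'' and the remaining assertions are declared a special case of Lemma~\ref{lem:gen-cr-prod-basis}. You have correctly unpacked both halves --- steps (i), (ii), (iv) are exactly the content of Lemma~\ref{lem:gen-cr-prod-basis} specialized to the chosen family $z_\sigma=z_1^{i_1}\cdots z_r^{i_r}$, and your inductive telescoping for (\ref{eq:uij-rel-3}) via $z_jz_i^{m}z_j^{-1}=u_{ji}\wt\sigma_i(u_{ji})\cdots\wt\sigma_i^{m-1}(u_{ji})z_i^{m}$ is precisely the induction the paper alludes to. Your caution on the diagonal case $f(\sigma_i,\sigma_i)$ is well placed: the claim $f(\sigma_i,\sigma_j)=1$ for $i=j$ indeed requires $n_i>2$ (else $z_{\sigma_i^2}=z_{\id}=1$ and $f(\sigma_i,\sigma_i)=\alpha_i$), so the paper's ``$i\le j$'' should really be read as $i<j$.
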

\begin{proof}
The relations (\ref{eq:uij-rel}) are readily checked,
where (\ref{eq:uij-rel-3}) is shown by induction on $n_i$.
The rest of the lemma is a special case of Lemma~\ref{lem:gen-cr-prod-basis}.  
\end{proof}

\begin{remark}
  \label{rem:write-abelian-factor-set}
The map $f$, as defined in (\ref{eq:z-simga-factor-set}),
can be expressed in terms of the elements $u_{ij}$ and $\alpha_i$ only
(not using the $z_\sigma$).
The resulting formula is omitted here, since it is complicated and will not be further used.
But this shows that the factor set $(\omega,f)$ defined in (\ref{eq:z-simga-factor-set})
is completely determined by the $\wt\sigma_i$, $u_{ij}$ and  $\alpha_i$.
It will therefore be denoted shortly by $(\wt\sigma,u,\alpha)$.
Further, we write $(A,\wt\sigma,u,\alpha)$ for the abelian generalized crossed product $(A,G,(\wt\sigma,u,\alpha))$. 
\end{remark}

\begin{remark}
\label{rem:abelian-factor-set-form}
Lemma~\ref{lem:abelian-cr-prod-basis} shows that
any abelian factor set $(\omega,f)$ is cohomologous to some factor set
$(\wt\sigma,u,\alpha)$.
\end{remark}

The following proposition shows that the relations (\ref{eq:uij-rel}) are also sufficient to define a factor set.
The commutative version is known as
\cite[Theorem 1.3]{amitsur-saltman:gen-abel-cr-prod}.

\begin{prop}
\label{prop:ex-abelian-factor-set}
Suppose that $\sigma_1,\ldots,\sigma_r$ extend to $F$-algebra automorphisms
$\wt\sigma_1,\ldots,\wt\sigma_r$ of $A$ respectively
and there are elements $u_{ij}, \alpha_i \in\ug A$, $1\leq i,j\leq r$,
satisfying (\ref{eq:uij-rel}) for all $1\leq i,j,k\leq r$.
Then there is a factor set $(\wt\sigma,u,\alpha)\in\calF(G,A^\times)$.
\end{prop}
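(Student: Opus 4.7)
The plan is to construct an abelian generalized crossed product $B \in \calA(F)$ containing $K$ with $C_B(K) = A$ and units $z_1, \ldots, z_r \in \ug B$ realizing the prescribed data via \eqref{eq:def-u-alpha}; then Lemma~\ref{lem:abelian-cr-prod-basis} applied to $B$ with these $z_i$ extracts the desired factor set $(\wt\sigma, u, \alpha)$. I would proceed by induction on $r$, iterating cyclic generalized crossed products. For the base case $r = 1$, using $u_{11} = 1$ and $N_1(1) = 1$, relations \eqref{eq:uij-rel-2} and \eqref{eq:uij-rel-3} collapse to $\wt\sigma_1^{n_1} = \iota_{\alpha_1}$ and $\wt\sigma_1(\alpha_1) = \alpha_1$, so Proposition~\ref{prop:cycl-factor-set} directly produces the cyclic factor set $(\wt\sigma_1, \alpha_1)$.

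For the inductive step, set $G' := \gen{\sigma_1, \ldots, \sigma_{r-1}}$ and $F' := \Fix(G') \subseteq K$; since $G$ is abelian, $\Gal(F'/F) \cong G/G' = \gen{\sigma_r|_{F'}}$ is cyclic of order $n_r$. The restricted data satisfies \eqref{eq:uij-rel}, so by induction it yields the generalized crossed product $A_1 \in \calA(F')$, which contains $A$ with $C_{A_1}(K) = A$ (Remark~\ref{rem:gen-cr-prod}(1)) and units $z_1, \ldots, z_{r-1}$ realizing the relations \eqref{eq:def-u-alpha} for $i, j < r$. Next, extend $\wt\sigma_r$ to an $F$-algebra map $\wt\sigma_r' : A_1 \to A_1$ by declaring $\wt\sigma_r'|_A := \wt\sigma_r$ and $\wt\sigma_r'(z_i) := u_{ri} z_i$. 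The three defining relations of $A_1$ over $A$ are preserved: the relation $z_i a = \wt\sigma_i(a) z_i$ reduces to \eqref{eq:uij-rel-2}; the relation $z_i z_j = u_{ij} z_j z_i$ reduces to \eqref{eq:uij-rel-4} with $k = r$; and the relation $z_i^{n_i} = \alpha_i$ reduces to \eqref{eq:uij-rel-3} with $j = r$, since $(u_{ri} z_i)^{n_i} = N_i(u_{ri}) z_i^{n_i} = N_i(u_{ri}) \alpha_i = \wt\sigma_r(\alpha_i)$. Hence $\wt\sigma_r'$ is a well-defined $F$-automorphism of $A_1$ extending $\sigma_r$ on $K$.

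It remains to verify $(\wt\sigma_r')^{n_r} = \iota_{\alpha_r}$ on $A_1$ and $\wt\sigma_r'(\alpha_r) = \alpha_r$: the first holds on $A$ by \eqref{eq:uij-rel-2}, and on each $z_i$ reduces by iteration and \eqref{eq:N_i-inv} to ${}_rN(u_{ri}) = \alpha_r \wt\sigma_i(\alpha_r)^{-1}$, which is a rearrangement of \eqref{eq:uij-rel-3} applied with the roles of indices swapped; the second follows from \eqref{eq:uij-rel-3} with $i = j = r$ and $u_{rr} = 1$. Proposition~\ref{prop:cycl-factor-set}, applied to $A_1$ with the cyclic group $\gen{\sigma_r} \cong \Gal(F'/F)$, the automorphism $\wt\sigma_r'$, and the element $\alpha_r$, then yields the cyclic generalized crossed product $B := (A_1, \wt\sigma_r', \alpha_r) \in \calA(F)$, containing a unit $z_r$ with $\iota_{z_r}|_{A_1} = \wt\sigma_r'$ and $z_r^{n_r} = \alpha_r$. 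Consequently $\iota_{z_r}|_A = \wt\sigma_r$ and $z_r z_i z_r^{-1} z_i^{-1} = \wt\sigma_r'(z_i) z_i^{-1} = u_{ri}$ for $i < r$, so Lemma~\ref{lem:abelian-cr-prod-basis} applied to $B$ with the family $z_1, \ldots, z_r$ recovers exactly $(\wt\sigma, u, \alpha)$. The main obstacle is the well-definedness of $\wt\sigma_r'$; the relations \eqref{eq:uij-rel} are designed precisely so that this succeeds, but assembling them into the required preservation identities is a routine if tedious unwinding of each defining relation of $A_1$.
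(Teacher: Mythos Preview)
Your proposal is correct and follows essentially the same approach as the paper: an induction on $r$ that builds $B$ as an iterated cyclic generalized crossed product, extending $\wt\sigma_r$ to the previous stage via $z_i \mapsto u_{ri} z_i$, verifying well-definedness from the relations \eqref{eq:uij-rel}, and then invoking Lemma~\ref{lem:abelian-cr-prod-basis}. The only cosmetic differences are that the paper starts the induction at $r=0$ rather than $r=1$, and explicitly notes that \eqref{eq:uij-rel-1} is also used when checking preservation of $z_i z_j = u_{ij} z_j z_i$.
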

\begin{proof}
We prove by induction on $r$
that there is a $B\in\calA(F)$ that contains $A$ as a subalgebra, $A=C_B(K)$,
and there are elements $z_1,\ldots,z_r\in\ug B$ satisfying
\begin{equation}
  \label{eq:iota-z-i-on-A}
  \iota_{z_i}|_A=\wt\sigma_i 
\end{equation}
and (\ref{eq:def-u-alpha}) for all $1\leq i,j\leq r$.
The proposition then follows from Lem\-ma~\ref{lem:abelian-cr-prod-basis}.
As opposed to the proof of \cite[Theorem 1.3]{amitsur-saltman:gen-abel-cr-prod}
we use cyclic generelized crossed products here instead of twisted polynomial rings.

If $r=0$ we trivially choose $B=A$.
Now assume $r>0$ and let
$K'=\Fix(\sigma_1,\ldots,\sigma_{r-1})$.
By induction hypothesis there is an $A'\in\calA(K')$ that contains $A$ as a subalgebra, $A=C_{A'}(K)$,
and there are elements $z_1,\ldots,z_{r-1}\in\ug{A'}$ satisfying
~(\ref{eq:iota-z-i-on-A}) and (\ref{eq:def-u-alpha}) for all $1\leq i,j< r$.
The extension $K'/F$ is cyclic and $\Gal(K'/F)=\gen{\sigma_r|_{K'}}$.
We can extend $\sigma_r$ to an $F$-automorphism $\sigma_r^*$ of $A'$ as follows.
Since $\set{z_1^{i_1}\cdots z_{r-1}^{i_{r-1}}}_{0\leq i_j<n_j}$ is a free set of generators of $A'$ over $A$ (by Lemma~\ref{lem:abelian-cr-prod-basis}),
$\sigma_r^*$ is completely determined if we set
\begin{equation}
  \label{eq:sigma_r-def}
  \sigma_r^*(a):=\wt\sigma_r(a), \quad
  \sigma_r^*(z_i):=u_{ri}z_i \qt{for all $a\in A$ and $1\leq i<r$.}
\end{equation}
To see that (\ref{eq:sigma_r-def}) defines an automorphism we have to check that $\sigma^*_r$ preserves the relations (\ref{eq:def-u-alpha}).
By definition of $\sigma^*_r$ and (\ref{eq:iota-z-i-on-A}),
\begin{gather*}
\sigma^*_r(z_i)\sigma^*_r(z_j)=u_{ri}z_iu_{rj}z_j=u_{ri}\wt\sigma_i(u_{rj})z_iz_j, \\
\sigma^*_r(u_{ij})\sigma^*_r(z_j)\sigma^*_r(z_i)=\wt\sigma_r(u_{ij})u_{rj}z_ju_{ri}z_i
=\wt\sigma_r(u_{ij})u_{rj}\wt\sigma_j(u_{ri})u_{ji}z_iz_j \\
\intertext{and}
\wt\sigma_r(z_i)^{n_i}=(u_{ri}z_i)^{n_i}=N_i(u_{ri})z_i^{n_i}=N_i(u_{ri})\alpha_i
\end{gather*}
for all $1\leq i,j<r$.
Thus $\sigma^*_r(z_i)\sigma^*_r(z_j)=\sigma^*_r(u_{ij})\sigma^*_r(z_j)\sigma^*_r(z_i)$ by (\ref{eq:uij-rel-4}) and (\ref{eq:uij-rel-1}),
and $\sigma^*_r(\alpha_i)=\sigma^*_r(z_i)^{n_i}$ by (\ref{eq:uij-rel-3}).

We next show that ${\sigma_r^*}^{n_r}=\iota_{\alpha_r}$.
By (\ref{eq:uij-rel-2}), for all $a\in A$, 
$$ {\sigma^*_r}^{n_r}(a)={\wt\sigma_r}^{n_r}(a)=\iota_{\alpha_r}(a). $$
By (\ref{eq:sigma_r-def}), (\ref{eq:N_i-inv}), (\ref{eq:uij-rel-1}) and (\ref{eq:iota-z-i-on-A}),
for all $1\leq i< r$, 
\begin{gather*}
{\sigma_r^*}^{n_r}(z_i)={\sigma_r^*}^{n_r-1}(u_{ri})\cdots\sigma_r^*(u_{ri})u_{ri}z_i={}_rN(u_{ri})z_i=N_r(u_{ir})^{-1}z_i 
\intertext{and}
\iota_{\alpha_r}(z_i)=\alpha_rz_i\alpha_r^{-1}=\alpha_r\wt\sigma_i(\alpha_r)^{-1}z_i.
\end{gather*}
Thus ${\sigma_r^*}^{n_r}(z_i)=\iota_{\alpha_r}(z_i)$ by (\ref{eq:uij-rel-3})
and ${\sigma_r^*}^{n_r}=\iota_{\alpha_r}$ is proved.
Moreover, $\sigma_r^*(\alpha_r)=\alpha_r$ 
by (\ref{eq:uij-rel-3}) and (\ref{eq:uij-rel-1}).
Therefore we can form the cyclic generalized crossed product
$B:=(A',\sigma_r^*,\alpha_r)$.
Then $B$ contains an element $z_r$ with $z_r^{n_r}=\alpha_r$ and
$\iota_{z_r}|_{A'}=\sigma_r^*$.
The relations (\ref{eq:iota-z-i-on-A}) and (\ref{eq:def-u-alpha}) are then satisfied for all $1\leq i,j,k\leq r$ by definition of $\sigma_r^*$.
\end{proof}

\begin{remark}
  \label{rem:abelian-r-1}
In the case $r=1$ the elements $u_{ij}$ and $\alpha_i$ are determined by the single element $\alpha=\alpha_1$.
The relations (\ref{eq:uij-rel}) are equivalent to (\ref{eq:rel-alpha}).
This shows that the abelian factor sets generalize the cyclic factor sets.
\end{remark}

\begin{remark}
  \label{rem:abelian-r-2}
In the case $r=2$ the elements $u_{ij}$ are determined by the single element $u=u_{21}$,
thus the abelian factor set $(\wt\sigma,u,\alpha)$ is determined by 
the extensions $\wt\sigma_1,\wt\sigma_2$ 
and the three parameters $u_{21}$, $\alpha_1$ and $\alpha_2$.
We shall therefore denote it also by
$(\wt\sigma_1,\wt\sigma_2;\alpha_1,\alpha_2;u)$ where $u=u_{21}$.
It is easily checked that the relation (\ref{eq:uij-rel-4}) is always satisfied.
Therefore (\ref{eq:uij-rel}) are equivalent to 
\begin{subequations}  
\label{eq:abel-r=2}
\begin{gather}
\label{eq:abel-r=2-1}
\wt\sigma_1^{n_1}=\iota_{\alpha_1}, \quad \wt\sigma_1(\alpha_1)=\alpha_1, \\
\label{eq:abel-r=2-2}
\wt\sigma_2^{n_2}=\iota_{\alpha_2}, \quad \wt\sigma_2(\alpha_2)=\alpha_2, \\
\label{eq:abel-r=2-3}
\wt\sigma_2\wt\sigma_1=\iota_u\wt\sigma_1\wt\sigma_2, \\
\label{eq:abel-r=2-4}
\wt\sigma_1(\alpha_2)=N_2(u^{-1})\alpha_2, \\
\label{eq:abel-r=2-5}
\wt\sigma_2(\alpha_1)=N_1(u)\alpha_1. 
\end{gather}
\end{subequations}
\end{remark}

The following is a repetition of Proposition \ref{prop:gen-scal-ext} for abelian factor sets.
\begin{prop}
  \label{prop:gen-scal-ext-abelian}
Let $(\wt\sigma,u,\alpha)\in\calF(G,A^\times)$ be an abelian factor set.
Let $F'/F$ be any field extension such that $F'$ is linearly disjoint to $K$ over~$F$.
Then $K\otimes_F F'$ is Galois over $F'$ with $\Gal(K\otimes_F F'/F')\cong G$ and
\begin{equation*}
(A,\wt\sigma,u,\alpha)\otimes_F F'\cong (A\otimes_F F',\wt\sigma\otimes\id,u\otimes 1,\alpha\otimes 1).
\end{equation*}
\end{prop}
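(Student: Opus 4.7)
The plan is to reduce this statement to the general scalar extension result, Proposition \ref{prop:gen-scal-ext}, together with the explicit description of abelian factor sets given in Lemma \ref{lem:abelian-cr-prod-basis}. First I would record the purely field-theoretic fact that, since $F'$ is linearly disjoint from $K$ over $F$, the tensor product $K':=K\otimes_F F'$ is a field, the extension $K'/F'$ is Galois, and the restriction map gives an isomorphism $\Gal(K'/F')\cong G$ under which $\sigma_i$ corresponds to $\sigma_i\otimes\id$. In particular, the direct-product decomposition $G=S_1\times\cdots\times S_r$ passes to $\Gal(K'/F')$ with the same cyclic generators.

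Next I would invoke Proposition \ref{prop:gen-scal-ext} applied to the abelian factor set $(\omega,f):=(\wt\sigma,u,\alpha)$: it gives
\[
 (A,G,(\omega,f))\otimes_F F' \cong (A\otimes_F F',\,G,\,(\omega,f)^{F'}),
\]
where $(\omega,f)^{F'}$ is defined by $\omega'_\sigma=\omega_\sigma\otimes\id$ and $f'(\sigma,\tau)=f(\sigma,\tau)\otimes 1$. It remains to identify the right-hand side as the abelian generalized crossed product $(A\otimes_F F',\wt\sigma\otimes\id,u\otimes 1,\alpha\otimes 1)$ in the sense of Remark \ref{rem:write-abelian-factor-set}. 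For this I would take generators $z_1,\ldots,z_r$ of $(A,\wt\sigma,u,\alpha)$ over $A$ as in Lemma \ref{lem:abelian-cr-prod-basis} (so that $\iota_{z_i}|_A=\wt\sigma_i$, $z_iz_jz_i^{-1}z_j^{-1}=u_{ij}$ and $z_i^{n_i}=\alpha_i$), and consider the elements $z_i\otimes 1$ in the tensored algebra. Since the tensor functor is an $F$-algebra homomorphism, each $z_i\otimes 1$ satisfies $\iota_{z_i\otimes 1}|_{A\otimes_F F'}=\wt\sigma_i\otimes\id$ and
\[
 (z_i\otimes 1)(z_j\otimes 1)(z_i\otimes 1)^{-1}(z_j\otimes 1)^{-1}=u_{ij}\otimes 1,\qquad (z_i\otimes 1)^{n_i}=\alpha_i\otimes 1.
\]

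Finally, the relations (\ref{eq:uij-rel}) for the scalar-extended data $(\wt\sigma_i\otimes\id,\,u_{ij}\otimes 1,\,\alpha_i\otimes 1)$ are immediate consequences of the same relations for $(\wt\sigma_i,u_{ij},\alpha_i)$, obtained by applying $-\otimes 1$. Lemma \ref{lem:abelian-cr-prod-basis}, applied to the subalgebra $A\otimes_F F'=C_{(A,G,(\omega,f))\otimes_F F'}(K')$ together with the elements $z_i\otimes 1$, then identifies the scalar-extended algebra with the abelian generalized crossed product $(A\otimes_F F',\wt\sigma\otimes\id,u\otimes 1,\alpha\otimes 1)$, completing the proof. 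There is no real obstacle here beyond careful bookkeeping; the only point worth double-checking is that the shorthand $(\cdots,u,\alpha)$ (defined in terms of the $z_i$'s via Lemma \ref{lem:abelian-cr-prod-basis}) is indeed compatible with the tensor product, which is exactly what the identification of the $z_i\otimes 1$ as free generators ensures.
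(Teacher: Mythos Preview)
Your proposal is correct and follows essentially the same approach as the paper: both arguments take the generators $z_1,\ldots,z_r$ of $(A,\wt\sigma,u,\alpha)$, pass to $z_i\otimes 1$, and apply Lemma~\ref{lem:abelian-cr-prod-basis}. The paper's proof is terser and omits the detour through Proposition~\ref{prop:gen-scal-ext}, going directly from the elements $z_i\otimes 1$ to the conclusion via Lemma~\ref{lem:abelian-cr-prod-basis}; your intermediate step is not wrong, just unnecessary.
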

\begin{proof}
Let $z_1,\ldots,z_r$ be elements from $(A,\wt\sigma,u,\alpha)$ satisfying (\ref{eq:def-u-alpha}).
Consider the elements $z_1\otimes 1,\ldots,z_r\otimes 1$ from $(A,\wt\sigma,u,\alpha)\otimes_F F'$.
The assertion then follows from Lemma~\ref{lem:abelian-cr-prod-basis}.
\end{proof}

\subsection{Generalized crossed products of inertial algebras}

The goal of this subsection is to give a proof of Theorem \ref{theorem:lift} that is based on generalized crossed products.
Let $F$ be a field with valuation $v$.
The first lemma shows, roughly spoken,
that if $F$ has the inertial lift property then generalized crossed products can be lifted
from the residue level.

\begin{lemma}
\label{lem:inert-lift-factor-set}
Let $\wt K/\bar F$ be a Galois extension with $\Gal(\wt K/\bar F)=G$,
let $\wt D\in\calD(\wt K)$, and let $(\wt\omega,\wt f)\in\calF(G,\ug{\wt D})$.
If $F$ has the inertial lift property,
there is an inertial lift $K$ of $\wt K$ over $F$,
an inertial lift $E$ of $\wt D$ over $F$ with centre $K$,
and a factor set $(\omega,f)\in\calF(G,\ug E)$ such that
$[(E,G,(\omega,f))]\in\IBr(F)$ and
$\beta_F([(E,G,(\omega,f))])=[(\wt D,G,(\wt\omega,\wt f))]$.
Moreover, $\exp (E,G,(\omega,f))=\exp (\wt D,G,(\wt\omega,\wt f))$.
\end{lemma}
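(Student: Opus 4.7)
The plan is to lift the whole generalized crossed product in one stroke, rather than trying to lift the data $(\wt\omega,\wt f)$ component by component. More precisely, I would form the central simple $\bar F$-algebra $\wt C:=(\wt D,G,(\wt\omega,\wt f))$, lift it to $F$ using the inertial lift property, and then recover the desired $E$ and $(\omega,f)$ from the lift by applying Proposition \ref{prop:res-classify} in the reverse direction.

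First, by the inertial lift property there exists an inertial Galois lift $K/F$ of $\wt K/\bar F$ satisfying (\ref{eq:ILP}); I canonically identify $\Gal(K/F)$ with $G$ via the residue map. Second, since $\beta_F|_X:X\to \Br(\bar F)$ is a group isomorphism by part (1) of the remark after Definition \ref{def:ILP}, I choose $C\in\calA(F)$ with $[C]\in X$ and $\beta_F([C])=[\wt C]$. Let $E$ denote the underlying division algebra of $C_K$. By (\ref{eq:ILP}) applied to $C$, $[C_K]\in\IBr(K)$, so $E$ is inertial over $K$, and since $K$ is inertial over $F$, $E$ is inertial over $F$ with centre $K$. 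Using part (3) of the same remark and Remark \ref{rem:gen-cr-prod}(2),
\[
  \beta_K([C_K])=\res_{\wt K/\bar F}\beta_F([C])=[\wt C_{\wt K}]=[\wt D],
\]
so $\bar E\cong\wt D$, and thus $E$ is an inertial lift of $\wt D$ over $F$ with centre $K$.

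Third, since $E\sim C_K$, Proposition \ref{prop:res-classify} (applied with $A=E$, $B=C$ and the Galois extension $K/F$) yields a factor set $(\omega,f)\in\calF(G,\ug E)$ such that $(E,G,(\omega,f))\sim C$. Consequently,
\[
  [(E,G,(\omega,f))]=[C]\in X\subseteq\IBr(F),
\]
and
\[
  \beta_F([(E,G,(\omega,f))])=\beta_F([C])=[\wt C]=[(\wt D,G,(\wt\omega,\wt f))].
\]
Since $\beta_F|_X$ is a group isomorphism it preserves orders of elements, which immediately gives the exponent statement
\[
  \exp(E,G,(\omega,f))=\exp C=\exp\wt C=\exp(\wt D,G,(\wt\omega,\wt f)).
\]

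The main subtle point is that Proposition \ref{prop:res-classify}(3) produces \emph{some} factor set on $E$ rather than a distinguished lift of $(\wt\omega,\wt f)$ element by element. This is fine because the conclusions of the lemma only refer to the Brauer class of the generalized crossed product (via $\beta_F$) and to its exponent, both of which depend only on the class in $\Br(F)$; that is what makes the ``lift $\wt C$ first, then extract'' strategy succeed. A more constructive alternative would be to lift the automorphisms $\wt\omega_\sigma$ one by one via Theorem \ref{thm:hensel-ILSP} (after passing to Henselizations) and then adjust the values of $f$ on the nose using Skolem--Noether in $E$, but that route is considerably more awkward and unnecessary for the statement actually claimed.
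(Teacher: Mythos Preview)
Your proof is correct and follows essentially the same route as the paper: the paper lets $A$ be the underlying division algebra of $(\wt D,G,(\wt\omega,\wt f))$, takes $I\in\calD(F)$ with $[I]\in X$ and $\bar I\cong A$, sets $E:=I_K$, and then applies Proposition \ref{prop:res-classify} to obtain $(\omega,f)$ with $(E,G,(\omega,f))\sim I$---exactly your ``lift $\wt C$ first, then extract'' strategy with $I$ playing the role of (the underlying division algebra of) your $C$. The only cosmetic difference is that you invoke the commuting square of the remark after Definition \ref{def:ILP} to identify $\bar E$, whereas the paper writes out $\bar{I_K}\cong\bar I_{\bar K}\cong A_{\wt K}\cong\wt D$ directly from Proposition \ref{prop:inert-scalar-ext-valued}.
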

\begin{proof}
Let $A\in\calD(\bar F)$ be the underlying division algebra of $(\wt D,G,(\wt\omega,\wt f))$.
Let $K$ be an inertial Galois lift of $\wt K$ over $F$ with the property (\ref{eq:ILP}),
and let $I\in\calD(F)$ be an inertial lift of $A$ over $F$ with $[I]\in X$,
where $X\subseteq\IBr(F)$ is the subgroup of $\Br(F)$ from the inertial lift property.
Then $\exp I=\exp A$,
$[I_K]\in\IBr(K)$ and $\bar{I_K}\cong\bar I_{\bar K}\cong A_{\wt K}\cong\wt D$.
This shows that $E:=I_K$ is an inertial lift of $\wt D$ with centre $K$.
By Proposition \ref{prop:res-classify} there is $(\omega,f)\in\calF(G,\ug E)$
with $(E,G,(\omega,f))\sim I$.
Therefore
$[(E,G,(\omega,f))]\in\IBr(F)$ and
$\beta_F([(E,G,(\omega,f))])=[\bar I]=[A]=[(\wt D,G,(\wt\omega,\wt f))]$.
Moreover
$\exp (E,G,(\omega,f))=\exp I=\exp A=\exp (\wt D,G,(\wt\omega,\wt f))$.
\end{proof}

We call $(E,G,(\omega,f))$ in Lemma \ref{lem:inert-lift-factor-set}
an \emph{inertial lift} of the generalized crossed product $(\wt D,G,(\wt\omega,\wt f))$.
Now let $K/F$ be a finite inertial Galois extension with $\Gal(K/F)=G$.

\begin{lemma}
  \label{lem:inert-gen-cr-prod}
Let $A\in\calA(K)$ with $[A]\in\IBr(K)$.
Let $(\omega,f)\in\calF(G,\ug A)$
such that the underlying division algebra $I\in\calD(F)$ of $(A,G,(\omega,f))$ is inertial,
and let $c\in Z^2(G,\ug K)$ such that $N:=(K,G,c)\in\calD(F)$ is nicely semiramified.
Then for $B:=(A,G,(\omega,fc))$~:
\begin{enumerate}
\item $B\sim I\otimes_F N$.
\item $\ind B=[K:F]\ind A$.
\item $[B]\in\SBr(F)$ and $\bar{B_F}\cong \bar{A_K}$.~
\footnote{Recall that $B_F,A_K$ stand for the underlying division algebras of $B,A$ respectively.}
\item If $\exp I=\exp\bar I$ and $\exp N=\exp\Gal(\bar K/\bar F)$ then 
  $$\exp B=\exp B^h=\lcm(\exp N,\exp I) . $$
\end{enumerate}
In particular, if $A$ is a division algebra, so is $B$.
\end{lemma}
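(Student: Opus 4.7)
The plan is to reduce everything to Proposition~\ref{prop:N-I-valued}, after first establishing the decomposition $(1)$.

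First I would prove $(1)$ by applying the Product Theorem (Theorem~\ref{thm:prod-formula}) to the factor sets $(\omega,f)\in\calF(G,\ug A)$ and $(\id_G,c)\in\calF(G,\ug K)$, giving
\[
(A,G,(\omega,f))\otimes_F(K,G,c) \sim (A\otimes_K K,G,(\omega\otimes\id_G,f\otimes c)).
\]
Under the canonical identification $A\otimes_K K\cong A$, the factor set $(\omega\otimes\id_G,f\otimes c)$ becomes exactly $(\omega,fc)$, because $\omega_\sigma|_K=\sigma$ and $c$ takes values in $K^\times\subseteq\ug A$ (central in $A$). Hence $B\sim I\otimes_F N$.

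Next, to apply Proposition~\ref{prop:N-I-valued}, I would verify its hypothesis $\ind I_K=\ind\bar I_{\bar K}$. By Remark~\ref{rem:gen-cr-prod}(2), $I_K\sim(A,G,(\omega,f))_K\sim A$, so $[I_K]=[A]\in\IBr(K)$. Since $I$ is inertial over $F$ and $K/F$ is inertial, Proposition~\ref{prop:inert-scalar-ext-valued} then yields both $\ind I_K=\ind\bar I_{\bar K}$ and $\bar{I_K}\cong\bar I_{\bar K}$. Note in particular that $\bar I_{\bar K}\cong\bar{I_K}\cong A_K$ (the underlying division algebra of $A$), so $\ind\bar I_{\bar K}=\ind A$.

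Now Proposition~\ref{prop:N-I-valued}(1) applied to $I$ and $N$ gives $[B]\in\SBr(F)$, establishing the first half of $(3)$. For the residue algebra and index, I would invoke Theorem~\ref{theorem:inert-split-decomposition} applied to the inertially split $B^h\sim I^h\otimes_{F^h}N^h$: since $\bar N=\bar K$ (from Remark~\ref{rem:NSR}(1)), statements $(1),(2),(4)$ of that theorem yield $Z(\bar B)\cong\bar K$, $\bar B\cong\bar I_{\bar K}\cong A_K$ (finishing~$(3)$), and $\ind B=[\bar K:\bar F]\ind\bar I_{\bar K}=[K:F]\ind A$ (which is~$(2)$). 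Finally, $(4)$ is exactly Proposition~\ref{prop:N-I-valued}(2) under the stated hypotheses on the exponents. The last assertion is immediate: if $A$ is a division algebra, then $\ind A=\deg A$, so by $(2)$, $\ind B=[K:F]\deg A=\deg B$, forcing $B$ to be a division algebra.

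There is no real obstacle: the only point requiring care is matching the factor set $(\omega,fc)$ with the output of the Product Theorem in step one; everything else is assembled from prior results already in the chapter.
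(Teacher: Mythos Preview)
Your approach is essentially identical to the paper's proof: Product Theorem for (1), then Proposition~\ref{prop:inert-scalar-ext-valued} (via $[I_K]=[A]\in\IBr(K)$) to feed into Proposition~\ref{prop:N-I-valued} and Theorem~\ref{theorem:inert-split-decomposition} for (2)--(4). One small slip to fix: you twice write $\bar I_{\bar K}\cong A_K$, but $A_K$ is a $K$-algebra while $\bar I_{\bar K}$ is a $\bar K$-algebra; the correct identification (and what (3) actually asserts) is $\bar I_{\bar K}\cong\bar{A_K}$, the residue of the underlying division algebra of $A$ --- the index conclusion $\ind\bar I_{\bar K}=\ind A$ then follows because $A_K$ is inertial.
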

\begin{proof}
Theorem \ref{thm:prod-formula} yields
$$B=(A\otimes_{K}K,G,(\omega,fc))\sim(A,G,(\omega,f))\otimes_F(K,G,c)\sim I\otimes_F N .$$
By Proposition \ref{prop:inert-scalar-ext-valued},
since $[I_K]=[A]\in\IBr(K)$,
$\ind\bar I_{\bar K}=\ind I_K=\ind A$ and 
$\bar{A_K}=\bar{I_K}\cong \bar I_{\bar K}$.
Therefore, Proposition \ref{prop:N-I-valued} $(1)$ shows $[B]\in\SBr(F)$,
and Theorem \ref{theorem:inert-split-decomposition} shows
$\ind B=[\bar K:\bar F]\ind \bar I_{\bar K}=[K:F]\ind A$ and
$\bar{B_F}\cong\bar I_{\bar K}\cong\bar{A_K}$.
$(4)$ follows from Proposition \ref{prop:N-I-valued} $(2)$.
\end{proof}

We can now construct the underlying division algebra $D$ in Theorem~\ref{theorem:lift} directly as a generalized crossed product.

\begin{theorem}
  \label{thm:underlying-div-alg}
  Let $F$ be a valued field that has the inertial lift property,
  and let $\wt D$ be a finite-dimensional division algebra over $\bar F$
  such that the following properties are satisfied~:
  \begin{enumerate}
  \item $Z(\wt D)$ is abelian over $\bar F$ with $[Z(\wt D):\bar F]=n<\ift$.
  \item There exists $(\wt\omega,\wt f)\in\calF(G,{\wt D}^\times)$.~
    \footnote{Proposition \ref{prop:res-classify} shows that this condition is actually equivalent to condition $(2)$ of Theorem \ref{theorem:lift}.}
  \item $G=\Gal(Z(\wt D)/\bar F)$ embeds into $\Gamma_F/m\Gamma_F$,
where $m=\exp G$.
  \end{enumerate}
There is an inertial lift
$(E,G,(\omega,f))$ of %
$(\wt D,G,(\wt\omega,\wt f))$
and a $2$-cocycle $c\in Z^2(G,Z(E)^\times)$ such that
$D:=(E,G,(\omega,fc))\in\calD(F)$ is inertially split with $\bar D\cong\wt D$.
Furthermore,
$\ind D=n\ind\wt D$ and
$\exp D=\exp D^h=\lcm(m,\exp (\wt D,G,(\wt\omega,\wt f)))$.
\end{theorem}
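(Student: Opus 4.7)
The plan is to build $D$ as a cocycle twist of an inertial lift of the factor set $(\wt\omega,\wt f)$, absorbing a nicely semiramified factor through the $2$-cocycle. All the machinery is already in place: Lemma \ref{lem:inert-lift-factor-set} will supply the inertial lift, Example \ref{ex:NSR} will supply a nicely semiramified algebra with prescribed residue extension and exponent, and Lemma \ref{lem:inert-gen-cr-prod} will glue the two into an inertially split generalized crossed product with the required invariants.

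First I would apply Lemma \ref{lem:inert-lift-factor-set} to $(\wt\omega,\wt f)\in\calF(G,\ug{\wt D})$. Since $F$ has the inertial lift property and $\wt K:=Z(\wt D)$ is Galois over $\bar F$ with group $G$, this yields an inertial Galois lift $K/F$ of $\wt K$ (so $\Gal(K/F)\cong G$), an inertial lift $E\in\calD(F)$ of $\wt D$ with $Z(E)=K$, and a factor set $(\omega,f)\in\calF(G,\ug E)$ such that the underlying division algebra $I$ of $(E,G,(\omega,f))$ is inertial. Writing $A$ for the underlying division algebra of $(\wt D,G,(\wt\omega,\wt f))$, the lemma gives $\bar I\cong A$, and hence $\exp I=\exp\bar I=\exp(\wt D,G,(\wt\omega,\wt f))$.

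Next I use hypothesis (3): since $G$ embeds into $\Gamma_F/m\Gamma_F$, Example \ref{ex:NSR} applied to the inertial abelian extension $K/F$ produces a nicely semiramified algebra $N\in\calD(F)$ with inertial maximal subfield $K$, $\ind N=n$, and $\exp N=m=\exp\Gal(\bar K/\bar F)$. By construction (or by Lemma \ref{lem:gen-cr-prod-basis}) one can write $N=(K,G,c)$ for some $c\in Z^2(G,\ug K)=Z^2(G,Z(E)^\times)$. Finally I set $D:=(E,G,(\omega,fc))$ and invoke Lemma \ref{lem:inert-gen-cr-prod}. All its hypotheses are met: $[E]\in\IBr(K)$ since $E$ is inertial, $I$ is inertial and $N$ nicely semiramified by the two previous steps, and the exponent equalities $\exp I=\exp\bar I$ and $\exp N=\exp\Gal(\bar K/\bar F)$ have already been noted. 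The lemma then delivers $D\sim I\otimes_F N$, the index formula $\ind D=[K:F]\ind E=n\ind\wt D$, the fact that $[D]\in\SBr(F)$ with $\bar D\cong\bar E\cong\wt D$, and the exponent formula $\exp D=\exp D^h=\lcm(\exp N,\exp I)=\lcm(m,\exp(\wt D,G,(\wt\omega,\wt f)))$.

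There is essentially no obstacle, since the theorem is a direct synthesis of the results preceding it; the one detail demanding attention is to promote the similarity $D\sim I\otimes_F N$ to an isomorphism in the Brauer class by comparing $\deg D=[K:F]\deg E=n\ind\wt D$ with $\ind D=n\ind\wt D$. Equality of these two numbers confirms that $D$, and not merely its underlying division algebra, lies in $\calD(F)$, completing the construction.
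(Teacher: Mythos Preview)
Your proof is correct and follows essentially the same route as the paper: invoke Lemma~\ref{lem:inert-lift-factor-set} for the inertial lift $(E,G,(\omega,f))$, use Example~\ref{ex:NSR} to produce the nicely semiramified $N=(K,G,c)$ with $\exp N=m$, set $D=(E,G,(\omega,fc))$, and read off all conclusions from Lemma~\ref{lem:inert-gen-cr-prod}. One tiny notational slip: $E$ has centre $K$, so $E\in\calD(K)$ rather than $\calD(F)$; your final degree--index comparison is a nice explicit check, though it is already contained in the last line of Lemma~\ref{lem:inert-gen-cr-prod} (since $E$ is a division algebra).
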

\begin{proof}
Let $(E,G,(\omega,f))$ be an inertial lift of the generalized crossed product 
$(\wt D,G,(\wt\omega,\wt f))$
as in Lemma \ref{lem:inert-lift-factor-set}.
Let $K=Z(E)$.
By Example \ref{ex:NSR}, since $\Gal(K/F)\cong G$ embeds into $\Gamma_F/m\Gamma_F$,
there is a $c\in Z^2(G,\ug K)$ such that $N:=(K,G,c)\in\calD(F)$ 
is nicely semiramified with inertial maximal subfield $K$ and 
$\exp N=\exp G=m$.
Define $D:=(E,G,(\omega,fc)$.
The statements of the theorem then follow from Lemma \ref{lem:inert-gen-cr-prod}
applied with $I\in\calD(F)$ the underlying division algebra of $(E,G,(\omega,f))$.
\end{proof}

\begin{remark}
\label{rem:explicit-constr}
\label{rem:underl-div-alg}
(1) Theorem \ref{thm:underlying-div-alg} reduces the explicit construction of $D$
to the explicit construction of the inertial lift
$(E,G,(\omega,f))$ of $(\wt D,G,(\wt\omega,\wt f))$.
The cocycle $c$ is already given explicitly. 

(2) The computation of $\exp D$ in Theorem \ref{thm:underlying-div-alg} is reduced to the computation of
$\exp (\wt D,G,(\wt\omega,\wt f))$,
which is still a difficult task in general.
But by Corollary \ref{cor:exp},
if $G$ is cyclic, then 
$\exp D$ is independent of the choice of $(\wt\omega,\wt f)\in\calF(G,{\wt D}^\times)$, 
\ie $\exp D$ is determined by $\wt D$ only. 

(3) If $Z(\wt D)$ is a global field and $G$ is cyclic,
then $\exp D$ can be computed by the formula 
\[
  \exp D =\exp D^h = \lcm(n,n_w \ind\wt D_w)_{w\in\Val(Z(\wt D))}
\]
of Proposition \ref{prop:exp-form-num-field},
where $n_w=[Z(\wt D):\bar F]_w$.
\end{remark}

The following example shows how $(E,G,(\omega,f))$ in Lemma \ref{lem:inert-lift-factor-set} is obtained if
$V_F$ contains a field that maps isomorphically onto $\bar F$ under $\pi_F$.

\begin{ex}
\label{ex:explicit-lift}
Suppose that $V_F$ contains a field that maps isomorphically onto $\bar F$ under $\pi_F$.
The inertial lifts of $\wt K$ and $\wt D$ in the proof of Lemma \ref{lem:inert-lift-factor-set} are
$K=\wt K\otimes_{\bar F} F$ and $E=\wt D\otimes_{\bar F} F$ (\cf the inertial lift property in the proof of Theorem \ref{thm:bar-F-embed-ILP}).
Moreover,
$(\omega,f)=(\wt\omega,\wt f)^F\in\calF(G,\ug A)$, \ie 
$\omega_\sigma=\wt\omega_\sigma\otimes\id_F$ 
and $f(\sigma,\tau)=\wt f(\sigma,\tau)\otimes 1$
for all $\sigma,\tau\in G$.
For, by Proposition \ref{prop:gen-scal-ext},
since $K=\wt K\otimes_{\bar F} F$ is a field, 
$$I\sim(\wt D,G,(\wt\omega,\wt f))\otimes_{\bar F} F
\cong(\wt D\otimes_{\bar F} F,G,(\omega,f))
=(E,G,(\omega,f)),$$
where $I$ is the inertial lift of the underlying division algebra of $(\wt D,G,(\wt\omega,\wt f))$ as in the proof of Lemma \ref{lem:inert-lift-factor-set}.
\end{ex}

\section{Twisted function fields and Laurent series rings}
\subsection{One twisted indeterminant}

\label{sec:One-twist-indet}
In this section we describe the construction of twisted function fields
and twisted Laurent series rings.
Let $K/F$ be a finite cyclic field extension
with $\Gal(K/F)=\gen{\sigma}$ and $[K:F]=n$.
Let $A\in\calA(K)$
and suppose that $\sigma$ extends to an $F$-algebra automorphism $\wt\sigma$ of $A$.
Denote by $A[x;\wt\sigma]$ the set of all polynomials
\begin{equation*}
  A[x;\wt\sigma] := \sett{\sum_{i=0}^k d_i x^i}{k\in\N_0, d_i\in A} ,
\end{equation*}
and by $A(\!(x;\wt\sigma)\!)$ the set of all formal series 
\begin{equation*}
  A(\!(x;\wt\sigma)\!) := \sett{\sum_{i\geq k} d_i x^i}{k\in\Z, d_i\in A} .
\end{equation*}
A ring structure is given on $A[x;\wt\sigma]$ and $A(\!(x;\wt\sigma)\!)$ by componentwise addition and by multiplication with the rule
\begin{equation*}
 ax^i\cdot bx^j=a\wt\sigma^i(b)x^{i+j} \qt{for all $a,b\in A$, $i,j\in\Z$.} 
\end{equation*}
Obviously $A[x;\wt\sigma]\subset A(\!(x;\wt\sigma)\!)$ is a subring
and we identify $A$ with the subring $Ax^0$ of $A[x;\wt\sigma]$ and $A(\!(x;\wt\sigma)\!)$.
Denote by $A(x;\wt\sigma)$ the ring of central quotients of $A[x;\wt\sigma]$, \ie
\begin{equation*}
  A(x;\wt\sigma):=\sett{f/g}{f\in A[x;\wt\sigma], g\in Z(A[x;\wt\sigma])} .  
\end{equation*}
Let $\alpha\in A^\times$ be an element satisfying (\ref{eq:rel-alpha}).
Then 
\begin{equation*}
t:=\alpha^{-1}x^n
\end{equation*}
is a commutative indeterminate over $A$ and  
the centres of $A[x;\wt\sigma]$,  $A(\!(x;\wt\sigma)\!)$ and $A(x;\wt\sigma)$ are  
\begin{alignat*}{2}
  &Z(A[x;\wt\sigma])&&=F[t]=\sett{\sum_{i=0}^k a_i(\alpha^{-1}x^n)^i}{a_i\in F, k\in\N_0} , \\
  &Z(A(\!(x;\wt\sigma)\!))&&=F(\!(t)\!)=\sett{\sum_{i\geq k} a_i(\alpha^{-1}x^n)^i}{a_i\in F, k\in\Z} 
\intertext{and}
  &Z(A(x;\wt\sigma))&&=Q(Z(A[x;\wt\sigma]))=F(t),
\end{alignat*}
where $Q(R)$ denotes the quotient field of an integral domain $R$.
Note that $Z(A(x;\wt\sigma))$ and $Z(A(\!(x;\wt\sigma)\!))$ are fields.
Therefore
all elements from $Z(A[x;\wt\sigma])$ are already invertible in $A(\!(x;\wt\sigma)\!)$,
hence $A(x;\wt\sigma)$ can be regarded as a subring of $A(\!(x;\wt\sigma)\!)$.
We call
$A[x;\wt\sigma]$ \emph{twisted polynomial ring},
$A(x;\wt\sigma)$ \emph{twisted function field}
and $A(\!(x;\wt\sigma)\!)$ \emph{twisted Laurent series ring}.

To see that $A(x;\wt\sigma)$ and $A(\!(x;\wt\sigma)\!)$ are central simple $F(t)$- and $F(\!(t)\!)$-algebras respectively
and to compute their index and exponent
they are presented as (cyclic) generalized crossed products.
This is due to \cite[Theorem~2.3]{tignol:gen-cr-prod}.
We regard $A(t)$, the ring of central quotients of $A[t]$,
as a subring of $A(x;\wt\sigma)$.

\begin{lemma}
  \label{lem:twisted-alg-cycl-cr-prod}
$A(x;\wt\sigma)$ and $A(\!(x;\wt\sigma)\!)$ are central simple $F(t)$- and $F(\!(t)\!)$-algebras respectively and
\[
  A(x;\wt\sigma)\cong(A(t),\wt\sigma,\alpha t), \qquad
  A(\!(x;\wt\sigma)\!)\cong (A(\!(t)\!),\wt\sigma,\alpha t).
\]
Here $\wt\sigma$ also denotes the extension of $\wt\sigma$ to $A(t)$ and $A(\!(t)\!)$ that fixes $t$.
\end{lemma}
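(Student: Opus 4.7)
The plan is to exhibit both $A(x;\wt\sigma)$ and $A(\!(x;\wt\sigma)\!)$ as cyclic generalized crossed products via Lemma~\ref{lem:cyclic-cr-prod-basis}, with $x$ playing the role of the generator $z$ and $\alpha t$ playing the role of $\alpha$ in the base algebra $A(t)$ (\resp $A(\!(t)\!)$). First I would extend $\wt\sigma$ to an $F(t)$-automorphism of $A(t)$ and to an $F(\!(t)\!)$-automorphism of $A(\!(t)\!)$ by letting it act coefficientwise while fixing $t$. The relations (\ref{eq:rel-alpha}) required by Proposition~\ref{prop:cycl-factor-set} are satisfied by $\alpha t$ in the extended setting: since $t$ is central we have $\iota_{\alpha t}=\iota_\alpha=\wt\sigma^n$ on $A(t)$ (\resp $A(\!(t)\!)$), and $\wt\sigma(\alpha t)=\wt\sigma(\alpha)t=\alpha t$. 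Hence the cyclic generalized crossed products $(A(t),\wt\sigma,\alpha t)$ and $(A(\!(t)\!),\wt\sigma,\alpha t)$ are defined and, by Proposition~\ref{prop:gen-cr-prod-def}, are central simple $F(t)$- (\resp $F(\!(t)\!)$-) algebras of degree $n\deg A$.

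Next I would construct a ring homomorphism $\phi$ from each crossed product to the corresponding twisted ring that is the identity on $A(t)$ (\resp $A(\!(t)\!)$) and sends the generator $z$ to $x$. For well-definedness the presentation relations $za=\wt\sigma(a)z$ and $z^n=\alpha t$ have to hold in the twisted ring with $x$ in place of $z$. The relation $x^n=\alpha t$ is merely the definition of $t$. The twisting relation holds for $a\in A$ by the defining multiplication rule of the twisted ring, and for $a=t$ it follows from
\[
xt=x\alpha^{-1}x^n=\wt\sigma(\alpha^{-1})x^{n+1}=\alpha^{-1}x^{n+1}=tx,
\]
using $\wt\sigma(\alpha)=\alpha$; conjugation by $x$ is therefore the $F$-algebra automorphism extending $\wt\sigma$ and fixing $t$, that is, our extension of $\wt\sigma$ to $A(t)$ (\resp $A(\!(t)\!)$).

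Injectivity of $\phi$ is immediate from the simplicity of the crossed product. For surjectivity of the polynomial version, every monomial $d_ix^i$ with $i=qn+r$, $0\leq r<n$, lies in the image since $x^i=(\alpha t)^qx^r=\alpha^q t^q x^r$, so $A[x;\wt\sigma]\subseteq\im\phi$; and every nonzero $g\in F[t]=Z(A[x;\wt\sigma])$ is a unit in the crossed product (whose center is $F(t)$), so its inverse lies in $\im\phi$ as well, covering all central quotients. For the Laurent series version the same decomposition gives
\[
\sum_{i\geq k}d_ix^i=\sum_{r=0}^{n-1}\Bigl(\sum_{q\geq\lceil(k-r)/n\rceil}d_{qn+r}\alpha^q t^q\Bigr)x^r,
\]
where each inner sum is a bona fide element of $A(\!(t)\!)$ because the $t$-exponents are bounded below by $\lceil(k-r)/n\rceil$; thus $A(\!(x;\wt\sigma)\!)$ decomposes as a left $A(\!(t)\!)$-module with basis $\{1,x,\dots,x^{n-1}\}$, matching the crossed product.

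With $\phi$ established as an isomorphism in both cases, the statements about central simplicity and the identification of the centers with $F(t)$ and $F(\!(t)\!)$ are automatic from Proposition~\ref{prop:gen-cr-prod-def}. The main bookkeeping point I anticipate is the Laurent-series surjectivity, \ie checking that regrouping $\sum d_ix^i$ by residue class of $i$ modulo $n$ respects the series structure in $t$; the cyclic polynomial case is essentially formal once the twisting identity $xt=tx$ is noted, whereas the Laurent series case needs the explicit bound on $q$ to guarantee convergence of the inner sums.
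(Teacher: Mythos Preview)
Your proof is correct and follows essentially the same strategy as the paper: identify the twisted ring with the cyclic generalized crossed product via the correspondence $z\leftrightarrow x$, $\alpha\leftrightarrow\alpha t$. The only difference is one of direction and bookkeeping. The paper works internally in $A(x;\wt\sigma)$: it observes that $\{1,x,\ldots,x^{n-1}\}$ generates $A(x;\wt\sigma)$ over $A(t)$, then applies Lemma~\ref{lem:z-sigma-lin-indep} (via Lemma~\ref{lem:cyclic-cr-prod-basis}) to get freeness, whence the decomposition $\bigoplus A(t)x^i$ and the identification with $(A(t),\wt\sigma,\alpha t)$ follow at once. You instead build the crossed product first, use its simplicity for injectivity of the explicit map $\phi$, and argue surjectivity by hand. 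Your version has the merit of treating the Laurent series case with an explicit regrouping, whereas the paper simply declares it analogous; the paper's version is slightly more economical for the function field case since freeness and generation are packaged in a single lemma.
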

\begin{proof}
We prove this lemma for $A(x;\wt\sigma)$,
it is analogous for $A(\!(x;\wt\sigma)\!)$.
The set $\{x^0,\ldots,x^{n-1}\}$ clearly generates 
$A[x;\wt\sigma]$ over $A[t]$.
Since $Z(A[x;\wt\sigma])=F[t]\subseteq A[t]$, it also generates 
$A(x;\wt\sigma)$ over $A(t)$.
Further, we have $\iota_{x}|_{K(t)}=\sigma$,
where $\sigma$ denotes the extension of $\sigma$ to $K(t)$ fixing $t$,
and $A(t)\subseteq C_{A(x;\wt\sigma)}(K(t))$.
Lemma~\ref{lem:cyclic-cr-prod-basis} then shows that
$\{x^0,\ldots,x^{n-1}\}$ is free over $A(t)$,
thus $A(x;\wt\sigma)=\bigoplus_{i=0}^{n-1} A(t)x^i$.
It is now readily seen that actually
$C_{A(x;\wt\sigma)}(K(t))=A(t)$.
Therefore, by Lemma~\ref{lem:cyclic-cr-prod-basis},
$A(x;\wt\sigma)\cong(A(t),\wt\sigma, \alpha t)$,
since $x^n=\alpha t$.
\end{proof}

We have to be careful when regarding $A(x;\wt\sigma)$ and $A(\!(x;\wt\sigma)\!)$ as $F(t)$- and $F(\!(t)\!)$-algebras respectively,
since in this notation the choice of $\alpha$ is lost.
We should keep in mind that $t$ depends on $\alpha$,
and that different choices of $\alpha$ lead to different actions of $F(t)$ and $F(\!(t)\!)$ on $A(x;\wt\sigma)$ and $A(\!(x;\wt\sigma)\!)$ respectively.
However, in the Brauer group of $F(t)$ and $F(\!(t)\!)$,
this affects the elements 
$[A(x;\wt\sigma)]$ and $[A(\!(x;\wt\sigma)\!)]$ only up to an isomorphism
which is induced by an $F$-isomorphism of $F(t)$ and $F(\!(t)\!)$ respectively.
In Lemma~\ref{lem:twisted-alg-cycl-cr-prod} and in the following theorems 
we assume that $\alpha$ is fixed and $t=\alpha^{-1}x^n$.

\begin{theorem}
  \label{thm:twisted-alg-properties}
$A(x;\wt\sigma)$ and $A(\!(x;\wt\sigma)\!)$ are central simple $F(t)$- and $F(\!(t)\!)$-algebras respectively with
\begin{enumerate}
\item $\deg A(x;\wt\sigma)=\deg A(\!(x;\wt\sigma)\!)=n\deg A,$
\item $\ind A(x;\wt\sigma)=\ind A(\!(x;\wt\sigma)\!)=n\ind A,$
\item $\exp A(x;\wt\sigma)=\exp A(\!(x;\wt\sigma)\!)=\lcm(n,\exp (A,\wt\sigma,\alpha)),$
\item $[A(x;\wt\sigma)]\in\SBr(F(t))$ and $[A(\!(x;\wt\sigma)\!)]\in\SBr(F(\!(t)\!)),$
\item $\bar{A(x;\wt\sigma)_{F(t)}}\cong\bar{A(\!(x;\wt\sigma)\!)_{F(\!(t)\!)}}\cong A_K.$
\end{enumerate}
Here, $F(t)$ and $F(\!(t)\!)$ are regarded with respect to the $t$-adic valuation.
Moreover, for any $C\in\calA(F)$ with $C^K\sim A$,
\begin{enumerate}
\setcounter{enumi}{5}
\item $\exp A(x;\wt\sigma)=\exp A(\!(x;\wt\sigma)\!)=\lcm(n,\exp C).$
\end{enumerate}
In particular, if $A$ is a division algebra then
$A(x;\wt\sigma)$ and $A(\!(x;\wt\sigma)\!)$ are division algebras.
\end{theorem}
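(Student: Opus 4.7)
The plan is to deduce everything from the cyclic generalized crossed product representation already established in Lemma~\ref{lem:twisted-alg-cycl-cr-prod} and then invoke Lemma~\ref{lem:inert-gen-cr-prod}. I treat the case of $A(x;\wt\sigma)$ in detail, the case $A(\!(x;\wt\sigma)\!)$ being identical with $F(t)$ replaced by $F(\!(t)\!)$, since both carry the $t$-adic valuation with residue field $F$ and satisfy the hypothesis of Lemma~\ref{lem:bar-F-embed-inert-lift}.

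First, by Lemma~\ref{lem:twisted-alg-cycl-cr-prod}, $A(x;\wt\sigma) \cong (A(t),\wt\sigma,\alpha t)$. I rewrite the cyclic factor set $(\wt\sigma,\alpha t)$ as $(\wt\sigma,\alpha\cdot c)$ where $c \in Z^2(G,K(t)^\times)$ is the standard cocycle defining the cyclic algebra $N:=(K(t)/F(t),\sigma,t)$, that is, $c(\sigma^i,\sigma^j)=1$ if $i+j<n$ and $c(\sigma^i,\sigma^j)=t$ otherwise. Then Proposition~\ref{prop:cohomologous-factor-sets}(4) shows $A(x;\wt\sigma)$ is of the form $(A(t),G,(\omega,f c))$ required by Lemma~\ref{lem:inert-gen-cr-prod}, with $(\omega,f)=(\wt\sigma,\alpha)$.

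Second, I identify the two ingredients. For the inertial part, observe that since $\alpha\in A$ and $\wt\sigma$ fixes $t$, the crossed product relations $zt=tz$ and $za=\wt\sigma(a)z$, $z^n=\alpha$ give a natural isomorphism $(A(t),\wt\sigma,\alpha)\cong (A,\wt\sigma,\alpha)\otimes_F F(t)$. Since $V_{F(t)}$ contains $F$ mapping isomorphically onto the residue field, Lemma~\ref{lem:bar-F-embed-inert-lift} applied to the underlying division algebra $D_0$ of $(A,\wt\sigma,\alpha)$ shows that $D_0\otimes_F F(t)$ is an inertial division algebra over $F(t)$ with residue $D_0$; this is the underlying division algebra $I$ of $(A(t),\wt\sigma,\alpha)$, and $\exp I=\exp D_0=\exp(A,\wt\sigma,\alpha)$. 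For the nicely semiramified part, $K(t)/F(t)$ is an inertial cyclic extension and $v_t(t)=1$ generates $\Gamma_{F(t)}/n\Gamma_{F(t)}\cong\Gal(K(t)/F(t))$, so Example~\ref{ex:NSR} gives that $N$ is nicely semiramified with $\bar N=K$, $\ind N=n$, and $\exp N=n=\exp\Gal(\bar K/\bar F)$.

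Third, I apply Lemma~\ref{lem:inert-gen-cr-prod} to $B=A(x;\wt\sigma)=(A(t),G,(\wt\sigma,\alpha c))$. Part~(2) of the lemma yields $\ind B=[K(t):F(t)]\ind A(t)=n\ind A$, which is (2); part~(3) gives (4) and, using that the $t$-adic valuation on $K(t)$ extends to make $A(t)=A\otimes_K K(t)$ inertial with residue $A_K$, yields $\bar{B_{F(t)}}\cong\bar{A(t)_{K(t)}}\cong A_K$, which is (5); part~(4), whose hypotheses I verified above, yields
\[
\exp A(x;\wt\sigma)=\lcm(\exp N,\exp I)=\lcm\bigl(n,\exp(A,\wt\sigma,\alpha)\bigr),
\]
which is (3). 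The degree statement~(1) is immediate since $\deg(A(t),\wt\sigma,\alpha t)=n\deg A(t)=n\deg A$. For~(6), given $C\in\calA(F)$ with $C^K\sim A$, Remark~\ref{rem:gen-cr-prod}(2) gives $(A,\wt\sigma,\alpha)^K\sim A\sim C^K$, so the underlying division algebras of $(A,\wt\sigma,\alpha)$ and $C$ become isomorphic after scalar extension to $K$; Lemma~\ref{lemma:exp} then yields $\lcm(n,\exp(A,\wt\sigma,\alpha))=\lcm(n,\exp C)$. Finally, if $A$ is a division algebra then $\ind A=\deg A$, and~(1)--(2) force $\ind A(x;\wt\sigma)=\deg A(x;\wt\sigma)$, so $A(x;\wt\sigma)$ is a division algebra; analogously for $A(\!(x;\wt\sigma)\!)$. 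The main technical point is the identification $(A(t),\wt\sigma,\alpha)\cong (A,\wt\sigma,\alpha)\otimes_F F(t)$, which supplies the inertial lift needed to feed into Lemma~\ref{lem:inert-gen-cr-prod}; once that is in place the rest is bookkeeping.
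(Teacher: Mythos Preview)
Your proof is correct and follows essentially the same approach as the paper: represent $A(x;\wt\sigma)$ as $(A(t),\wt\sigma,\alpha t)$ via Lemma~\ref{lem:twisted-alg-cycl-cr-prod}, split the factor set as $(\wt\sigma,\alpha)\cdot c$ with $c$ the cyclic cocycle for $t$, identify $(A(t),\wt\sigma,\alpha)\cong(A,\wt\sigma,\alpha)\otimes_F F(t)$ (the paper cites Proposition~\ref{prop:gen-scal-ext-abelian} here), verify $N=(K(t),\sigma,t)$ is nicely semiramified via Example~\ref{ex:NSR}, and then feed everything into Lemma~\ref{lem:inert-gen-cr-prod}. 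The only cosmetic difference is that for~(6) the paper invokes Corollary~\ref{cor:exp} rather than Lemma~\ref{lemma:exp} directly, which amounts to the same thing.
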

\begin{proof}
Again we prove the theorem for $A(x;\wt\sigma)$ only.
Lemma~\ref{lem:twisted-alg-cycl-cr-prod} states
$A(x;\wt\sigma)\cong(A(t),\wt\sigma,\alpha t)$
and (1) follows from Proposition~\ref{prop:gen-cr-prod-def}.
In the following $F(t)$ is regarded with respect to the $t$-adic valuation.
We want to apply Lemma~\ref{lem:inert-gen-cr-prod} with
$(\omega,f)=(\wt\sigma,\alpha)\in\calF(G,A(t)^\times)$,
$I$ the underlying division algebra of $(A(t),\wt\sigma,\alpha)$,
$c=t\in Z^2(G,K(t)^\times)$
and $N=(K(t),G,t)$.
By Proposition~~\ref{prop:gen-scal-ext-abelian},
$$ (A,\wt\sigma,\alpha)\otimes_F F(t)\cong (A\otimes_F F(t),\wt\sigma\otimes\id,\alpha\otimes 1)=(A(t),\wt\sigma,\alpha). $$
This shows that $I$ is an inertial lift of the underlying division algebra of $(A,\wt\sigma,\alpha)$ over $F(t)$
and $\exp I=\exp\bar I=\exp (A,\wt\sigma,\alpha)$
(cf. proof of Theorem~\ref{thm:bar-F-embed-ILP}).
The cyclic extension $K(t)/F(t)$ is inertial and 
$t$ is a prime element of $F(t)$.
Example~\ref{ex:NSR} then shows that $N=(K(t),G,t)$ is a nicely semiramified division algebra with $\exp N=n$.
Since $(\omega,fc)=(\wt\sigma,\alpha t)$,
Lemma~\ref{lem:inert-gen-cr-prod} yields (2)--(5).
Moreover, Corollary~\ref{cor:exp} shows (6).
It is clear from (1) and (2)
that if $A$ is a division algebra, then $A(x;\wt\sigma)$ is a division algebra.
\end{proof}

\begin{cor}
  \label{cor:twist-alg-glob-exp-form}
If $F$ is a global field, then we have the formula
$$ \exp A(x;\wt\sigma)=\exp A(\!(x;\wt\sigma)\!)=
\lcm(n,n_w\ind A_w)_{w\in\Val(K)}, $$
where $n_w=[K:F]_w$.
\end{cor}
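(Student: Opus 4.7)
The plan is to derive the formula directly from part~(6) of Theorem~\ref{thm:twisted-alg-properties}, which says that $\exp A(x;\wt\sigma)=\exp A(\!(x;\wt\sigma)\!)=\lcm(n,\exp C)$ for any $C\in\calA(F)$ with $C^K\sim A$. Such $C$ exists by Corollary~\ref{cor:ext-auto-charac}, since $\sigma$ extends to $\wt\sigma$ on~$A$. So the task reduces to showing that for any such $C$,
\[
\lcm(n,\exp C)=\lcm(n,n_w\ind A_w)_{w\in\Val(K)}.
\]

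First, since $F$ is a global field, $\exp C=\ind C=\lcm(\ind C_v)_{v\in\Val(F)}$ by the formula \eqref{eq:global-index-exp}. Next, from $C^K\sim A$ we obtain $A_w\sim (C_v)_{K_w}$ for each $w\in\Val(K)$ with $w\mid v$. Because $K/F$ is Galois, the local degrees $n_w=[K:F]_w$ and the completions $K_w$ (hence the indices $\ind A_w$) depend only on $v$; we may therefore write $n_v$ and $\ind A_v$, with the understanding that we group together the $w\mid v$. Applying Lemma~\ref{lem:ind-local-scal-ext}(2) to the extension $K_w/F_v$ of degree $n_v$ and to the algebra $C_v$ gives
\[
n_v\ind A_v=\lcm(n_v,\ind C_v) \qt{for every $v\in\Val(F)$.}
\]

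Now I combine these pieces. Since $n_v\mid n$ for all $v$, we have $\lcm(n,n_v\ind A_v)=\lcm(n,\lcm(n_v,\ind C_v))=\lcm(n,\ind C_v)$. Taking the $\lcm$ over all $v\in\Val(F)$ (equivalently, over all $w\in\Val(K)$) yields
\[
\lcm(n,n_w\ind A_w)_{w\in\Val(K)}=\lcm\bigl(n,\lcm(\ind C_v)_{v\in\Val(F)}\bigr)=\lcm(n,\ind C)=\lcm(n,\exp C),
\]
which, by Theorem~\ref{thm:twisted-alg-properties}(6), equals $\exp A(x;\wt\sigma)=\exp A(\!(x;\wt\sigma)\!)$. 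There is no real obstacle here beyond bookkeeping; the content sits entirely in the local-to-global formula for $\ind C$, the fact that $\ind=\exp$ over global fields, and the local index formula of Lemma~\ref{lem:ind-local-scal-ext}(2).
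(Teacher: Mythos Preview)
Your proof is correct and follows essentially the same route as the paper. The paper cites Theorem~\ref{thm:twisted-alg-properties} together with Proposition~\ref{prop:exp-form-num-field}; the proof of the latter is precisely the computation you have written out here (global index formula \eqref{eq:global-index-exp}, then Lemma~\ref{lem:ind-local-scal-ext}(2) at each place, then collapse using $n_w\mid n$), so you have simply inlined that proposition rather than quoting it.
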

\begin{proof}
  Theorem \ref{thm:twisted-alg-properties} and Proposition \ref{prop:exp-form-num-field}.
\end{proof}

\begin{cor}
  \label{cor:twist-alg-cross-prod}
If $A$ contains a strictly maximal subfield that is Galois over~$F$ with Galois group $G$,
then $A(x;\wt\sigma)$ and $A(\!(x;\wt\sigma)\!)$ are crossed products
with group $G$.
Now suppose that $F$ is a global field.
If $A$ is a symbol algebra, 
then $A(x;\wt\sigma)$ and $A(\!(x;\wt\sigma)\!)$ are crossed products.
If $A$ is a $p$-algebra, 
then $A(x;\wt\sigma)$ and $A(\!(x;\wt\sigma)\!)$ are cyclic crossed products.
\end{cor}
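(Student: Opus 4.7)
The plan is to build the required Galois maximal subfields directly, rather than invoking the noncrossed product criterion of Theorem~\ref{thm:crossed-product} (which in the rational function case would anyway fail since $F(t)$ is non-Henselian). Suppose $L\subseteq A$ is a strictly maximal subfield with $L/F$ Galois and $\Gal(L/F)=G$; then $K\subseteq L$ and $[L:F]=[L:K][K:F]=n\deg A$. The element $t=\alpha^{-1}x^n$ is central in $A(x;\wt\sigma)$ and in $A(\!(x;\wt\sigma)\!)$, so in particular $t$ commutes with every element of $L$. Since $L/F$ is algebraic and $F(t)/F$ (resp.\ $F(\!(t)\!)/F$) is purely transcendental of transcendence degree one, $L$ and $F(t)$ (resp.\ $L$ and $F(\!(t)\!)$) are linearly disjoint over $F$, so the compositum inside $A(x;\wt\sigma)$ (resp.\ $A(\!(x;\wt\sigma)\!)$) is a field canonically isomorphic to $L(t)$ (resp.\ $L(\!(t)\!)$).

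Next I would compare degrees and check the Galois condition. By Theorem~\ref{thm:twisted-alg-properties}(1), $\deg A(x;\wt\sigma)=\deg A(\!(x;\wt\sigma)\!)=n\deg A$, which equals $[L(t):F(t)]=[L(\!(t)\!):F(\!(t)\!)]=[L:F]$. Hence $L(t)$ and $L(\!(t)\!)$ are strictly maximal subfields of $A(x;\wt\sigma)$ and $A(\!(x;\wt\sigma)\!)$ respectively. Each $\tau\in G$ extends uniquely to an $F(t)$-automorphism of $L(t)$ fixing $t$ coefficientwise (and analogously for Laurent series), and restriction to $L$ yields an isomorphism of Galois groups, so $L(t)/F(t)$ and $L(\!(t)\!)/F(\!(t)\!)$ are Galois with group $G$. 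This establishes the first assertion of the corollary.

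For the two remaining statements assume $F$ is a global field, so $K$ is as well. If $A$ is a symbol algebra of degree $m$, then by definition $\mu_m\subseteq K$, hence $\mu_{m_0}\subseteq K$ for the $m_0$ of Corollary~\ref{cor:symbol-alg-gal-max-sf}; that corollary then supplies a strictly maximal subfield $L\subseteq A$ Galois over $F$, and the previous step applies. If instead $A$ is a $p$-algebra, then $\charak F=p$ and $\deg A$ is a $p$-power, so the largest divisor of $\deg A$ prime to $p$ is $m_0=1$; the ``$m_0=1$'' clause of Corollary~\ref{cor:symbol-alg-gal-max-sf} then yields $L$ cyclic over $F$, and the construction above produces a cyclic strictly maximal subfield $L(t)$ (resp.\ $L(\!(t)\!)$) Galois over the centre. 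There is no genuine obstacle: the only point one must be careful about is that the inclusion $L\hookrightarrow A$ really does extend to a field inclusion $L(t)\hookrightarrow A(x;\wt\sigma)$, which is the linear-disjointness observation at the start.
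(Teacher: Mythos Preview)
Your proof is correct and follows essentially the same approach as the paper: build $L(t)$ (resp.\ $L(\!(t)\!)$) as a strictly maximal Galois subfield using the degree formula of Theorem~\ref{thm:twisted-alg-properties}(1), and for the global-field statements invoke Corollary~\ref{cor:symbol-alg-gal-max-sf}. You have merely made explicit the linear-disjointness argument and the identification of the Galois group that the paper leaves to the reader.
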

\begin{proof}
Let $L$ be a strictly maximal subfield of $A$ which is Galois over $F$ with $\Gal(L/F)=G$.
Then $L(t)$ and $L(\!(t)\!)$ are subfields of $A(x;\wt\sigma)$ and $A(\!(x;\wt\sigma)\!)$ 
that are Galois over $F(t)$ and $F(\!(t)\!)$ respectively with the same Galois group $G$.
By Theorem~\ref{thm:twisted-alg-properties} (1), they are also strictly maximal subfields,
thus $A(x;\wt\sigma)$ and $A(\!(x;\wt\sigma)\!)$ are crossed products with group $G$.
Now suppose that $F$ is a global field.
If $A$ is a symbol algebra, 
then Corollary~\ref{cor:symbol-alg-gal-max-sf} shows that $A$ contains a strictly maximal subfield $L$ that is Galois over $F$,
thus $A(x;\wt\sigma)$ and $A(\!(x;\wt\sigma)\!)$ are crossed products.
If $A$ is a $p$-algebra, then $L$ can be found cyclic over $F$ by Corollary~\ref{cor:symbol-alg-gal-max-sf},
thus $A(x;\wt\sigma)$ and $A(\!(x;\wt\sigma)\!)$ are cyclic crossed products.
\end{proof}

Besides that, if $A$ is a division algebra we can apply Theorem~\ref{thm:crossed-product} and achieve the following crossed product characterization.

\begin{cor}
  \label{cor:twist-alg-nonr-prod}
Suppose that $A\in\calD(K)$ is a division algebra.
Then the division algebra $A(x;\wt\sigma)$ is a crossed product
if and only if 
$A$ contains a strictly maximal subfield that is Galois over $F$.
In particular, if $A(x;\wt\sigma)$ is a crossed product, 
then $A$ is a crossed product.
The same holds for $A(\!(x;\wt\sigma)\!)$.
\end{cor}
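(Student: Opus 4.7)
The plan is to view this as a direct application of the noncrossed product criterion of Theorem~\ref{thm:crossed-product} to the valued field $F(t)$ (respectively $F(\!(t)\!)$) equipped with the $t$-adic valuation, whose residue field is $F$. I will work out the case of $A(x;\wt\sigma)$; the argument for $A(\!(x;\wt\sigma)\!)$ is word-for-word the same.

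First, I would set up the hypotheses. By Theorem~\ref{thm:twisted-alg-properties}~(4), $[A(x;\wt\sigma)]\in\SBr(F(t))$, so $A(x;\wt\sigma)$ is an inertially split division algebra over $F(t)$. By Theorem~\ref{thm:twisted-alg-properties}~(5), its residue algebra is $\overline{A(x;\wt\sigma)_{F(t)}}\cong A_K$, and since $A$ is assumed to be a division algebra we have $A_K=A$. Thus the residue algebra of $A(x;\wt\sigma)$ is $A$, and the residue field of $F(t)$ is $F$.

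For the forward implication, assume $A(x;\wt\sigma)$ is a crossed product. The implication $(2)\impl(3)$ in Theorem~\ref{thm:crossed-product}, applied to $D=A(x;\wt\sigma)$, yields that $\bar D=A$ contains a maximal subfield $L$ which is Galois over $\bar F=F$. Since $A$ is a division algebra, this maximal subfield is automatically strictly maximal, i.e.\ $[L:K]=\deg A$. This proves one direction. For the ``in particular'' statement, observe that any maximal subfield $L$ of $A$ contains $K=Z(A)$, and if $L/F$ is Galois, then so is $L/K$ (being an intermediate extension of a Galois extension), so $A$ itself is a crossed product.

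For the reverse implication, if $A$ contains a strictly maximal subfield $L$ which is Galois over $F$, then Corollary~\ref{cor:twist-alg-cross-prod} directly gives that $A(x;\wt\sigma)$ is a crossed product (with the same Galois group). The same strategy, with Theorem~\ref{thm:twisted-alg-properties} applied to $A(\!(x;\wt\sigma)\!)$ and $F(\!(t)\!)$, handles the Laurent series case. There is no real obstacle here; the corollary is a clean assembly of Theorem~\ref{thm:twisted-alg-properties} (to place the twisted algebras in $\SBr$ with the correct residue algebra) and Theorem~\ref{thm:crossed-product} (to transfer the crossed product property between a valued division algebra and its residue algebra), together with the trivial observation that maximal subfields of a division algebra over $K$ automatically contain $K$.
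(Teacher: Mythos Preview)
Your proof is correct and follows essentially the same route as the paper: apply Theorem~\ref{thm:twisted-alg-properties} to see that $A(x;\wt\sigma)$ is inertially split with residue algebra $A$ over the residue field $F$, then invoke Theorem~\ref{thm:crossed-product} for the forward direction and Corollary~\ref{cor:twist-alg-cross-prod} for the converse. You have simply spelled out a few details (e.g.\ why the maximal subfield is strictly maximal, and the ``in particular'' clause) that the paper leaves implicit.
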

\begin{proof}
Suppose  $A(x;\wt\sigma)$ is a crossed product.
Theorem~\ref{thm:twisted-alg-properties} states that $A(x;\wt\sigma)$ is an inertially split division algebras with residue algebra $A$.
Since $F$ is the residue field of $F(t)$,
Theorem~\ref{thm:crossed-product} shows that $A$ contains a maximal subfield that is Galois over $F$.
The converse was already shown in Corollary~\ref{cor:twist-alg-cross-prod}.
The same proof holds for $A(\!(x;\wt\sigma)\!)$.
\end{proof}

\begin{ex}
\label{ex:cycl-noncr-prod}
Let $\wt D$ be as in Example~\ref{ex:noncr-prod-cycl}.
We know from Example \ref{ex:sigma-extends} 
that 
$\sigma$ extends to an automorphism $\wt\sigma$ of $\wt D$.
Since $\wt D$ does not contain a maximal subfield which is Galois over $F$,
as shown in Example~\ref{ex:noncr-prod-cycl},
the algebras $\wt D(x;\wt\sigma)$ and $\wt D(\!(x;\wt\sigma)\!)$
are noncrossed products by Corollary \ref{cor:twist-alg-nonr-prod}.
It was also shown in Example~\ref{ex:noncr-prod-cycl}
that there is an $A\in\calD(F)$ of index and exponent $9$ with $A^K\sim\wt D$.
Therefore by the formulas of Theorem \ref{thm:twisted-alg-properties}, 
the index and exponent of $\wt D(x;\wt\sigma)$ and $\wt D(\!(x;\wt\sigma)\!)$
are equal to $9$.
\end{ex}

\begin{ex}
  \label{ex:twisted-scal-ext}
Let $A=B\otimes_F K$ for some $B\in\calA(F)$ and let $\wt\sigma=\id\otimes\sigma$.
Then by Theorem~\ref{thm:twisted-alg-properties},
\begin{align*}
  \deg A(x;\wt\sigma)&=\deg A(\!(x;\wt\sigma)\!)%
  =n\deg B, \\
  \ind A(x;\wt\sigma)&=\ind A(\!(x;\wt\sigma)\!)=n\ind A, \\
  \exp A(x;\wt\sigma)&=\exp A(\!(x;\wt\sigma)\!)=\lcm(n,\exp B).
\end{align*}
In particular, if $n\mid\exp B$, then 
$$ \exp A(x;\wt\sigma)=\exp A(\!(x;\wt\sigma)\!)=\exp B. $$
This can be used to construct new noncrossed products from old
such that the index increases while the exponent remains the same
(cf. \cite[Theorem~2]{saltman:noncr-prod-small-exp}).
For, if $B$ is a noncrossed product division algebra with $n\mid\exp B$
such that $A=B^K$ is a noncrossed product division algebra,
then $A(x;\wt\sigma)$ and $A(\!(x;\wt\sigma)\!)$ are noncrossed products by Corollary~\ref{cor:twist-alg-nonr-prod}.
Moreover, since $\ind A=\deg B$, they are division algebras with index $n\ind B$ and exponent $\exp B$.
\end{ex}

\begin{remark}
\label{rem:brussel-ex}
In Theorem~\ref{thm:twisted-alg-properties} the algebras
$A(x;\wt\sigma)$ and $A(\!(x;\wt\sigma)\!)$
are presented as generalized crossed products and then decomposed into a tensor product $I\otimes N$.
Of course, this process can be reversed.
As an application, we can show that the noncrossed products from \cite{brussel:noncr-prod} are of the form $\wt D(x;\wt\sigma)$ and $\wt D(\!(x;\wt\sigma)\!)$.
We switch to the notation that was used in Chapter 2.
The noncrossed products in \cite{brussel:noncr-prod} are the underlying division algebras $D$ of tensor products
$A\otimes_k(K(t)/k(t),\sigma,t)$,
where $A\in\calD(k)$ and $K/k$ is cyclic with $\Gal(K/k)=\gen{\sigma}$.
Let $\wt D=A_K$.
By Proposition~\ref{prop:res-classify},
there is a cyclic factor set $(\wt\sigma,\alpha)\in\calF(G,\ug{\wt D})$
such that $(\wt D,\wt\sigma,\alpha)\sim A$.
Then, using Proposition~\ref{prop:gen-scal-ext}, the Product Theorem
and Lemma~\ref{lem:twisted-alg-cycl-cr-prod},  
\begin{align*}
D&\sim A^{k(t)}\otimes_{k(t)} (K(t)/k(t),\sigma,t)
\sim (\wt D(t),\wt\sigma,\alpha)\otimes_{k(t)} (K(t)/k(t),\sigma,t) \\
&\sim (\wt D(t),\wt\sigma,\alpha t)\cong \wt D(x;\wt\sigma).
\end{align*}
By Theorem~\ref{thm:twisted-alg-properties}, 
$\wt D(x;\wt\sigma)$ is a division algebra,
thus $\wt D(x;\wt\sigma)\cong D$.
Analogously, the underlying division algebra of
$A\otimes_k (K(\!(t)\!)/k(\!(t)\!),\sigma,t)$ 
is $\wt D(\!(x;\wt\sigma)\!)$.
\end{remark}

\subsection{Iterated twisted function fields}

We shall now iterate the process of building twisted function fields and twisted Laurent series rings from $A$.
Let $K/F$ be a finite abelian extension with Galois group
$G=S_1\times\cdots\times S_r$,
where the $S_i$ are cyclic with $S_i=\gen{\sigma_i}$ and $|S_i|=n_i$,
and let $|G|=n_1\cdots n_r=n$.
Suppose that $\sigma_1,\ldots,\sigma_r$ extend to $F$-automorphisms
$\wt\sigma_1,\ldots,\wt\sigma_r$ of $A$ respectively.
By an \emph{iterated twisted function field} 
and an \emph{iterated twisted Laurent series ring}
we mean rings
\[
A(x_1;\sigma^*_1)(x_2;\sigma^*_2)\cdots(x_r;\sigma^*_r)
\qt{and}
A(\!(x_1;\sigma^*_1)\!)(\!(x_2;\sigma^*_2)\!)\cdots(\!(x_r;\sigma^*_r)\!)
\]
respectively,
where the $\sigma^*_i$ are automorphisms extending the $\wt\sigma_i$ respectively.
The following Theorem shows how such rings can be build from abelian factor sets.

\begin{theorem}
  \label{thm:iter-twisted}
Suppose there are elements $u_{ij},\alpha_i\in\ug A$, \mbox{$1\leq i,j\leq r$},
that satisfy the relations (\ref{eq:uij-rel}).
Then there exists an iterated twisted function field 
$$R=A(x_1;\sigma^*_1)(x_2;\sigma^*_2)\cdots(x_r;\sigma^*_r)$$
and an iterated twisted Laurent series ring
$$S=A(\!(x_1;\sigma^*_1)\!)(\!(x_2;\sigma^*_2)\!)\cdots(\!(x_r;\sigma^*_r)\!)$$
such that
\begin{equation}
  \label{eq:mult-in-iter-twisted}
  x_i a=\wt\sigma_i(a)x_i, \quad x_ix_j=u_{ij}x_jx_i
\end{equation}
for all $a\in A$, $1\leq i,j\leq r$.
The centres of these rings are the fields
\begin{equation*}
  \label{eq:centre-iter-twisted}
  Z(R)=F(t_1,\ldots,t_r) \qt{and} Z(S)=F(\!(t_1,\ldots,t_r)\!)
\end{equation*}
respectively, where $t_i=\alpha_i^{-1}x_i^{n_i}$ for $1\leq i\leq r$.
\end{theorem}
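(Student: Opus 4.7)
The plan is induction on $r$. For the base case $r=1$, I would apply the construction of the preceding subsection to $A$ with $\wt\sigma:=\wt\sigma_1$ and $\alpha:=\alpha_1$; the latter verifies (\ref{eq:rel-alpha}) by specializing (\ref{eq:uij-rel-2}) and (\ref{eq:uij-rel-3}) to $i=j=1$ together with $u_{11}=1$. Writing $G_j:=\langle\sigma_1,\ldots,\sigma_j\rangle$ and $K_j:=K^{G_j}$, so that $K_r=F$, Lemma~\ref{lem:twisted-alg-cycl-cr-prod} gives $Z(R_1)=K_1(t_1)$ and $Z(S_1)=K_1(\!(t_1)\!)$.

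For the inductive step, assume the theorem for $k$ indeterminates, $1\leq k<r$, so that $R_k$ is central simple over $K_k(t_1,\ldots,t_k)$ and (\ref{eq:mult-in-iter-twisted}) holds for $1\leq i,j\leq k$. In exact analogy with the proof of Proposition~\ref{prop:ex-abelian-factor-set}, I would define an $F$-algebra automorphism $\sigma^*_{k+1}$ of $R_k$ by
\[
  \sigma^*_{k+1}|_A:=\wt\sigma_{k+1},\qquad \sigma^*_{k+1}(x_i):=u_{k+1,i}x_i\quad\text{for $1\leq i\leq k$,}
\]
and similarly on $S_k$. Well-definedness reduces to checking that this prescription respects the relations $x_ia=\wt\sigma_i(a)x_i$ and $x_ix_j=u_{ij}x_jx_i$; the former uses (\ref{eq:uij-rel-2}), and the latter is precisely (\ref{eq:uij-rel-4}) with the free index specialized to $k+1$, which is exactly the calculation performed in Proposition~\ref{prop:ex-abelian-factor-set}. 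Invertibility of $\sigma^*_{k+1}$ follows because $R_k$ is simple and the $u_{k+1,i}$ are units, so the image contains $A$ and all $x_i$.

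Next I would verify that $(R_k,\sigma^*_{k+1})$ satisfies the hypotheses needed to form the twisted function and Laurent series rings of the preceding subsection. Since $G$ is abelian and $G_k\cap\langle\sigma_{k+1}\rangle=\{1\}$, the restriction $\sigma_{k+1}|_{K_k}$ generates $\Gal(K_k/K_{k+1})$ with order $n_{k+1}$. A direct computation using (\ref{eq:uij-rel-3}) gives $\sigma^*_{k+1}(t_i)=\wt\sigma_{k+1}(\alpha_i)^{-1}N_i(u_{k+1,i})x_i^{n_i}=t_i$ for $1\leq i\leq k$, so $\sigma^*_{k+1}$ restricts on $Z(R_k)=K_k(t_1,\ldots,t_k)$ to a generator of $\Gal(K_k(t_1,\ldots,t_k)/K_{k+1}(t_1,\ldots,t_k))$. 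The element $\alpha_{k+1}\in A^\times\subseteq R_k^\times$ is fixed by $\sigma^*_{k+1}$ by (\ref{eq:uij-rel-3}) with $i=j=k+1$ and $u_{k+1,k+1}=1$, and $(\sigma^*_{k+1})^{n_{k+1}}=\iota_{\alpha_{k+1}}$ holds on $A$ by (\ref{eq:uij-rel-2}). The same identity on each generator $x_i$ reduces after a brief calculation to ${}_{k+1}N(u_{k+1,i})=N_{k+1}(u_{i,k+1})^{-1}$, which is a telescoping consequence of (\ref{eq:uij-rel-1}) and (\ref{eq:N_i-inv}).

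With these verifications the construction of the preceding subsection produces $R_{k+1}:=R_k(x_{k+1};\sigma^*_{k+1})$ and $S_{k+1}:=S_k(\!(x_{k+1};\sigma^*_{k+1})\!)$; Lemma~\ref{lem:twisted-alg-cycl-cr-prod} identifies their centres as $K_{k+1}(t_1,\ldots,t_{k+1})$ and $K_{k+1}(\!(t_1,\ldots,t_{k+1})\!)$ respectively, where $t_{k+1}=\alpha_{k+1}^{-1}x_{k+1}^{n_{k+1}}$. The remaining relations in (\ref{eq:mult-in-iter-twisted}) involving $x_{k+1}$ follow from the definition of $\sigma^*_{k+1}$ together with $u_{k+1,k+1}=1$ and $u_{i,k+1}=u_{k+1,i}^{-1}$. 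Iterating to $k=r$ and using $K_r=F$ yields the centres as stated. The main obstacle is bookkeeping: every one of the four relations in (\ref{eq:uij-rel}) enters the argument, and the one requiring real care is showing that $(\sigma^*_{k+1})^{n_{k+1}}=\iota_{\alpha_{k+1}}$ holds on all of $R_k$ rather than just on $A$.
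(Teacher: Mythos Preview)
Your proof is correct and follows essentially the same approach as the paper's: induction on the number of indeterminates, defining $\sigma^*_{k+1}$ on $R_k$ by $\sigma^*_{k+1}|_A=\wt\sigma_{k+1}$ and $\sigma^*_{k+1}(x_i)=u_{k+1,i}x_i$, verifying that this respects the relations (by the same computation as in Proposition~\ref{prop:ex-abelian-factor-set}), checking $\sigma^*_{k+1}(t_i)=t_i$ via (\ref{eq:uij-rel-3}), and confirming $(\sigma^*_{k+1})^{n_{k+1}}=\iota_{\alpha_{k+1}}$ before applying the single-variable construction of \S\ref{sec:One-twist-indet}. The only cosmetic difference is that the paper starts the induction at $r=0$ rather than $r=1$.
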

\begin{proof}
For simplicity of exposition this will be formulated only for $R$,
and is in complete analogy for $S$.
The construction can be found in \cite[Theorem 1.3]{amitsur-saltman:gen-abel-cr-prod} for the case $A=K$,
and it is the same in the general case. 
However we repeat it here for completeness.

We prove the theorem by induction on $r$.
It is trivial for $r=0$, 
so let $r>0$ and suppose that
$R':=A(x_1;\sigma^*_1)(x_2;\sigma^*_2)\cdots(x_r;\sigma^*_{r-1})$
is constructed such that (\ref{eq:mult-in-iter-twisted}) holds for all $1\leq i,j<r$
and $Z(R')=F'(t_1,\ldots,t_{r-1})$,
where $F'=\Fix(\sigma_1,\ldots,\sigma_{r-1})$.
Define the automorphism $\sigma^*_r$ of $R'$ by
\begin{equation}
  \label{eq:sigma-*-def}
  \sigma_r^*(a):=\wt\sigma_r(a), \quad
  \sigma_r^*(x_i):=u_{ri}x_i \qt{for all $a\in A$, $i<r$.}
\end{equation}
To see that (\ref{eq:sigma-*-def}) defines an automorphism 
we have to check that $\sigma^*_r$ preserves the relations (\ref{eq:mult-in-iter-twisted}).
The calculation is the same as in the proof of Proposition~\ref{prop:ex-abelian-factor-set}.
Next we show that $\sigma^*_r$ fixes $F(t_1,\ldots,t_{r-1})$.
By (\ref{eq:mult-in-iter-twisted}) and (\ref{eq:uij-rel-3}) we get
\begin{align*}
\sigma^*_r(t_i)&=\sigma^*_r(\alpha_i^{-1}x_i^{n_i})
=\sigma^*_r(\alpha_i)^{-1}(u_{ri}x_i)^{n_i} \\
&=\wt\sigma_r(\alpha_i)^{-1} N_i(u_{ri})x_i^{n_i}=\alpha_i^{-1}x_i^{n_i}=t_i
\end{align*}
for all $1\leq i<r$.
Therefore $\Gal(Z(R')/F(t_1,\ldots,t_{r-1}))=\gen{\sigma^*_r|_{Z(R')}}$.
Like in the proof of Proposition~\ref{prop:ex-abelian-factor-set} we show
${\sigma_r^*}^{n_r}=\iota_{\alpha_r}$ and $\sigma_r^*(\alpha_r)=\alpha_r$.
Therefore we can build $R:=R'(x_r;\sigma^*_r)$ as in \S~\ref{sec:One-twist-indet},
and get $Z(R)=F(t_1,\ldots,t_r)$ for $t_r=\alpha_r^{-1}x_r^{n_r}$.
\end{proof}

\begin{remark}
  \label{rem:write-iter-twist}
The rings $R$ and $S$ in Theorem~\ref{thm:iter-twisted} are completely described over $A$ by the rules (\ref{eq:mult-in-iter-twisted}).
We therefore also write
$$R=A(x;\wt\sigma;u)=A(x_1,\ldots,x_r;\wt\sigma_1,\ldots,\wt\sigma_r;u_{ij})$$
and
$$S=A(\!(x;\wt\sigma;u)\!)=A(\!(x_1,\ldots,x_r;\wt\sigma_1,\ldots,\wt\sigma_r;u_{ij})\!).$$
In the case $r=2$ the $u_{ij}$ are determined by the single element $u=u_{21}$
(\cf Remark~\ref{rem:abelian-r-2}).
We then write
\[ A(x_1,x_2;\wt\sigma_1,\wt\sigma_2;u) \qt{and}
A(\!(x_1,x_2;\wt\sigma_1,\wt\sigma_2;u)\!) \]
for $R$ and $S$ respectively.
\end{remark}

\begin{lemma}
  \label{lem:iter-twisted-abel-cr-prod}
$A(x;\wt\sigma;u)$ and $A(\!(x;\wt\sigma;u)\!)$ are central simple $F(t_1,\ldots,t_r)$- and $F(\!(t_1,\ldots,t_r)\!)$-algebras respectively and
\begin{align*}
  A(x;\wt\sigma;u)&\cong(A(t_1,\cdots,t_r),\wt\sigma,u,\alpha t), \\  
  A(\!(x;\wt\sigma;u)\!)&\cong(A(\!(t_1,\ldots,t_r)\!),\wt\sigma,u,\alpha t).
\end{align*}
Here $\wt\sigma_i$ also denotes the extension of $\wt\sigma_i$ to $A(t_1,\ldots,t_r)$ and $A(\!(t_1,\ldots,t_r)\!)$ that fixes $t_1,\ldots,t_r$,
and $(\alpha t)_i$ stands for $\alpha_i t_i$.
\end{lemma}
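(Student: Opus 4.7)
The plan is to induct on $r$, the number of indeterminates. The base case $r=1$ is exactly Lemma~\ref{lem:twisted-alg-cycl-cr-prod}, since with a single indeterminate the relations (\ref{eq:uij-rel}) collapse to $u_{11}=1$ and (\ref{eq:rel-alpha}), so that the abelian factor set $(\wt\sigma,u,\alpha t)$ reduces to the cyclic factor set $(\wt\sigma_1,\alpha_1 t_1)$. For the inductive step I use the construction from the proof of Theorem~\ref{thm:iter-twisted}: write $F'=\Fix(\sigma_1,\ldots,\sigma_{r-1})\subseteq K$ and $R':=A(x_1,\ldots,x_{r-1};\wt\sigma_1,\ldots,\wt\sigma_{r-1};u_{ij})$, so by the inductive hypothesis $R'\in\calA(F'(t_1,\ldots,t_{r-1}))$ and $R'\cong(A(t_1,\ldots,t_{r-1}),S_1\times\cdots\times S_{r-1},(\wt\sigma,u,\alpha t))$.

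The extension $\sigma_r^*$ of $\wt\sigma_r$ to $R'$ built in the proof of Theorem~\ref{thm:iter-twisted} fixes $t_1,\ldots,t_{r-1}$ and restricts to $\sigma_r|_{F'}$ on the centre, so $F'(t_1,\ldots,t_{r-1})/F(t_1,\ldots,t_{r-1})$ is cyclic of degree $n_r$ with generating automorphism $\sigma_r^*|_{F'(t_1,\ldots,t_{r-1})}$. Moreover ${\sigma_r^*}^{n_r}=\iota_{\alpha_r}$ and $\sigma_r^*(\alpha_r)=\alpha_r$, so Lemma~\ref{lem:twisted-alg-cycl-cr-prod} applies to $R=R'(x_r;\sigma_r^*)$ and shows that $R$ is central simple over $F(t_1,\ldots,t_{r-1})(t_r)=F(t_1,\ldots,t_r)$, with $\deg R=n_r\deg R'=n\deg A$.

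To realize $R$ as the claimed abelian generalized crossed product, I invoke Lemma~\ref{lem:abelian-cr-prod-basis} with the elements $z_i:=x_i$. The subring $K(t_1,\ldots,t_r)$ is a subfield of $R$ (it sits inside $A(t_1,\ldots,t_r)$, which is obtained from $A[t_1,\ldots,t_r]$ by localizing at $F[t_1,\ldots,t_r]\setminus\{0\}$), and the multiplication rule $x_ia=\wt\sigma_i(a)x_i$ gives $\iota_{x_i}|_K=\sigma_i$. The crucial point is to check that $\iota_{x_i}$ also fixes each $t_j$, so that $\iota_{x_i}|_{K(t_1,\ldots,t_r)}$ is indeed the element $\sigma_i\in\Gal(K(t_1,\ldots,t_r)/F(t_1,\ldots,t_r))$. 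From $x_ix_j=u_{ij}x_jx_i$ we get $\iota_{x_i}(x_j)=u_{ij}x_j$, so
\[
\iota_{x_i}(x_j^{n_j})=(u_{ij}x_j)^{n_j}=u_{ij}\wt\sigma_j(u_{ij})\cdots\wt\sigma_j^{n_j-1}(u_{ij})\,x_j^{n_j}=N_j(u_{ij})\,x_j^{n_j},
\]
and using (\ref{eq:uij-rel-3}) with $i,j$ swapped, namely $\wt\sigma_i(\alpha_j)=N_j(u_{ij})\alpha_j$, one obtains $\iota_{x_i}(t_j)=\wt\sigma_i(\alpha_j^{-1})N_j(u_{ij})x_j^{n_j}=\alpha_j^{-1}x_j^{n_j}=t_j$. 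Now a dimension count clinches $C_R(K(t_1,\ldots,t_r))=A(t_1,\ldots,t_r)$: the inclusion ``$\supseteq$'' is clear, while by Lemma~\ref{lem:z-sigma-lin-indep} the $n$ monomials $x_1^{i_1}\cdots x_r^{i_r}$ are free over the centralizer, forcing $[R:C_R(K(t_1,\ldots,t_r))]\geq n$; on the other hand $[R:A(t_1,\ldots,t_r)]=(n\deg A)^2/(n\deg(A)^2)=n$. Since $x_ix_jx_i^{-1}x_j^{-1}=u_{ij}$ and $x_i^{n_i}=\alpha_i t_i$ by the definitions, Lemma~\ref{lem:abelian-cr-prod-basis} then yields $R\cong(A(t_1,\ldots,t_r),G,(\wt\sigma,u,\alpha t))$ as desired.

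The main obstacle is the fixing-the-$t_j$ computation just sketched; everything else is bookkeeping against the inductive hypothesis and previously established structural results. The Laurent series ring $S$ is handled in complete analogy: the induction uses the Laurent-series half of Lemma~\ref{lem:twisted-alg-cycl-cr-prod}, and the centralizer and generalized crossed product identifications proceed verbatim with $A(t_1,\ldots,t_r)$ replaced by $A(\!(t_1,\ldots,t_r)\!)$.
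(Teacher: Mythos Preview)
Your proof is correct. It takes a somewhat more elaborate route than the paper's own argument, so a brief comparison is worthwhile.

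The paper proceeds directly, without induction: it observes that the monomials $x_1^{i_1}\cdots x_r^{i_r}$ generate $R$ over $A(t_1,\ldots,t_r)$ and are free over it (via Lemma~\ref{lem:z-sigma-lin-indep}), reads off $C_R(K(t_1,\ldots,t_r))=A(t_1,\ldots,t_r)$ from the resulting direct-sum decomposition, and then identifies $R$ with the generalized crossed product $(A(t_1,\ldots,t_r),\wt\sigma,u,\alpha t)$ by matching multiplication rules; central simplicity over $F(t_1,\ldots,t_r)$ then comes for free from Proposition~\ref{prop:gen-cr-prod-def}. You instead establish central simplicity first, by induction on $r$ through the cyclic Lemma~\ref{lem:twisted-alg-cycl-cr-prod}, and only afterwards carry out the centralizer computation and invoke Lemma~\ref{lem:abelian-cr-prod-basis}. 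Both arguments hinge on the same structural lemma and on the same ``$\iota_{x_i}$ fixes $t_j$'' computation (which in the paper already appears in the proof of Theorem~\ref{thm:iter-twisted}). Your route is more self-contained and makes the degree count explicit; the paper's is shorter because it lets the generalized crossed product identification do double duty.
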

\begin{proof}
We give the proof here for $A(x;\wt\sigma;u)$.
It is essentially the same as the proof of Lemma~\ref{lem:twisted-alg-cycl-cr-prod}.
The set \mbox{$\sett{x_1^{i_1}\cdots x_r^{i_r}}{0\leq i_k<n_k,1\leq k\leq r}$}
generates $A(x;\wt\sigma;u)$ over $A(t_1,\ldots,t_r)$ 
and is free over $A(t_1,\ldots,t_r)$ %
by Lem\-ma~\ref{lem:abelian-cr-prod-basis}.
Hence $A(x;\wt\sigma;u)=\bigoplus_{0\leq i_k\leq n_k} A(t_1,\ldots,t_r)x_1^{i_1}\cdots x_r^{i_r}$.
This shows that $C_{A(x;\wt\sigma;u)}(K(t_1,\ldots,t_r))=A(t_1,\ldots,t_r)$.
The assertion follows from Lemma~\ref{lem:abelian-cr-prod-basis} since the $x_i$ satisfy the relations (\ref{eq:mult-in-iter-twisted})
and $x_i^{n_i}=\alpha_i t_i$ for all $1\leq i,j\leq r$.
\end{proof}

\begin{theorem}
  \label{thm:iter-twist-properties}
$A(x;\wt\sigma;u)$ and $A(\!(x;\wt\sigma;u)\!)$ are central simple algebras over
$F(t_1,\ldots,t_r)$ and $F(\!(t_1,\ldots,t_r)\!)$ respectively with
\begin{enumerate}
\item $\deg A(x;\wt\sigma;u)=A(\!(x;\wt\sigma;u)\!)=n\deg A,$
\item $\ind A(x;\wt\sigma;u)=A(\!(x;\wt\sigma;u)\!)=n\ind A,$
\item $\exp A(x;\wt\sigma;u)=A(\!(x;\wt\sigma;u)\!)=\lcm(\exp G,\exp (A,\wt\sigma,u,\alpha)),$
\item $[A(x;\wt\sigma;u)]\in\SBr(F(t_1,\ldots,t_r)), 
[A(\!(x;\wt\sigma;u)\!)]\in\SBr(F(\!(t_1,\ldots,t_r)\!)),$
\item $\bar{A(x;\wt\sigma;u)_{F(t_1,\ldots,t_r)}}\cong\bar{A(\!(x;\wt\sigma;u)\!)_{F(\!(t_1,\ldots,t_r)\!)}}\cong A_K.$
\end{enumerate}
Here, $F(t_1,\ldots,t_r)$ and $F(\!(t_1,\ldots,t_r)\!)$ are regarded with respect to the composite valuation of the $t_i$-adic valuations.
In particular, if $A$ is a division algebra then
$A(x;\wt\sigma;u)$ and $A(\!(x;\wt\sigma;u)\!)$ are division algebras.
\end{theorem}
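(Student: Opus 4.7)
The plan is to follow the three-step pattern of Theorem~\ref{thm:twisted-alg-properties} verbatim, with Lemma~\ref{lem:iter-twisted-abel-cr-prod} in place of Lemma~\ref{lem:twisted-alg-cycl-cr-prod} and with abelian rather than cyclic generalized crossed products. I treat only $R=A(x;\wt\sigma;u)$; the argument for $S$ is identical with $F(\!(t_1,\ldots,t_r)\!)$ in place of $F(t_1,\ldots,t_r)$.

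First, set $F':=F(t_1,\ldots,t_r)$, $K':=K(t_1,\ldots,t_r)$, $A':=A(t_1,\ldots,t_r)$, and equip $F'$ with the composite of the $t_i$-adic valuations, so that $\bar{F'}=F$ and $\Gamma_{F'}=\Z^r$. Theorem~\ref{thm:bar-F-embed-ILP} endows $F'$ with the inertial lift property, and Lemma~\ref{lem:bar-F-embed-inert-lift} identifies $K'/F'$ as an inertial Galois lift of $K/F$ (with Galois group naturally $G$) and $A'$ as an inertial lift of $A$ over $K'$. By Lemma~\ref{lem:iter-twisted-abel-cr-prod}, $R\cong(A',G,(\wt\sigma,u,\alpha t))$, with $\wt\sigma_i$ extended to $A'$ fixing each $t_j$ and $(\alpha t)_i=\alpha_it_i$.

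Next I decompose $(\wt\sigma,u,\alpha t)$ into an inertial factor set times a nicely semiramified cocycle. Proposition~\ref{prop:gen-scal-ext-abelian} gives the isomorphism $(A',G,(\wt\sigma,u,\alpha))\cong(A,\wt\sigma,u,\alpha)\otimes_FF'$, so (cf.\ Example~\ref{ex:explicit-lift}) its underlying division algebra $I\in\calD(F')$ is an inertial lift of the underlying division algebra of $(A,\wt\sigma,u,\alpha)$, with $\exp I=\exp\bar I=\exp(A,\wt\sigma,u,\alpha)$. Setting $m:=\exp G$, the assignment $\sigma_i\mapsto\tfrac{m}{n_i}e_i+m\Gamma_{F'}$ embeds $G$ into $\Gamma_{F'}/m\Gamma_{F'}$, and Example~\ref{ex:NSR} then produces a nicely semiramified $N=(K',G,c)\cong\bigotimes_{i=1}^r(L_i/F',\sigma_i,t_i)$ with $\exp N=m$, where $L_i$ is the cyclic subfield of $K'/F'$ fixed by the $\sigma_j$ with $j\neq i$.

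Finally I verify $R\cong(A',G,(\wt\sigma,u,\alpha)\cdot c)$ by a direct computation of the abelian parameters of $(\wt\sigma,u,\alpha)\cdot c$: the cocycle $c$ vanishes on pairs $(\sigma_j,\sigma_i)$ with $i\neq j$ (hence leaves $u$ unchanged), while restriction to each cyclic subgroup $\gen{\sigma_i}$ contributes precisely the factor $t_i$ to $\alpha_i$ by Remark~\ref{rem:cyclic-factor-set-form}. This identification---which is the main technical step, the remainder being routine bookkeeping---puts us in position to invoke Lemma~\ref{lem:inert-gen-cr-prod} with $A'$, $(\omega,f)=(\wt\sigma,u,\alpha)$, $I$, $N$ and $c$. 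The lemma then yields at once $\deg R=n\deg A$, $\ind R=[K':F']\ind A'=n\ind A$, $[R]\in\SBr(F')$, $\bar{R_{F'}}\cong\bar{A'_{K'}}\cong A_K$, and $\exp R=\lcm(\exp N,\exp I)=\lcm(\exp G,\exp(A,\wt\sigma,u,\alpha))$; the final assertion that $R$ is a division algebra whenever $A$ is follows from the equality $\deg R=\ind R$ in that case.
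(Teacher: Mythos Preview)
Your proof is correct and follows exactly the route the paper takes: use Lemma~\ref{lem:iter-twisted-abel-cr-prod} to write $R$ as the abelian generalized crossed product $(A',\wt\sigma,u,\alpha t)$, identify $I$ as the inertial lift of the underlying division algebra of $(A,\wt\sigma,u,\alpha)$ via Proposition~\ref{prop:gen-scal-ext-abelian}, take $N=(K',G,\hat c_1\cdots\hat c_r)$ as in Example~\ref{ex:NSR}, and apply Lemma~\ref{lem:inert-gen-cr-prod}. Your verification that $(\wt\sigma,u,\alpha t)=(\wt\sigma,u,\alpha)\cdot c$ is the only place you add detail beyond the paper's terse ``proved just like Theorem~\ref{thm:twisted-alg-properties}''; note that this identity in fact holds on the nose (not merely up to cohomology), since replacing $\alpha_i$ by $\alpha_it_i$ with $t_i$ central multiplies $f(\sigma,\tau)$ by exactly $\prod_i t_i^{[a_i+b_i\geq n_i]}=c(\sigma,\tau)$.
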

\begin{proof}
This theorem is proved just like Theorem~\ref{thm:twisted-alg-properties}
using Lemma \ref{lem:iter-twisted-abel-cr-prod}.
Here Lemma~\ref{lem:inert-gen-cr-prod} is applied
with $I$ the underlying division algebra of $(A(t_1,\ldots,t_r),\wt\sigma,u,\alpha)$
and $N=(K(t_1,\ldots,t_r),G,\hat c_1\cdots\hat c_r)$,
where the $\hat c_i$ are inflations of cyclic cocycles defined by $t_i$.
Then $N$ is a nicely semiramified division algebra by Example~\ref{ex:NSR}
because $v(t_1),\ldots,v(t_r)$ form a base of the value group of $F(t_1,\ldots,t_r)$,
where $v$ denotes the composite valuation of the $t_i$-adic valuations.
\end{proof}

Analogously to Corollary  \ref{cor:twist-alg-nonr-prod} we get

\begin{cor}
  \label{cor:iter-alg-nonr-prod}
Suppose that $A\in\calD(K)$ is a division algebra.
Then the division algebra $A(x;\wt\sigma;u)$ is a crossed product
if and only if $A$ contains a maximal subfield that is Galois over $F$.
In particular, if $A(x;\wt\sigma;u)$ is a crossed product,
then $A$ is a crossed product.
The same holds for $A(\!(x;\wt\sigma;u)\!)$.
\end{cor}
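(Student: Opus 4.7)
The plan is to combine Theorem~\ref{thm:iter-twist-properties}, which presents $A(x;\wt\sigma;u)$ and $A(\!(x;\wt\sigma;u)\!)$ as inertially split division algebras, with the general noncrossed product criterion Theorem~\ref{thm:crossed-product}. I will write the argument out for $D:=A(x;\wt\sigma;u)$; the case $A(\!(x;\wt\sigma;u)\!)$ is identical with $F(t_1,\ldots,t_r)$ replaced by $F(\!(t_1,\ldots,t_r)\!)$.

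First I would record the two facts supplied by Theorem~\ref{thm:iter-twist-properties}: the centre of $D$ is the valued field $F(t_1,\ldots,t_r)$ (with respect to the composite of the $t_i$-adic valuations), whose residue field is $F$; and $[D]\in\SBr(F(t_1,\ldots,t_r))$ with residue algebra $\bar D\cong A_K$. Since $A$ is assumed to be a division algebra over $K$, we have $A_K\cong A$, so $\bar D\cong A$ and $Z(\bar D)=K$.

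For the forward direction, suppose $D$ is a crossed product. Then the implication $(2)\impl(3)$ of Theorem~\ref{thm:crossed-product}, which holds for any inertially split valued division algebra without Henselian hypothesis, yields that $\bar D\cong A$ contains a maximal subfield which is Galois over the residue field of the centre, namely over $F$. This gives one direction of the equivalence.

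For the reverse direction, let $L$ be a maximal subfield of $A$ that is Galois over $F$. Since the elements $t_i=\alpha_i^{-1}x_i^{n_i}$ are central in $D$, the $F$-subalgebra $L[t_1,\ldots,t_r]\subseteq D$ is commutative and is a polynomial ring in the $t_i$ over $L$; its field of fractions $L(t_1,\ldots,t_r)$ is therefore a subfield of $D$. It is Galois over $F(t_1,\ldots,t_r)$ with Galois group $\Gal(L/F)$ because the $t_i$ are central and algebraically independent, and its degree over $F(t_1,\ldots,t_r)$ equals $[L:F]=[L:K][K:F]=(\ind A)\cdot n$, which by Theorem~\ref{thm:iter-twist-properties}~(2) is $\ind D$. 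Hence $L(t_1,\ldots,t_r)$ is a strictly maximal subfield of $D$ that is Galois over $Z(D)$, and $D$ is a crossed product.

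Finally, the ``in particular'' statement follows at once: if $D$ is a crossed product, the forward direction yields a maximal subfield $L\subseteq A$ that is Galois over $F$; since $F\subseteq K\subseteq L$ and $L/F$ is Galois, $L/K$ is also Galois, so $A$ itself is a crossed product. I do not foresee a real obstacle here; the only point that needs care is keeping straight that the residue field of the centre of $D$ is $F$ (not $K$), so that Theorem~\ref{thm:crossed-product} is invoked with the correct base field, and that centrality of the $t_i$ is what allows the elementary lift of $L$ to a maximal subfield of $D$ in the converse, bypassing the Henselian hypothesis that the general $(3)\impl(1)$ of Theorem~\ref{thm:crossed-product} would require.
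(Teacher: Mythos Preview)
Your proposal is correct and follows essentially the same approach as the paper: the paper indicates the proof is ``analogously to Corollary~\ref{cor:twist-alg-nonr-prod}'', whose argument is precisely yours---apply Theorem~\ref{thm:iter-twist-properties} to identify $A(x;\wt\sigma;u)$ as an inertially split division algebra with residue algebra $A$ and residue field $F$, then invoke Theorem~\ref{thm:crossed-product} for the forward direction and lift a Galois maximal subfield $L$ of $A$ to $L(t_1,\ldots,t_r)$ for the converse.
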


The following is an application of Proposition~\ref{prop:gen-scal-ext-abelian}.
\begin{prop}
  \label{prop:gen-scal-ext-iterated}
Let $(\wt\sigma,u,\alpha)\in\calF(G,A^\times)$ be an abelian factor set.
Let $F'/F$ be any field extension such that $F'$ is linearly disjoint to $K$ over~$F$.
Then $K\otimes_F F'$ is Galois over $F'$ with $\Gal(K\otimes_F F'/F')\cong G$ and
\begin{equation*}
A(x;\wt\sigma;u)\otimes_{F(t_1,\ldots,t_r)} F'(t_1,\ldots,t_r)\cong (A\otimes_{F} F')(x;\wt\sigma\otimes\id,u\otimes 1).
\end{equation*}
The same relation holds for the Laurent series rings.
\end{prop}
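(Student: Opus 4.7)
The plan is to reduce the statement to Proposition~\ref{prop:gen-scal-ext-abelian} by passing through the generalized crossed product description provided by Lemma~\ref{lem:iter-twisted-abel-cr-prod}. First I will set $K'=K\otimes_F F'$ and verify the hypothesis that $K'/F'$ is Galois with $\Gal(K'/F')\cong G$ via $\sigma_i\mapsto\sigma_i\otimes\id$; this is standard from the linear disjointness assumption on $F'$ and $K$ over $F$ and already appears in Proposition~\ref{prop:gen-scal-ext-abelian}. Since $K$ and the purely transcendental extension $F(t_1,\ldots,t_r)$ are linearly disjoint over $F$, the same disjointness lifts: $F'(t_1,\ldots,t_r)$ and $K(t_1,\ldots,t_r)$ are linearly disjoint over $F(t_1,\ldots,t_r)$, which is exactly what is needed to apply Proposition~\ref{prop:gen-scal-ext-abelian} in the second step.

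Next I would chain together the following isomorphisms of central simple $F'(t_1,\ldots,t_r)$-algebras. By Lemma~\ref{lem:iter-twisted-abel-cr-prod},
\[
A(x;\wt\sigma;u)\otimes_{F(t_1,\ldots,t_r)}F'(t_1,\ldots,t_r)
\cong(A(t_1,\ldots,t_r),\wt\sigma,u,\alpha t)\otimes_{F(t_1,\ldots,t_r)}F'(t_1,\ldots,t_r).
\]
Applying Proposition~\ref{prop:gen-scal-ext-abelian} to the cyclic-like factor set data over $K(t_1,\ldots,t_r)$, the right hand side becomes
\[
(A(t_1,\ldots,t_r)\otimes_{F(t_1,\ldots,t_r)}F'(t_1,\ldots,t_r),\,\wt\sigma\otimes\id,\,u\otimes 1,\,\alpha t\otimes 1).
\]
A standard tensor-product identification, using $A(t_1,\ldots,t_r)\cong A\otimes_F F(t_1,\ldots,t_r)$ (which holds because $K\otimes_F F(t_1,\ldots,t_r)=K(t_1,\ldots,t_r)$), gives
\[
A(t_1,\ldots,t_r)\otimes_{F(t_1,\ldots,t_r)}F'(t_1,\ldots,t_r)\cong(A\otimes_F F')(t_1,\ldots,t_r).
\]
Finally, a second application of Lemma~\ref{lem:iter-twisted-abel-cr-prod}, now to the algebra $A\otimes_F F'$ (central simple over $K'$) together with the extensions $\wt\sigma_i\otimes\id$ and the transported data $u_{ij}\otimes 1,\alpha_i\otimes 1$, rewrites this generalized crossed product as $(A\otimes_F F')(x;\wt\sigma\otimes\id,u\otimes 1)$, which is the claimed isomorphism.

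For the Laurent-series assertion the argument is formally identical: one replaces $F(t_1,\ldots,t_r)$ by $F(\!(t_1,\ldots,t_r)\!)$ throughout, uses the Laurent-series half of Lemma~\ref{lem:iter-twisted-abel-cr-prod}, and notes that linear disjointness of $F'$ and $K$ over $F$ again yields linear disjointness of $F'(\!(t_1,\ldots,t_r)\!)$ and $K(\!(t_1,\ldots,t_r)\!)$ over $F(\!(t_1,\ldots,t_r)\!)$, so that Proposition~\ref{prop:gen-scal-ext-abelian} applies without change. The only genuinely delicate point, which I expect to be the main thing to verify carefully, is the identification $A(t_1,\ldots,t_r)\otimes_{F(t_1,\ldots,t_r)}F'(t_1,\ldots,t_r)\cong(A\otimes_F F')(t_1,\ldots,t_r)$ (and its Laurent-series analogue) together with the compatible transport of the twisting data $\wt\sigma,u,\alpha t$; once this bookkeeping is set up, the proposition follows by composing the three isomorphisms above.
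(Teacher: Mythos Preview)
Your proposal is correct and follows essentially the same route as the paper: apply Lemma~\ref{lem:iter-twisted-abel-cr-prod} to pass to the generalized crossed product $(A(t),\wt\sigma,u,\alpha t)$, then Proposition~\ref{prop:gen-scal-ext-abelian} to extend scalars, then identify $A(t)\otimes_{F(t)}F'(t)\cong(A\otimes_F F')(t)$ and apply Lemma~\ref{lem:iter-twisted-abel-cr-prod} in reverse. The paper's proof is just the terse three-line chain of these isomorphisms, without spelling out the linear disjointness check or the tensor identification that you flag as the bookkeeping step.
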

\begin{proof}
We shortly write $t$ for $t_1,\ldots,t_r$.
By Lemma~\ref{lem:iter-twisted-abel-cr-prod} and Proposition~\ref{prop:gen-scal-ext-abelian}, we have
\begin{multline*}
  A(x;\wt\sigma,u)\otimes_{F(t)} F'(t)\cong(A(t),\wt\sigma,u,\alpha t)\otimes_{F(t)} F'(t) \\
\cong(A(t)\otimes_{F(t)}F'(t),\wt\sigma\otimes 1,u\otimes 1,\alpha t\otimes 1)
\cong(A\otimes_F F')(x;\wt\sigma\otimes 1;u\otimes 1).
\end{multline*}
\end{proof}

\section{Automorphisms of symbol algebras}
\label{sec:Autom-symb}

The first (and major) problem 
in the computation of examples of factor sets $(\omega,f)\in\calF(G,\ug A)$
lies in the computation of extensions $\wt\sigma$ of the automorphism $\sigma\in G$ to $A$.
The present section discusses this problem for the case that $A$ is a symbol algebra.
Let $K$ be a field that contains a primitive $n$-th root of unity $\zeta$.
For $a,b\in\ug K$, the symbol algebra $(\frac{a,b}{K,\zeta})$
is a central simple $K$-algebra of degree $n$ that is generated
by elements $i,j$ such that $i^n=a, j^n=b$ and $ji=\zeta ij$.

\begin{lemma}
  \label{lem:autom-symbol}
Let $A=(\frac{a,b}{K,\zeta})$ be a symbol algebra, $a,b\in K^\times$,
and let $\sigma$ be an automorphism of $K$ fixing $\zeta$.
If $\wt\sigma$ is an automorphism of $A$ that extends $\sigma$,
then for $i':=\wt\sigma(i)$ and $j':=\wt\sigma(j)$,
\begin{equation}
  \label{eq:sigma-u-v}
{i'}^n=\sigma(a), \quad {j'}^n=\sigma(b), \quad j'i'=\zeta i'j'.
\end{equation}
Conversely, if $i',j'\in A$ satisfy (\ref{eq:sigma-u-v}),
then an extension $\wt\sigma$ of $\sigma$ to~$A$ is defined by
$\wt\sigma(i):=i'$ and $\wt\sigma(j):=j'$.
\end{lemma}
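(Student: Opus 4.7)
The forward direction is immediate. Since $\wt\sigma$ is a ring automorphism of $A$ whose restriction to $K$ equals $\sigma$, and since $\sigma$ fixes $\zeta\in K$, applying $\wt\sigma$ to the three defining relations $i^n=a$, $j^n=b$ and $ji=\zeta ij$ yields at once $(i')^n=\sigma(a)$, $(j')^n=\sigma(b)$ and $j'i'=\zeta i'j'$, which is precisely (\ref{eq:sigma-u-v}).

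For the converse, my plan is to invoke the universal property of symbol algebras. Given $i',j'\in A$ satisfying (\ref{eq:sigma-u-v}), these elements generate a subring $B$ of $A$ containing $K$. The relations (\ref{eq:sigma-u-v}) say exactly that $B$ is a homomorphic image (as a $K$-algebra) of the symbol algebra $(\frac{\sigma(a),\sigma(b)}{K,\zeta})$ under the map sending the standard generators to $i'$ and $j'$. Since that symbol algebra is central simple of $K$-dimension $n^2$, the surjection onto $B$ is an isomorphism, so $\dim_K B=n^2=\dim_K A$ and hence $B=A$. In particular, $\{(i')^p(j')^q:0\leq p,q<n\}$ is a $K$-basis of $A$.

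To produce $\wt\sigma$, I would then compose the $\sigma$-semilinear isomorphism $A=(\frac{a,b}{K,\zeta})\lra(\frac{\sigma(a),\sigma(b)}{K,\zeta})$, which acts as $\sigma$ on $K$ and sends the standard generators of the source to the standard generators of the target, with the $K$-algebra isomorphism $(\frac{\sigma(a),\sigma(b)}{K,\zeta})\lra A$ just constructed. The result is a ring automorphism $\wt\sigma$ of $A$ whose restriction to $K$ is $\sigma$ and which satisfies $\wt\sigma(i)=i'$ and $\wt\sigma(j)=j'$. Alternatively, one can define $\wt\sigma$ directly on the basis $\{i^pj^q\}$ by $\wt\sigma(\sum c_{pq}i^pj^q):=\sum \sigma(c_{pq})(i')^p(j')^q$ and verify multiplicativity using (\ref{eq:sigma-u-v}); the work is the same.

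There is no real obstacle in this argument; the only point that needs care is the bijectivity of $\wt\sigma$, and this is automatic from the simplicity of $(\frac{\sigma(a),\sigma(b)}{K,\zeta})$ together with the dimension count $\dim_K A=n^2$.
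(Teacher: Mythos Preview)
Your proof is correct and is exactly the standard argument the paper has in mind; the paper's own proof consists solely of the sentence ``The proof is obvious.'' Your write-up simply spells out the details (applying $\wt\sigma$ to the defining relations for the forward direction, and invoking the presentation/universal property of the symbol algebra together with simplicity and a dimension count for the converse), so there is nothing to add.
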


The proof is obvious.
Now assume that $A$ is a division algebra.
More than Lemma \ref{lem:autom-symbol} can be stated in the special case that $\sigma$ fixes $\zeta$ and $b$.

\begin{lemma}
  \label{lem:autom-symbol-special}
Let $A=(\frac{a,b}{K,\zeta})$ be a symbol algebra that is a division algebra, 
$a,b\in K^\times$, 
and let $\sigma$ be an automorphism of $K$ fixing $\zeta$ and $b$.
Then $\sigma$ extends to an automorphism $\wt\sigma$ of $A$ 
if and only if there exists $\lambda\in K(j)$ with
$$\No_{K(j)/K}(\lambda)=\frac{\sigma(a)}{a}.$$
For any such $\lambda$, an extension $\wt\sigma$ of $\sigma$ is defined by
$$ \wt\sigma(i):=\lambda i, \quad \wt\sigma(j):=j. $$
\end{lemma}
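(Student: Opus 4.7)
The plan is to use Lemma \ref{lem:autom-symbol} as the criterion: $\sigma$ extends to $\wt\sigma$ if and only if there exist $i',j'\in A$ satisfying (\ref{eq:sigma-u-v}). The crucial algebraic observation is that conjugation by $i$ restricts to a generator of $\Gal(K(j)/K)$. Indeed, from $ji=\zeta ij$ we get $iji^{-1}=\zeta^{-1}j$, so $\tau:=\iota_i|_{K(j)}$ sends $j\mapsto\zeta^{-1}j$; since $\zeta$ is a primitive $n$-th root of unity and $[K(j):K]=n$ (as $A$ is a division algebra), $\tau$ has order $n$ and generates $\Gal(K(j)/K)$. Consequently, for every $\lambda\in K(j)$,
\[
(\lambda i)^n=\lambda\cdot\tau(\lambda)\cdot\tau^2(\lambda)\cdots\tau^{n-1}(\lambda)\cdot i^n=\No_{K(j)/K}(\lambda)\cdot a.
\]

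For the sufficiency direction, suppose $\lambda\in K(j)$ with $\No_{K(j)/K}(\lambda)=\sigma(a)/a$. I set $i':=\lambda i$ and $j':=j$ and verify the three conditions of Lemma \ref{lem:autom-symbol}. Since $\sigma$ fixes $b$, $j'^n=j^n=b=\sigma(b)$. The norm identity above gives $i'^n=\No_{K(j)/K}(\lambda)\cdot a=\sigma(a)$. Finally, $\lambda$ commutes with $j$ (as $\lambda\in K(j)$), so
\[
ji'=j\lambda i=\lambda ji=\zeta\lambda ij=\zeta i'j',
\]
and Lemma \ref{lem:autom-symbol} produces the extension $\wt\sigma(i)=\lambda i,\wt\sigma(j)=j$.

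For the necessity direction, let $\wt\sigma$ be any extension of $\sigma$. By Lemma \ref{lem:autom-symbol} and $\sigma(b)=b$, the element $\wt\sigma(j)$ satisfies $\wt\sigma(j)^n=b=j^n$, so $K\to K(\wt\sigma(j))\subseteq A$, $j\mapsto\wt\sigma(j)$, defines a $K$-embedding of $K(j)$ into $A$. By Skolem-Noether there is $c\in A^\times$ with $cjc^{-1}=\wt\sigma(j)$. Since $\iota_{c^{-1}}$ fixes $K$ pointwise, $\iota_{c^{-1}}\circ\wt\sigma$ is again an extension of $\sigma$, and it sends $j$ to $j$; hence, replacing $\wt\sigma$, I may assume $\wt\sigma(j)=j$. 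Then the relation $j\wt\sigma(i)=\zeta\wt\sigma(i)j$ combined with $ji=\zeta ij$ shows that $\lambda:=\wt\sigma(i)i^{-1}$ commutes with $j$, hence lies in $C_A(K(j))$. Because $A$ is a division algebra and $K(j)$ is a strictly maximal subfield of $A$, one has $C_A(K(j))=K(j)$, so $\lambda\in K(j)$. Applying the norm identity to $\wt\sigma(i)^n=(\lambda i)^n=\sigma(a)$ gives $\No_{K(j)/K}(\lambda)=\sigma(a)/a$, as required. Note that this $\lambda$ need not be the one produced by the formula in the statement for the original $\wt\sigma$; the sufficiency direction only ensures that \emph{some} extension exists for every such $\lambda$.

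The main obstacle is the Skolem-Noether reduction in the necessity direction: an arbitrary extension $\wt\sigma$ may send $j$ to any $n$-th root of $b$ in $A$, and we must absorb this freedom into an inner automorphism before $\wt\sigma(i)i^{-1}$ can be forced into $K(j)$. Once this reduction is in place, the centralizer identity $C_A(K(j))=K(j)$ (valid precisely because $A$ is a division algebra, hence $K(j)$ is strictly maximal) and the norm identity finish the argument in one step each.
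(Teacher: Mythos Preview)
Your proof is correct and follows essentially the same route as the paper's: both use Skolem--Noether to reduce to the case $\wt\sigma(j)=j$, then identify $\lambda=\wt\sigma(i)i^{-1}$ as an element of $C_A(K(j))=K(j)$, and finish via the norm identity $(\lambda i)^n=\No_{K(j)/K}(\lambda)\,a$ coming from the fact that $\iota_i|_{K(j)}$ generates $\Gal(K(j)/K)$. The only cosmetic differences are that you treat sufficiency before necessity and invoke the centralizer identity explicitly, whereas the paper writes the equivalence $j\lambda=\lambda j\iff\lambda\in K(j)$ directly.
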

\begin{proof}
Suppose that $\sigma$ extends to an automorphism $\wt\sigma$ of $A$.
Then $\wt\sigma(j)^n=\wt\sigma(b)=b=j^n$,
hence by the theorem of Skolem-Noether there is a $d\in A^\times$ such that
$\iota_d\wt\sigma(j)=j$.
Since $\iota_d\wt\sigma$ also extends $\sigma$,
we can assume \wolg that $\wt\sigma(j)=j$.
Let $\lambda=\wt\sigma(i)i^{-1}$, $i'=\wt\sigma(i)$ and $j'=\wt\sigma(j)$,
\ie $i'=\lambda i$ and $j'=j$.
Since $j'i'=j\lambda i$ and
$\zeta i'j'=\zeta\lambda ij = \lambda ji$,
\begin{equation}
  \label{eq:lambda-in-K(j)}
j'i'=\zeta i'j' \iff
j\lambda=\lambda j \iff
\lambda\in K(j),
\end{equation}
so $\lambda\in K(j)$ by Lemma \ref{lem:autom-symbol}.
Conjugation by $i$ induces a generating automorphism of $\Gal(K(j)/K)$,
thus ${i'}^n=(\lambda i)^n=\No_{K(j)/K}(\lambda)a$.
Therefore
\begin{equation} 
  \label{eq:norm-lambda}
{i'}^n=\sigma(a) \iff \No_{K(j)/K}(\lambda)=\frac{\sigma(a)}{a},
\end{equation} 
so $\No_{K(j)/K}(\lambda)=\frac{\sigma(a)}{a}$ by Lemma \ref{lem:autom-symbol}.
Conversely, suppose there is $\lambda\in K(j)$ with 
$\No_{K(j)/K}(\lambda)=\frac{\sigma(a)}{a}$.
Let $i'=\lambda i$ and $j'=j$.
Then (\ref{eq:sigma-u-v}) follows from 
(\ref{eq:lambda-in-K(j)}), (\ref{eq:norm-lambda}) and $\sigma(b)=b$.
Lemma \ref{lem:autom-symbol} states that an extension $\wt\sigma$ of $\sigma$ is defined by
$\wt\sigma(i):=i'=\lambda i$ and $\wt\sigma(j):=j'=j$.
\end{proof}

\begin{remark}
Lemma \ref{lem:autom-symbol-special} reduces the computation of $\wt\sigma$ to the solution of a relative norm equation.
If $K$ is a number field, this problem can be handled with methods from computational algebraic number theory. 
In concrete examples the KASH software \cite{kash} can be used.
\end{remark}

\section{Examples of noncrossed product division algebras}
\label{sec:Exampl-noncr-prod}
The examples of this section were computed with the help of the KASH software \cite{kash}. 
However, we will give hints how all necessary calculations can be performed by hand.

\subsection{A noncrossed product division algebra with index $8$ and exponent $8$}
\label{sec:An-example-with-8}

We take over the situation from Example~\ref{ex:not-embed-abel} and~\ref{ex:noncr-prod-abel-1}.
So let $K=\Q(\sqrt{3},\sqrt{-7})$.
Then $K/\Q$ is abelian with $[K:\Q]=4$ and
$\Gal(K/\Q)=\gen{\sigma_1}\oplus\gen{\sigma_2}$, where
\begin{alignat*}{2}
  \sigma_1(\sqrt{3})&=-\sqrt{3},&\qquad \sigma_1(\sqrt{-7})&=\sqrt{-7}, \\
  \sigma_2(\sqrt{3})&=\sqrt{3},& \sigma_2(\sqrt{-7})&=-\sqrt{-7}.
\end{alignat*}
For
\begin{align*}
   \pi_1:=1+\sqrt{3} \in\Q(\sqrt{3}) \qt{and} \pi_2:=\frac{1+\sqrt{-7}}{2} \in\Q(\sqrt{-7}), 
\end{align*}
we have
\begin{align}
  \label{eq:norm-pi}
  \No_{\Q(\sqrt{3})/\Q}(\pi_1)=\pi_1\sigma_1(\pi_1)=-2,\quad
  \No_{\Q(\sqrt{-7})/\Q}(\pi_2)=\pi_2\sigma_2(\pi_2)=2. 
\end{align}
Let $D$ be the quaternion algebra $(\frac{a,b}{K})$ with
\begin{equation*}
\begin{aligned}
  a&:=\sqrt{3}\pi_1 =3+\sqrt{3} &\quad&\in\Q(\sqrt{3}), \\
  b&:=\sqrt{-7}\pi_2 =\frac{-7+\sqrt{-7}}{2} &&\in\Q(\sqrt{-7}),
\end{aligned}
\end{equation*}
\ie $D=K\oplus Ki \oplus Kj \oplus Kk$
with $i^2=a,j^2=b,ij=-ji=k$.
We know from Example~\ref{ex:noncr-prod-abel-1}
that $D$ is a division algebra.

Now extend $\sigma_1$ and $\sigma_2$ to $F$-autmorphisms of $D$.
Since $\sigma_1(b)=b$ and $\sigma_2(a)=a$
we can use Lemma~\ref{lem:autom-symbol-special}.
Set 
$\lambda:=\frac{\lambda_0}{\pi_2}$ and $\mu:=\frac{\mu_0}{\pi_1}$ for
\begin{align*}
  \lambda_0 = \sigma_1(\pi_1)(-1+i) \in K(i) \qt{and}
  \mu_0 = \pi_2 + j \in K(j) .
\end{align*}
We use the notation $\bar\lambda_0=\sigma_1(\pi_1)(-1-i)$
and $\bar\mu_0=\pi_2-j$.
Then
\begin{equation}
  \label{eq:norm-0}
 \No_{K(i)/K}(\lambda_0) = \lambda_0\bar\lambda_0=-2 \qt{and}  
 \No_{K(j)/K}(\mu_0) = \mu_0\bar\mu_0 = 2 ,
\end{equation}
so by (\ref{eq:norm-pi}),
\begin{align*}
 \No_{K(j)/K}(\mu) &= \frac{2}{{\pi_1}^2} = -\frac{\pi_1\sigma_1(\pi_1)}{{\pi_1}^2} = -\frac{\sigma_1(\pi_1)}{\pi_1}=\frac{\sigma_1(a)}{a} \\
\intertext{and}
 \No_{K(i)/K}(\lambda) &= \frac{-2}{{\pi_2}^2} = -\frac{\pi_2\sigma_2(\pi_2)}{{\pi_2}^2} = -\frac{\sigma_2(\pi_2)}{\pi_2}=\frac{\sigma_2(b)}{b}.
\end{align*}
Hence by Lemma~\ref{lem:autom-symbol-special},
extensions $\wt\sigma_1$, $\wt\sigma_2$ of $\sigma_1$,$\sigma_2$ to $D$ are defined by
\begin{alignat*}{2}
  \wt\sigma_1(i)&=\mu i,&\qquad \wt\sigma_1(j)&=j, \\
  \wt\sigma_2(i)&=i,& \wt\sigma_2(j)&=\lambda j.
\end{alignat*}

To build an iterated twisted function field $D(x_1,x_2;\wt\sigma_1,\wt\sigma_2;u)$
and an iterated twisted Laurent series ring $D(\!(x_1,x_2;\wt\sigma_1,\wt\sigma_2;u)\!)$ over $D$
according to Theorem \ref{thm:iter-twisted} (\cf also Remark \ref{rem:write-iter-twist}),
we give elements $\alpha_1, \alpha_2, u\in D^\times$ 
that satisfy the relations (\ref{eq:abel-r=2}).
These are
\begin{align*}
  \alpha_1& := \frac{1}{\pi_2}\mu_0j = \sqrt{-7}+j,\\
  \alpha_2& := \sqrt{3}\lambda_0i = -6+(3-\sqrt{3})i,\\
  u& := \frac{1}{2\sigma_1(\pi_1)}(\lambda_0\bar\mu_0-2) 
= \frac{1}{4}(1+2\sqrt{3}-\sqrt{-7}+(1+\sqrt{-7})i+2j-2k).
\end{align*}
The verification of the relations (\ref{eq:abel-r=2}) is deferred to the end of this section,
in order to not interrupt the exposition here.
By Theorem~\ref{thm:iter-twist-properties},
$D(x_1,x_2;\wt\sigma_1,\wt\sigma_2;u)$
and $D(\!(x_1,x_2;\wt\sigma_1,\wt\sigma_2;u)\!)$
are division algebras that are inertially split with residue algebra $D$ and
\begin{align*}
  Z(D(x_1,x_2;\wt\sigma_1,\wt\sigma_2;u))&=\Q(t_1,t_2), \\
  Z(D(\!(x_1,x_2;\wt\sigma_1,\wt\sigma_2;u)\!))&=\Q(\!(t_1,t_2)\!),
\end{align*}
where $t_1=\alpha_1^{-1}x_1^2$, $t_2=\alpha_2^{-1}x_2^2$.
We have seen in Example~\ref{ex:noncr-prod-abel-1} that $D$ does not contain a maximal subfield which is Galois over $\Q$,
hence $D(x_1,x_2;\wt\sigma_1,\wt\sigma_2;u)$
and $D(\!(x_1,x_2;\wt\sigma_1,\wt\sigma_2;u)\!)$
are noncrossed products by Corollary~\ref{cor:iter-alg-nonr-prod}.
Moreover by Theorem~\ref{thm:iter-twist-properties},
\begin{align*}
 \ind D(x_1,x_2;\wt\sigma_1,\wt\sigma_2;u)&=\ind D(\!(x_1,x_2;\wt\sigma_1,\wt\sigma_2;u)\!) = 4\ind D=8, \\
 \exp D(x_1,x_2;\wt\sigma_1,\wt\sigma_2;u)&=\exp D(\!(x_1,x_2;\wt\sigma_1,\wt\sigma_2;u)\!) = \lcm(2,\exp A),
\end{align*}
with $A=(D,\wt\sigma,u,\alpha)$.
To compute $\exp A=\ind A$ we make use of Corollary~\ref{cor:ind-local-scal-ext}.
We know from Example~\ref{ex:not-embed-abel} that the $3$-adic valuation $v_1\in\V0(\Q)$ extends uniquely to a valuation $w_1\in\V0(K)$,
and we know from Example~\ref{ex:noncr-prod-abel-1} that $D_{w_1}$ is a divison algebra.
This means $n_{w_1}=[K:\Q]_{w_1}=4$  and $\ind D_{w_1}=2$.
Since $n=[K:\Q]=4$ and $\ind D=2$,
Corollary~\ref{cor:ind-local-scal-ext} yields 
$8\mid\ind A\mid 8$.
Therefore
\begin{equation*}
 \exp D(x_1,x_2;\wt\sigma_1,\wt\sigma_2;u)=\exp D(\!(x_1,x_2;\wt\sigma_1,\wt\sigma_2;u)\!) = \lcm(2,8)=8.
\end{equation*}

The rest of this section gives hints
how to verify that $\alpha_1,\alpha_2,u$ satisfy (\ref{eq:abel-r=2}).
Note that $n_1=n_2=2$ and that $\alpha_1\in K(j)$ and $\alpha_2\in K(i)$.
The following relations, as well as (\ref{eq:norm-0}), 
will be frequently used without further mention :
\begin{align*}
  \wt\sigma_1(\mu_0)& =\mu_0,& \wt\sigma_2(\lambda_0)& =\lambda_0,&
  i\lambda_0& =\lambda_0 i,& j\lambda_0&=\bar\lambda_0 j, \\
  \wt\sigma_1(\bar\mu_0)& =\bar\mu_0,& \wt\sigma_2(\bar\lambda_0)&=\bar\lambda_0,&
  i\mu_0& =\bar\mu_0 i,& j\mu_0&=\mu_0 j.
\end{align*}
We first check (\ref{eq:abel-r=2-1}--\ref{eq:abel-r=2-3}).
Since $\wt\sigma_1$ fixes $\pi_2,\mu_0$ and $j$, we have $\wt\sigma_1(\alpha)=\alpha$,
and since $\wt\sigma_2$ fixes $\sqrt{3},\lambda_0$ and $i$, we have $\wt\sigma_2(\beta)=\beta$.
To verify the identities about the inner automorphisms we will make use of

\begin{lemma}
  \label{lem:comp-inner-autom}
Let $D$ be a quaternion algebra that is a division algebra and 
let $\varphi$ be an inner automorphism of $D$.
If $x,y\in D\bs K$ with $x\notin K(y)$ such that $\varphi(x)=x$ and $\varphi(y)=xyx^{-1}$,
then $\varphi=\iota_x$.
\end{lemma}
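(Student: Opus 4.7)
The plan is to use Skolem--Noether to write $\varphi=\iota_z$ for some $z\in\ug D$ and then show that $zx^{-1}$ lies in the centre $K$, which gives $\iota_z=\iota_x$.

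First I would exploit the condition $\varphi(x)=x$: this says $z$ commutes with $x$, so $z\in C_D(x)$. Since $D$ is a quaternion division algebra and $x\in D\setminus K$, the field $K(x)$ is a maximal subfield of $D$, hence $C_D(x)=K(x)$. Thus $z\in K(x)$. Next I would use $\varphi(y)=xyx^{-1}$, which rewrites as $\iota_{x^{-1}z}(y)=y$, i.e.\ $x^{-1}z\in C_D(y)=K(y)$ (again using that $K(y)$ is a maximal subfield because $y\in D\setminus K$). Hence $z=x\mu$ for some $\mu\in K(y)$.

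Now $z\in K(x)$ and $z=x\mu$, so $\mu=x^{-1}z\in K(x)$ as well (since $x\in K(x)$ and $K(x)$ is a field). Therefore $\mu\in K(x)\cap K(y)$. The key observation is that this intersection equals $K$: indeed, $K(x)\cap K(y)$ is a subfield of $K(x)$ containing $K$, and $[K(x):K]=2$, so the intersection is either $K$ or $K(x)$; the latter would force $K(x)\subseteq K(y)$, hence $x\in K(y)$, contradicting the hypothesis. So $\mu\in K^\times$ and $z=x\mu\in K^\times x$, giving $\varphi=\iota_z=\iota_x$.

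The argument is short and the only subtle point is the intersection computation $K(x)\cap K(y)=K$, which relies crucially on the maximal subfields of $D$ being of degree $2$ (so they have no proper intermediate fields over $K$) and on the hypothesis $x\notin K(y)$. No main obstacle is expected; the proof is essentially a clean application of Skolem--Noether combined with the structure of maximal subfields in a quaternion algebra.
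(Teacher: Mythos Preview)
Your proof is correct, but it takes a different route from the paper's. The paper argues more directly: since $x\notin K(y)$ and $[D:K(y)]=2$ (as a left $K(y)$-module), the set $\{1,x\}$ is a $K(y)$-basis of $D$; hence any $K$-algebra automorphism of $D$ is determined by its values on $x$ and $y$. As both $\varphi$ and $\iota_x$ send $x\mapsto x$ and $y\mapsto xyx^{-1}$, they coincide. Your approach instead pins down the conjugating element via centralizers: from $\varphi=\iota_z$ you extract $z\in C_D(x)=K(x)$ and $x^{-1}z\in C_D(y)=K(y)$, then use $K(x)\cap K(y)=K$ to conclude $z\in K^\times x$. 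Both arguments are short; the paper's is marginally more elementary (no centralizer identification needed), while yours makes explicit where the element $z$ must sit. One small remark: you invoke Skolem--Noether to write $\varphi=\iota_z$, but this is already given by the hypothesis that $\varphi$ is inner.
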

\begin{proof}
If $x\notin K(y)$ then $\{1,x\}$ is a $K(y)$-basis of $D$.
Therefore $\varphi$ is already determined by $\varphi(x)$ and $\varphi(y)$.
\end{proof}

We first apply Lemma~\ref{lem:comp-inner-autom} to $\wt\sigma_1^2$ and $\wt\sigma_2^2$.
Obviously,
$\wt\sigma_1^2(\alpha_1)=\alpha_1$ and $\wt\sigma_2^2(\alpha_2)=\alpha_2$.
Furthermore,
\begin{align*}
  \wt\sigma_1^2(i)\alpha_1&= \wt\sigma_1(\frac{\mu_0}{\pi_1}i)\alpha_1
=\frac{\mu_0}{\sigma_1(\pi_1)}\frac{\mu_0}{\pi_1}i\frac{\mu_0j}{\pi_2} 
=\frac{\mu_0 j}{\pi_2}i
=\alpha_1 i
\intertext{and}
  \wt\sigma_2^2(j)\alpha_2&= \wt\sigma_2(\frac{\lambda_0}{\pi_2}j)\alpha_2
=\frac{\lambda_0}{\sigma_2(\pi_2)}\frac{\lambda_0}{\pi_2}j\sqrt{3}\lambda_0i
=\sqrt{3}\lambda_0ij
=\alpha_2 j.
\end{align*}
Since $\alpha_1\notin K(i)$ and $\alpha_2\notin K(j)$
Lemma \ref{lem:comp-inner-autom} shows $\wt\sigma_1^2=\iota_{\alpha_1}$
and $\wt\sigma_2^2=\iota_{\alpha_2}$,
thus (\ref{eq:abel-r=2-1}) and (\ref{eq:abel-r=2-2}) are verified.
Next, we apply Lemma \ref{lem:comp-inner-autom}
to $\wt\sigma_2\wt\sigma_1\wt\sigma_2^{-1}\wt\sigma_1^{-1}$
with $x=u$ and $y=\wt\sigma_1\wt\sigma_2(i)$.
To show $\wt\sigma_2\wt\sigma_1\wt\sigma_2^{-1}\wt\sigma_1^{-1}(u)=u$
we check that 
$\wt\sigma_1\wt\sigma_2(u')=\wt\sigma_2\wt\sigma_1(u')=u$
for $u'=\frac{-1}{2\pi_2}(\bar\lambda_0\mu_0+2)$.
Using 
\begin{equation}
  \label{eq:tau-mu_0}
  \wt\sigma_2(\mu_0)=\frac{2+\lambda_0j}{\pi_2} \qqt{and}
  \wt\sigma_1(\bar\lambda_0)=-(\pi_1+\mu_0 i)
\end{equation}
we first get
\begin{gather*}
  \wt\sigma_2(u')
=\frac{-1}{2\sigma_2(\pi_2)}(\bar\lambda_0\frac{2+\lambda_0j}{\pi_2}+2)
=\frac{-1}{2\sigma_2(\pi_2)\pi_2}(2\bar\lambda_0-2j+2\pi_2)
=-\frac{1}{2}(\bar\lambda_0+\bar\mu_0)
\intertext{and}
  \wt\sigma_1(u')=\frac{1}{2\pi_2}((\pi_1+\mu_0 i)\mu_0-2)
=\frac{1}{2\pi_2}(\pi_1\mu_0+2i-2)
=\frac{\pi_1}{2\pi_2}(\mu_0-\lambda_0).
\end{gather*}
Thus, again using (\ref{eq:tau-mu_0}),
\begin{gather*}
  \wt\sigma_1\wt\sigma_2(u')
=\frac{1}{2}(\pi_1+\mu_0 i-\bar\mu_0)
=\frac{1}{2\sigma_2(\pi_1)}(\sigma_2(\pi_1)\pi_1+\sigma_2(\pi_1)(-1+i)\bar\mu_0)
=u
\intertext{and}
  \wt\sigma_2\wt\sigma_1(u')
=\frac{\pi_1}{2\sigma_2(\pi_2)}(\frac{2+\lambda_0j}{\pi_2}-\lambda_0)
=\frac{\pi_1}{4}(2+\lambda_0(v-\pi_2))
=u,
\end{gather*}
showing $\wt\sigma_1\wt\sigma_2(u')=\wt\sigma_2\wt\sigma_1(u')=u$,
hence $\wt\sigma_2\wt\sigma_1\wt\sigma_2^{-1}\wt\sigma_1^{-1}(u)=u$.
Since we have shown $\wt\sigma_2^{-1}\wt\sigma_1^{-1}(u)=u'$,
$u\notin K(y)$ is equivalent to $u'\notin K(i)$,
and $u'=\frac{-1}{2\pi_2}(\bar\lambda_0\mu_0+2)\notin K(i)$ is obvious.
It remains to show 
$\wt\sigma_2\wt\sigma_1(i)u=u\wt\sigma_1\wt\sigma_2(i)$.
Then (\ref{eq:abel-r=2-3}) follows from Lemma~\ref{lem:comp-inner-autom}.
Because $\wt\sigma_2\wt\sigma_1(i)u=-\frac{1}{4}\wt\sigma_2(\mu_0)(\lambda_0\mu_0-2)i$ 
and $u\wt\sigma_1\wt\sigma_2(i)=-\frac{1}{2}(\lambda_0-\mu_0)i$,
we show 
\begin{equation}
  \label{eq:commutator}
\wt\sigma_2(\mu_0)(\lambda_0\mu_0-2)=2(\lambda_0-\mu_0).  
\end{equation}
Using $\mu_0-j=\pi_2$ and $\mu_0j+2=\pi_2\mu_0$,
we have
\begin{equation*}
  \begin{split}
  \wt\sigma_2(\mu_0)(\lambda_0\mu_0-2)
&=\frac{1}{\pi_2}(2+\lambda_0 j)(\lambda_0\mu_0-2) \\
&=\frac{2}{\pi_2}(\lambda_0(\mu_0-j)-(\mu_0 j+2))
= 2(\lambda_0-\mu_0),
  \end{split}
\end{equation*}
\ie (\ref{eq:commutator}) and (\ref{eq:abel-r=2-3}) are verified.

To verify (\ref{eq:abel-r=2-4}--\ref{eq:abel-r=2-5}) we show
\begin{align}
\label{eq:(2)-1}
u\wt\sigma_1(u)\alpha_1&=\wt\sigma_2(\alpha_1)
\intertext{and}
\label{eq:(2)-2}
\alpha_2^{-1}\wt\sigma_2(u)u&=\wt\sigma_1(\alpha_2)^{-1}.
\end{align}
The right handsides of these equations are
\begin{align}
\label{eq:right-side}
  \wt\sigma_2(\alpha_1)=\frac{1}{\pi_2}(\lambda_0-j)j \qt{and}
  \wt\sigma_1(\alpha_2)^{-1}=
\frac{1}{\sigma_1(\pi_1)^2\sqrt{3}}i^{-1}(\sigma_1(\pi_1)i-\bar\mu_0),
\end{align}
the latter following from
$\wt\sigma_1(\alpha_2)=\sqrt{3}(\mu_0+\sigma_1(\pi_1)i)i$.
Since $\pi_2\wt\sigma_1\wt\sigma_2(j)=\wt\sigma_1(\lambda_0)j$, we get
\begin{equation*}
  \wt\sigma_1(u)\alpha_1
=\frac{1}{2\pi_1\pi_2}(\wt\sigma_1(\lambda_0)\bar\mu_0-2)\mu_0j
=\frac{1}{\pi_1\pi_2}(\pi_2\wt\sigma_1\wt\sigma_2(j)-\mu_0j),
\end{equation*}
and by (\ref{eq:abel-r=2-3}),
\begin{equation*}
    u\wt\sigma_1(u)\alpha_1 = \frac{1}{\pi_1\pi_2}(\pi_2\wt\sigma_2\wt\sigma_1(j)u-u\mu_0j)
= \frac{1}{\pi_1\pi_2}(v\bar\lambda_0u-u\mu_0j).
\end{equation*}
Using 
\[
\bar\lambda_0u =\frac{-1}{\sigma_1(\pi_1)}(\bar\lambda_0+\bar\mu_0) \qt{and}
u\mu_0=\frac{1}{\sigma_1(\pi_1)}(\lambda_0-\mu_0),
\]
this %
implies
\begin{equation*}
  \begin{split}
    u\wt\sigma_1(u)\alpha_1 
=\frac{1}{\pi_1\pi_2\sigma_1(\pi_1)}(-j(\bar\lambda_0+\bar\mu_0)-(\lambda_0-\mu_0)j) \\
= \frac{1}{2\pi_2}(2\lambda_0+\bar\mu_0-\mu_0)j
= \frac{1}{\pi_2}(\lambda_0-j)j,
  \end{split}
\end{equation*}
which together with (\ref{eq:right-side}) proves (\ref{eq:(2)-1}).
Since $\mu_0^{-1}=\frac{1}{2}\bar\mu_0$,
we get
$$\wt\sigma_2\wt\sigma_1(i)^{-1}=-\frac{i^{-1}\wt\sigma_2(\bar\mu_0)}{\sigma_1(\pi_1)}, $$
hence
\begin{equation*}
  \alpha_2^{-1}\wt\sigma_2(u)
=\frac{1}{2\sigma_1(\pi_1)\sqrt{3}}i^{-1}\lambda_0^{-1}(\lambda_0\wt\sigma_2(\bar\mu_0)-2)
=\frac{1}{2\sqrt{3}}(-\wt\sigma_2\wt\sigma_1(i)^{-1}+\pi_1 i^{-1}\lambda_0^{-1}).
\end{equation*}
By (\ref{eq:abel-r=2-3})
and $\wt\sigma_1\wt\sigma_2(i)=\frac{\mu_0}{\pi_1}i=i\frac{\bar\mu_0}{\pi_1}$,
\begin{equation*}
  \alpha_2^{-1}\wt\sigma_2(u)u
=\frac{1}{2\sqrt{3}}(-u\wt\sigma_1\wt\sigma_2(i)^{-1}+\pi_1 i^{-1}\lambda_0^{-1}u)
=\frac{1}{\sigma_1(\pi_1)\sqrt{3}}(u\bar\mu_0^{-1}i^{-1}-i^{-1}\lambda_0^{-1}u).
\end{equation*}
Using 
\[
u\bar\mu_0^{-1} =\frac{1}{2\sigma_1(\pi_1)}(\lambda_0-\mu_0) \qt{and}
\lambda_0^{-1}u=\frac{1}{2\sigma_1(\pi_1)}(\bar\lambda_0+\bar\mu_0),
\]
this implies
\begin{equation*}
  \alpha_2^{-1}\wt\sigma_2(u)u
= \frac{1}{2\sigma_1(\pi_2)^2\sqrt{3}}i^{-1}(\lambda_0-\bar\lambda_0-2\bar\mu_0)
= \frac{1}{\sigma_1(\pi_2)^2\sqrt{3}}i^{-1}(\sigma_1(\pi_1)i-\bar\mu_0),
\end{equation*}
which together with (\ref{eq:right-side}) proves (\ref{eq:(2)-2}).
Thus (\ref{eq:abel-r=2-4}--\ref{eq:abel-r=2-5}) are verified.

\subsection{A noncrossed product division algebra with index $16$ and exponent $8$}
\label{sec:An-example-with-16}

The noncrossed product example from \S~\ref{sec:An-example-with-8} 
can be modified
to increase its index while the exponent remains the same
according to Example \ref{ex:twisted-scal-ext}.
This will be carried out for the function field and is analogous for the Laurent series ring.
Let $B=D(x_1,x_2;\wt\sigma_1,\wt\sigma_2;u)$ be the noncrossed product division algebra with index and exponent $8$ from \S~\ref{sec:An-example-with-8}.
Let $F=Z(B)=\Q(t_1,t_2)$ and $L=F(\sqrt{37})$.
Since $\sqrt{37}\notin K=\Q(\sqrt{3},\sqrt{-7})$,
the fields $\Q(\sqrt{37})$ and $K$ are linearly disjoint over $\Q$.
By Proposition~\ref{prop:gen-scal-ext-iterated},
\begin{align*}
A&:=B\otimes_F L=D(x_1,x_2;\wt\sigma_1,\wt\sigma_2;u)\otimes_{\Q(t_1,t_2)} \Q(\sqrt{37})(t_1,t_2) \\
&\cong D'(x_1,x_2;\wt\sigma'_1,\wt\sigma'_2;u'),
\end{align*}
with $D'=D\otimes_\Q \Q(\sqrt{37})$,
$\wt\sigma'_i=\wt\sigma_i\otimes\id$ for $1\leq i\leq 2$,
and $u'=u\otimes 1$.
We have $D'=(\frac{a,b}{K(\sqrt{37})})$.
It was shown in Example \ref{ex:noncr-prod-abel-2}
that $D'$ is a division algebra,
and that $D'$ does not contain a maximal subfield which is Galois over $\Q(\sqrt{37})$.
Thus, $A\cong D'(x_1,x_2;\wt\sigma'_1,\wt\sigma'_2;u')$ is a division algebra by Theorem~\ref{thm:iter-twist-properties}
and a noncrossed product by Corollary \ref{cor:iter-alg-nonr-prod}.
Let $\Gal(L/F)=\gen{\sigma_3}$,
\ie $\sigma_3(\sqrt{37})=-\sqrt{37}$,
$\sigma_3(t_1)=t_1$ and $\sigma_3(t_2)=t_2$.
Let $\wt\sigma_3$ be the automorphism $\wt\sigma_3=\id\otimes\sigma_3$ of $A=B\otimes_F L$.
Then, as an automorphism of $D'(x_1,x_2;\wt\sigma'_1,\wt\sigma'_2;u')$,
$\wt\sigma_3$ fixes $x_1,x_2$ and the subalgebra $D\otimes 1$ of $D'$.
By Example \ref{ex:twisted-scal-ext},
$$ A(x_3;\wt\sigma_3)=D'(x_1,x_2;\wt\sigma'_1,\wt\sigma'_2;u')(x_3;\wt\sigma_3) $$
is a noncrossed product division algebra with index $2\ind B=16$
and exponent $\lcm(2,\exp B)=8$.
The indeterminate $x_3$ commutes with $x_1$ and $x_2$ and with the subalgebra $D\otimes 1$ of $D'$,
and it holds $x_3\sqrt{37}=-\sqrt{37}x_3$.

\backmatter

\chapter*{Appendix}

\section{$p$-Algebras}
\label{sec:p-Algebras}

Let $F$ be a field with $\charak F=p$, a prime number.
By a $p$-extension we mean a field extension $K/F$
such that $[K:F]$ is a $p$-power
and by a $p$-algebra we mean an $A\in\calA(F)$
such that $\deg A$ is a $p$-power.

Let $K/F$ be a Galois $p$-extension of degree $p$.
Then $K=F(\alpha)$ for some $\alpha\in K$ with 
$\alpha^p-\alpha\in F\bs\calP(F)$,
where $\calP(F)=\sett{x^p-x}{x\in F}$.
Conversely, any $K$ of this form is a Galois extension of $F$.
The Galois group is generated by the automorphism given by $\alpha\mapsto\alpha+1$.
This is the well known Artin-Schreier construction of $p$-extensions of degree $p$
(\cf \cite[Chapter~VI, Theorem 6.4]{lang:algebra}).

Let $A\in\calA(F)$, $\deg A=p$,
and suppose there is an element $\beta\in A\bs F$ such that $\beta^p\in F$.
Then $A$ is generated over $F$ by $\beta$ and a second element $\alpha\in A$
that satisfy
\begin{equation}
  \label{eq:p-alg-gen}
\alpha^p-\alpha=a\in F, \quad \beta^p=b\in F, \quad \beta\alpha=(\alpha+1)\beta. 
\end{equation}
In this case we write $A=[a,b)$.
The goal of this section is to prove 

\begin{prop}
\label{prop:normal-galois-appendix}
Let $K/k$ be any finite separable field extension and $D\in\calD(K)$.
If $D$ contains a maximal subfield that is normal over $k$,
then $D$ also contains a maximal subfield that is Galois over $k$.
\end{prop}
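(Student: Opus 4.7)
The plan is to reduce the proposition to Saltman's lemma for $p$-algebras over their own centre (the case $K=k$ established in \cite{saltman:noncr-prod-small-exp}) by isolating the purely inseparable part of the given normal maximal subfield and passing to its centralizer.

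Let $M$ be the given maximal subfield of $D$ normal over $k$, and let $M_s$ be the separable closure of $k$ inside $M$; the normality of $M/k$ makes $M_s/k$ Galois with group $G:=\Gal(M_s/k)$, and since $K/k$ is separable we have $K\subseteq M_s$. Set $E:=C_D(M_s)$: by the Double Centralizer Theorem $E$ is a central simple algebra over $M_s$ of degree $[M:M_s]$, and $M$ sits inside $E$ as a purely inseparable strictly maximal subfield. In particular $E$ is a $p$-algebra over $M_s$. By Skolem--Noether each $\sigma\in G$ is realized by conjugation by some $x_\sigma\in D^\times$, and since $M_s=Z(E)$ the inner automorphism $\iota_{x_\sigma}$ of $D$ restricts to a $k$-algebra automorphism of $E$ extending $\sigma$. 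Hence $E$ is a $p$-algebra over $M_s$ equipped with a $G$-action whose fixed subring in $M_s$ is $k$ and admitting a purely inseparable maximal subfield, namely $M$.

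This is precisely the setup of Saltman's argument. He constructs a maximal subfield $N$ of $E$ by working through an Artin--Schreier ladder of $[a,b)$--type algebras, exploiting the freedom to replace each $a$ by $a+(\xi^p-\xi)$ with $\xi\in M_s$ in order to force $N$ to be stable under $G$ and Galois over $k$. Any such $N$ is automatically a maximal subfield of $D$, since $[N:K]=[N:M_s][M_s:K]=[M:M_s][M_s:K]=[M:K]=\deg D$, and it is Galois over $k$ by construction. The main obstacle is verifying that Saltman's proof, formulated for the case $K=k$, never implicitly uses $k$ being the entire centre of the ambient division algebra rather than merely the fixed field $M_s^G$; because every computation takes place inside the pair $(E,G)$ with $E$ a $p$-algebra over $M_s$ and $M_s^G=k$, his calculations carry over unchanged and yield the desired Galois maximal subfield of $D$.
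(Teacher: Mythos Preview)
Your reduction to $E=C_D(M_s)$ is fine, but what you then call ``Saltman's argument'' is not what Saltman actually does, and the $G$-equivariant construction you sketch is not justified. Saltman's proof (which is the one reproduced in the paper's appendix, and which the paper explicitly says carries over verbatim to the case $K/k$ separable) is a \emph{maximality} argument, not an equivariant ladder. One chooses a normal maximal subfield $L$ with $[L:k]_s$ as large as possible, writes $L=L_0P$ with $L_0/k$ Galois and $P/k$ purely inseparable, and, assuming $P\neq k$, peels off a single purely inseparable layer $P'\subset P$ of index $p$ and passes to $D^*=C_D(L_0P')$, a degree-$p$ algebra of the form $[a,b)$ with $b\in P'$. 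The crucial step is then \emph{not} the elementary freedom $[a,b)=[a+\xi^p-\xi,b)$, but the hard Lemma~\ref{lem:p-alg-a-b}: if $b$ lies in a subfield $F'\subseteq F$, one can replace the presentation so that the Artin--Schreier parameter $a'$ lies in $F'$. Applied with $F'=P'$, and followed by iterating the identity $[a',b')=[(a')^p,b')$ (Lemma~\ref{lem:p-alg-a-p}) down the purely inseparable tower $P'/k$, one obtains $a'\in k$. Then $k(\alpha)/k$ is Artin--Schreier Galois, so $L_0P'(\alpha)=L_0P'\cdot k(\alpha)$ is again \emph{normal} over $k$ (compositum of normal extensions) and has strictly larger separable degree over $k$ than $L$ did, contradicting maximality.

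Note in particular that no $G$-action on any centralizer is ever used; normality over $k$ of the new field comes for free from $a'\in k$, not from any stability under conjugation. Your proposal, by contrast, asserts that one can run an Artin--Schreier tower inside $E$ while keeping everything $G$-stable by adjusting each $a$ by an additive coboundary $\xi^p-\xi$ with $\xi\in M_s$. That would require, at each stage, solving an additive Hilbert~90--type problem to make the parameter $G$-invariant, and you would still need an analogue of Lemma~\ref{lem:p-alg-a-b} to move the parameter into the right subfield. None of this is in Saltman's paper, and you have not supplied it. As it stands, the proposal defers the entire content of the proof to an argument that does not exist in the form you describe.
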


This is essentially a statement about $p$-algebras and was proved in \cite[Lemma 3]{saltman:noncr-prod-small-exp} for $K=k$.
The same proof as given there can be used for the case that $K/k$ is finite separable.
However, we give a proof here for completeness up to the Lemma \ref{lem:p-alg-a-b} below, whose proof is quite difficult.
The following two lemmas are due to Saltman.

\begin{lemma}
\label{lem:p-alg-a-p}
Let $A=[a,b)$, $a,b\in F$.
Then $A=[a^p,b)$.
\end{lemma}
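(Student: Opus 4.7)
The plan is to exhibit explicit generators of $A$ in the form required by the symbol $[a^p,b)$. Specifically, I would set
\[
\alpha' := \alpha^p \quad\text{and}\quad \beta' := \beta,
\]
and verify that these elements satisfy the three defining relations (\ref{eq:p-alg-gen}) with $a$ replaced by $a^p$ and $b$ unchanged.

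First, I would observe that $\alpha^p = \alpha + a$, so since $\charak F = p$,
\[
{\alpha'}^p - \alpha' = \alpha^{p^2} - \alpha^p = (\alpha+a)^p - (\alpha+a) = \alpha^p + a^p - \alpha - a = a^p \in F.
\]
Second, ${\beta'}^p = \beta^p = b$ is immediate. Third, the commutation relation $\beta\alpha = (\alpha+1)\beta$ iterates to $\beta\alpha^i = (\alpha+1)^i\beta$ for every $i \ge 1$, and at $i = p$ the binomial expansion collapses in characteristic~$p$ to
\[
\beta'\alpha' = \beta\alpha^p = (\alpha+1)^p\beta = (\alpha^p + 1)\beta = (\alpha' + 1)\beta'.
\]

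It then remains to check that $\alpha'$ and $\beta'$ actually generate $A$ over $F$, so that the presentation $[a^p,b)$ really describes $A$ itself and not merely a subalgebra. But $\alpha' = \alpha + a$ lies in $F(\alpha)\setminus F$, and $[F(\alpha):F] = p$ is prime, hence $F(\alpha') = F(\alpha)$; together with $\beta' = \beta \notin F(\alpha)$ (as $A$ is noncommutative of degree $p$), this forces the subalgebra generated by $\alpha',\beta'$ to equal $A$.

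There is no real obstacle here: the whole argument rests on the two identities $\alpha^{p^2} - \alpha^p = a^p$ and $(\alpha+1)^p = \alpha^p + 1$, both immediate in characteristic $p$. The only point requiring a moment of care is confirming that the new generators still generate the full algebra, which is handled by the degree argument above.
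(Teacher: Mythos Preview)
Your proof is correct and follows exactly the paper's approach: replace $\alpha$ by $\alpha^p=\alpha+a$, keep $\beta$, and check the relations. One small simplification: once you have noted $\alpha'=\alpha+a$, generation is immediate since $\alpha=\alpha'-a\in F[\alpha']$, so the $F$-subalgebra generated by $\alpha',\beta'$ already contains $\alpha,\beta$; the field-degree argument (which tacitly assumes $F[\alpha]$ is a field, i.e.\ that $A$ is a division algebra) is not needed.
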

\begin{proof}
Let $A$ be generated by elements $\alpha,\beta$ satisfying (\ref{eq:p-alg-gen}).
Then $A$ is also generated by $\alpha^p=\alpha+a$ and $\beta$,
and these elements satisfy (\ref{eq:p-alg-gen}) where $a$ is replaced by $a^p$.
Thus $A=[a^p,b)$.
\end{proof}

\begin{lemma}
\label{lem:p-alg-a-b}
Let $A=[a,b)$, $a,b\in F$.
Suppose $F'\subseteq F$ is a subfield, $[F:F']<\ift$,
and $b\in F'$.
Then $A=[a',b')$ for some $a'\in F',b'\in F$.
\end{lemma}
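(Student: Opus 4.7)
The target is to exhibit generators $\alpha',\beta'\in A$ satisfying the relations (\ref{eq:p-alg-gen}) with ${\alpha'}^p-\alpha'\in F'$. Given such an $\alpha'$, any $\beta'\in\ug A$ whose inner automorphism restricts to $\alpha'\mapsto\alpha'+1$ on $F(\alpha')$ exists by Skolem--Noether, is unique up to left multiplication by $\ug{F(\alpha')}$, and automatically satisfies ${\beta'}^p\in F$. Thus $b'\in F$ comes for free once $\alpha'$ is constructed, and the plan concentrates on producing the right $\alpha'$.

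The construction will exploit the freedom to replace $\alpha$ by elements of $A$ generating the same Artin--Schreier extension of $F$. Two basic moves preserve $A$ as an $F$-algebra. First, the translation $\alpha\mapsto\alpha+c$ for $c\in F$ replaces $a$ by $a+\calP(c)$. Second, by Lemma \ref{lem:p-alg-a-p} we may replace $\alpha$ by $\alpha^{p^n}$, turning $a$ into $a^{p^n}$. Since $a^p-a=\calP(a)\in\calP(F)$, the second move is redundant modulo $\calP(F)$, so together these moves bring $a$ to any representative of the coset $a+\calP(F)$. When $(a+\calP(F))\cap F'\neq\ets$, I simply take $a'$ in this intersection and $b'=b\in F'\subseteq F$, and the lemma is immediate.

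When $a\notin F'+\calP(F)$ the argument becomes more delicate. First I reduce to the case that $F/F'$ is simple by factoring through an intermediate tower. In the simple purely inseparable case $[F:F']=p^n$, Lemma \ref{lem:p-alg-a-p} immediately gives $a^{p^n}\in F^{p^n}\subseteq F'$, so we take $a'=a^{p^n}$ and $b'=b$. For a simple separable extension $F=F'(\theta)$, modifications of $\alpha$ alone do not suffice, and I would activate a third family of moves affecting $\beta$. Replacing $\beta$ by $v\beta$ for $v\in\ug{F(\alpha)}$ preserves the commutation relation with $\alpha$, leaves $\alpha$ itself unchanged, and transforms $b$ into $\No_{F(\alpha)/F}(v)\cdot b\in F$. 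Compounded with further translations of $\alpha$ inside $F$, these substitutions produce modifications of $a$ that are genuinely not of the form $a+\calP(c)$, giving access to a much larger orbit of allowed $a$'s than the $\calP(F)$-coset.

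The hard part will be executing the separable case. Writing $a$ in an $F'$-basis of $F$ induced by the simple generator $\theta$, one must eliminate the non-$F'$ coefficients one at a time by combining the norm substitutions on $\beta$ with translations of $\alpha$ so that the resulting $a'$ eventually lies in $F'$. Showing that the combined system of moves generates enough room to kill every component of $a$ outside $F'$ is the technical heart of Saltman's original argument and the place where the finiteness $[F:F']<\ift$ is genuinely used; this is what I expect to be the main obstacle in writing out the complete proof.
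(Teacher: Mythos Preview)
The paper itself does not prove this lemma; it explicitly defers to \cite[Lemma~4.4.16]{jacobson:fin-dim-div-alg} and \cite[Lemma~6]{saltman:splittings-cycl-p-alg}, calling the argument ``quite complicated.'' So there is no in-paper proof to compare against, and your proposal has to stand on its own.

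Your proposal is a plan rather than a proof, and you candidly flag the separable case as unfinished. Two concrete issues deserve attention beyond that acknowledgment. First, the reduction to simple extensions by ``factoring through an intermediate tower'' does not close. Suppose $F'\subsetneq F''\subsetneq F$. Applying the (assumed) simple-extension case to the pair $F''\subset F$ uses $b\in F'\subseteq F''$ and produces $A=[a'',b'')$ with $a''\in F''$ but with $b''$ only guaranteed to lie in $F$. The next descent step, aimed at pushing $a''$ from $F''$ down into $F'$, again requires the second slot to lie in the smaller field, and you no longer control that. Your purely inseparable argument via Lemma~\ref{lem:p-alg-a-p} keeps $b$ fixed, so those steps iterate cleanly; the obstruction is entirely in chaining separable steps.

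Second, your ``third family of moves'' does not do what you claim. Replacing $\beta$ by $v\beta$ with $v\in\ug{F(\alpha)}$ changes $b$ to $\No_{F(\alpha)/F}(v)\cdot b$ but leaves $\alpha$, and hence $a$, untouched; subsequently translating $\alpha\mapsto\alpha+c$ still only moves $a$ within the coset $a+\calP(F)$. So these combined moves do not enlarge the orbit of $a$ at all. To reach genuinely new values of $a$ one must exhibit an Artin--Schreier element $\alpha'\in A$ not lying in $F(\alpha)$, and constructing such an $\alpha'$ with ${\alpha'}^p-\alpha'\in F'$ is precisely the substantive content you are deferring to Saltman.
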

\begin{proof}
The proof is quite complicated and shall be omited here.
It can be found in \cite[Lemma 4.4.16]{jacobson:fin-dim-div-alg} 
or \cite[Lemma 6]{saltman:splittings-cycl-p-alg}.
\end{proof}

\begin{proof}[Proof of Proposition \ref{prop:normal-galois-appendix}]
Suppose that $D$ contains a maximal subfield $L$ that is normal over $k$.
Then choose such an $L$ with maximal separable degree $[L:k]_s$.
Let $L_0$ be the maximal separable subextension of $L/k$,
\ie $[L_0:k]=[L:k]_s$.
Since $L/k$ is normal, 
$L_0/k$ is Galois, and there is a purely inseparable subfield $P$ of $L/k$ with $L=L_0P$
(\cf \cite[Chapter~V, Proposition~6.11]{lang:algebra}).
$K/k$ is separable by hypothesis, thus $K\subseteq L_0$.
Let $[P:k]=p^n$ and assume $n\geq 1$, in particular $\charak k=p$.
There is a subfield $P'$ of $P/k$ with $[P:P']=p$, $[P':k]=p^{n-1}$
and $P=P'(\beta)$ with $\beta^p=b\in P'$.
Consider $D^*:=C_D(L_0P')$,
which is a $p$-division algebra of degree $p$ over $Z(D^*)=L_0P'$
with maximal subfield $L=L_0P'(\beta)$.
Since $\beta^p=b\in P'$,
we have $D^*=[a,b)$ for some $a\in L_0P'$.
By Lemma \ref{lem:p-alg-a-b},
there are $a'\in P'$ and $b'\in L_0P'$ such that $D^*=[a',b')$.
Since ${a'}^{p^{n-1}}\in k$, 
repeated application of Lemma \ref{lem:p-alg-a-p} shows
that we can assume $a'\in k$.
Let $\alpha\in D^*\bs Z(D^*)$ with $\alpha^p-\alpha=a'\in k$.
Then $L_0P'(\alpha)=L_0P'\cdot k(\alpha)$ is a maximal subfield of $D^*$ and $D$
that is normal over $k$
with $$[L_0P'(\alpha):k]_s=[L_0(\alpha):k]>[L_0:k]=[L:k]_s,$$
since $\alpha\notin L_0$.
This contradicts the maximality of $[L:k]_s$,
hence $n=1$, $P=k$, and $L$ is Galois over $k$.
\end{proof}

\nocite{*}
\newcommand{\etalchar}[1]{$^{#1}$}
\providecommand{\bysame}{\leavevmode\hbox to3em{\hrulefill}\thinspace}

\chapter*{List of Symbols}

\begin{tabbing}
$\N$ \hspace{3cm} \= $\{1,2,3,\ldots\}$ \\
$\N_0$ \> $\{0,1,2,\ldots\}$ \\
$\mathfrak{Z}_n$ \> cyclic group of order $n$ \\
$\exp G$ \> exponent of abelian group \\
\\
$\charak F$ \> characteristic of  $F$ \\
$F_{alg}$ \> algebraic closure of  $F$ \\
$\mu_n(F)$ \> group of $n$-th roots of unity contained in $F$\\
$\mu_n\subset F$ \> $F$ contains all $n$-th roots of unity of $F_{alg}$ \\
$\zeta_n$ \> primitive $n$-th root of unity \\
\\
$\Gal(K/F)$ \> Galois group of $K/F$ \\
$\Fix(\cdot)$ \> fixed field \\
$\No_{K/F}$ \> Norm of $K/F$ \\
$\Tr_{K/F}$ \> Trace of $K/F$ \\
\\
$\Val(F)$ \> set of all valuations on $F$ \\
$\V0(F)$ \> set of all non-archimedian valuations on $F$ \\
$F_v$ \> completion of $F$ with respect to $v\in\Val(F)$ \\
$F^h$ \> Henselization of $F$ with respect to a fixed $v\in\Val(F)$ \\
$\bar F_v$ \> residue field of $F$ with respect to $v\in\V0(F)$\\
$\bar F$ \> residue field of $F$ with respect to a fixed $v\in\V0(F)$\\
\\
$w|v$ \> the valuation $w$ extends the valuation $v$ \\
$G_v(K/F)$ \> decomposition group of $v\in\V0(F)$ in a Galois extension\\
$Z_v(K/F)$ \> decomposition field of $v\in\V0(F)$ in a Galois extension\\
$I_v(K/F)$ \> inertia group of $v\in\V0(F)$ when $v$ extends uniquely  \\
$T_v(K/F)$ \> inertia field of $v\in\V0(F)$ when $v$ extends uniquely  \\
$e_v(K/F)$ \> ramification index \\
$f_v(K/F)$ \> inertia degree \\
\\
$A^\times$ \> multiplicative group of units of $A$ \\
$A^{op}$ \> opposite algebra of $A$ \\
$Z(A)$ \> centre of $A$ \\
$C_B(A)$ \> centralizer of $B$ in $A$ \\
$\iota_a$ \> inner automorphism defined by $\iota_a(x):=axa^{-1}$ \\
$\calA(F)$ \> set of all finite-dimensional central simple $F$-algebras \\
$\calD(F)$ \> set of all finite-dimensional $F$-division algebras \\
$A^K$ \> scalar extension, $A\otimes_F K$ \\
$A_K$ \> underlying division algebra of $A^K$ \\
$A\sim B$ \> $A$ and $B$ are similiar, $A\cong M_n(D)$ and $B\cong M_m(D)$ \\
$[A]$ \> similarity class of $A$ in $\calA(F)$ \\
$\Br(F)$ \> Brauer group of $F$ \\
$\res_{K/F}$ \> restriction map $\Br(F)\to\Br(K),[A]\mapsto[A^K]$ \\
$\deg A$ \> degree of $A$, $\sqrt{[A:Z(A)]}$ \\
$\ind A$ \> index of $A$, degree of the underlying division algebra of $A$ \\
$\exp A$ \> exponent of $A$, order of $[A]$ in $\Br(F)$ \\
\\
$V_D$ \> valuation ring of the valued division ring $D$ \\
$\Gamma_D$ \> value group of the valued division ring $D$ \\
$\bar D$ \> residue algebra of the valued division ring $D$ \\
$\pi_F$ \> canonical residue map \\
\\
$A_v$ \> the completion of $A$ with respect to $v$, $A\otimes_F F_v$ \\
$D^h$ \> Henselization of $D\in\calD(F)$, $D\otimes_F F^h$ \\
$\inv_v A$ \> local invariant, $\inv A_v$ \\
$\VBr(F)$ \> ``valued part'' of $\Br(F)$ \\
$\IBr(F)$ \> ``inertial part'' of $\Br(F)$ \\
$\SBr(F)$ \> ``inertially split part'' of $\Br(F)$ \\
$\Br(V_F)$ \> Brauer group of the valuation ring $V_F$ \\
\\
$(\frac{a,b}{F})$ \> quaternion algebra over $F$ generated by $i,j$ with \\
\> $i^2=a, j^2=b,ji=-ij$ \\
$(\frac{a,b}{F,\zeta_n})$ \> symbol algebra over $F$, $\zeta_n\in F$, generated by $i,j$ with \\
\> $i^n=a, j^n=b,ji=\zeta_n ij$ \\
$[a,b)$ \> $p$-algebra generated by $\alpha,\beta$ with \\
\> $\alpha^p-\alpha=a,\beta^p=b,\beta\alpha=(\alpha+1)\beta$ \\
\\
$\Aut_F(A)$ \> group of $F$-algebra automorphisms of $A$ \\
$(\omega,f)$ \> factor set \\
$(\omega,f)\sim (\eta,g)$ \> $(\omega,f)$ and $(\eta,g)$ are comohologous \\
$\calF(G,A^\times)$ \> set of all factor sets of $G$ in $A^\times$ \\
$\calH(G,A^\times)$ \> quotient of $\calF(G,\ug A)$ by $\sim$ \\
$Z^2(G,K^\times)$ \> set of all $2$-cocycles of $G$ in $K^\times$ \\
$B^2(G,K^\times)$ \> set of all $2$-coboundaries of $G$ in $K^\times$ \\
$(\wt\sigma,\alpha)$ \> cyclic factor set \\
$(\wt\sigma,u,\alpha)$ \> abelian factor set \\
$(A,G,(\omega,f))$ \> generalized crossed product \\
$(A,\wt\sigma,\alpha)$ \> cyclic generalized crossed product \\
$(A,\wt\sigma,u,\alpha)$ \> abelian generalized crossed product \\
$A(x;\wt\sigma)$ \> twisted function field \\
$A(\!(x;\wt\sigma)\!)$ \> twisted Laurent series ring \\
$A(x;\wt\sigma;u)$ \> iterated twisted function field \\
$A(\!(x;\wt\sigma;u)\!)$ \> iterated twisted Laurent series ring \\
\end{tabbing}

\end{document}